\numberwithin{equation}{section}
\numberwithin{figure}{section}
\theoremstyle{plain}
\newtheorem{thm}{\protect\theoremname}[section]
\theoremstyle{definition}
\newtheorem{example}[thm]{\protect\examplename}
\theoremstyle{definition}
\newtheorem{defn}[thm]{\protect\definitionname}
\theoremstyle{plain}
\newtheorem*{lem*}{\protect\lemmaname}
\theoremstyle{remark}
\newtheorem{notation}[thm]{\protect\notationname}
\theoremstyle{plain}
\newtheorem*{prop*}{\protect\propositionname}
\theoremstyle{plain}
\newtheorem*{thm*}{\protect\theoremname}
\theoremstyle{plain}
\newtheorem{lem}[thm]{\protect\lemmaname}
\theoremstyle{plain}
\newtheorem{cor}[thm]{\protect\corollaryname}
\theoremstyle{plain}
\newtheorem{prop}[thm]{\protect\propositionname}
\theoremstyle{remark}
\newtheorem{rem}[thm]{\protect\remarkname}
\newtheorem{propositiona}{Proposition}
\theoremstyle{plain}
\newtheorem{corollarya}{Corollary}
\newcommand{\hooklongrightarrow}{\lhook\joinrel\longrightarrow}
\newcommand{\twoheadlongrightarrow}{\relbar\joinrel\twoheadrightarrow}
\newcommand\<{\langle}
\renewcommand\>{\rangle}
\def\nicefracbig#1#2{
    \raise.5ex\hbox{$#1$}%
    \kern-.25em\bigr/\kern-.15em%
    \lower.35ex\hbox{$#2$}}
\newcommand{\xyR}[1]{%
\xydef@\xymatrixrowsep@{#1}
}
\newcommand{\xyC}[1]{%
\xydef@\xymatrixcolsep@{#1}
}
\theoremstyle{definition}
\providecommand{\corollaryname}{Corollary}
\providecommand{\definitionname}{Definition}
\providecommand{\examplename}{Example}
\providecommand{\lemmaname}{Lemma}
\providecommand{\notationname}{Notation}
\providecommand{\propositionname}{Proposition}
\providecommand{\remarkname}{Remark}
\providecommand{\theoremname}{Theorem}
\begin{document}
\title{From braids to transverse slices in reductive groups}
\author{Wicher Malten}
\maketitle
\begin{abstract}
In 1965, Steinberg's study of conjugacy classes in connected reductive
groups led him to construct an affine subspace parametrising regular
conjugacy classes, which he noticed is also a cross section for the
conjugation action by the unipotent radical of a Borel subgroup on
another affine subspace. Recently, generalisations of this slice and
its cross section property have been obtained by Sevostyanov in the
context of quantum group analogues of $W$-algebras and by He-Lusztig
in the context of Deligne-Lusztig varieties. Such slices are often
thought of as group analogues of Slodowy slices.

In this paper we explain their relationship via common generalisations
associated to Weyl group elements and provide a simple criterion for
cross sections in terms of roots. In the most important class of examples
this criterion is equivalent to a statement about the Deligne-Garside
factors of their powers in the braid monoid being maximal in some
sense. Moreover, we show that these subvarieties transversely intersect
conjugacy classes and determine for a large class of factorisable
$r$-matrices when the Semenov-Tian-Shansky bracket reduces to a Poisson
structure on these slices.
\end{abstract}
\tableofcontents{}

\section{Introduction}

In 1965, Steinberg's study of conjugacy classes in a connected reductive
group $G$ led him to analyse its regular elements, characterising
them in various ways; they can be defined as the elements whose conjugacy
class has minimal codimension (which is equal to the rank of $G$),
and together these elements form a dense open subset. Fixing a Borel
subgroup $B$ and maximal torus $T$, denoting by $N_{+}=[B,B]$ the
unipotent radical of $B$, by $N_{-}$ its opposite, by $W=N_{G}(T)/T$
the Weyl group, writing for $w$ in $W$ then $N_{w}:=N_{+}\cap w^{-1}N_{-}w$
and $\dot{w}$ for arbitrary lifts to the normaliser $N_{G}(T)$ of
$T$ in $G$ and finally fixing $w$ to be a Coxeter element of minimal
length,\footnote{By Coxeter element we mean any conjugate of a product (in any order)
of all the simple reflections.} he proves that the Steinberg slice $\dot{w}N_{w}$ yields a cross
section of these regular conjugacy classes \cite[§1.4]{MR0180554}.
\begin{example}
[{\cite[§7.4]{MR0180554}}] Let $G=\mathrm{SL}_{\mathrm{rk}+1}$ over
a scheme $S$ and consider the Coxeter element $w=s_{\mathrm{rk}}\cdots s_{1}$.
The usual lift $\dot{w}$ yields the space of Frobenius companion
matrices
\[
\dot{w}N_{w}=\left\{ \begin{bmatrix}0 & 1 & \cdots & 0 & 0\\
\vdots & \vdots & \ddots & \vdots & \vdots\\
0 & 0 & \cdots & 1 & 0\\
0 & 0 & \cdots & 0 & 1\\
(-1)^{\mathrm{rk}} & c_{\mathrm{rk}} & \cdots & c_{2} & c_{1}
\end{bmatrix}:c_{1},\ldots,c_{\mathrm{rk}}\in\CMcal O_{S}\right\} .
\]
Furthermore, Steinberg settles the existence of regular unipotent
elements by locating some of them inside of the Bruhat cell $BwB$
\cite[§4]{MR0180554}. He subsequently remarks that actually $BwB$
consists entirely of regular elements, which he notes follows from
the same property of $\dot{w}N_{w}$ in combination with the isomorphism
\begin{equation}
N_{+}\times\dot{w}N_{w}\overset{\sim}{\longrightarrow}N_{+}\dot{w}N_{+},\qquad(n,g)\longmapsto n^{-1}gn,\label{eq:Steinberg-cross-section}
\end{equation}
 but the proof of that isomorphism is missing \cite[§8.9]{MR0180554}.
In the late 1970s this map was investigated by Spaltenstein, who found
an example in type $\mathsf{A}_{5}$ (see Example \ref{exa:spaltenstein})
showing that it is not necessarily an isomorphism when $w$ is replaced
by a Coxeter element which is not of minimal length \cite[§0.4]{MR2904572}
. Very recently, generalisations of this cross section appeared in
two different settings:
\end{example}

Perhaps the most natural way to construct $W$-algebras is by applying
quantum Hamiltonian reduction to a certain character of the Lie algebra
$\mathfrak{n}$ of a subgroup $N$ of $N_{+}$ generated by root subgroups
\cite{MR1255424,MR1929302,MR1876934}. For example, Kostant's study
in 1978 of Whittaker representations for the Langlands program led
him to construct the principal finite $W$-algebra \cite{MR507800},
which can be interpreted as applying this procedure to a regular character
of the Lie algebra $\mathfrak{n}_{+}$ of $N_{+}$. The character
of $\mathfrak{n}$ dequantises to a symplectic point of $\mathfrak{n}^{*}$
via Dixmier's map \cite{MR182682} and the semiclassical limit of
the corresponding finite $W$-algebra is then obtained by reducing
the inverse image of this point under the momentum map for the restriction
of the coadjoint action of $G$ on the dual $\mathfrak{g}^{*}$ of
its Lie algebra to $N$. The quantum group analogue of that reduces
inverse images of symplectic points under momentum maps for the restriction
of the dressing action of a factorisable Poisson-Lie group $G$ (with
the Drinfeld-Sklyanin bracket) on its dual Poisson-Lie group $G^{*}$
(which is quantised by the Drinfeld-Jimbo quantum group $U_{q}G$)
to coisotropic subgroups $N$ of $N_{+}$ generated by root subgroups.
One can exponentiate the symplectic point in $\mathfrak{n}_{+}^{*}$
to a point in the dual $N_{+}^{*}$ of $N_{+}$, and show directly
that it is symplectic if and only if the standard $r$-matrix on $G$
is modified by the Cayley transform of a Coxeter element of minimal
length \cite{WM-symplectic-points}. Denoting by $G_{*}$ the variety
$G$ equipped with the corresponding Semenov-Tian-Shansky bracket
(which is quantised by the integrable part $U_{q}^{\mathrm{int}}G$
of $U_{q}G$), the resulting reduction in $G^{*}$ covers a reduction
in $G_{*}$ along the $G$-equivariant factorisation mapping $G^{*}\rightarrow G_{*}$.
More precisely, this is the Poisson reduction of $N_{+}\dot{w}N_{+}\subset G_{*}$
by the conjugation action of $N_{+}\subset G$, so the cross section
suggests that a certain reduction of $U_{q}^{\mathrm{int}}G$ should
quantise the Steinberg slice $\dot{w}N_{w}$. The differential graded
algebra corresponding to the principal case was thoroughly studied
in work of Sevostyanov \cite{MR1793611}, the author \cite{WM-symplectic-points},
and Grojnowski \cite{Grojnowski}. Whilst searching for quantum group
analogues of non-principal $W$-algebras, Sevostyanov generalised
the Steinberg slice to some elements in every conjugacy class of the
Weyl group, and proved that for any those elements a conjugation map
similar to (\ref{eq:Steinberg-cross-section}) is an isomorphism \cite{MR2806525}.

All infinite families of finite simple groups, except those of the
alternating groups, consist of finite reductive groups. In 1976, Deligne
and Lusztig proved that all of their characters can be obtained from
the compactly supported étale cohomology of certain algebraic varieties
$X_{w}^{G}$ constructed out of such groups \cite{MR393266}, using
a twist $F$ (a Frobenius morphism) and an element $w$ of the Weyl
group. They showed that these virtual representations do not depend
on the twisted conjugacy class of $w$ \cite[Theorem 1.6]{MR393266},
reducing much of the study of these Deligne-Lusztig varieties to minimal
length elements. Any minimal length element is elliptic in a standard
parabolic corresponding to a Levi subgroup $L$, and one can show
that the orbit space satisfies $G^{F}\backslash X_{w}^{G}\simeq L^{F}\backslash X_{w}^{L}$,
thus reducing to the case where $w$ is elliptic and has minimal length.
Lusztig and his former student He generalised the cross section (\ref{eq:cross-section})
to these elements, and deduced from this that $G^{F}\backslash X_{w}^{G}$
is universally homeomorphic to the quotient of $\mathbb{A}^{\ell(w)}$
by a finite torus \cite{MR2904572} (which implies a cohomology vanishing
theorem \cite{MR2427642}, simplifying Lusztig's classification of
the representations of finite reductive groups \cite{MR742472}).
This statement was later generalised by He to obtain a dimension formula
for affine Deligne-Lusztig varieties \cite{MR3126571}, which are
closely related to the reduction of integral models of Shimura varieties
in arithmetic geometry. He and Lusztig mention that it
\begin{quote}
is not clear to us what is the relation of the Weyl group elements
considered {[}by Sevostyanov{]} with those considered in this paper
\attrib{\cite[§0.3]{MR2904572}}
\end{quote}
and the original aim of this paper and its prequel \cite{WM-mindom}
was to provide a common generalisation explaining this.

Following work by Springer \cite{MR0263830}, Grothendieck around
1969 obtained a simultaneous resulution of the singularities of the
fibres of $G\rightarrow G/\!\!/G\simeq T/W$ and conjectured that
a transverse slice to a subregular element of $G$ would yield a universal
deformation of the corresponding Du Val-Klein singularity, and similarly
for its Lie algebra. This was proven by Brieskorn \cite{MR0437798},
and Slodowy thoroughly studied these deformations through suitable
slices in the Lie algebra \cite{MR584445}. Already appearing in the
work of Harish-Chandra on invariant distributions on Lie algebras
\cite{MR161940}, Slodowy slices play a crucial rôle in the classification
of Whittaker representations as the semiclassical limits of $W$-algebras
\cite{MR507800,MR1929302}, have recently been applied to reconstruct
Khovanov homology \cite{MR2254624,MR3867999} and numerous physicists
are currently using them in their study of supersymmetric gauge theories
(e.g.\ \cite{MR2610576,MR2548595}). 

Meanwhile, Drinfeld showed that every reductive Lie algebra (interpreted
as a Casimir Lie algebra with the Killing form) admits a unique quantisation,
up to gauge transformations \cite{MR1047964}; semiclassically, this
corresponds to a choice of factorisable $r$-matrix, and those were
classified by Belavin-Drinfeld \cite{MR674005}. In his study of zero
curvature conditions for soliton equations, Semenov-Tian-Shansky discovered
a natural Poisson bracket on $G$ associated with these $r$-matrices,
which is compatible with conjugation \cite{MR842417}.

By using the Kazhdan-Lusztig map \cite{MR947819}, Sevostyanov associated
certain Weyl group elements to the subregular unipotent classes and
determined that his corresponding slices have the correct dimension,
yielding analogous results. He subsequently showed that after modifying
the standard $r$-matrix, the Semenov-Tian-Shansky bracket on $G$
reduces to his slices and lifts to Poisson structures on these minimal
surface singularities \cite{MR2806525}. Very recently, it was shown
that the analogue of the Grothendieck-Springer resolution for $G$-bundles
on elliptic curves \cite{MR3383168,2019arXiv190804140D} yields del
Pezzo surfaces \cite{MR4210255,2020arXiv200413268D}, whilst slices
in the affine Grassmannian were quantised \cite{MR3248988} and immediately
applied in the study of Coulomb branches in physics (e.g.\ \cite{MR3663621,MR4020310}).

Employing Lusztig's recent stratification of $G$ by unions of sheets
of conjugacy classes \cite{MR3495802}, Sevostyanov verified that
any conjugacy class of $G$ is strictly transversally intersected
by one of his slices \cite{MR3883243}. This forms the main ingredient
in Sevostyanov's approach \cite{sevostyanov2021qw} to the long-standing
De Concini-Kac-Procesi conjecture on the dimensions of irreducible
representations of quantum groups at roots of unity \cite[§6.8]{MR1124981}
(the analogous Kac-Weisfeiler conjecture for Lie algebras \cite{MR0285575}
was proven with Slodowy slices \cite{MR1345285}). Furthermore, Sevostyanov's
slices have been used in an attempt to obtain a group analogue \cite{MR3417486}
of Katsylo's theorem \cite{MR661143}, which itself has been applied
e.g.\ to analyse the space of one-dimensional representations of
finite $W$-algebras and to the theory of primitive ideals \cite{MR3260140}.\newline

It is natural in the theory of reductive Poisson-Lie groups and Drinfeld-Jimbo
quantum groups to fix a torus and Borel subgroup as before. Recall
from the introduction of \cite{WM-mindom} the notion and notation
for twisted finite Coxeter groups, (firmly) convex elements, reduced
braids, braid power bounds, roots, stable roots, convex sets of roots,
inversion sets, weak left Bruhat-Chevalley orders, right Deligne-Garside
factors and normal forms and the braid equation
\begin{equation}
\mathrm{DG}(b_{w}^{d})=\mathrm{pb}(w).\label{eq:braid-equation}
\end{equation}
The relevance of this equation to the main theorem is through its
close relationship with
\begin{defn}
\label{def:cross} Let $w$ be an element of a twisted Weyl group.
For any positive root $\beta$ of its root system, we construct a
subset of positive roots
\[
\mathrm{cross}_{w}(\beta):=\bigl\{ w(\beta+\sum_{i=1}^{m}\beta_{i})\in\mathfrak{R}:\beta_{1},\ldots,\beta_{m}\in\mathfrak{R}_{w},m\geq0\bigr\}\cap\mathfrak{R}_{+}.
\]
This extends to a map
\[
\mathrm{cross}_{w}:\{\textrm{subsets of }\mathfrak{R}_{+}\}\longrightarrow\{\textrm{subsets of }\mathfrak{R}_{+}\},\qquad\mathfrak{N}\longmapsto\bigcup_{\beta\in\mathfrak{N}}\mathrm{cross}_{w}(\beta),
\]
and we let $\mathrm{cross}_{w}^{d}(\cdot)$ denote its $d$-th iterate
for integers $d\geq0$.
\end{defn}

A more group-theoretic interpretation is given in Corollary \ref{cor:cross-root-subgroups},
where it is explained that $\mathrm{cross}_{w}(\cdot)$ is a (simplification
of a) first-order approximation to the polynomial equations appearing
in our generalisation of the cross section (\ref{eq:Steinberg-cross-section}).
In Lemma \ref{lem:braid-invariance-cross} we prove that $\mathrm{cross}_{w}(\cdot)$
naturally extends to twisted braid monoids, which is one of the ingredients
in the proof of the following
\begin{lem*}
\label{lem:cross} Let w be an element of a twisted Weyl group $W$.

\begin{enumerate}[\normalfont(i)]

\item For any simple root $\alpha$ not in the inversion set $\mathfrak{R}_{w}$,
the set $\mathrm{cross}_{w}(\alpha)$ contains simple roots.

\item For any other element $w'$ of $W$ and integer $d\geq0$,
we have
\[
\mathrm{DG}(b_{w}^{d})\geq w'\qquad\textrm{if and only if }\qquad\mathrm{cross}_{w}^{d}(\mathfrak{R}_{w'})=\varnothing,
\]
if and only if $\mathrm{cross}_{w}^{d}(\mathfrak{R}_{w'})$ does not
contain any simple roots.

\item Moreover, for any $d>|\mathfrak{R}_{+}\backslash\mathfrak{R}_{\mathrm{st}}^{w}|-\ell(w)$
we have
\[
w\textrm{ is convex and satisfies }(\ref{eq:braid-equation})\qquad\textrm{if and only if }\qquad\mathrm{cross}_{w}^{d}(\mathfrak{R}_{+}\backslash\mathfrak{R}_{\mathrm{st}}^{w})=\varnothing,
\]
if and only if $\mathrm{cross}_{w}^{d}(\mathfrak{R}_{+}\backslash\mathfrak{R}_{\mathrm{st}}^{w})$
does not contain any simple roots.

\end{enumerate}
\end{lem*}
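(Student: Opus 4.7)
\emph{Plan.} I would prove (i) by a direct combinatorial argument on biconvex inversion sets, then use it together with the braid-invariance of $\mathrm{cross}_w$ (Lemma \ref{lem:braid-invariance-cross}) to deduce (ii), and finally derive (iii) as the specialisation of (ii) to $w'=\mathrm{pb}(w)$.

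For (i), the identity $-w(\mathfrak{R}_w)=\mathfrak{R}_{w^{-1}}$ rewrites the cross set as
\[
\mathrm{cross}_w(\alpha)=\bigl\{w\alpha-\textstyle\sum_{i}\gamma_{i}:\gamma_{i}\in\mathfrak{R}_{w^{-1}}\bigr\}\cap\mathfrak{R}_{+}.
\]
Since $\alpha$ is simple and $\alpha\notin\mathfrak{R}_w$, we have $\ell(s_\alpha w^{-1})=\ell(w)+1$, so $\mathfrak{R}_{s_\alpha w^{-1}}=\mathfrak{R}_{w^{-1}}\sqcup\{w\alpha\}$ is biconvex as an inversion set. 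I induct on the height of $w\alpha$: if $w\alpha$ is simple then it lies in $\mathrm{cross}_w(\alpha)$ and we are done; otherwise decompose $w\alpha=\alpha_j+\beta$ with $\alpha_j$ simple and $\beta\in\mathfrak{R}_+$ of strictly smaller height, and use cobiconvexity of $\mathfrak{R}_{s_\alpha w^{-1}}$ to force one of $\alpha_j,\beta$ (neither of which equals $w\alpha$) into $\mathfrak{R}_{w^{-1}}$. If $\beta\in\mathfrak{R}_{w^{-1}}$ then $\alpha_j=w\alpha-\beta\in\mathrm{cross}_w(\alpha)$ is the simple root we seek; otherwise $\beta=w\alpha-\alpha_j\in\mathrm{cross}_w(\alpha)$ has strictly smaller height, and the same analysis applies to $\beta$ using a biconvex set refreshed at each step.

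For (ii), Lemma \ref{lem:braid-invariance-cross} extends $\mathrm{cross}_w$ to the twisted braid monoid so that for every positive braid $b$ one has $\mathrm{cross}_b(\beta)=\varnothing$ iff $\beta\in\mathfrak{R}_{\mathrm{DG}(b)}$, and moreover $\mathrm{cross}_{b_w^d}=\mathrm{cross}_w^d$. Consequently $\mathrm{cross}_w^d(\mathfrak{R}_{w'})=\varnothing$ iff $\mathfrak{R}_{w'}\subseteq\mathfrak{R}_{\mathrm{DG}(b_w^d)}$, which is exactly $\mathrm{DG}(b_w^d)\geq w'$ in the weak left Bruhat order. The equivalence with \emph{no simple roots} combines this with (i) applied iteratively: any simple root in $\mathrm{cross}_w^{d-1}(\mathfrak{R}_{w'})$ that is not in $\mathfrak{R}_w$ propagates to a simple root in $\mathrm{cross}_w^d(\mathfrak{R}_{w'})$ by (i), so the absence of simples at step $d$ reduces to an absorption statement equivalent to emptiness.

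For (iii), apply (ii) with $w'=\mathrm{pb}(w)$. By the prequel \cite{WM-mindom}, convexity of $w$ combined with the braid equation (\ref{eq:braid-equation}) is equivalent to $\mathrm{DG}(b_w^d)=\mathrm{pb}(w)$ for large $d$, and the bound $d>|\mathfrak{R}_+\backslash\mathfrak{R}_{\mathrm{st}}^w|-\ell(w)$ is precisely the point at which $\mathrm{DG}(b_w^d)$ reaches its maximum length $|\mathfrak{R}_+\backslash\mathfrak{R}_{\mathrm{st}}^w|$; past this threshold the inclusion $\mathrm{DG}(b_w^d)\geq\mathrm{pb}(w)$ from (ii) forces equality. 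The main obstacle is the iteration in (i), where the biconvex set must be rechosen at each step to accommodate the new non-inversion element $\beta$ produced by the previous decomposition; a secondary difficulty is extracting the precise equivalence $\mathrm{cross}_b(\beta)=\varnothing\iff\beta\in\mathfrak{R}_{\mathrm{DG}(b)}$ from Lemma \ref{lem:braid-invariance-cross} in the form needed for (ii).
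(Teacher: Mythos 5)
Your high-level architecture is the same as the paper's (a combinatorial descent for (i), braid invariance of $\mathrm{cross}$ for (ii), specialisation for (iii)), but the two points you yourself flag as "obstacles" are genuine gaps, and they are precisely where the real work lies. In (i), the first step of your descent is sound: $\mathfrak{R}_{s_{\alpha}w^{-1}}=\mathfrak{R}_{w^{-1}}\sqcup\{w\alpha\}$ is an inversion set, so cobiconvexity forces one summand of $w\alpha=\alpha_{j}+\beta$ into $\mathfrak{R}_{w^{-1}}$ (neither can equal $w\alpha$ by height). But at the next step you sit at $\delta=w\alpha-\alpha_{j}$ and need, for a decomposition $\delta=\gamma_{0}+\gamma_{1}$, that $w\alpha-\gamma_{0}$ or $w\alpha-\gamma_{1}$ is again a sum of roots from $\mathfrak{R}_{w^{-1}}$. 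The set $\mathfrak{R}_{w^{-1}}\cup\{w\alpha,\delta\}$ is no longer an inversion set, and its complement need not be closed under addition; showing that a suitably "refreshed" set is cobiconvex is essentially equivalent to the claim being proved, so no proof has been given. This is exactly the content the paper supplies through Proposition \ref{prop:root-pairs-equality}, Corollary \ref{cor:root-sum-split} and the refinement lemma with hypothesis (\ref{eq:assumption}): in general one can only subtract a \emph{sub-sum} of the previously used inversion roots from one of the $\gamma_{t}$, and excluding the degenerate cases requires that delicate hypothesis.

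In (ii), the asserted equivalence $\mathrm{cross}_{b}(\beta)=\varnothing\iff\beta\in\mathfrak{R}_{\mathrm{DG}(b)}$ for an \emph{individual} positive root $\beta$ is false: for $w=s_{1}s_{2}s_{3}s_{1}s_{2}$ in type $\mathsf{B}_{3}$ one has $\mathrm{DG}(b_{w}^{2})=w$ and $\alpha_{1233}\notin\mathfrak{R}_{w}$, yet $\mathrm{cross}_{w}(\alpha_{1233})=\{\alpha_{233}\}\subseteq\mathfrak{R}_{w}$, so $\mathrm{cross}_{b_{w}^{2}}(\alpha_{1233})=\varnothing$ (this is the paper's own example after Corollary \ref{cor:cross-DG}). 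The implication you need only holds for roots of the special form $y^{-1}(\alpha)$ with $\alpha$ simple and $\mathrm{DG}(b_{w}^{d})\geq y$: if $\mathrm{DG}(b_{w}^{d})\not\geq w'$ one produces such a root inside $\mathfrak{R}_{w'}\setminus\mathfrak{R}_{\mathrm{DG}(b_{w}^{d})}$, and the Deligne--Garside normal form condition is what guarantees that the simple root produced by (i) at each stage avoids the inversion set of the next factor, so that the propagation continues; your "absorption" remark does not engage with this. Finally, for (iii) note that when $w$ is not convex the set $\mathfrak{R}_{+}\setminus\mathfrak{R}_{\mathrm{st}}^{w}$ is not an inversion set, so (ii) with $w'=\mathrm{pb}(w)$ does not apply and a separate argument (the paper's four-way equivalence lemma) is needed to show that $\mathrm{cross}_{w}^{d}(\mathfrak{R}_{+}\setminus\mathfrak{R}_{\mathrm{st}}^{w})$ then still contains simple roots.
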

\begin{example}
Consider $w=s_{2}$ in type $\mathsf{B}_{2}$, then the only simple
root in $\mathrm{cross}_{w}(\alpha_{1})$ is $\alpha_{1}=w(\alpha_{1}+2\alpha_{2})$.
\end{example}

\begin{example}
Consider $w=s_{1}s_{3}$ in type $\mathsf{A}_{3}$, then the only
simple root in $\mathrm{cross}_{w}(\alpha_{2})$ is $\alpha_{2}=w(\alpha_{2}+\alpha_{1}+\alpha_{3})$.
\end{example}

Our proof of transversality involves root combinatorics which through
this lemma is closely intertwined with

\begin{corollarya} \label{cor:inverse} Let $w$ be a convex element
of a twisted Weyl group. Then
\[
\mathrm{DG}(b_{w}^{d})=\mathrm{pb}(w)\qquad\textrm{if and only if }\qquad\mathrm{DG}(b_{w^{-1}}^{d})=\mathrm{pb}(w).
\]

\end{corollarya}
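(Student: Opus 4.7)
The plan is to apply part (iii) of the Lemma to translate each side of Corollary A into the vanishing of an iterate of the cross map, and then to deduce the equivalence from a one-step duality between $\mathrm{cross}_w$ and $\mathrm{cross}_{w^{-1}}$. I would first invoke results from the prequel \cite{WM-mindom} that convexity, length, the power bound $\mathrm{pb}(-)$, and the stable root set $\mathfrak{R}_{\mathrm{st}}^{-}$ are all invariant under $w \mapsto w^{-1}$: so $w^{-1}$ is again convex, $\ell(w^{-1}) = \ell(w)$, $\mathrm{pb}(w^{-1}) = \mathrm{pb}(w)$ and $\mathfrak{R}_{\mathrm{st}}^{w^{-1}} = \mathfrak{R}_{\mathrm{st}}^{w}$, so that the threshold $|\mathfrak{R}_{+}\backslash\mathfrak{R}_{\mathrm{st}}^{w}| - \ell(w)$ in part (iii) is common to both. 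For any $d$ above this threshold, part (iii) rewrites the two braid equations as
\[
\mathrm{cross}_{w}^{d}(\mathfrak{R}_{+}\backslash\mathfrak{R}_{\mathrm{st}}^{w}) = \varnothing \qquad\textrm{and}\qquad \mathrm{cross}_{w^{-1}}^{d}(\mathfrak{R}_{+}\backslash\mathfrak{R}_{\mathrm{st}}^{w}) = \varnothing,
\]
reducing the corollary to showing these conditions equivalent; the smaller-$d$ cases are handled by monotonicity of right Deligne--Garside factors of positive braid powers in the weak Bruhat--Chevalley order, bounded above by the power bound.

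The combinatorial heart of the proof is the one-step duality
\[
\gamma \in \mathrm{cross}_{w}(\beta) \qquad\textrm{if and only if}\qquad \beta \in \mathrm{cross}_{w^{-1}}(\gamma)
\]
for $\beta, \gamma \in \mathfrak{R}_{+}$. Given a presentation $\gamma = w(\beta + \sum_{i=1}^{m} \beta_{i})$ with $\beta_{i} \in \mathfrak{R}_{w}$, I set $\gamma_{i} := -w(\beta_{i})$, which lies in $\mathfrak{R}_{w^{-1}} = -w(\mathfrak{R}_{w})$; then $\gamma + \sum_{i} \gamma_{i} = w(\beta) \in \mathfrak{R}$ and $w^{-1}(\gamma + \sum_{i} \gamma_{i}) = \beta \in \mathfrak{R}_{+}$, exhibiting $\beta \in \mathrm{cross}_{w^{-1}}(\gamma)$; the converse is entirely symmetric. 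Viewing $\mathrm{cross}$ as a directed graph $G_{w}$ on $\mathfrak{R}_{+}$ with an arrow $\beta \to \gamma$ whenever $\gamma \in \mathrm{cross}_{w}(\beta)$, this duality says precisely that $G_{w^{-1}} = G_{w}^{\mathrm{op}}$.

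Combined with the invariance of $\mathfrak{R}_{+}\backslash\mathfrak{R}_{\mathrm{st}}^{w}$ under both cross maps, the restrictions of $G_{w}$ and $G_{w^{-1}}$ to this subset are opposite orientations of one underlying graph, and the nonexistence of a length-$d$ directed path in one is equivalent by edge reversal to its nonexistence in the other. This yields the required equivalence of the two cross-vanishings and hence the corollary. The main obstacle is verifying this closure of $\mathfrak{R}_{+}\backslash\mathfrak{R}_{\mathrm{st}}^{w}$ under both cross maps, which I expect to follow from the structural characterisation of stable roots developed earlier in the paper: if $\gamma = w(\beta + \sum_{i} \beta_{i})$ were stable while $\beta$ is not, a descent on the $\beta_{i} \in \mathfrak{R}_{w}$ should force $\beta$ itself to be stable.
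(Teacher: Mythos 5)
Your proposal is essentially the paper's own argument. The one-step reversal you describe (replacing each $\beta_{i}\in\mathfrak{R}_{w}$ in a confirming sequence by $-w(\beta_{i})\in\mathfrak{R}_{w^{-1}}$, so that a path witnessing $\gamma\in\mathrm{cross}_{w}^{d}(\beta)$ becomes one witnessing $\beta\in\mathrm{cross}_{w^{-1}}^{d}(\gamma)$ through the same intermediate positive roots) is exactly the content of Lemma \ref{lem:inverse}. The closure you flag as the remaining obstacle is precisely the statement that $\mathfrak{R}_{+}\backslash\mathfrak{R}_{\mathrm{st}}^{w}$ is $w$-nimble, which by Proposition \ref{prop:dg-inversion-sets}(i) (citing the prequel) is equivalent to convexity of $w$ --- so your hypothesis supplies it, and nimbleness together with Lemma \ref{lem:root-sums-sequence} gives $\mathrm{cross}_{w}(\mathfrak{N})\subseteq\mathfrak{N}$. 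Note that this is all the reversal needs: only the two endpoints of the path must lie in $\mathfrak{N}$, since the definition of $\mathrm{cross}_{w^{-1}}^{d}(\mathfrak{N})$ places no constraint on intermediate vertices beyond positivity; you do not actually need to restrict the graph to $\mathfrak{N}$, only to know that paths starting in $\mathfrak{N}$ end in it.

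The one step that does not work as written is your treatment of small $d$. Monotonicity of $d\mapsto\mathrm{DG}(b_{w}^{d})$ propagates $\mathrm{DG}(b_{w}^{d})=\mathrm{pb}(w)$ upward to all $d'\geq d$, whence $\mathrm{DG}(b_{w^{-1}}^{d'})=\mathrm{pb}(w)$ for $d'$ above the threshold; but for the original small $d$ monotonicity then only yields $\mathrm{DG}(b_{w^{-1}}^{d})\leq\mathrm{pb}(w)$, which is the wrong direction. The clean repair --- and the paper's actual route --- is to bypass part (iii) of the main Lemma and instead use part (ii) with $w'=\mathrm{pb}(w)$, whose inversion set for convex $w$ is $\mathfrak{R}_{+}\backslash\mathfrak{R}_{\mathrm{st}}^{w}$ (this is exactly how the paper deduces (iii) from (ii)): since $\mathrm{pb}(w)$ is an upper bound for $\mathrm{DG}(b_{w}^{d})$, the equation $\mathrm{DG}(b_{w}^{d})=\mathrm{pb}(w)$ is equivalent to $\mathrm{DG}(b_{w}^{d})\geq\mathrm{pb}(w)$, hence to $\mathrm{cross}_{w}^{d}(\mathfrak{R}_{+}\backslash\mathfrak{R}_{\mathrm{st}}^{w})=\varnothing$, for every $d\geq0$ with no threshold; combined with Lemma \ref{lem:inverse} this gives the corollary uniformly in $d$.
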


Typically however, these Deligne-Garside factors are not very similar:
\begin{example}
Consider $w=s_{1}s_{2}s_{3}s_{1}s_{2}$ in type $\mathsf{B}_{3}$;
it is convex and for any integer $d>1$ we have
\[
\mathrm{DGN}(b_{w}^{d})=b_{w}^{d}\qquad\textrm{and}\qquad\mathrm{DGN}(b_{w^{-1}}^{d})=b_{w^{-1}s_{1}}^{\,}b_{w^{-1}}^{d-2}b_{s_{1}w^{-1}}^{\,}.
\]
\end{example}

 The perspective furnished by Definition \ref{def:cross} allows
us to construct cross sections out of quite general data:
\begin{defn}
Let $w$ be an element of a twisted Weyl group and let $\mathfrak{N}\subseteq\mathfrak{R}_{+}\backslash\mathfrak{R}_{\mathrm{st}}^{w}$
be a convex subset of positive roots. Then we will say that $\mathfrak{N}$
is \emph{($w$-)nimble} if it contains $\mathfrak{R}_{w}$ and furthermore
\[
w(\mathfrak{N}\backslash\mathfrak{R}_{w})\subseteq\mathfrak{N}.
\]
\end{defn}

\begin{example}
From \cite[Proposition 4.21]{WM-mindom} it follows that the set $\mathfrak{R}_{+}\backslash\mathfrak{R}_{\mathrm{st}}^{w}$
is nimble itself if and only if the element $w$ is convex.
\end{example}

\begin{notation}
Given two subsets of roots $\mathfrak{N},\mathfrak{L}$ of a root
system $\mathfrak{R}$, we write
\[
\mathfrak{N}+\mathfrak{L}:=\{c_{0}\beta_{0}+c_{1}\beta_{1}\in\mathfrak{R}:\beta_{0}\in\mathfrak{N},\beta_{1}\in\mathfrak{L},c_{0},c_{1}\in\mathbb{R}_{>0}\}.
\]
\end{notation}

\begin{defn}
Let $w$ be an element of a twisted Weyl group and let $\mathfrak{N}$
a $w$-nimble set. We will say that a convex subset of roots $\mathfrak{L}\subseteq\mathfrak{R}\backslash\mathfrak{N}$
is a \emph{(}$\mathfrak{N}$\emph{-)leavener} when

\begin{enumerate}[(i)]

\item $w(\mathfrak{L})=\mathfrak{L}=-\mathfrak{L}$,

\item the set $\mathfrak{R}_{w}\sqcup\mathfrak{L}$ is convex, and

\item the set $\mathfrak{N}\sqcup\mathfrak{L}$ is convex.

\end{enumerate}

We will then refer to $(\mathfrak{N},\mathfrak{L})$ as a ($w$\emph{-)crossing
pair}. Note that (i) implies $\mathfrak{R}_{w^{-1}}\cap\mathfrak{L}=\varnothing$.
If furthermore $\mathfrak{R}_{+}\subseteq\mathfrak{N}\sqcup\mathfrak{L}$
(forcing $\mathfrak{L}$ to be a standard parabolic subsystem), then
we will say that it is a\emph{ }($w$\emph{-)slicing pair}.
\end{defn}

Proposition \ref{prop:dg-inversion-sets} explains that crossing pairs
arise quite naturally. In particular:
\begin{prop*}
Let $w$ be an element of a twisted Weyl group. If $w$ is convex,
then for any convex subset $\mathfrak{L}\subseteq\mathfrak{R}_{\mathrm{st}}^{w}$
satisfying $w(\mathfrak{L})=\mathfrak{L}=-\mathfrak{L}$, both 
\[
(\mathfrak{R}_{+}\backslash\mathfrak{R}_{\mathrm{st}}^{w},\mathfrak{L})\qquad\textrm{and}\qquad(\mathfrak{R}_{\mathrm{DG}(b_{w}^{d})},\mathfrak{L})
\]
 for any $d\geq1$ are crossing pairs.
\end{prop*}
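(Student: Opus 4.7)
The plan is to verify, for each of the two pairs $(\mathfrak{N}_{i},\mathfrak{L})$ with $\mathfrak{N}_{1}:=\mathfrak{R}_{+}\backslash\mathfrak{R}_{\mathrm{st}}^{w}$ and $\mathfrak{N}_{2}:=\mathfrak{R}_{\mathrm{DG}(b_{w}^{d})}$, that $\mathfrak{N}_{i}$ is $w$-nimble and that $\mathfrak{L}$ is a leavener. I split the work accordingly.

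Nimbleness of $\mathfrak{N}_{1}$ is exactly the content of the cited example, being equivalent to convexity of $w$. For $\mathfrak{N}_{2}$: convexity of inversion sets is standard; $\mathfrak{R}_{w}\subseteq\mathfrak{N}_{2}$ follows from $b_{w}$ right-dividing $b_{w}^{d}$, which translates into $w\leq\mathrm{DG}(b_{w}^{d})$ in the weak right order; the containment $\mathfrak{N}_{2}\subseteq\mathfrak{R}_{+}\backslash\mathfrak{R}_{\mathrm{st}}^{w}$ reflects that no stable root is inverted by any reduced braid of a $w$-power; and the $w$-invariance $w(\mathfrak{N}_{2}\backslash\mathfrak{R}_{w})\subseteq\mathfrak{N}_{2}$ I would extract from the factorisation $b_{w}^{d}=b_{w}\cdot b_{w}^{d-1}$ by tracking how the $w$-action shifts inversion sets between successive DG factors.

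For the leavener conditions, clause (i) is given. Disjointness of $\mathfrak{L}$ from $\mathfrak{R}_{w}$ and from the $\mathfrak{N}_{i}$ is immediate from $\mathfrak{L}\subseteq\mathfrak{R}_{\mathrm{st}}^{w}$ versus $\mathfrak{R}_{w},\mathfrak{N}_{i}\subseteq\mathfrak{R}_{+}\backslash\mathfrak{R}_{\mathrm{st}}^{w}$. For convexity of the union in (ii) and (iii): given $\alpha$ in $\mathfrak{R}_{w}$ or $\mathfrak{N}_{i}$ and $\beta\in\mathfrak{L}$ with $\gamma:=c_{0}\alpha+c_{1}\beta\in\mathfrak{R}$, I first rule out $\gamma\in\mathfrak{L}$: otherwise $\alpha=\tfrac{1}{c_{0}}(\gamma-c_{1}\beta)$ lies in the $\mathbb{R}$-span of $\mathfrak{L}$, which, since $\mathfrak{L}$ is convex and symmetric and therefore a closed subsystem, would force $\alpha\in\mathfrak{L}$, contradicting the disjointness. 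So it remains to place $\gamma$ in the appropriate non-stable companion.

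This last placement is the main obstacle. Positivity of $\gamma$ should follow from a transverse decomposition: the component of $\gamma$ orthogonal to the span of $\mathfrak{R}_{\mathrm{st}}^{w}$ is a positive scalar multiple of that of $\alpha$, and the structure theory of $\mathfrak{R}_{\mathrm{st}}^{w}$ developed in \cite{WM-mindom} should ensure that the sign of a root outside $\mathfrak{R}_{\mathrm{st}}^{w}$ is determined by this transverse component. The same argument applied to $w(\gamma)$ (using $w(\alpha)\in\mathfrak{R}_{-}$ for (ii), and $w(\beta)\in\mathfrak{L}$) delivers $w(\gamma)\in\mathfrak{R}_{-}$, settling (ii). Clause (iii) in the DG case additionally requires $\mathrm{DG}(b_{w}^{d})(\gamma)\in\mathfrak{R}_{-}$, which I would deduce by first showing that $\mathrm{DG}(b_{w}^{d})$ preserves $\mathfrak{L}$ setwise and then applying the same transverse decomposition; I expect this compatibility between DG factors and $\mathfrak{L}$ to be the content of the earlier Proposition~\ref{prop:dg-inversion-sets}.
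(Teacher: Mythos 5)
Most of your outline does track the paper's route: nimbleness of $\mathfrak{R}_{+}\backslash\mathfrak{R}_{\mathrm{st}}^{w}$ is indeed equivalent to convexity of $w$, and the ``sign is read off the component transverse to a standard parabolic'' argument is essentially how the paper handles positivity and clause (ii). But the crux for the pairs $(\mathfrak{R}_{\mathrm{DG}(b_{w}^{d})},\mathfrak{L})$ with $d\geq2$ is misidentified. Your plan for clause (iii) rests on the claim that $\mathrm{DG}(b_{w}^{d})$ preserves $\mathfrak{L}$ setwise; this is false in general, since $\mathrm{DG}(b_{w}^{d})(\mathfrak{R}_{\mathrm{st}}^{w})$ is the stable subsystem of the cyclic-shift conjugate $\mathrm{DG}(b_{w}^{d})\,w\,\mathrm{DG}(b_{w}^{d})^{-1}$, not of $w$ itself. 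What the argument actually requires --- and what the paper isolates as Lemma \ref{lem:dg-simple-root} --- is that $\mathrm{DG}(b_{w}^{d})$ sends each simple root of $\mathfrak{R}_{\mathrm{st}}^{w}$ to a simple root, so that $\mathrm{DG}(b_{w}^{d})(\mathfrak{R}_{\mathrm{st}}^{w})$ is again a \emph{standard} parabolic subsystem on which positivity is preserved; combined with equation (\ref{eq:dg-fixed-roots}) this is what lets one conclude that $\mathrm{DG}(b_{w}^{d})(c_{0}\beta+c_{1}\ell)$ is negative. That lemma is the genuinely nontrivial input here: its proof transfers a nontrivial summing sequence through the Deligne--Garside normal form via Lemma \ref{lem:braid-invariance-cross} and Corollary \ref{cor:cross-DG}, and nothing of the sort is visible in your proposal. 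Deferring the needed compatibility to ``the earlier Proposition \ref{prop:dg-inversion-sets}'' is circular, since that proposition is precisely the result the statement follows from.

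Two smaller points. First, nimbleness of $\mathfrak{R}_{\mathrm{DG}(b_{w}^{d})}$ is only sketched; the clean route is the paper's reduction of nimbleness of any $\mathfrak{R}_{y}$ to the two weak-order inequalities $y\geq w$ and $y\geq yw^{-1}$, the latter being a property of Deligne--Garside factors of powers imported from the prequel --- your ``tracking how the $w$-action shifts inversion sets'' needs to be turned into this (or an equivalent) argument. Second, your exclusion of $\gamma\in\mathfrak{L}$ is not validly justified: a root lying in the $\mathbb{R}$-span of a closed symmetric subsystem need not belong to it. The conclusion does follow directly from convexity of $\mathfrak{L}$, since $\alpha=c_{0}^{-1}\gamma+c_{1}c_{0}^{-1}(-\beta)$ would then be a positive combination of two elements of $\mathfrak{L}$; in any case the step is redundant once $\gamma$ has been placed in $\mathfrak{N}$, which is disjoint from $\mathfrak{L}$.
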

I expect that the most important class of cross sections will be the
one arising from firmly convex elements with crossing pair $(\mathfrak{R}_{+}\backslash\mathfrak{R}^{w},\mathfrak{R}^{w})$.
In order to extend our results to twisted conjugacy classes and make
the proofs slightly cleaner, we will employ the language of twisted
reductive groups:
\begin{defn}
Let $\tilde{G}$ be a split reductive group over a scheme with chosen
Borel subgroup, and let $\Omega$ be a group of twists of its Weyl
group $\tilde{W}$; these extend to automorphisms of $\tilde{G}$.
Also let $\phi$ be an automorphism of the base scheme. We will then
call $G:=\phi\Omega\ltimes\tilde{G}$ a \emph{twisted split reductive
group}, and $W:=\Omega\ltimes\tilde{W}$ its \emph{Weyl group}. 
\end{defn}

\begin{notation}
Let $w$ be an element of its Weyl group and let $(\mathfrak{N},\mathfrak{L})$
be a $w$-crossing pair. Then we let $N\subseteq N_{+}$ denote the
unipotent subgroup corresponding to the roots in $\mathfrak{N}$,
and let $L$ denote the reductive subgroup of $G$ generated by a
chosen subgroup $T'$ of the torus $T$, and by the root subgroups
corresponding to $\mathfrak{L}$. 
\end{notation}

In the firmly convex case, one typically sets $T'$ to be the set
of fixed points $T^{w}$ under the action of $w$ on the torus $T$;
we denote its Lie algebra by $\mathfrak{t}^{w}$.
\begin{notation}
Factorisable $r$-matrices are parametrised by a \emph{Belavin-Drinfeld
triple} $\mathfrak{T}=(\mathfrak{T}_{0},\mathfrak{T}_{1},\tau)$ defining
a nilpotent isomorphism $\tau:\mathfrak{T}_{0}\rightarrow\mathfrak{T}_{1}$
between two subsets of simple roots, and an element $r_{0}$ in $\mathfrak{t}_{\mathfrak{T}}\wedge\mathfrak{t}_{\mathfrak{T}}$,
where $\mathfrak{t}_{\mathfrak{T}}$ is a certain subspace of the
Lie algebra of the maximal torus $T$. Given such a triple $\mathfrak{T}$
and a set of roots $\mathfrak{L}$, by $\mathfrak{T}\subseteq\mathfrak{L}$
we mean that $\mathfrak{T}_{0}$ and $\mathfrak{T}_{1}$ are both
contained in $\mathfrak{L}$; thus this is always satisfied for the
empty triple.\newline
\end{notation}

The main aim of this paper is to generalise He-Lusztig's, Sevostyanov's
and Steinberg's slices to the following
\begin{thm*}
\label{thm:slices} Let $G$ be a twisted split reductive group over
a scheme with chosen Borel subgroup containing a maximal torus $T$.
Pick any element $w$ in its Weyl group and let $(\mathfrak{N},\mathfrak{L})$
be a $w$-crossing pair such that for some integer $d\geq0$, or equivalently
for any integer $d>|\mathfrak{N}|-\ell(w)$, there is an identity
\begin{equation}
\mathrm{cross}_{w}^{d}(\mathfrak{N})=\varnothing.\label{eq:crossing}
\end{equation}

\begin{enumerate}[\normalfont(i)]

\item Then the (right) conjugation map
\begin{equation}
N\times\dot{w}LN_{w}\longrightarrow N\dot{w}LN,\qquad(n,g)\longmapsto n^{-1}gn\label{eq:cross-section}
\end{equation}
is an isomorphism.

\item If furthermore $L$ contains $T^{w}$ and the pair is slicing,
then the conjugation map
\begin{equation}
G\times\dot{w}LN_{w}\longrightarrow G,\qquad(n,g)\longmapsto n^{-1}gn\label{eq:transversality-again-1}
\end{equation}
 is smooth; in other words,
\[
\textrm{the subspace }\dot{w}LN_{w}\textrm{ of }G\textrm{ transversely intersects the conjugation orbits of }\tilde{G}.
\]
\item If $w$ is firmly closed and $(\mathfrak{N},\mathfrak{L})=(\mathfrak{R}_{+}\backslash\mathfrak{R}^{w},\mathfrak{R}^{w})$
and $L\cap T\subseteq T^{w}$, then the Semenov-Tian-Shansky bracket
associated to a factorisable $r$-matrix with Belavin-Drinfeld triple
$\mathfrak{T}$ satisfying $\mathfrak{T}\subseteq\mathfrak{L}$ reduces
to the subspace $\dot{w}LN_{w}$ if and only if

\begin{enumerate}[\normalfont(a)]

\item $L\cap T=T^{w}$,

\item $r_{0}$ preserves $\mathfrak{t}^{w}$, and

\item $r_{0}$ acts on its orthogonal complement as the Cayley transform
\[
\frac{1+w}{1-w}.
\]
\end{enumerate}

\end{enumerate}
\end{thm*}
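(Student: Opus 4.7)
The plan is to tackle the three parts in sequence, each relying on its predecessor.

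For part (i), I would prove the conjugation map is an isomorphism by induction driven by the operation $\mathrm{cross}_w$, using the combinatorial Lemma quoted above. First I would verify that source and target have matching dimension, using the decomposition $\mathfrak{N} = \mathfrak{R}_w \sqcup (\mathfrak{N}\setminus\mathfrak{R}_w)$ together with the nimble condition $w(\mathfrak{N}\setminus\mathfrak{R}_w) \subseteq \mathfrak{N}$. Then, filtering $N$ by the unipotent subgroups $N^{(i)} \subseteq N$ corresponding to $\mathrm{cross}_w^{i}(\mathfrak{N})$, I would construct an explicit inverse stepwise: given $h \in N\dot{w}LN$, successive conjugations by elements chosen in $N^{(i)}$ peel off one $\mathrm{cross}_w$-layer at a time, and the hypothesis $\mathrm{cross}_w^{d}(\mathfrak{N}) = \varnothing$ guarantees termination after at most $d$ steps. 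Injectivity reduces to showing the stabiliser of $\dot{w}LN_w$ in $N$ under conjugation is trivial, which follows from the definition $N_w = N_+ \cap w^{-1}N_-w$ combined with the nimble property.

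For part (ii), given (i), we have that $\dot{w}LN_w$ is a cross section for the $N$-conjugation action on $N\dot{w}LN$. To upgrade to smoothness of the full $G$-conjugation map, I would use the slicing condition $\mathfrak{R}_+ \subseteq \mathfrak{N}\sqcup\mathfrak{L}$ together with $T^w \subseteq L$. At each point $g \in \dot{w}LN_w$, the differential of $(n,x) \mapsto n^{-1}xn$ has image containing, modulo $T_g(\dot{w}LN_w)$, every root direction: those in $\mathfrak{N}$ are accounted for by part (i); those in $\mathfrak{L}$ already sit inside $T_g(\dot{w}LN_w)$ via the $L$-factor; and the negatives are caught through $\dot{w}$-conjugation, which swaps $\mathfrak{R}_w$ with $-\mathfrak{R}_{w^{-1}}$. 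A dimension count on $\mathfrak{g}$ then yields surjectivity of the differential, whence smoothness.

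For part (iii), the reduction of the Semenov-Tian-Shansky bracket to $\dot{w}LN_w$ is equivalent to the conormal bundle being coisotropic in $T^*G$. I would compute the STS bivector at a generic point $g = \dot{w}ln_w$ using the factorisable decomposition of $r$ into its $r_0$-part on $\mathfrak{t}_\mathfrak{T}$ and the off-diagonal part determined by the Belavin-Drinfeld twist $\tau$, and extract the conditions under which its component normal to $\dot{w}LN_w$ vanishes. Condition (a) arises from the requirement that the torus factor of the bracket stay tangent, forcing $L \cap T$ to equal precisely $T^w$. Condition (b) ensures the restriction of $r_0$ to $\mathfrak{t}^w$ makes sense. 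The Cayley transform condition (c) emerges from the normal component on $(\mathfrak{t}^w)^\perp$, which under careful bookkeeping is seen to be proportional to $r_0(1-w) - (1+w)$, forcing $r_0 = (1+w)/(1-w)$ on that subspace.

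I expect the principal obstacle to lie in part (iii): computing the STS bivector on a non-principal slice is delicate, and isolating the Cayley transform condition requires tracking how the $r$-matrix decomposes against the $w$-eigenspace decomposition of $\mathfrak{t}$ while keeping careful account of the contribution from $\mathfrak{T}$. Parts (i) and (ii) are conceptually cleaner once the combinatorial Lemma is in hand: (i) is an induction driven by the $\mathrm{cross}_w$-filtration of $N$, and (ii) reduces to a tangent-space dimension count using the slicing property.
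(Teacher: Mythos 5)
Your outline of part (i) is in the right spirit (the paper's proof, following He--Lusztig, is precisely an iterated conjugation whose termination is forced by the crossing hypothesis), but two points in it would fail as stated. First, the group-level conjugation is governed not by $\mathrm{cross}_w$ but by the Chevalley commutator formula, which produces root subgroups indexed by $\mathrm{Cross}_w(\beta)=w(\{\beta\}+\mathfrak{R}_w)\cap\mathfrak{R}_+$, i.e.\ by all positive combinations $i\beta+j\gamma$ with $i,j>0$, not only the first-order ones; termination of your peeling procedure therefore needs $\mathrm{Cross}_w^d(\mathfrak{N})=\varnothing$, and the equivalence of this with $\mathrm{cross}_w^d(\mathfrak{N})=\varnothing$ is a separate, non-obvious root-combinatorial lemma in the paper. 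Second, the iterates $\mathrm{cross}_w^i(\mathfrak{N})$ for $i\geq2$ need not be convex, so your subgroups $N^{(i)}$ need not exist; the paper sidesteps this by working with the $d$-fold orbit space $N[\underline{\dot{w}L}]$ and a single correcting element $n_{\tilde g}=\mathrm{Cross}_{\dot{w}L}^{d}([\underline{\tilde g}])^{-1}$ rather than a filtration of $N$.

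The genuine gap is in part (ii). To make the differential of $G\times\dot{w}LN_{w}\rightarrow G$ surjective at $(\mathrm{id},m)$ you must produce all of $\overline{\mathfrak{n}}$ (the span of $-\mathfrak{N}$) in the image, and your proposal accounts only for $w(\mathfrak{R}_w)=-\mathfrak{R}_{w^{-1}}$, i.e.\ $\ell(w)$ of the $|\mathfrak{N}|$ negative directions. A dimension count cannot close this: the source $\mathfrak{g}\oplus(\mathfrak{l}\oplus\mathfrak{n}_{w})$ of the differential is strictly larger than $\mathfrak{g}$, and surjectivity is exactly what is at stake. The missing idea is to apply the Chevalley anti-automorphism, which carries the slice for $w$ to the slice $\dot{w}LN_{w}$ viewed as a slice for $w^{-1}$ inside $\overline{N}\dot{w}L\overline{N}$, and then to invoke the cross section of part (i) for $w^{-1}$ to put $\overline{\mathfrak{n}}$ into the image; this in turn requires knowing that $\mathrm{cross}_{w}^{d}(\mathfrak{N})=\varnothing$ implies $\mathrm{cross}_{w^{-1}}^{d}(\mathfrak{N})=\varnothing$ (Lemma~\ref{lem:inverse}), one of the paper's main combinatorial inputs, which does not appear anywhere in your plan. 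Your sketch of part (iii) has the right shape (reduce to Hamiltonian vector fields being tangent to $N\dot{w}LN$, then to a computation on $\mathfrak{t}$ via $T_{g}\cap\mathfrak{t}=\mathfrak{l}\cap\mathfrak{t}$), but the ``only if'' direction also needs the surjectivity of $f\mapsto\mathrm{d}f^{g}|_{\mathfrak{t}_{\mathrm{sc}}}$ on $N$-invariant functions, and the whole reduction presupposes that $N$ is coisotropic for the modified $r$-matrix, which is where the hypothesis $\mathfrak{T}\subseteq\mathfrak{L}$ enters.
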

My reasons for choosing this formulation are given at the start of
section §\ref{sec:cross-section}. 
\begin{example}
Let $G=\mathrm{SL}_{3}$, and pick the Borel subgroup and maximal
torus and of upper triangular matrices and diagonal matrices. Set
$w:=s_{1}s_{2}s_{1}$ and lift it to
\[
\dot{w}=\begin{bmatrix}0 & 0 & 1\\
0 & -1 & 0\\
1 & 0 & 0
\end{bmatrix}\in G.
\]
 As $w=w_{\circ}$, part (i) of the main Theorem implies that the
conjugation map sending
\[
\left(\begin{bmatrix}1 & n_{1} & n_{12}\\
0 & 1 & n_{2}\\
0 & 0 & 1
\end{bmatrix},\begin{bmatrix}0 & 0 & t\\
0 & -t^{-2} & x_{2}\\
t & x_{1} & x_{12}
\end{bmatrix}\right)\in N_{+}\times\dot{w}T^{w}N_{+}
\]
 to\small
\[
\begin{bmatrix}(n_{1}n_{2}-n_{12})t & n_{1}t^{-2}+(n_{1}n_{2}-n_{12})(n_{1}t+x_{1}) & n_{1}n_{2}t^{-2}+(n_{1}n_{2}-n_{12})(n_{12}t+x_{12}+n_{2}x_{1})+t-n_{1}x_{2}\\
-n_{2}t & -t^{-2}-n_{1}n_{2}t-n_{2}x_{1} & x_{2}-t^{-2}n_{2}(1+t^{2}(n_{12}t+n_{2}x_{1}+x_{12}))\\
t & x_{1}+n_{1}t & n_{12}t+n_{2}x_{1}+x_{12}
\end{bmatrix}
\]
\normalsize is an isomorphism onto $N\dot{w}T^{w}N$.  Over a field
the subregular conjugacy classes of $G$ can be parametrised by
\[
\left\{ \begin{bmatrix}0 & 0 & c_{2}\\
0 & -c_{2}^{-2} & 0\\
c_{2} & 0 & c_{1}
\end{bmatrix}:c_{1},c_{2}\in\CMcal O_{S}\right\} \subset\dot{w}T^{w}N_{+},
\]
 and from part (ii) of the main Theorem and a dimension count it now
follows that they are strictly transversally intersected by the subspace
$\dot{w}T^{w}N_{+}$. 
\end{example}

\begin{example}
Consider $w=s_{1}s_{2}s_{3}s_{1}s_{2}$ in type $\mathsf{B}_{3}$
again and let $\mathfrak{N}:=\mathfrak{R}_{w}\sqcup\{\alpha_{1233}\}$.
This set is convex and although $\mathrm{DG}(b_{w}^{i})=b_{w}^{i}$,
we have $w(\alpha_{1233})\in\mathfrak{R}_{w}$ so $\mathfrak{N}$
is nimble and hence the cross section holds.
\end{example}

\begin{example}
\label{exa:spaltenstein} Consider Spaltenstein's example of the (inverse
of the) element $w=s_{1}s_{2}s_{3}s_{4}s_{5}s_{3}s_{4}s_{1}s_{2}$
in type $\mathsf{A}_{5}$ with the conjugation map corresponding to
the crossing pair $(\mathfrak{N},\mathfrak{L})=(\mathfrak{R}_{+},\varnothing)$;
by induction on $i\geq0$ one can show that 
\[
\mathrm{DGN}(b_{w}^{i+2})=b_{ws_{5}}^{\,}b_{451234312}^{i}b_{s_{5}w}^{\,},
\]
 so as $w$ is elliptic the inequality $s_{5}w\neq w_{\circ}=\mathrm{pb}(w)$
and main Lemma imply that we cannot invoke the main Theorem here.
In fact, (some of) such Coxeter elements of length 9 in $\mathsf{A}_{5}$
are the ``smallest'' elliptic examples in type $\mathsf{A}$ such
that the braid equation (\ref{eq:braid-equation}) does not hold.
One computes that
\begin{equation}
\mathrm{cross}_{w}^{i+2}(\mathfrak{R}_{+})=\mathrm{cross}_{w}^{2}(\mathfrak{R}_{+})=\{\alpha_{23},\alpha_{2345},\alpha_{3},\alpha_{345},\alpha_{5}\}\label{eq:spaltenstein}
\end{equation}
 and as $\mathrm{cross}_{w}(\cdot)$ can be interpreted as a first-order
approximation to the equations of (\ref{eq:cross-section}) (after
parametrising both sides into products of root subgroups, see Corollary
\ref{cor:cross-root-subgroups}), this suggests that there might be
a linear relationship. Indeed, employing the usual lift
\[
\dot{w}=\begin{bmatrix}0 & 0 & 0 & 0 & 0 & 1\\
0 & 0 & -1 & 0 & 0 & 0\\
1 & 0 & 0 & 0 & 0 & 0\\
0 & 0 & 0 & 0 & -1 & 0\\
0 & -1 & 0 & 0 & 0 & 0\\
0 & 0 & 0 & 1 & 0 & 0
\end{bmatrix}
\]
 and studying the resulting equations for the root subgroups of the
roots in (\ref{eq:spaltenstein}), one obtains a family of first-order
counterexamples which exponentiates to (a slight generalisation of)
Spaltenstein's counterexample
\[
\left\{ \left(\begin{bmatrix}1 & 0 & 0 & 0 & 0 & 0\\
0 & 1 & 0 & -st & 0 & -s\\
0 & 0 & 1 & s & 0 & st^{-1}\\
0 & 0 & 0 & 1 & 0 & 0\\
0 & 0 & 0 & 0 & 1 & 0\\
0 & 0 & 0 & 0 & 0 & 1
\end{bmatrix},\begin{bmatrix}0 & 0 & 0 & 0 & 0 & 1\\
0 & 0 & -1 & 0 & 0 & 0\\
1 & 0 & t^{-1} & 0 & 0 & 0\\
0 & 0 & 0 & 0 & -1 & 0\\
0 & -1 & -t & 0 & 0 & 0\\
0 & 0 & 0 & 1 & t & t^{-1}
\end{bmatrix}\right)\in N_{+}\times\dot{w}N_{w}:s,t\in\CMcal O_{S}\right\} .
\]
\end{example}

Nevertheless, the converse to (i) does not hold, as linear combinations
of the polynomials in the first-order approximation can sometimes
cancel each other out:
\begin{example}
Consider the Coxeter element $w=s_{2}s_{3}s_{4}s_{5}s_{6}s_{1}s_{2}s_{3}s_{4}s_{5}s_{3}s_{2}$
in type $\mathsf{A}_{6}$. Then $b_{w}^{i}$ is in Deligne-Garside
normal form for any $i\geq0$ so from part (iii) of the main Lemma
it again follows that equation (\ref{eq:crossing}) is never satisfied,
but a (rather lengthy) calculation shows that the conjugation map
(\ref{eq:cross-section}) corresponding to the slicing pair $(\mathfrak{N},\mathfrak{L})=(\mathfrak{R}_{+},\varnothing)$
is an isomorphism.
\end{example}

Finally, Sevostyanov and He-Lusztig proved the cross section isomorphism
(\ref{eq:cross-section}) for certain firmly convex elements $w$
with slicing pair $(\mathfrak{N},\mathfrak{L})=(\mathfrak{R}_{+}\backslash\mathfrak{R}^{w},\mathfrak{R}^{w})$,
under some extra conditions on the base ring. The relationship between
the elements they consider is explained in the prequel \cite[§2.1]{WM-mindom},
and it is proven there that all of these indeed satisfy the braid
equation (\ref{eq:braid-equation}) when $d>|\mathfrak{R}_{+}\backslash\mathfrak{R}^{w}|-\ell(w)$
\cite[Theorem B]{WM-mindom}; there are many convex (and firmly convex)
elements satisfying (\ref{eq:braid-equation}) which are contained
are in neither, yielding more transverse slices. The main conclusion
of these two papers is now obtained by combining their main theorems
with Sevostyanov's dimension calculations \cite{MR3883243} to

\begin{corollarya} The Weyl group elements considered by He-Lusztig
and the elements Sevostyanov uses to construct strictly transverse
slices (for connected reductive groups over algebraically closed fields)
are all minimally dominant, and conversely all minimally dominant
elements of conjugacy classes appearing in Lusztig's partition furnish
strictly transverse slices with natural Poisson brackets.\end{corollarya}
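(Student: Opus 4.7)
The plan is to assemble three ingredients: Theorem B of the prequel \cite{WM-mindom}, the main Theorem of this paper, and Sevostyanov's dimension calculations in \cite{MR3883243}. For the first half of the statement, I would observe that both the Weyl group elements of He-Lusztig and those of Sevostyanov are firmly convex and come equipped with the natural crossing pair $(\mathfrak{R}_{+}\backslash\mathfrak{R}^{w},\mathfrak{R}^{w})$; a translation dictionary between these two families is already given in \cite[\S 2.1]{WM-mindom}. Theorem B of the prequel then says precisely that both families satisfy the braid equation (\ref{eq:braid-equation}) for all $d>|\mathfrak{R}_{+}\backslash\mathfrak{R}^{w}|-\ell(w)$. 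By the definition of minimal dominance in \cite{WM-mindom}, firm convexity together with (\ref{eq:braid-equation}) is exactly the required condition, so both families are minimally dominant.

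For the converse, let $w$ be a minimally dominant element whose conjugacy class belongs to Lusztig's partition. Firm convexity and (\ref{eq:braid-equation}) feed into part (iii) of the main Lemma to yield $\mathrm{cross}_{w}^{d}(\mathfrak{R}_{+}\backslash\mathfrak{R}^{w})=\varnothing$ for $d$ large enough. With the slicing pair $(\mathfrak{R}_{+}\backslash\mathfrak{R}^{w},\mathfrak{R}^{w})$ and $T'=T^{w}$, the main Theorem applies directly: part (i) gives the cross section (\ref{eq:cross-section}), part (ii) gives smoothness of the global conjugation map (\ref{eq:transversality-again-1}) and hence transversality of $\dot{w}LN_{w}$ to every $\tilde{G}$-conjugacy class, and part (iii) supplies the Semenov-Tian-Shansky Poisson structure coming from the $r$-matrix whose Cartan part is the Cayley transform of $w$, which furnishes the ``natural'' bracket mentioned in the statement.

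The remaining and principal obstacle is upgrading transversality to \emph{strict} transversality against the classes in Lusztig's partition. This is a dimension-matching problem: for each such class $\mathcal{O}$ one must verify that $\dim(\dot{w}LN_{w})$ equals the codimension of $\mathcal{O}$ in $\tilde{G}$, so that the smooth map (\ref{eq:transversality-again-1}) restricted to $G\times\{(\dot{w}LN_{w})\cap\mathcal{O}\}$ has the expected fibre dimension. Here I would invoke Sevostyanov's computation in \cite{MR3883243}, which expresses $\dim(\dot{w}LN_{w})$ in terms of $\ell(w)$, $|\mathfrak{R}^{w}|$ and the rank of $T^{w}$, and then matches these numbers against the codimensions of the sheets forming Lusztig's stratification. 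Combining this numerical match with the smoothness from part (ii) produces strict transversality and closes the corollary; the passage to algebraically closed fields, imposed in the statement, is used only to align our setting with that of \cite{MR3883243} and \cite{MR3495802}.
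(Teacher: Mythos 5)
Your proposal matches the paper's intended argument exactly: the paper proves this corollary only by the remark that it follows from ``combining their main theorems with Sevostyanov's dimension calculations \cite{MR3883243}'', and your assembly of \cite[Theorem B]{WM-mindom}, the main Theorem here applied to the slicing pair $(\mathfrak{R}_{+}\backslash\mathfrak{R}^{w},\mathfrak{R}^{w})$, and the dimension matching against Lusztig's partition is precisely that combination spelled out.
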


\paragraph{Acknowledgements}

 I thank Ian Grojnowski for comments and I am grateful to Dominic
Joyce, Balázs Szendr\H{o}i and the Mathematical Institute of the University
of Oxford for an excellent stay, where some of this paper was written.
This visit was supported by the Centre for Quantum Geometry of Moduli
Spaces at Aarhus University and the Danish National Research Foundation.
This work was also supported by EPSRC grant EP/R045038/1.

\section{Crossing roots}

A common strategy, dating back to Killing's work around 1889, is to
study reductive groups and Weyl groups through their root systems;
we follow this perspective by developing properties of $\mathrm{cross}_{w}(\cdot)$.
Throughout this paper, root systems will always be assumed to be \emph{crystallographic};
we begin by proving a lemma on decomposing sums of roots, that we
will employ several times:
\begin{defn}
We will say that a sequence $(\beta_{1},\ldots,\beta_{m})$ of roots
in a crystallographic root system is a \emph{summing sequence }if
each of the partial sums $\sum_{i=1}^{j}\beta_{i}$ is a root for
$1\leq j\leq m$, and if their total sum is denoted by $\gamma:=\sum_{i=1}^{m}\beta_{i}$
then we will denote this sequence as
\[
\sum(\beta_{1},\ldots,\beta_{m})=\gamma.
\]
If furthermore each of these roots $\beta_{i}$ lies in a subset of
roots $\mathfrak{N}$, then we will call this a \emph{summing sequence
in $\mathfrak{N}$.}
\end{defn}

The following lemma shows how summing sequences may be constructed:
\begin{lem}
\label{lem:root-sums-sequence} Let $\beta_{0},\ldots,\beta_{m}$
be roots in a crystallographic root system such that their sum $\sum_{i=0}^{m}\beta_{i}$
is also a root.

\begin{enumerate}[\normalfont(i)]

\item \cite[Proposition VI.1.19]{MR0240238} Amongst these roots
there exists a root $\beta_{j}$ such that the difference
\[
\sum_{i=0}^{m}\beta_{i}-\beta_{j}
\]
 is either a root or is zero; it is strictly positive when $m>0$
and each root $\beta_{i}$ is positive.

\item Suppose briefly that $m=3$ and that $\beta_{i}+\beta_{j}\neq0$
for each distinct pair $i,j\in\{1,2,3\}$. Then at least two of the
three sums
\[
\beta_{1}+\beta_{2},\qquad\beta_{1}+\beta_{3},\qquad\beta_{2}+\beta_{3}
\]
 are also roots.

\item Hence we may obtain from these roots $\beta_{1},\ldots,\beta_{m}$
by reordering and deleting a summing sequence
\[
\sum(\beta_{i_{1}},\ldots,\beta_{i_{m'}})=\sum_{i=1}^{m}\beta_{i}.
\]
If each of the roots $\beta_{i}$ is positive, then the resulting
sequence $\beta_{i_{1}},\ldots,\beta_{i_{m}}$ is simply a reordering
and we may choose the sequence to start with any of the $\beta_{i}$.\end{enumerate}

In particular, if each of the $\beta_{i}$ lie in a convex subset
$\mathfrak{N}$, then $\sum_{i=0}^{m}\beta_{i}$ also lies in $\mathfrak{N}$.
\end{lem}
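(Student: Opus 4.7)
The plan is to cite part (i) from Bourbaki and to bootstrap the remaining statements from it by root-string combinatorics together with induction on $m$; the main obstacle will be part (ii), where a single application of (i) is insufficient and one is driven to a short case analysis on rank-two sub-root-systems.

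For part (ii), I would apply part (i) to $(\beta_1,\beta_2,\beta_3)$ to obtain, after relabelling and using the nonvanishing pair hypothesis, that $\gamma := \beta_1+\beta_2$ is a root. Arguing by contradiction, suppose neither $\beta_1+\beta_3$ nor $\beta_2+\beta_3$ is a root; since both are nonzero, the standard sum-of-roots criterion forces $(\beta_1,\beta_3),(\beta_2,\beta_3)\geq 0$, and hence $(\gamma,\beta_3)\geq 0$. Because $\sigma:=\gamma+\beta_3$ is a root, the $\beta_3$-string through $\gamma$ must then also extend downwards, so that $\gamma-\beta_3$ is a root and the string has length at least three. In simply-laced types all root strings have length at most two, yielding the contradiction immediately; in the remaining types the pair $\gamma,\beta_3$ must span a rank-two sub-root-system of type $B_2$ or $G_2$, and a short inspection of the possible decompositions $\gamma=\beta_1+\beta_2$ together with the non-negativity of $(\beta_i,\beta_3)$ provides the contradiction.

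Part (iii) proceeds by induction on $m$, using part (i) to peel off a final root $\beta_j$ whose deletion leaves a sum which is either again a root or zero, and recursing on the shorter collection. When all $\beta_i$ are positive, the strictly positive clause of (i) prevents partial sums from vanishing, so no deletion is needed and one obtains an honest reordering; the flexibility in the starting root is then secured by a secondary induction showing that at each stage some removable root is distinct from the prescribed first element—if not, non-negativity of $(\beta_1,\beta_i)$ for $i\geq 2$ combined with $\sigma$ being a root forces $(\beta_1,\sigma)\geq(\beta_1,\beta_1)$ and a root-string analysis opens up an alternative removal. In the general signed case a zero remainder after removal corresponds precisely to discarding a pair of roots that cancel each other out, which realises the deletion permitted by the statement.

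Finally, the "in particular" clause is immediate from (iii) together with convexity: the summing sequence provided by (iii) has partial sums which are all roots, its initial term lies in $\mathfrak{N}$ by hypothesis, and each subsequent partial sum, written as an earlier partial sum (inductively in $\mathfrak{N}$) plus some $\beta_i\in\mathfrak{N}$, is placed into $\mathfrak{N}$ by convexity; hence the full sum $\sum_{i=0}^{m}\beta_i$ lies in $\mathfrak{N}$ as well.
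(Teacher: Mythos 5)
Your part (i) and the opening of part (ii) match the paper, but your proof of (ii) has a genuine gap where it matters most. After reducing to the case $(\beta_1,\beta_3)\geq 0$ and $(\beta_2,\beta_3)\geq 0$, you pass to the $\beta_3$-string through $\gamma=\beta_1+\beta_2$ and propose to finish by inspecting "the possible decompositions $\gamma=\beta_1+\beta_2$" inside a rank-two subsystem of type $\mathsf{B}_2$ or $\mathsf{G}_2$. But $\beta_1$ and $\beta_2$ are arbitrary roots of the ambient system subject only to $\beta_1+\beta_2=\gamma$; they need not lie in the plane spanned by $\gamma$ and $\beta_3$, so the case analysis you describe does not cover all decompositions and the argument is incomplete as stated. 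The detour is also unnecessary: since $\gamma$ is a root, $(\gamma,\gamma)>0$ forces $(\beta_i,\gamma)>0$ for some $i\in\{1,2\}$, and adding $(\beta_i,\beta_3)\geq 0$ gives $(\beta_i,\sigma)>0$ for $\sigma:=\beta_1+\beta_2+\beta_3$; hence $\sigma-\beta_i$, which is the sum of the other two roots and is nonzero by hypothesis, is a root. This is the paper's one-line finish and it avoids root strings and type distinctions entirely.

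A related soft spot appears in your part (iii). The clause allowing the summing sequence to start with a prescribed root is exactly what part (ii) is designed for: the paper applies (ii) to the triple $\gamma_{<m-1},\,\beta_{i_{m-1}},\,\beta_{i_m}$ to guarantee that either $\gamma_{<m-1}+\beta_{i_m}$ or $\beta_{i_{m-1}}+\beta_{i_m}$ is a root, and then recurses. Your substitute, namely that when the prescribed root is the only removable one a "root-string analysis opens up an alternative removal," is not an argument, and the inequality you extract (that $(\beta_1,\sigma)$ is bounded below) does not by itself produce another removable root. Also, a zero remainder in the general signed case means the entire remaining collection sums to zero and is discarded, not merely a cancelling pair. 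The rest of (iii) and the final convexity deduction are fine and agree with the paper.
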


\begin{proof}
(i): We follow the proof of \cite[Proposition VI.1.19]{MR0240238};
let's write $\gamma:=\sum_{i=1}^{m}\beta_{i}$. If $(\beta_{i},\gamma)\leq0$
for each root $\beta_{i}$, then by linearity also $(\gamma,\gamma)=(\sum_{i=1}^{m}\beta_{i},\gamma)\leq0$
which contradicts that $\gamma$ is a root. Thus there must be an
inequality $(\beta_{j},\gamma)>0$ for some $j$, which in a crystallographic
root system implies that $\gamma-\beta_{j}$ is either a root or is
zero.

(ii): By (i), we may assume after relabelling that $\beta_{1}+\beta_{2}$
is a root (as it is not zero by assumption). If $(\beta_{i},\beta_{1}+\beta_{2})\leq0$
for both $i\in\{1,2\}$ then we derive the same contradiction
\[
(\beta_{1}+\beta_{2},\beta_{1}+\beta_{2})\leq0
\]
 as before, so we may assume that say $(\beta_{1},\beta_{1}+\beta_{2})>0$.
If $\beta_{1}+\beta_{3}$ is a root the claim follows and if it is
not then $(\beta_{1},\beta_{3})\geq0$. This would yield
\[
(\beta_{1},\beta_{1}+\beta_{2}+\beta_{3})=(\beta_{1},\beta_{1}+\beta_{2})+(\beta_{1},\beta_{3})>0,
\]
which implies that $(\beta_{1}+\beta_{2}+\beta_{3})-\beta_{1}=\beta_{2}+\beta_{3}$
is a root.

(iii): The first claim of (iii) follows from (i) by induction on $m$.
As a nontrivial sum of positive roots is never zero the sequence must
be a reordering if each of the roots $\beta_{i}$ are positive. For
the final claim we induct on $m$, so we may assume that $\beta_{i_{m}}$
is the chosen root we want the sequence to start with. Writing $\gamma_{<m-1}:=\sum_{j=1}^{m-2}\beta_{i_{j}}$,
part (ii) yields that at least one of
\[
\gamma_{m-1}+\beta_{i_{m}}\qquad\textrm{or}\qquad\beta_{i_{m-1}}+\beta_{i_{m}}
\]
 is a root. In the former case we apply the induction hypothesis to
that sum to conclude, and in the latter case we apply the induction
hypothesis to the set of roots $\beta_{i_{1}},\ldots,\beta_{i_{m-2}},\beta_{i_{m-1}}+\beta_{i_{m}}$
to find a summing sequence beginning with $\beta_{i_{m-1}}+\beta_{i_{m}}$,
which immediately yields one starting with $\beta_{i_{m}}$.
\end{proof}
\begin{example}
Let $\beta,\gamma$ be a pair of roots in a root system such that
neither $\beta+\gamma$ nor $\beta-\gamma$ are roots (e.g.\ $\gamma=\beta$),
then 
\[
\beta+\gamma+(-\gamma)
\]
 is a root but none of the three sums in (ii) are roots.
\end{example}

\subsection{Crossing for braids}

In this subsection we continue developing results on decomposing sums
of roots, and as a first application we show that the definition of
$\mathrm{cross}_{w}(\cdot)$ extends to the braid monoid. We will
give another proof of this using root subgroups in Proposition \ref{prop:braid-invariance-again};
that proof is arguably more intuitive, but since the main Lemma and
its various corollaries might be of interest independent of reductive
groups I decided to include a proof using only roots.

Part (ii) and (iii) of the previous lemma yield the following corollary:
\begin{cor}
\label{cor:separate-roots} Suppose $\mathfrak{N},\mathfrak{N}'$
are two subsets of roots of a crystallographic root system, such that
$\mathfrak{N},\mathfrak{N}'$ and also their union $\mathfrak{N}\cup\mathfrak{N}'$
are convex. Suppose furthermore that $\beta$ is a root such that
$-\beta$ does not lie in $\mathfrak{N}\cup\mathfrak{N}'$. Then from
any summing sequence 
\[
\sum(\beta,\tilde{\beta}_{1},\ldots,\tilde{\beta}_{m})=:\gamma
\]
 with $\tilde{\beta}_{i}\in\mathfrak{N}\cup\mathfrak{N}'$ for each
$i$, we can obtain one
\[
\sum(\beta,\beta_{1},\ldots,\beta_{k},\beta_{1}',\ldots,\beta_{k'}')=\gamma
\]
 with $\beta_{i}\in\mathfrak{N}$ and $\beta_{i}'\in\mathfrak{N}'$,
each of them obtained from the $\tilde{\beta}_{i}$ by rearranging
and summing some of them.
\end{cor}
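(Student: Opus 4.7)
The plan is to induct on the pair $(m,\text{\# inversions})$ in lexicographic order, where an \emph{inversion} is a pair $(i,l)$ with $i<l$, $\tilde{\beta}_{i}\in\mathfrak{N}'$ and $\tilde{\beta}_{l}\in\mathfrak{N}$, and a \emph{separated} summing sequence is one with no inversions (in which every $\mathfrak{N}$-root precedes every $\mathfrak{N}'$-root, trivially yielding the claim with each $\beta_{i}$ and $\beta'_{j}$ a single $\tilde{\beta}_{l}$). The base case $m=0$ is immediate.

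For the inductive step, if the sequence has no inversions we are done; otherwise pick an adjacent inversion $(\tilde{\beta}_{j},\tilde{\beta}_{j+1})$ and set $\eta:=\beta+\sum_{i<j}\tilde{\beta}_{i}$, which is a root (the $(j-1)$-th partial sum, or $\beta$ itself if $j=1$). Apply Lemma~\ref{lem:root-sums-sequence}(ii) to the triple $(\eta,\tilde{\beta}_{j},\tilde{\beta}_{j+1})$, whose total sum is the root $\eta_{j+1}$ and whose first partial sum is the root $\eta_{j}=\eta+\tilde{\beta}_{j}$. If none of the three pair-sums vanishes, the lemma forces a second pair-sum to be a root: either $\eta+\tilde{\beta}_{j+1}$ is a root, in which case we swap $\tilde{\beta}_{j}$ and $\tilde{\beta}_{j+1}$, strictly decreasing the inversion count while leaving $m$ unchanged; or $\tilde{\beta}_{j}+\tilde{\beta}_{j+1}$ is a root, in which case we merge the two terms into a single root $\zeta$ that by convexity of $\mathfrak{N}\cup\mathfrak{N}'$ lies in the union, strictly decreasing $m$. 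If $\tilde{\beta}_{j}+\tilde{\beta}_{j+1}=0$ we simply delete both roots (subsequent partial sums are unaffected), strictly decreasing $m$ by two; the case $\eta+\tilde{\beta}_{j}=0$ is vacuous since $\eta_{j}$ is a root.

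The main obstacle is the remaining degenerate case $\eta+\tilde{\beta}_{j+1}=0$, which would force $\tilde{\beta}_{j+1}=-\eta\in\mathfrak{N}$. For $j=1$ this immediately gives $\tilde{\beta}_{2}=-\beta$, contradicting the standing hypothesis $-\beta\notin\mathfrak{N}\cup\mathfrak{N}'$, and this is precisely the role of that hypothesis. For $j\geq 2$ neither a direct swap nor a merge is available, so we must first rearrange the prefix $(\beta,\tilde{\beta}_{1},\ldots,\tilde{\beta}_{j-1})$---for instance by applying Lemma~\ref{lem:root-sums-sequence}(ii) to an earlier triple of the form $(\eta_{l-1},\tilde{\beta}_{l},\tilde{\beta}_{j+1})$ so as to slide $\tilde{\beta}_{j+1}$ leftward and thereby alter $\eta$---until the obstruction dissolves. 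Since every such preparatory modification either strictly decreases $m$ or strictly decreases a secondary statistic (the position of the offending term), the lexicographic induction terminates.
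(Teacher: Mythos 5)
Your overall strategy -- bubble-sorting an adjacent out-of-order pair by either swapping (when $\eta+\tilde{\beta}_{j+1}$ is a root) or merging (when $\tilde{\beta}_{j}+\tilde{\beta}_{j+1}$ is a root or zero), with termination governed by the lexicographic statistic $(m,\#\mathrm{inversions})$ -- is exactly the paper's argument, and your non-degenerate cases are all handled correctly. The genuine gap is your treatment of the degenerate case $\eta+\tilde{\beta}_{j+1}=0$ with $j\geq2$, which you present as "the main obstacle" and propose to dissolve by an unspecified preparatory rearrangement of the prefix. As written this is not a proof: you give no well-defined procedure, and no argument that the "position of the offending term" actually decreases under whatever moves Lemma \ref{lem:root-sums-sequence}(ii) happens to allow at earlier triples (those triples come with their own degenerate cases, so the recursion is not obviously well-founded).

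The point you are missing is that this case is impossible for \emph{every} $j$, by the same hypothesis you invoke for $j=1$. If $\eta+\tilde{\beta}_{j+1}=0$ then
\[
-\beta=\sum_{i=1}^{j-1}\tilde{\beta}_{i}+\tilde{\beta}_{j+1},
\]
so the root $-\beta$ is a nonempty sum of roots all lying in $\mathfrak{N}\cup\mathfrak{N}'$; since that union is convex by hypothesis, the final ("in particular") claim of Lemma \ref{lem:root-sums-sequence} forces $-\beta\in\mathfrak{N}\cup\mathfrak{N}'$, contradicting the standing assumption. This is precisely how the paper dispatches the case, and it is the second place (besides the merge step) where convexity of the union is used. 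With that observation your case analysis closes and the induction terminates exactly as you describe. A small further caveat: since $\mathfrak{N}$ and $\mathfrak{N}'$ need not be disjoint, "inversion" should be defined relative to a fixed choice of label in $\{\mathfrak{N},\mathfrak{N}'\}$ for each term, as otherwise a pair of terms lying in the intersection would count as a permanent inversion.
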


\begin{proof}
We induct on $m$ and within that we induct on $2\leq l\leq m$, which
is the first time $\tilde{\beta}_{l}$ lies in $\mathfrak{N}$ but
$\tilde{\beta}_{l-1}$ lies in $\mathfrak{N}'$; if there is no such
$l$ we are already done. Set $\gamma_{<l-1}:=\beta+\sum_{i=1}^{l-2}\tilde{\beta}_{i}$
and consider the sum
\[
\gamma_{<l-1}+\tilde{\beta}_{l-1}+\tilde{\beta}_{l}.
\]
If either $\gamma_{<l-1}=0$ or $\tilde{\beta}_{l-1}+\tilde{\beta}_{l}=0$
then we may shorten the sequence and conclude from the induction hypothesis
on $m$. Moreover if
\[
\beta+\sum_{i=1}^{l-2}\tilde{\beta}_{i}+\tilde{\beta}_{l-1}=\gamma_{<l-1}+\tilde{\beta}_{l-1}=0
\]
 then rewriting yields $-\beta=\sum_{i=1}^{l-2}\tilde{\beta}_{i}+\tilde{\beta}_{l-1}$.
From the last statement in part (iii) it would follow that $-\beta$
lies in the convex set $\mathfrak{N}\cup\mathfrak{N}'$. The case
$\gamma_{<l-1}+\tilde{\beta}_{l}=0$ similarly yields a contradiction.
Hence by part (ii) at least one of $\gamma_{<l-1}+\tilde{\beta}_{l}$
or $\tilde{\beta}_{l-1}+\tilde{\beta}_{l}$ is a root; in the latter
case it lies in $\mathfrak{N}\cup\mathfrak{N}'$ and we may shorten
the sequence, whereas in the former case we may now swap the roots
$\tilde{\beta}_{l-1}$ and $\tilde{\beta}_{l}$ in the sequence to
lower $l$ and apply the induction hypothesis on $l$.
\end{proof}
\begin{defn}
Let $b:=b_{\underline{w}}:=b_{w_{d}}\cdots b_{w_{1}}$ be a braid
in a braid monoid, constructed out of a sequence of elements $\underline{w}=(w_{d},\ldots,w_{1})$
lying in the corresponding twisted Weyl group. Given a positive root
$\beta$ or a subset of positive roots $\mathfrak{N}\subseteq\mathfrak{R}_{+}$,
construct the set of roots
\[
\mathrm{cross}_{b}(\beta):=\mathrm{cross}_{w_{d}}\cdots\mathrm{cross}_{w_{2}}\mathrm{cross}_{w_{1}}(\beta)
\]
and extend it to a map
\[
\mathrm{cross}_{b}:\{\textrm{subsets of }\mathfrak{R}_{+}\}\longrightarrow\{\textrm{subsets of }\mathfrak{R}_{+}\},\qquad\mathfrak{N}\longmapsto\bigcup_{\beta\in\mathfrak{N}}\mathrm{cross}_{b}(\beta).
\]

We let $\mathrm{cross}_{\underline{w}}(\gamma,\beta)$ denote the
set of sequences of elements in the $\mathfrak{R}_{w_{i}}$ ``confirming''
that $\gamma$ lies in $\mathrm{cross}_{b}(\beta)$; more precisely,
it is the set of subsets of roots
\[
\mathfrak{N}_{m}\times\cdots\times\mathfrak{N}_{2}\times\mathfrak{N}_{1}\subseteq\mathfrak{R}_{w_{m}}\times\cdots\times\mathfrak{R}_{w_{2}}\times\mathfrak{R}_{w_{1}}
\]
 with the property that the corresponding sequence $\beta=\beta_{0}',\ldots,\beta_{d}'$
inductively constructed via
\[
\beta_{j}':=w_{j}(\beta_{j-1}'+\sum_{\tilde{\beta}\in\mathfrak{\mathfrak{N}}_{j}}\tilde{\beta})
\]
for $1\leq j\leq d$, consists solely of roots which are all positive,
and satisfies $\beta_{d}'=\gamma$.
\end{defn}

Analysing these sequences shows that the set of roots $\mathrm{cross}_{b}(\cdot)$
is well-defined:
\begin{lem}
\label{lem:braid-invariance-cross} Let $\underline{w},\underline{w}'$
be two sequences of elements in a twisted Weyl group such that there
is an equality $b_{\underline{w}}=b_{\underline{w}'}$ in the associated
braid monoid, and let $\beta,\gamma$ be positive roots in its root
system. One can non-canonically construct ``transfer maps''
\[
\mathrm{cross}_{\underline{w}}(\gamma,\beta)\rightleftarrows\mathrm{cross}_{\underline{w}'}(\gamma,\beta),
\]
mapping nontrivial sequences to nontrivial sequences.

In particular, for any subset of positive roots $\mathfrak{N}$ and
any braid $b$ in the corresponding braid monoid, the set of roots
$\mathrm{cross}_{b}(\mathfrak{N})$ does not depend on the chosen
decomposition of $b$ into reduced braids.
\end{lem}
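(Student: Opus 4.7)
The plan is to reduce the claim to invariance under a single braid-monoid move and then construct non-canonical transfer maps in both directions. Any identity $b_{\underline{w}} = b_{\underline{w}'}$ in the braid monoid factors into a finite sequence of elementary moves
\[
(\ldots, w_2, w_1, \ldots) \;\longleftrightarrow\; (\ldots, w_2 w_1, \ldots)
\]
with $\ell(w_2 w_1) = \ell(w_1) + \ell(w_2)$ and all other positions unchanged, so it suffices to handle a single such move. The combinatorics is governed throughout by the disjoint decomposition $\mathfrak{R}_{w_2 w_1} = \mathfrak{R}_{w_1} \sqcup w_1^{-1}(\mathfrak{R}_{w_2})$.

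In the combining direction, given confirming $(\mathfrak{N}_2, \mathfrak{N}_1)$ at the two adjacent positions I set $\mathfrak{N} := \mathfrak{N}_1 \sqcup w_1^{-1}(\mathfrak{N}_2) \subseteq \mathfrak{R}_{w_2 w_1}$, and the one-line identity $(w_2 w_1)(\beta + \sum \mathfrak{N}) = w_2(w_1(\beta + \sum \mathfrak{N}_1) + \sum \mathfrak{N}_2)$ shows the output at the combined position equals $\beta_2'$; the intermediate positivity constraint is simply dropped. Positivity of the new output is inherited from that of the old.

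In the splitting direction, which is the main obstacle, I am given $\mathfrak{N} \subseteq \mathfrak{R}_{w_2 w_1}$ with $(w_2 w_1)(\beta + \sum \mathfrak{N}) = \gamma \in \mathfrak{R}_+$ and I must produce $\mathfrak{N}_1 \subseteq \mathfrak{R}_{w_1}$, $\mathfrak{N}_2 \subseteq \mathfrak{R}_{w_2}$ such that the intermediate $\beta_1' = w_1(\beta + \sum \mathfrak{N}_1)$ is an \emph{honest positive root}. Since $(w_2 w_1)(\beta + \sum \mathfrak{N}) = \gamma$ is a root, so is $\beta + \sum \mathfrak{N}$, so Lemma \ref{lem:root-sums-sequence}(iii) yields a summing sequence beginning with $\beta$ whose subsequent entries reorder $\mathfrak{N}$. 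I then apply Corollary \ref{cor:separate-roots} with the two convex subsets $\mathfrak{R}_{w_1}$ and $w_1^{-1}(\mathfrak{R}_{w_2})$ (disjoint, with union $\mathfrak{R}_{w_2 w_1}$ convex and not containing $-\beta$) to rearrange this into a summing sequence made of a $\mathfrak{R}_{w_1}$-block followed by a $w_1^{-1}(\mathfrak{R}_{w_2})$-block, all partial sums still being positive roots. Setting $\mathfrak{N}_1$ to the first block and $\mathfrak{N}_2 := w_1(\textrm{second block}) \subseteq \mathfrak{R}_{w_2}$ then makes $\beta + \sum \mathfrak{N}_1$ a positive root automatically.

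The hard part will be verifying that $w_1$ of this partial sum is still positive, i.e.\ that $\beta + \sum \mathfrak{N}_1 \notin \mathfrak{R}_{w_1}$. I would argue this by contradiction: assuming otherwise, the summands from the second block map under $w_1$ into positive roots of $\mathfrak{R}_{w_2}$, and tracking the signs through the subsequent addition of these roots and through $w_2$, using the convexity of $\mathfrak{R}_{w_2}$, should produce a clash with $\gamma > 0$. If such a direct argument is insufficient, a further rearrangement inside the first block via Lemma \ref{lem:root-sums-sequence}(iii) ensures that the running sum actually exits $\mathfrak{R}_{w_1}$, which is where the ``non-canonicity'' in the statement enters. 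A minor subtlety is that Corollary \ref{cor:separate-roots} may merge several elements of $\mathfrak{N}$ into a single element of a block; this is consistent with the subset-rather-than-multiset framing of a confirming sequence. Finally, both transfer maps manifestly send empty confirming data to empty and nonempty to nonempty, and running them along any chain of elementary moves between two factorisations of $b$ shows that the non-emptiness of $\mathrm{cross}_{\underline{w}}(\gamma, \beta)$ depends only on $b$, yielding the well-definedness of $\mathrm{cross}_b(\mathfrak{N})$.
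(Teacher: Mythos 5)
Your proposal follows essentially the same route as the paper: reduce to a single reduced factorisation $w_2w_1$ via merge/split moves, use the decomposition $\mathfrak{R}_{w_2w_1}=\mathfrak{R}_{w_1}\sqcup w_1^{-1}(\mathfrak{R}_{w_2})$ for the easy direction, and combine Lemma \ref{lem:root-sums-sequence}(iii) with Corollary \ref{cor:separate-roots} to block-sort the summing sequence for the splitting direction. The one point you leave as a sketch (intermediate positivity) closes cleanly: if $\beta+\sum\mathfrak{N}_1$ lay in $\mathfrak{R}_{w_1}$, then since the second block lies in $w_1^{-1}(\mathfrak{R}_{w_2})$ and $\mathfrak{R}_{w_2w_1}$ is convex, the total $(w_2w_1)^{-1}(\gamma)$ would lie in $\mathfrak{R}_{w_2w_1}$, contradicting $\gamma\in\mathfrak{R}_{+}$ --- the convex set to use is $\mathfrak{R}_{w_2w_1}$ rather than $\mathfrak{R}_{w_2}$, and the paper is similarly terse here.
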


\begin{proof}
We first show the claim for a reduced decomposition $w=xy$ of an
arbitrary element $w$ in the twisted Weyl group. A (nontrivial) sequence
in $\mathrm{cross}_{(x,y)}(\gamma,\beta)$ rewrites as
\begin{equation}
\gamma=x\bigl(y(\beta+\sum_{\beta'\in\mathfrak{R}_{y}}\beta')+\sum_{\beta''\in\mathfrak{R}_{x}}\beta''\bigr)=w\bigl(\beta+\sum_{\beta'\in\mathfrak{R}_{y}}\beta'+y^{-1}(\sum_{\beta''\in\mathfrak{R}_{x}}\beta'')\bigr),\label{eq:reduced-cross}
\end{equation}
which through the identity

\begin{equation}
\mathfrak{R}_{w}=y^{-1}(\mathfrak{R}_{x})\sqcup\mathfrak{R}_{y}\label{eq:decomp-roots}
\end{equation}
 immediately yields a (nontrivial) sequence in $\mathrm{cross}_{w}(\gamma,\beta)$.
Conversely, given roots $\beta_{1},\ldots,\beta_{m}$ in $\mathfrak{R}_{w}$
such that
\[
\beta+\sum_{i=1}^{m}\beta_{i}=w^{-1}(\gamma),
\]
obtain through part (iii) of the previous lemma a summing sequence
starting with $\beta$, and according to equation (\ref{eq:decomp-roots})
each of the subsequent roots lies in one of the convex sets $v^{-1}(\mathfrak{R}_{u})$
or $\mathfrak{R}_{v}$. By the previous corollary we may modify the
summing sequence to one which starts with $\beta$, is then followed
by roots in $\mathfrak{R}_{v}$, and then in $v^{-1}(\mathfrak{R}_{u})$.
In the process, roots are only reordered or summed, so since both
$v^{-1}(\mathfrak{R}_{u})$ and $\mathfrak{R}_{v}$ consist of positive
roots and the sum of positive roots is positive, it follows through
(\ref{eq:reduced-cross}) that a nontrivial sequence in $\mathrm{cross}_{w}(\gamma,\beta)$
yields another nontrivial sequence in $\mathrm{cross}_{(u,v)}(\gamma,\beta)$. 

Now let $\underline{w}$ and $\underline{w}'$ be as in the statement
of this lemma. We may decompose both $b_{\underline{w}}$ and $b_{\underline{w}'}$
into a product of elementary braids $b_{i}$ of length one and twists;
one first verifies that 
\[
\mathrm{cross}_{\delta x}=\mathrm{cross}_{\delta}\mathrm{cross}_{x}=\mathrm{cross}_{\delta x\delta^{-1}}\mathrm{cross}_{\delta^{\,}}
\]
 for any twist $\delta$ and element $x$ in the underlying untwisted
Weyl group, so we may move all of the twists to the left and combine
them. The equality $b_{\underline{w}}=b_{\underline{w}'}$ then implies
that these twists agree and can be safely ignored. This braid identity
on the untwisted part implies that the first sequence transforms into
the second one, through a finite sequence of braid moves $s_{i}s_{j}s_{i}\cdots=s_{j}s_{i}s_{j}\cdots$.
Now applying the result in the first paragraph several times for each
such braid move, we obtain transfer maps
\[
\mathrm{cross}_{(s_{i},s_{j},s_{i},\ldots)}(\gamma,\beta)\rightleftarrows\mathrm{cross}_{s_{i}s_{j}s_{i}\cdots}(\gamma,\beta)=\mathrm{cross}_{s_{j}s_{i}s_{j}\cdots}(\gamma,\beta)\rightleftarrows\mathrm{cross}_{(s_{j},s_{i},s_{j},\ldots)}(\gamma,\beta)
\]
for any positive root $\beta$. Hence we by induction on the number
of braid moves to be made, we obtain transfer maps $\mathrm{cross}_{\underline{w}}(\gamma,\beta)\rightleftarrows\mathrm{cross}_{\underline{w}'}(\gamma,\beta)$.
\end{proof}
Transferring \emph{nontrivial} sequences will play a key rôle in the
proof of Lemma \ref{lem:dg-simple-root}.

\subsection{Crossing simple roots}

In this subsection we prove part (i) of the main Lemma. It is independent
of the previous subsection and in contrast, I do not know of a simple
interpretation or proof in terms of root subgroups.
\begin{prop}
\label{prop:root-pairs-equality} Let $\beta_{0},\beta_{1},\gamma_{0},\gamma_{1}$
be positive roots (or zero) in a crystallographic root system, such
that their sums yield an equality
\[
\beta_{0}+\beta_{1}=\gamma_{0}+\gamma_{1}
\]
 between two positive roots. Then there exists a pair of indices $i,j\in\{0,1\}$
such that
\[
\beta_{i}-\gamma_{j}
\]
 is either a positive root or is zero.
\end{prop}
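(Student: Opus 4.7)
The plan is to use the inner-product trick that already underlies the proof of Lemma \ref{lem:root-sums-sequence}(i). Set $\delta:=\beta_{0}+\beta_{1}=\gamma_{0}+\gamma_{1}$, which is a positive root by assumption. First I would dispose of the degenerate cases in which one of the four roots is zero: if say $\beta_{1}=0$ then $\beta_{0}=\delta$ and
\[
\beta_{0}-\gamma_{0}=\delta-\gamma_{0}=\gamma_{1},
\]
which is a positive root or zero, so the pair $(i,j)=(0,0)$ works. The cases where some $\gamma_{j}$ vanishes are symmetric. Hence I may assume all four roots are nonzero positive roots.

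Next I would exploit the bilinear expansion
\[
\sum_{i,j\in\{0,1\}}(\beta_{i},\gamma_{j})=(\beta_{0}+\beta_{1},\gamma_{0}+\gamma_{1})=(\delta,\delta)>0,
\]
which forces at least one pair $(i,j)$ to satisfy $(\beta_{i},\gamma_{j})>0$. If $\beta_{i}=\gamma_{j}$ then $\beta_{i}-\gamma_{j}=0$ and we are done. Otherwise $\beta_{i}\neq\gamma_{j}$ and in a crystallographic root system strict positivity of the inner product forces $\beta_{i}-\gamma_{j}$ to be a (nonzero) root, by considering the $\beta_{i}$-string through $\gamma_{j}$ or equivalently the reflection $s_{\gamma_{j}}(\beta_{i})$.

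To conclude I would note that the relation $\beta_{0}+\beta_{1}=\gamma_{0}+\gamma_{1}$ yields
\[
\beta_{1-i}-\gamma_{1-j}=-(\beta_{i}-\gamma_{j}),
\]
so the four candidate differences come in two pairs of opposite roots. If $\beta_{i}-\gamma_{j}$ is a positive root, the original pair $(i,j)$ works; if it is a negative root, then its negative $\beta_{1-i}-\gamma_{1-j}$ is a positive root and the complementary pair $(1-i,1-j)$ works.

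The only real content is the crystallographic step: if $(\beta_{i},\gamma_{j})>0$ and $\beta_{i}\neq\gamma_{j}$, then $\beta_{i}-\gamma_{j}$ is a root. This is the same standard fact that drives the proof of Lemma \ref{lem:root-sums-sequence}(i), so the cost of the proposition is essentially that of running an analogous argument once more. I do not anticipate any serious obstacle beyond keeping track of the sign convention in the final case analysis.
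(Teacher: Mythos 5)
Your proposal is correct and follows essentially the same route as the paper: expand $(\beta_{0}+\beta_{1},\gamma_{0}+\gamma_{1})>0$ bilinearly to find a pair with $(\beta_{i},\gamma_{j})>0$, invoke the crystallographic fact that the difference is then a root or zero, and pass to the complementary pair $(1-i,1-j)$ if the difference is negative. The separate treatment of the degenerate cases is harmless but unnecessary, since positivity of $(\beta_{i},\gamma_{j})$ already forces both roots to be nonzero.
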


\begin{proof}
Since $(\beta_{0}+\beta_{1},\gamma_{0}+\gamma_{1})>0$, we must have
$(\beta_{0}+\beta_{1},\gamma_{j})>0$ for some $j\in\{0,1\}$, and
thus $(\beta_{i},\gamma_{j})>0$ for some $i\in\{0,1\}$. In a crystallographic
root system it follows that $\beta_{i}-\gamma_{j}$ must be either
a root or zero, so if it is not a negative root we are done. If it
is negative, then the complementary pair of indices $i':=1-i$ and
$j':=1-j$ yields a positive root
\[
\beta_{i'}-\gamma_{j'}=-(\beta_{i}-\gamma_{j}).\qedhere
\]
\end{proof}
\begin{cor}
\label{cor:root-sum-split} Let $\beta_{0},\ldots,\beta_{m}$ for
$m\geq0$ and $\gamma_{0},\gamma_{1}$ be positive roots in a crystallographic
root system $\mathfrak{R}$, such that their sums yield an equality
\[
\sum_{i=0}^{m}\beta_{i}=\gamma_{0}+\gamma_{1}
\]
 between two positive roots. Then for some $0\leq j\leq m$ the smaller
sum $\sum_{i=0,i\neq j}^{m}\beta_{i}$ is also a positive root (or
zero if $m=0$) and for some $t\in\{0,1\}$ either
\[
\sum_{i=0,i\neq j}^{m}\beta_{i}-\gamma_{t}\qquad\textrm{or}\qquad\beta_{j}-\gamma_{t}
\]
 lies in $\mathfrak{R}_{+}\sqcup\{0\}$.
\end{cor}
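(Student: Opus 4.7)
The plan is to reduce the statement to the binary case already handled by Proposition \ref{prop:root-pairs-equality}, using Lemma \ref{lem:root-sums-sequence}(i) to peel off one of the $\beta_i$ while keeping the remaining sum a positive root.

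First, I would dispose of the degenerate case $m=0$. Here the empty sum $\sum_{i=0,i\neq 0}^{m}\beta_{i}$ is $0$, which is permitted by the statement, and the hypothesis $\beta_{0}=\gamma_{0}+\gamma_{1}$ immediately gives $\beta_{0}-\gamma_{0}=\gamma_{1}\in\mathfrak{R}_{+}$, so the conclusion holds with $j=0$ and $t=0$.

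For $m\geq 1$, I would invoke Lemma \ref{lem:root-sums-sequence}(i) applied to the family $\beta_{0},\ldots,\beta_{m}$ (whose sum $\gamma_{0}+\gamma_{1}$ is a root by hypothesis): this furnishes an index $0\leq j\leq m$ such that the truncated sum $\gamma':=\sum_{i=0,i\neq j}^{m}\beta_{i}$ is either a root or is zero, and, crucially, since each $\beta_{i}$ is positive and $m\geq 1$ it is strictly positive. Thus $\gamma'$ is a positive root, and we obtain the equality of positive roots
\[
\beta_{j}+\gamma'=\gamma_{0}+\gamma_{1}.
\]
Now I would feed this identity into Proposition \ref{prop:root-pairs-equality} with $(\beta_{0},\beta_{1})$ replaced by $(\beta_{j},\gamma')$. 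This produces indices $i\in\{0,1\}$ and $t\in\{0,1\}$ for which the difference of the corresponding element of $\{\beta_{j},\gamma'\}$ and $\gamma_{t}$ is a positive root or zero; translating back, this says exactly that either $\beta_{j}-\gamma_{t}$ or $\sum_{i=0,i\neq j}^{m}\beta_{i}-\gamma_{t}$ lies in $\mathfrak{R}_{+}\sqcup\{0\}$, which is the claim.

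There is no real obstacle here beyond bookkeeping: the whole content is the observation that Lemma \ref{lem:root-sums-sequence}(i) converts an $(m+1)$-term sum into a two-term sum of positive roots, at which point Proposition \ref{prop:root-pairs-equality} applies directly. The only mild subtlety is tracking the $m=0$ boundary case, where the ``smaller sum'' is genuinely zero rather than a root, which is why the statement explicitly allows this.
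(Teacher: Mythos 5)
Your proposal is correct and follows exactly the route the paper takes, which is simply to combine Lemma~\ref{lem:root-sums-sequence}(i) (to peel off a $\beta_j$ while keeping the remaining sum a positive root when $m\geq 1$) with Proposition~\ref{prop:root-pairs-equality} applied to the resulting two-term decomposition. The paper states this in a single sentence without spelling out the $m=0$ boundary case; your write-up just fills in that bookkeeping.
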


\begin{proof}
This now follows by combining the previous proposition with Lemma
\ref{lem:root-sums-sequence}(i).
\end{proof}
This is close to what we need; the next lemma refines this statement
to show that if $\beta_{0}$ is a simple root, then we can ensure
that this root does not appear in the conclusion.
\begin{lem}
Suppose that $\alpha$ is a simple root and $\beta_{1},\ldots,\beta_{m}$
for $m\geq1$ and $\gamma$ are positive roots in a crystallographic
root system, such that
\[
\alpha+\sum_{i=1}^{m}\beta_{i}\qquad\textrm{and}\qquad\alpha+\sum_{i=1}^{m}\beta_{i}-\gamma\qquad\textrm{are both positive roots}.
\]
Furthermore suppose that for any subset $\{i_{1},\ldots,i_{k}\}$
of $\{1,\ldots,m\}$, the expression
\begin{equation}
\alpha+\sum_{j=1}^{k}\beta_{i_{j}}-\gamma\qquad\textrm{is neither a negative root nor zero}.\label{eq:assumption}
\end{equation}
Then there exists a subset $\{i_{1},\ldots,i_{k}\}$ of $\{1,\ldots,m\}$
with the property that
\[
\sum_{j=1}^{k}\beta_{i_{j}}-\gamma\qquad\textrm{is a positive root or is zero}.
\]
\end{lem}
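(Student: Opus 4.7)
The plan is to argue by induction on $m\geq 1$, using Lemma \ref{lem:root-sums-sequence}(iii) to impose a convenient order on the $\beta_i$'s and then applying Proposition \ref{prop:root-pairs-equality} to a carefully chosen decomposition of a partial sum.

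First I would invoke Lemma \ref{lem:root-sums-sequence}(iii) for the positive roots $\alpha,\beta_1,\ldots,\beta_m$ whose total $\alpha+\sum_i\beta_i$ is a positive root, in order to produce a summing sequence beginning with $\alpha$; after relabelling the $\beta_i$'s I may assume this sequence is $(\alpha,\beta_1,\ldots,\beta_m)$, so that each partial sum $\sigma_k:=\alpha+\sum_{i=1}^k\beta_i$ is a positive root. I would then define $k^*$ to be the smallest index $k\geq 0$ for which $\sigma_k-\gamma$ is a positive root; the hypothesis on $\sigma_m-\gamma$ ensures $k^*$ exists and is at most $m$. For each $k<k^*$ the element $\sigma_k-\gamma$ is not positive by minimality, and not negative or zero by the standing assumption applied to $T=\{1,\ldots,k\}$, hence not a root at all. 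In particular $\sigma_0-\gamma=\alpha-\gamma$ is not a root, since a positive value would present the simple root $\alpha$ as a sum $\gamma+(\alpha-\gamma)$ of two positive roots; this forces $k^*\geq 1$.

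Next I would apply Proposition \ref{prop:root-pairs-equality} to the equality of sums of positive roots
\[
\sigma_{k^*-1}+\beta_{k^*}=\gamma+(\sigma_{k^*}-\gamma),
\]
which supplies an index pair $(i,j)\in\{0,1\}^2$ such that the corresponding difference is a positive root or zero. The pair $(0,0)$ would yield $\sigma_{k^*-1}-\gamma$, which was just ruled out; the pair $(1,1)$ would yield $\gamma-\sigma_{k^*-1}$, whose positivity would make $\sigma_{k^*-1}-\gamma$ negative and whose vanishing would make it zero, both contradicting the standing assumption. The pair $(1,0)$ delivers $\beta_{k^*}-\gamma$ positive or zero, whereupon the subset $T=\{k^*\}$ fulfils the conclusion, and if the pair $(0,1)$ delivers $\gamma-\beta_{k^*}=0$ then $\gamma=\beta_{k^*}$ and again $T=\{k^*\}$ works.

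The main obstacle is the remaining subcase of $(0,1)$, in which $\mu:=\gamma-\beta_{k^*}$ is a genuine positive root. When $k^*=1$ this subcase is impossible, because then $\sigma_{k^*}-\gamma=\alpha-\mu$ is a positive root, so $\alpha=\mu+(\alpha-\mu)$ decomposes the simple root into two positive roots, a contradiction; this also disposes of the base case $m=1$. For $k^*\geq 2$ I would invoke the inductive hypothesis on the reduced instance consisting of the same $\alpha$ together with the positive roots $\beta_1,\ldots,\beta_{k^*-1}$ and with $\mu$ in place of $\gamma$. The required hypotheses transfer directly, since $\alpha+\sum_{i=1}^{k^*-1}\beta_i=\sigma_{k^*-1}$ and $\sigma_{k^*-1}-\mu=\sigma_{k^*}-\gamma$ are positive roots, while for any $T''\subseteq\{1,\ldots,k^*-1\}$ the identity $\alpha+\sum_{T''}\beta_i-\mu=\alpha+\sum_{T''\cup\{k^*\}}\beta_i-\gamma$ allows the side condition to descend from the original. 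The induction then produces $T''\subseteq\{1,\ldots,k^*-1\}$ with $\sum_{T''}\beta_i-\mu$ positive or zero, and $T:=T''\cup\{k^*\}$ satisfies $\sum_T\beta_i-\gamma=\sum_{T''}\beta_i-\mu$, completing the proof.
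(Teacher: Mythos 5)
Your proof is correct. It uses the same two ingredients as the paper's argument -- Lemma \ref{lem:root-sums-sequence}(iii) and Proposition \ref{prop:root-pairs-equality}, inside an induction on $m$ -- but organises them differently. The paper picks a maximal $k$ for which $\{1,\ldots,m\}$ splits into a ``$\gamma$-part'' with $\gamma-\sum_{j\leq k}\beta_{i_j}$ positive and an ``$\alpha$-part'' with $\alpha+\sum_{j>k}\beta_{i_j}$ positive, then applies the proposition to $(\alpha+\sum_{j>k,j\neq t}\beta_{i_j})+\beta_{i_t}=(\alpha+\sum_j\beta_j-\gamma)+(\gamma-\sum_{j\leq k}\beta_{i_j})$ and, in the surviving case, recurses on the same $\gamma$ with one $\beta$ deleted. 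You instead fix a single summing sequence starting at $\alpha$, take the first partial sum $\sigma_{k^*}$ with $\sigma_{k^*}-\gamma$ positive, apply the proposition to $\sigma_{k^*-1}+\beta_{k^*}=\gamma+(\sigma_{k^*}-\gamma)$, and in the surviving case recurse with the \emph{modified} target $\mu=\gamma-\beta_{k^*}$; the identity $\alpha+\sum_{T''}\beta_i-\mu=\alpha+\sum_{T''\cup\{k^*\}}\beta_i-\gamma$ is exactly what lets hypothesis (\ref{eq:assumption}) descend, and $T=T''\cup\{k^*\}$ converts the inductive output back. Your version buys a cleaner extremal choice (a single threshold index rather than a maximal partition) and a slightly tighter case analysis -- minimality of $k^*$ kills the $(0,0)$ case directly where the paper needs maximality of its partition -- at the cost of having to verify that the hypotheses transfer to the new target $\mu$. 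Both arguments are sound; yours is a legitimate alternative rather than a paraphrase.
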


\begin{proof}
We induct on $m$, so we presume that the claim is true $<m$. Let
$0\leq k\leq m$ be the largest integer such that both $\gamma-\sum_{j=1}^{k}\beta_{i_{j}}$
and $\alpha+\sum_{j=k+1}^{m}\beta_{i_{j}}$ are positive roots, for
some partition 
\[
\{i_{1},\ldots,i_{k}\},\qquad\{i_{k+1},\ldots,i_{m}\}
\]
of $\{1,\ldots,m\}$ into two subsets. If $k=m$ then $\gamma-\sum_{j=1}^{m}\beta_{i_{j}}\in\mathfrak{R}_{+}$,
but this combines with the initial assumption $\alpha+\sum_{i=1}^{m}\beta_{i}-\gamma\in\mathfrak{R}_{+}$
and simpleness of $\alpha$ to a contradiction, so $k<m$. By Lemma
\ref{lem:root-sums-sequence}(iii) there then exists an integer $k+1\leq t\leq m$
such that
\begin{equation}
\alpha+\sum_{j=k+1,j\neq t}^{m}\beta_{i_{j}}\qquad\textrm{is a positive root}.\label{eq:remove-beta}
\end{equation}

Now applying Proposition \ref{prop:root-pairs-equality} to the equality
of roots
\[
(\alpha+\sum_{j=k+1,j\neq t}^{m}\beta_{i_{j}})+\beta_{i_{t}}=(\alpha+\sum_{i=j}^{m}\beta_{j}-\gamma)+(\gamma-\sum_{j=1}^{k}\beta_{i_{j}})
\]
yields that at least one of the expressions
\[
\gamma-\beta_{i_{t}}-\sum_{j=1}^{k}\beta_{i_{j}},\qquad\beta_{i_{t}}+\sum_{j=1}^{k}\beta_{i_{j}}-\gamma,\qquad\alpha+\sum_{j=1,j\neq t}^{m}\beta_{i_{j}}-\gamma,\qquad\gamma-\alpha-\sum_{j=1,j\neq t}^{m}\beta_{i_{j}},
\]
 lies in $\mathfrak{R}_{+}\sqcup\{0\}$. In the first case, if this
expression equals zero we're done and if it's a positive root then
combining this with equation (\ref{eq:remove-beta}) implies that
$k$ was not maximal. The second case would yield the claim immediately.
For the third case, the assumption of (\ref{eq:assumption}) implies
that this expression can't be zero and then the claim would follow
from the induction hypothesis on $m$. The fourth case is excluded
by the same assumption.
\end{proof}
\begin{cor}
Suppose that $\alpha$ is a simple root and $\beta_{1},\ldots,\beta_{m}$
for $m\geq1$ and $\gamma_{0},\gamma_{1}$ are positive roots in a
crystallographic root system, such that there is an equality
\[
\alpha+\sum_{i=1}^{m}\beta_{i}=\gamma_{0}+\gamma_{1}
\]
 of positive roots. Then for some subset $\{i_{1},\ldots,i_{k}\}$
of $\{1,\ldots,m\}$ and some $t\in\{0,1\}$, the expression
\[
\sum_{j=1}^{k}\beta_{i_{j}}-\gamma_{t}
\]
 is a positive root or is zero.
\end{cor}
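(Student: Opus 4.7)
The plan is to invoke the preceding Lemma with $\gamma := \gamma_0$ and perform a simple case split on whether its auxiliary hypothesis (\ref{eq:assumption}) holds. The initial hypotheses of the Lemma come for free, since $\alpha + \sum_{i=1}^m \beta_i = \gamma_0 + \gamma_1$ and $\alpha + \sum_{i=1}^m \beta_i - \gamma_0 = \gamma_1$ are both positive roots by assumption.

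If (\ref{eq:assumption}) holds, then the Lemma directly hands over a subset $\{i_1,\ldots,i_k\} \subseteq \{1,\ldots,m\}$ with $\sum_{j=1}^k \beta_{i_j} - \gamma_0$ in $\mathfrak{R}_+ \sqcup \{0\}$, and we conclude by taking $t = 0$. The subset is automatically nonempty, since the empty subset would force $-\gamma_0$ to be a positive root or zero, which is impossible.

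In the opposite case, some subset $S \subseteq \{1,\ldots,m\}$ makes $\alpha + \sum_{i \in S} \beta_i - \gamma_0$ a negative root or zero; equivalently, $\delta := \gamma_0 - \alpha - \sum_{i \in S} \beta_i$ is a positive root or zero. Subtracting this from the original identity $\alpha + \sum_{i=1}^m \beta_i = \gamma_0 + \gamma_1$ then yields
\[
\sum_{i \notin S} \beta_i - \gamma_1 = \delta,
\]
so the complementary subset paired with $t = 1$ furnishes the desired conclusion (and is nonempty since $\delta + \gamma_1 \neq 0$). The only nontrivial point is to notice that the symmetry of the identity between $\gamma_0$ and $\gamma_1$ converts the ``deficit'' $\delta$ witnessing the failure of (\ref{eq:assumption}) for $\gamma_0$ into the ``surplus'' witnessing the desired conclusion for $\gamma_1$; beyond this observation there is no real obstacle.
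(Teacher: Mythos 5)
Your proof is correct, and it is a genuine streamlining of the argument in the paper. Both proofs ultimately rest on the preceding Lemma, and both exploit the same symmetry — that a subset witnessing the failure of hypothesis (\ref{eq:assumption}) for $\gamma_{0}$ hands you, via the complementary subset, exactly the desired conclusion for $\gamma_{1}$ (the paper uses this observation too, in the sentence containing the displayed identity $\sum_{k+1}^{m}\beta_{i_{j}}-\gamma_{t'}=-(\alpha+\sum_{i=1}^{k}\beta_{i_{j}}-\gamma_{t})$). The difference is in how one arrives there: the paper first disposes of the case $\gamma_{t}=\alpha$, then invokes Corollary \ref{cor:root-sum-split} with $\beta_{0}=\alpha$ and works through a case analysis on the index $j$ and on whether various expressions are zero, only reaching the Lemma at the end; you instead go straight to the dichotomy on whether (\ref{eq:assumption}) holds for $\gamma=\gamma_{0}$, noting that the Lemma's two initial hypotheses are automatic since $\alpha+\sum_{i}\beta_{i}-\gamma_{0}=\gamma_{1}$. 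This makes Corollary \ref{cor:root-sum-split} unnecessary for this particular deduction and absorbs the $\gamma_{t}=\alpha$ case into the ``assumption fails with $S=\varnothing$'' branch. Your checks that the produced subsets are nonempty (the Lemma's subset cannot be empty since $-\gamma_{0}\notin\mathfrak{R}_{+}\sqcup\{0\}$, and the complement of $S$ cannot be empty since $\delta+\gamma_{1}\neq0$) are the right sanity checks, though strictly speaking the statement does not require nonemptiness. In short: same key Lemma, cleaner reduction, nothing missing.
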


\begin{proof}
We may assume that $\gamma_{t}\neq\alpha$ for both $t\in\{0,1\}$
(otherwise the claim is immediate), and we first invoke Corollary
\ref{cor:root-sum-split} with $\beta_{0}=\alpha$. If in its conclusion
$j=0$, then the claim follows immediately as $\alpha-\gamma_{t}$
in $\mathfrak{R}_{+}\sqcup\{0\}$ would imply that $\gamma_{t}=\alpha$.
Suppose on the other hand that $j\neq0$ and that moreover $\beta_{j}-\gamma_{t}$
is not in $\mathfrak{R}_{+}\sqcup\{0\}$ (otherwise the claim follows).
Then this corollary states that for certain $\gamma_{t}$, the expression
\[
\alpha+\sum_{i=1,i\neq j}^{m}\beta_{i}-\gamma_{t}\qquad\textrm{lies in }\qquad\mathfrak{R}_{+}\sqcup\{0\}.
\]
If this expression is zero then $\beta_{j}=\gamma_{t'}$, where $t':=1-t$,
and the claim follows for $\gamma_{t'}$ so we may assume that this
expression is a positive root. In fact, from the equation
\[
\sum_{k+1}^{m}\beta_{i_{j}}-\gamma_{t'}=-\bigl(\alpha+\sum_{i=1}^{k}\beta_{i_{j}}-\gamma_{t}\bigr)
\]
 it follows may furthermore assume that condition (\ref{eq:assumption})
holds, as again the claim would otherwise follow for $\gamma_{t'}$;
then we may invoke the previous lemma, which yields the claim.
\end{proof}
\begin{notation}
Given two positive roots $\gamma$ and $\gamma'$ in some root system,
we write $\gamma'<\gamma$ if their difference $\gamma-\gamma'$ lies
in the convex cone of positive roots.
\end{notation}

Finally, we prove part (i) of the main Lemma:
\begin{proof}
As the simple root $\alpha$ is not in $\mathfrak{R}_{w}$ by assumption,
the root $w(\alpha)$ is positive. Given an integer $m\geq0$ and
some roots $\beta_{1},\ldots,\beta_{m}\in\mathfrak{R}_{w}$ such that
$w(\alpha+\sum_{i=1}^{m}\beta_{i})$ is a positive root but is not
simple, we will find roots $\beta_{1}',\ldots,\beta_{m'}'\in\mathfrak{R}_{w}$
(with $m'\leq m+1$) such that $w(\alpha+\sum_{i=1}^{m'}\beta_{i}')$
is still a positive root, but is smaller in the sense that
\[
w(\alpha+\sum_{i=1}^{m'}\beta_{i}')<w(\alpha+\sum_{i=1}^{m}\beta_{i})=:\tilde{\gamma}.
\]
By downwards induction on height, the claim then follows. Since $\tilde{\gamma}$
is not simple, we may split $\tilde{\gamma}=\tilde{\gamma}_{0}+\tilde{\gamma}_{1}$
into some other positive roots $\tilde{\gamma}_{0},\tilde{\gamma}_{1}$.

If both roots $w^{-1}(\tilde{\gamma}_{0})$ and $w^{-1}(\tilde{\gamma}_{1})$
are positive then by the previous corollary (setting $\gamma_{i}:=w^{-1}(\tilde{\gamma}_{i})$),
for at least one $t\in\{0,1\}$ there is a subset $\{i_{1},\ldots,i_{k}\}$
of $\{1,\ldots,m\}$ such that $\sum_{i=1}^{k}\beta_{i_{j}}-w^{-1}(\tilde{\gamma}_{t})$
is a positive root or is zero; in the former case, it moreover lies
in $\mathfrak{R}_{w}$ as
\[
w\bigl(\sum_{i=1}^{k}\beta_{i_{j}}-w^{-1}(\tilde{\gamma}_{t})\bigr)=\sum_{i=1}^{k}w(\beta_{i_{j}})-\tilde{\gamma}_{t}
\]
 is a sum of negative roots.

If on the other hand some $w^{-1}(\tilde{\gamma}_{t})$ is negative,
then $-w^{-1}(\tilde{\gamma}_{t})$ lies in $\mathfrak{R}_{w}$ so
the same conclusion holds with $k=0$. Hence in either case
\[
w\Bigl(\alpha+\sum_{i=k+1}^{m}\beta_{i_{j}}+\bigl(\sum_{i=1}^{k}\beta_{i_{j}}-w^{-1}(\tilde{\gamma}_{t})\bigr)\Bigr)=w\bigl(\alpha+\sum_{i=1}^{m}\beta_{i}-w^{-1}(\tilde{\gamma}_{t})\bigr)=\tilde{\gamma}_{t'}<\tilde{\gamma},
\]
where $t':=1-t$.
\end{proof}
\begin{example}
Consider $w=s_{1}s_{2}$ in type $\mathsf{B}_{2}$. Then $\alpha_{1}\notin\mathfrak{R}_{w}$
and as $\mathrm{cross}_{w}(\alpha_{1})=\{\alpha_{122},\alpha_{2}\}$
it follows that there is no ``path of simple roots'' within $\mathrm{cross}_{w}(\alpha_{1})$
from $w(\alpha_{1})=\alpha_{122}$ down to a simple root.
\end{example}

\begin{rem}
\label{rem:noncrystallographic} I am not sure whether these results
naturally generalise to the noncrystallographic case; consider for
any positive root $\beta$ the set
\[
\mathrm{cross}_{w}(\beta):=\bigl\{ w(c_{0}\beta+\sum_{i=1}^{m}c_{i}\beta_{i})\in\mathfrak{R}:c_{0},\ldots,c_{m}\in\mathbb{R}_{>0},\beta_{1},\ldots,\beta_{m}\in\mathfrak{R}_{w},m\geq0\bigr\}\cap\mathfrak{R}_{+},
\]
 where one might want to put some restrictions on the $c_{i}$. For
$\mathsf{H}_{3}$ and $\mathsf{H}_{4}$ there is a well-known ``folding''
argument showing that one can realise their root systems inside those
of $\mathsf{D}_{6}$ and $\mathsf{E}_{8}$, preserving simple roots
and embedding the corresponding reflection groups; part (i) of the
main Lemma then holds for $c_{0}=1$ and $c_{i}\in\{1,\varphi^{\pm1}\}$
for $i>0$, where $\varphi$ denotes the golden ratio. On the other
hand, if we fix $c_{0}=1$ then part (i) of the main Lemma fails for
say Coxeter elements of minimal length in type $\mathsf{I}_{2}(7)$.
For dihedral groups there are only two simple roots and in nontrivial
cases the other simple root lies in $\mathfrak{R}_{w}$; thus for
(i) a positive linear combination of the simple roots can be used
if there are no restrictions on $c_{0}$. However, the main point
of (i) was to obtain (ii) by combining it with braid invariance, but
braid invariance still fails in $\mathsf{H}_{3}$.
\end{rem}

\begin{example}
Consider $w=s_{2}s_{1}$ in type $\mathsf{H}_{3}$ and denote the
golden ratio by $\varphi$. Then $\mathfrak{R}_{w}=\{\alpha_{1},\varphi\alpha_{1}+\alpha_{2}\}$
and the only simple root in $\mathrm{cross}_{w}(\alpha_{2})$ is 
\[
\alpha_{1}=w(\varphi\alpha_{1}+\varphi\alpha_{2})=w\bigl(\alpha_{2}+\varphi^{-1}\alpha_{1}+\varphi^{-1}(\varphi\alpha_{1}+\alpha_{2})\bigr).
\]
\end{example}

\subsection{Crossing for the Deligne-Garside normal form}

In this subsection we prove the remainder of the main Lemma, Corollary
\ref{cor:inverse} and the main Proposition.

Recall (e.g.\ from \cite[§4.1]{WM-mindom}) that two elements $w_{2},w_{1}$
of a twisted Weyl group are in right Deligne-Garside normal form (modulo
moving twists) if and only if for any simple reflection $s_{i}$ with
$\ell(w_{2}s_{i})<\ell(w_{2})$ we also have $\ell(s_{i}w_{1})<\ell(w_{1})$.
Furthermore, we will repeatedly use the following property from \cite[Proposition C(ii)]{WM-mindom}:
for any element $w$ of a twisted Weyl group and integer $d\geq0$,
we have
\begin{equation}
\mathfrak{R}_{\mathrm{DG}(b_{w}^{d})}\cap\mathfrak{R}_{\mathrm{st}}^{w}=\varnothing.\label{eq:dg-fixed-roots}
\end{equation}

From part (i) of the main Lemma we deduce
\begin{cor}
\label{cor:cross-DG} Let $w_{m},\ldots,w_{1}$ be elements of a twisted
Weyl group $W$ such that their product $b_{w_{m}}\cdots b_{w_{1}}$
is in Deligne-Garside normal form, after moving twists. Then for any
$y$ in $W$ such that $w_{1}\geq y$ and any simple root $\alpha$
such that $\beta:=y^{-1}(\alpha)$ lies in $\mathfrak{R}_{+}\backslash\mathfrak{R}_{w_{1}}$,
the set
\[
\mathrm{cross}_{w_{m-1}}\cdots\mathrm{cross}_{w_{1}}(\beta)
\]
 contains simple roots not lying in $\mathfrak{R}_{w_{m}}$.
\end{cor}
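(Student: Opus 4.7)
The plan is to produce, inductively for $1 \le k \le m-1$, a simple root $\alpha_k$ in the set $\mathrm{cross}_{w_k}\cdots\mathrm{cross}_{w_1}(\beta)$ satisfying $\alpha_k \notin \mathfrak{R}_{w_{k+1}}$; taking $k = m-1$ yields the claim. The engine is part (i) of the main Lemma, which supplies a simple root in $\mathrm{cross}_w(\alpha')$ whenever $\alpha'$ is simple and outside $\mathfrak{R}_w$, while the Deligne-Garside hypothesis is there to ensure that each $\alpha_k$ produced is a legal input for the next iteration.

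The key general observation is this: whenever $\alpha'$ is a simple root in $\mathrm{cross}_w(\gamma)$ for some positive root $\gamma$, the definition provides $w^{-1}(\alpha') = \gamma + \sum_i \beta_i$ with $\beta_i \in \mathfrak{R}_w$, which is at once a sum of positive roots and itself a root, hence positive. Therefore $\alpha' \notin \mathfrak{R}_{w^{-1}}$, and when $b_{w'} b_w$ is in Deligne-Garside normal form the condition that right descents of $w'$ are left descents of $w$ forces $\alpha' \notin \mathfrak{R}_{w'}$. Granting this, the induction step is immediate: part (i) of the main Lemma supplies a simple $\alpha_{k+1} \in \mathrm{cross}_{w_{k+1}}(\alpha_k)$, and the observation (applied with $w = w_{k+1}$, $w' = w_{k+2}$) forces $\alpha_{k+1} \notin \mathfrak{R}_{w_{k+2}}$.

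The substantive work is the base case $k = 1$, where $\beta$ need not be simple and part (i) cannot be invoked directly. I would exploit the hypothesis $y \le w_1$ by writing $w_1 = zy$ reducedly, so that Lemma \ref{lem:braid-invariance-cross} yields $\mathrm{cross}_{w_1}(\beta) = \mathrm{cross}_z(\mathrm{cross}_y(\beta))$. The trivial choice $m = 0$ in the definition of $\mathrm{cross}$ gives $\alpha = y(\beta) \in \mathrm{cross}_y(\beta)$, while equation (\ref{eq:decomp-roots}) gives $\mathfrak{R}_{w_1} = y^{-1}(\mathfrak{R}_z) \sqcup \mathfrak{R}_y$; combined with $\beta \notin \mathfrak{R}_{w_1}$ this forces $\beta \notin y^{-1}(\mathfrak{R}_z)$, i.e.\ $\alpha \notin \mathfrak{R}_z$. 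Part (i) of the main Lemma now produces a simple $\alpha_1 \in \mathrm{cross}_z(\alpha) \subseteq \mathrm{cross}_{w_1}(\beta)$, and the general observation above supplies $\alpha_1 \notin \mathfrak{R}_{w_2}$, starting the induction. The main obstacle is exactly this base case, where one must combine braid invariance with the reduced-decomposition formula for inversion sets to reduce to a situation amenable to part (i); thereafter the argument is driven purely by part (i) and the Deligne-Garside condition.
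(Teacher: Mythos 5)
Your proof is correct and follows essentially the same route as the paper: the base case uses the reduced decomposition $w_1=zy$ (the paper calls it $xy$), the inversion-set identity $\mathfrak{R}_{w_1}=y^{-1}(\mathfrak{R}_z)\sqcup\mathfrak{R}_y$ and part (i) of the main Lemma, and the passage from $\mathrm{cross}_z(\alpha)$ to $\mathrm{cross}_{w_1}(\beta)$ via braid invariance is exactly the manual rewriting the paper performs; the positivity of $w_1^{-1}(\alpha')$ combined with the descent characterisation of the Deligne--Garside normal form then excludes $\alpha'$ from $\mathfrak{R}_{w_2}$, and the induction proceeds identically.
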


\begin{proof}
By induction on $m$, it suffices to show that $\mathrm{cross}_{w_{1}}(\beta)$
contains a simple root not in $\mathfrak{R}_{w_{2}}$. By assumption
we have a reduced decomposition $w_{1}=xy$, and the root $w_{1}(\beta)=x(\alpha)$
is positive so $\alpha$ is not in $\mathfrak{R}_{x}$. By part (i)
of the main Lemma there then exists a simple root $\alpha'\in\mathrm{cross}_{x}(\alpha)$,
which means that
\[
x(\alpha+\sum_{i=1}^{m}\beta_{i})=\alpha'
\]
 for some $\beta_{1},\ldots,\beta_{m}\in\mathfrak{R}_{x}$. As $x=w_{1}y^{-1}$,
this yields the equation
\[
w_{1}\bigr(\beta+\sum_{i=1}^{m}y^{-1}(\beta_{i})\bigl)=\alpha'
\]
with each $y^{-1}(\beta_{i})\in y^{-1}(\mathfrak{R}_{x})\subseteq\mathfrak{R}_{w_{1}}$
which then implies that $\alpha'\in\mathrm{cross}_{w_{1}}(\beta)$.
The normal form condition on the pair $w_{2},w_{1}$ is equivalent
to requiring that $w_{1}^{-1}(\tilde{\alpha})$ is a negative root
for any simple root $\tilde{\alpha}$ lying in $\mathfrak{R}_{w_{2}}$.
As $w_{1}^{-1}(\alpha')=\beta+\sum_{i=1}^{m}y^{-1}(\beta_{i})$ is
a sum of positive roots and is therefore positive, it now follows
that $\alpha'$ does not lie in $\mathfrak{R}_{w_{2}}$.
\end{proof}
When $\alpha$ is not simple, there may not be simple roots in these
sets:
\begin{example}
Consider again $w=s_{1}s_{2}s_{3}s_{1}s_{2}$ in type $\mathsf{B}_{3}$.
Then $\mathrm{DGN}(b_{w}^{i})=b_{w}^{i}$ and the root $\alpha_{1233}$
does not lie in $\mathfrak{R}_{w}\sqcup\mathfrak{R}^{w}$, yet
\[
\mathrm{cross}_{w}(\alpha_{1233})=\{w(\alpha_{1233})\}=\{\alpha_{233}\}\in\mathfrak{R}_{w}.
\]
\end{example}

\begin{notation}
Recall that we write $\mathrm{DG}_{>1}(b_{w}^{d}):=b_{w}^{d}\mathrm{DG}(b_{w}^{d})^{-1}$
for the left complement to $\mathrm{DG}(b_{w}^{d})$ in $b_{w}^{d}$.
\end{notation}

\begin{cor}
Let $w$ be an element of a twisted Weyl group $W$, let $\mathfrak{N}\subseteq\mathfrak{R}_{+}$
be a subset of positive roots and pick an integer $d\geq0$.

\begin{enumerate}[\normalfont(i)]

\item If $\mathfrak{N}\subseteq\mathfrak{R}_{\mathrm{DG}(b_{w}^{d})}$
then $\mathrm{cross}_{w}^{d}(\mathfrak{N})=\varnothing$.

\item If there exist a simple root $\alpha$ in $\mathfrak{N}$ and
element $y$ in $W$ satisfying 
\[
\mathrm{DG}(b_{w}^{d})\geq y\qquad\textrm{and}\qquad y^{-1}(\alpha)\in\mathfrak{R}_{+}\backslash\mathfrak{R}_{\mathrm{DG}(b_{w}^{d})},
\]
 then $\mathrm{cross}_{w}^{d}(\mathfrak{N})$ contains simple roots.

\end{enumerate}
\end{cor}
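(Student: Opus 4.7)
The plan is to reduce (i) to braid invariance plus convexity, and (ii) to Corollary \ref{cor:cross-DG} followed by one more application of part (i) of the main Lemma.

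\textbf{Part (i).} By Lemma \ref{lem:braid-invariance-cross} applied to the factorisation $b_{w}^{d} = \mathrm{DG}_{>1}(b_{w}^{d})\cdot b_{\mathrm{DG}(b_{w}^{d})}$, we have
\[
\mathrm{cross}_{w}^{d}(\mathfrak{N}) \;=\; \mathrm{cross}_{\mathrm{DG}_{>1}(b_{w}^{d})}\bigl(\mathrm{cross}_{\mathrm{DG}(b_{w}^{d})}(\mathfrak{N})\bigr),
\]
so it suffices to show that the inner set is empty. For any $\beta \in \mathfrak{N} \subseteq \mathfrak{R}_{\mathrm{DG}(b_{w}^{d})}$ and any $\beta_{1},\ldots,\beta_{m} \in \mathfrak{R}_{\mathrm{DG}(b_{w}^{d})}$ for which $\beta + \sum_{i}\beta_{i}$ is a root, Lemma \ref{lem:root-sums-sequence}(iii) together with convexity of the inversion set forces this sum into $\mathfrak{R}_{\mathrm{DG}(b_{w}^{d})}$, so it is sent to a negative root by $\mathrm{DG}(b_{w}^{d})$ and thus contributes nothing to $\mathrm{cross}_{\mathrm{DG}(b_{w}^{d})}(\mathfrak{N})$.

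\textbf{Part (ii).} Write $\mathrm{DGN}(b_{w}^{d}) = b_{w_{m}}\cdots b_{w_{1}}$ with $w_{1} = \mathrm{DG}(b_{w}^{d})$, so $\mathrm{cross}_{w}^{d} = \mathrm{cross}_{w_{m}}\cdots\mathrm{cross}_{w_{1}}$. The assumptions on $y$ and on the simple root $\alpha \in \mathfrak{N}$ are exactly those of Corollary \ref{cor:cross-DG} for the sequence $(w_{m},\ldots,w_{1})$, which therefore produces a simple root $\alpha^{*} \notin \mathfrak{R}_{w_{m}}$ in $\mathrm{cross}_{w_{m-1}}\cdots\mathrm{cross}_{w_{1}}(\beta)$, where $\beta := y^{-1}(\alpha)$. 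One further application of part (i) of the main Lemma, now to $w_{m}$ and the simple root $\alpha^{*}$, delivers a simple root $\alpha^{**} \in \mathrm{cross}_{w_{m}}(\alpha^{*})$. Composing, $\alpha^{**}$ is a simple root in $\mathrm{cross}_{w}^{d}$ applied to a witness built from $\alpha$, hence in $\mathrm{cross}_{w}^{d}(\mathfrak{N})$.

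\textbf{Main obstacle.} The delicate ingredient lives inside Corollary \ref{cor:cross-DG}: at each step along the Deligne-Garside sequence one must check that the newly-constructed simple root avoids the inversion set of the next factor, so that part (i) of the main Lemma can be reapplied. This follows from the right Deligne-Garside normal form property that $w_{j}^{-1}(\tilde{\alpha}) < 0$ for every simple $\tilde{\alpha} \in \mathfrak{R}_{w_{j+1}}$, combined with the explicit shape of the simple root produced by main Lemma (i), namely $w_{j}$ applied to $\alpha$ plus a positive sum of elements of $\mathfrak{R}_{w_{j}}$, which ensures that its preimage under $w_{j}^{-1}$ is positive and thus that it escapes $\mathfrak{R}_{w_{j+1}}$.
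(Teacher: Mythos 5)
Your proof is correct and follows essentially the same route as the paper: part (i) is braid invariance of $\mathrm{cross}$ applied to the Deligne--Garside factorisation together with convexity of the inversion set $\mathfrak{R}_{\mathrm{DG}(b_{w}^{d})}$, and part (ii) is Corollary \ref{cor:cross-DG} applied to $\mathrm{DGN}(b_{w}^{d})$, with the final application of part (i) of the main Lemma for the last factor $w_{m}$ (which the paper leaves implicit) spelled out. The only caveat, shared with the paper's own proof, is that the witness chain starts at $y^{-1}(\alpha)$ rather than $\alpha$, so landing in $\mathrm{cross}_{w}^{d}(\mathfrak{N})$ tacitly uses $y^{-1}(\alpha)\in\mathfrak{N}$, which is how the statement is actually invoked in the deduction of part (ii) of the main Lemma.
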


\begin{proof}
(i): If $\mathfrak{N}\subseteq\mathfrak{R}_{\mathrm{DG}(b_{w}^{d})}$
then by Lemma \ref{lem:braid-invariance-cross} we have 
\[
\mathrm{cross}_{w}^{d}(\mathfrak{N})=\mathrm{cross}_{\mathrm{DG}_{>1}(b_{w}^{d})}\mathrm{cross}_{\mathrm{DG}(b_{w}^{d})}(\mathfrak{N})=\varnothing.
\]

(ii): Corollary \ref{cor:cross-DG} yields that there are simple roots
lying in
\[
\mathrm{cross}_{\mathrm{DG}_{>1}(b_{w}^{d})}\mathrm{cross}_{\mathrm{DG}(b_{w}^{d})}(\mathfrak{N})=\mathrm{cross}_{w}^{d}(\mathfrak{N}).\qedhere
\]
\end{proof}
From this corollary we can deduce part (ii) of the main Lemma:
\begin{proof}
The implication $\Rightarrow$ follows immediately from the first
part of the corollary. For $\Leftarrow$, suppose that the inequality
$\mathrm{DG}(b_{w}^{d})\geq w'$ does not hold. So we may suppose
that there exists a simple reflection $s_{i}$ and an element $y\in W$
such that 
\[
\mathrm{DG}(b_{w}^{d})\geq y,\qquad w'\geq s_{i}y,\qquad\mathrm{DG}(b_{w}^{d})\not\geq s_{i}y.
\]
 Then $y^{-1}(\alpha_{i})$ is a positive root lying in $\mathfrak{R}_{w'}\backslash\mathfrak{R}_{\mathrm{DG}(b_{w}^{d})}$,
so from the second part of the corollary it follows that $\mathrm{cross}_{w}^{d}(\mathfrak{R}_{w'})$
contains a simple root.
\end{proof}
\begin{lem}
\label{lem:dg-simple-root} Let $w$ be an element of a twisted Weyl
group and pick an integer $d\geq0$. Then for any simple root $\alpha$
whose orbit under $w$ consists solely of other simple roots, the
root $\mathrm{DG}(b_{w}^{d})(\alpha)$ is again simple.
\end{lem}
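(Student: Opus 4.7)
My plan is to compute the DG normal form of the single braid $b_w^d\,b_{s_\alpha}$ in two different ways and then invoke uniqueness of the right Deligne--Garside factor. Write $\alpha_k := w^k(\alpha)$ for $k\geq 0$; by hypothesis every $\alpha_k$ is a simple (hence positive) root, so $\alpha$ lies in $\mathfrak{R}_{\mathrm{st}}^w$. Setting $z := \mathrm{DG}(b_w^d)$ and $x := \mathrm{DG}_{>1}(b_w^d)$, the identity (\ref{eq:dg-fixed-roots}) gives $\alpha \notin \mathfrak{R}_z$, so $z(\alpha)$ is already a positive root with $\ell(zs_\alpha) = \ell(z)+1$, and the problem reduces to showing this positive root is simple.

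Since $w s_\alpha w^{-1} = s_{\alpha_1}$ with $\alpha, \alpha_1 > 0$, the usual length bookkeeping gives $b_w\,b_{s_\alpha} = b_{s_{\alpha_1}}\,b_w$ in the twisted braid monoid, and iterating produces $b_w^d\,b_{s_\alpha} = b_{s_{\alpha_d}}\,b_w^d$. Reading the common braid from the first expression, $b_x b_z b_{s_\alpha} = b_x b_{zs_\alpha}$; a short length check --- any right descent $s_i$ of $x$ satisfies $\ell(s_iz) = \ell(z)-1$ by DG-normality of $(x,z)$, which combined with $\ell(zs_\alpha) = \ell(z)+1$ squeezes $\ell(s_izs_\alpha) = \ell(z) = \ell(zs_\alpha)-1$ --- shows that $(x, zs_\alpha)$ is itself in DG normal form, so $\mathrm{DG}(b_w^d b_{s_\alpha}) = zs_\alpha$ of length $\ell(z)+1$. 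Reading the braid from the second expression, the Weyl-group identity $x^{-1}(\alpha_d) = z(\alpha) > 0$ (via $xz = w^d$) tells us $s_{\alpha_d}$ is not a left descent of $x$, giving the alternative factorization $b_w^d b_{s_\alpha} = b_{s_{\alpha_d}x}\,b_z$.

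Now $(s_{\alpha_d}x,\,z)$ has lengths $(\ell(x)+1,\ell(z))$, whereas DG normal form has lengths $(\ell(x),\ell(z)+1)$; the passage between them pulls exactly one simple reflection $s_j$ from the left factor to the right factor. Such an $s_j$ must exist (otherwise $(s_{\alpha_d}x,z)$ would already be in DG normal form, with the wrong right length), is a right descent of $s_{\alpha_d}x$, and satisfies $\ell(s_jz) = \ell(z)+1$, yielding $b_{s_{\alpha_d}x}\,b_z = b_{s_{\alpha_d}xs_j}\,b_{s_jz}$. Because the new right factor $s_jz$ has length $\ell(z)+1 = \ell(\mathrm{DG}(b_w^d b_{s_\alpha}))$, Garside-monoid uniqueness of the maximal-length right factor forces $s_jz = zs_\alpha$, hence $s_j = s_{z(\alpha)}$. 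Since $s_j$ was chosen as a simple reflection, $z(\alpha)$ is a simple root, as desired.

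The main subtlety lies in the last paragraph, namely the appeal to the Garside-theoretic fact that the maximal-length right factor of a positive braid in the twisted braid monoid is unique; given this, the one-simple-reflection pull and the forced identification $s_j = s_{z(\alpha)}$ follow essentially from length bookkeeping.
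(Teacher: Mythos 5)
Your argument is correct in substance but follows a genuinely different route from the paper's. The paper deduces the lemma from its $\mathrm{cross}$ machinery: part (i) of the main Lemma produces a simple root in $\mathrm{cross}_{\mathrm{DG}(b_{w}^{d})}(\alpha)$ witnessed by a \emph{nontrivial} summing sequence when $\mathrm{DG}(b_{w}^{d})(\alpha)$ fails to be simple, Corollary \ref{cor:cross-DG} extends this to a sequence for the whole normal form, and the transfer maps of Lemma \ref{lem:braid-invariance-cross} (whose preservation of nontriviality exists precisely for this purpose) carry it to a nontrivial sequence for $b_{w}^{d}$ itself, which the hypothesis on the $w$-orbit of $\alpha$ then rules out. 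You instead stay entirely inside the braid monoid: from the conjugation identity $b_{w}^{d}b_{s_{\alpha}}=b_{s_{w^{d}(\alpha)}}b_{w}^{d}$ and equation (\ref{eq:dg-fixed-roots}) you compute $\mathrm{DG}(b_{w}^{d}b_{s_{\alpha}})=\mathrm{DG}(b_{w}^{d})s_{\alpha}$ one way, and the other way see that this factor must arise from $\mathrm{DG}(b_{w}^{d})$ by prepending a single simple reflection, forcing $\mathrm{DG}(b_{w}^{d})s_{\alpha}\mathrm{DG}(b_{w}^{d})^{-1}$ to be simple. This is shorter, bypasses part (i) of the main Lemma and the transfer maps entirely, and yields the cleaner statement $\mathrm{DG}(b_{w}^{d}b_{s_{\alpha}})=\mathrm{DG}(b_{w}^{d})s_{\alpha}$ as a by-product; the cost is that it leans on standard Garside facts (uniqueness and maximality of the right Deligne--Garside factor among right-dividing reduced braids, and the local characterisation of normality against a positive-braid left factor) rather than on the root combinatorics the paper has already built. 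One presentational flaw to fix: $\mathrm{DG}_{>1}(b_{w}^{d})$ is a positive braid, generally not of the form $b_{v}$ for a Weyl group element $v$, so expressions such as $b_{s_{\alpha_{d}}x}$, $\ell(x)$ as a Coxeter length, and ``$s_{\alpha_{d}}$ is not a left descent of $x$'' are not literally meaningful; however, every step you actually use only requires right-divisibility of the braid $b_{s_{\alpha_{d}}}\mathrm{DG}_{>1}(b_{w}^{d})$ by simple generators, so the argument survives once restated in those terms (and the clause about $x^{-1}(\alpha_{d})$ can simply be deleted, as nothing downstream depends on it).
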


\begin{proof}
From equation (\ref{eq:dg-fixed-roots}) it follows that $\alpha$
is not in $\mathfrak{R}_{\mathrm{DG}(b_{w}^{d})}$, so part (i) of
the main Lemma implies that there has to exist a simple root $\alpha''$
in $\mathrm{cross}_{\mathrm{DG}(b_{w}^{d})}(\alpha)$, which means
that there exist roots $\beta_{1,1},\ldots,\beta_{m,1}\in\mathfrak{R}_{\mathrm{DG}(b_{w}^{d})}$
such that 
\[
\mathrm{DG}(b_{w}^{d})(\alpha+\sum_{i=1}^{m}\beta_{i,1})=\alpha''.
\]

In particular, if $\mathrm{DG}(b_{w}^{d})(\alpha)$ is not simple
then $m>0$. From Corollary \ref{cor:cross-DG} we similarly obtain
a sequence in $\mathrm{cross}_{\mathrm{DG}_{>1}(b_{w}^{d})}(\alpha',\alpha'')$
for some simple root $\alpha'$, which we may concatenate with the
$(\beta_{i,1})$ to a sequence $(\beta_{i,j})\in\mathrm{cross}_{\mathrm{DGN}(b_{w}^{d})}(\alpha',\alpha)$.
Since the $(\beta_{i,1})$ part of this sequence is nontrivial, Lemma
\ref{lem:braid-invariance-cross} implies that we can transfer $(\beta_{i,j})$
to a nontrivial sequence $(\beta_{i,j}')$ in $\mathrm{cross}_{w}^{d}(\alpha',\alpha)$.
This gives positive roots $\alpha_{j}'$ inductively defined for $1\leq j\leq d$
as
\[
\alpha_{j}':=w(\alpha_{j-1}'+\sum_{i}\beta_{i,j}'),\qquad\alpha_{0}':=\alpha,
\]
 with each $\beta_{i,j}'\in\mathfrak{R}_{w}$. A priori the roots
$\alpha_{j}'$ are not necessarily simple, but we now inductively
prove that they are all simple roots lying in the $w$-orbit of $\alpha$,
and that the elements $\beta_{i,j}'$ are all zero: if it's true $<j$
then the induction hypothesis yields that 
\[
\alpha_{j}'=w(\alpha_{j-1}')+\sum_{i}w(\beta_{i,j}')\in\mathfrak{R}_{+}
\]
but as $w(\alpha_{j-1}')$ is simple by assumption and each nontrivial
root $\beta_{i,j}'$ lies in $\mathfrak{R}_{w}$ this implies that
each $\beta_{i,j}'=0$ and then $\alpha'_{j}=w(\alpha_{j-1}')$. But
that is a contradiction as the sequence $(\beta_{i,j}')$ was constructed
to be nontrivial, and therefore $\mathrm{DG}(b_{w}^{d})(\alpha)$
must be a simple root.
\end{proof}
\begin{cor}
If $w$ is convex (resp.\ firmly convex), then each of the elements
in the sequence
\begin{equation}
w,\qquad\mathrm{DG}(b_{w}^{2})\,w\,\mathrm{DG}(b_{w}^{2})^{-1},\qquad\mathrm{DG}(b_{w}^{3})\,w\,\mathrm{DG}(b_{w}^{3})^{-1},\qquad\ldots,\label{eq:dg-conjugates}
\end{equation}
 of cyclic shifts is convex (resp.\ firmly convex).
\end{cor}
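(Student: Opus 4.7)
The plan is to reduce to showing the single-step claim that if $w$ is convex (resp.\ firmly convex) and $x := \mathrm{DG}(b_w^d)$, then the conjugate $w' := xwx^{-1}$ remains convex (resp.\ firmly convex); the full statement then follows by iterating over $d \geq 2$. Since (firm) convexity is equivalent to nimbleness of $\mathfrak{R}_+ \setminus \mathfrak{R}^w_{\mathrm{st}}$ (resp.\ $\mathfrak{R}_+ \setminus \mathfrak{R}^w$), as recalled in the example after Definition~\ref{def:cross}'s companion definition of nimbleness, the task reduces to transporting this nimbleness along conjugation by $x$.

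The key step is to use Lemma~\ref{lem:dg-simple-root} to show that $x$ induces a positivity-preserving isomorphism $\mathfrak{R}^w_{\mathrm{st}} \to \mathfrak{R}^{w'}_{\mathrm{st}}$ intertwining the actions of $w$ and $w'$. Indeed, that lemma gives that $x$ sends any simple root $\alpha$ whose $w$-orbit consists of simple roots to a simple root $x(\alpha)$; the intertwining identity $w'x = xw$ then ensures that the $w'$-orbit of $x(\alpha)$ is just $x$ applied to the $w$-orbit of $\alpha$, hence also consists of simple roots. Combined with the identity $\mathfrak{R}_x \cap \mathfrak{R}^w_{\mathrm{st}} = \varnothing$ from equation (\ref{eq:dg-fixed-roots}), this would yield a bijection $\mathfrak{R}_+ \setminus \mathfrak{R}^w_{\mathrm{st}} \to \mathfrak{R}_+ \setminus \mathfrak{R}^{w'}_{\mathrm{st}}$ along which the three nimbleness axioms transfer (using $w' = xwx^{-1}$ for the closure condition, and the convexity of the image under $x$ for the first axiom).

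The main obstacle is tracking positivity carefully through this transfer: $x$ does not send all of $\mathfrak{R}_+$ to $\mathfrak{R}_+$, so the bijection must split into a ``stable part,'' where Lemma~\ref{lem:dg-simple-root} ensures positivity is preserved on the simple generators and hence on their positive integer combinations, and a ``non-stable part,'' where the roots flipped by $x$ (namely $\mathfrak{R}_x$) are disjoint from the stable subsystem by (\ref{eq:dg-fixed-roots}), permitting a clean sign adjustment so that no stable roots are inadvertently inverted. The firmly convex case should proceed identically, replacing $\mathfrak{R}^w_{\mathrm{st}}$ by $\mathfrak{R}^w$ throughout; the same lemma controls simple fixed roots of $w$ (whose orbit is a singleton, trivially all-simple) and so gives the analogous isomorphism $\mathfrak{R}^w \to \mathfrak{R}^{w'}$ needed for the stronger nimbleness condition.
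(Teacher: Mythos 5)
Your core idea coincides with the paper's: apply Lemma \ref{lem:dg-simple-root} to the simple roots generating $\mathfrak{R}_{\mathrm{st}}^{w}$ (which $w$ permutes, by convexity and height considerations), conclude that $x:=\mathrm{DG}(b_{w}^{d})$ sends them to simple roots, and deduce that $x(\mathfrak{R}_{\mathrm{st}}^{w})$ is again a standard parabolic subsystem. However, there is a genuine gap: you never establish that $\mathfrak{R}_{\mathrm{st}}^{w'}$ actually \emph{equals} $x(\mathfrak{R}_{\mathrm{st}}^{w})$ for $w'=xwx^{-1}$. The intertwining relation $w'x=xw$ only gives you this for the fixed subsystem $\mathfrak{R}^{w'}=x(\mathfrak{R}^{w})$; the stable subsystem is defined relative to the chosen positive system, so it is not transported by an arbitrary conjugation, and showing that $x(\mathfrak{R}_{\mathrm{st}}^{w})$ is standard parabolic says nothing about possible extra stable roots of $w'$ outside that image. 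The paper closes exactly this gap by citing the prequel: conjugation by $\mathrm{DG}(b_{w}^{d})$ is a composite of cyclic shifts \cite[Proposition 4.39]{WM-mindom}, and cyclic shifts transform stable subsystems equivariantly \cite[Proposition A(i)]{WM-mindom}, yielding $\mathfrak{R}_{\mathrm{st}}^{w'}=\mathrm{DG}(b_{w}^{d})(\mathfrak{R}_{\mathrm{st}}^{w})$. Without some such input your argument only shows that $\mathfrak{R}_{\mathrm{st}}^{w'}$ \emph{contains} a standard parabolic subsystem.

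A secondary problem is the proposed transfer of nimbleness along the ``sign-adjusted'' bijection $\beta\mapsto\pm x(\beta)$. Convexity is not stable under a map that flips the signs of some roots: if $\beta+\gamma$ is a root with $x(\beta)$ positive and $x(\gamma)$ negative, then $|x(\beta)|+|x(\gamma)|=x(\beta-\gamma)$ bears no relation to $x(\beta+\gamma)$, so the ``clean sign adjustment'' does not transfer the closure axiom. This detour is also unnecessary: once $\mathfrak{R}_{\mathrm{st}}^{w'}$ is known to be standard parabolic, convexity of $w'$ follows directly (the complement of a standard parabolic in $\mathfrak{R}_{+}$ is automatically convex, cf.\ the equivalence recorded in Proposition \ref{prop:dg-inversion-sets}(i) via \cite[Proposition 4.21]{WM-mindom}), which is how the paper concludes. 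Your treatment of the firmly convex case via the $w$-fixed simple roots is fine, since there the identity $\mathfrak{R}^{w'}=x(\mathfrak{R}^{w})$ is formal.
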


\begin{proof}
It was proven in \cite[Proposition 4.39]{WM-mindom} that conjugation
by $\mathrm{DG}(b_{w}^{d})$ induces a sequence of cyclic shifts,
so from \cite[Proposition A(i)]{WM-mindom} we then deduce that
\[
\mathfrak{R}_{\mathrm{st}}^{\mathrm{DG}(b_{w}^{d})w\mathrm{DG}(b_{w}^{d})^{-1}}=\mathrm{DG}(b_{w}^{d})(\mathfrak{R}_{\mathrm{st}}^{w}).
\]
If $w$ is convex then by height considerations it follows that it
must map any of the simple roots in $\mathfrak{R}_{\mathrm{st}}^{w}\cap\mathfrak{R}_{+}$
to other simple roots. The lemma now implies that $\mathrm{DG}(b_{w}^{d})(\mathfrak{R}_{\mathrm{st}}^{w})$
is also a standard parabolic subsystem.
\end{proof}
The main Proposition follows from
\begin{prop}
\label{prop:dg-inversion-sets} Let $w$ be an element of a twisted
Weyl group.

\begin{enumerate}[\normalfont(i)]

\item Let $\mathfrak{N}=\mathfrak{R}_{y}$ for some $y\in W$. Then
$\mathfrak{N}$ is $w$-nimble if and only if $y\geq w$ and $y\geq yw^{-1}$
in the weak left Bruhat-Chevalley order.

In particular, the inversion sets associated to the elements in the
sequence
\begin{equation}
w=\mathrm{DG}(b_{w}),\qquad\mathrm{DG}(b_{w}^{2}),\qquad\mathrm{DG}(b_{w}^{3}),\qquad\ldots,\label{eq:dgs}
\end{equation}
yield $w$-nimble sets. On the other hand, $\mathfrak{R}_{+}\backslash\mathfrak{R}_{\mathrm{st}}^{w}$
is $w$-nimble if and only if $w$ is convex.

\item If indeed $w$ is convex, then
\[
\mathfrak{R}_{+}\backslash\mathfrak{R}_{\mathrm{st}}^{w}+\mathfrak{R}_{\mathrm{st}}^{w}\subseteq\mathfrak{R}_{+}\backslash\mathfrak{R}_{\mathrm{st}}^{w}\qquad\textrm{and}\qquad\mathfrak{R}_{\mathrm{DG}(b_{w}^{d})}+\mathfrak{R}_{\mathrm{st}}^{w}\subseteq\mathfrak{R}_{\mathrm{DG}(b_{w}^{d})}
\]
 for any natural number $d\geq1$.

\end{enumerate}
\end{prop}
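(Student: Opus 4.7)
I plan to handle part (i) first, whose main equivalence then sets up the combinatorial framework for part (ii).

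For part (i), I unpack the definition of $w$-nimble applied to $\mathfrak{N} = \mathfrak{R}_y$: the first clause $\mathfrak{R}_w \subseteq \mathfrak{R}_y$ is the standard root-theoretic reformulation of $y \geq w$ in the weak left Bruhat-Chevalley order, with corresponding reduced decomposition $y = (yw^{-1}) \cdot w$. The inversion-set formula $\mathfrak{R}_y = \mathfrak{R}_w \sqcup w^{-1}(\mathfrak{R}_{yw^{-1}})$ for reduced products then gives $w(\mathfrak{R}_y \setminus \mathfrak{R}_w) = \mathfrak{R}_{yw^{-1}}$, so the invariance clause of nimbleness reduces to $\mathfrak{R}_{yw^{-1}} \subseteq \mathfrak{R}_y$, equivalently $y \geq yw^{-1}$. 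For the DG sequence, the identity $b_w^d = b_w^{d-1} \cdot b_w$ and maximality of the right DG factor yield $\mathrm{DG}(b_w^d) \geq w$; writing $\mathrm{DG}(b_w^d) = zw$ reduced, $z$ right-divides $b_w^{d-1}$, so $z \leq \mathrm{DG}(b_w^{d-1}) \leq \mathrm{DG}(b_w^d)$ by monotonicity of DG under further right multiplication. For the set $\mathfrak{R}_+ \setminus \mathfrak{R}_{\mathrm{st}}^w$, its convexity is the definition of $w$ convex per \cite[Proposition 4.21]{WM-mindom}; the containment $\mathfrak{R}_w \subseteq \mathfrak{R}_+ \setminus \mathfrak{R}_{\mathrm{st}}^w$ is automatic and the invariance clause is immediate from $w$-invariance of $\mathfrak{R}_{\mathrm{st}}^w$.

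For part (ii), assume $w$ convex throughout. The corollary preceding the proposition furnishes that $\mathfrak{R}_{\mathrm{st}}^w$ is a standard parabolic subsystem (hence convex and negation-closed) on which $\mathrm{DG}(b_w^d)$ acts as a diagram automorphism, permuting its simple roots via Lemma \ref{lem:dg-simple-root}. Given $\alpha, \beta, \gamma = c_0\alpha + c_1\beta$ as in the first inclusion, were $\gamma$ in $\mathfrak{R}_{\mathrm{st}}^w$, negation-closure and convexity of the parabolic would force $\alpha = c_0^{-1}\gamma + c_0^{-1}c_1(-\beta) \in \mathrm{span}(\mathfrak{R}_{\mathrm{st}}^w) \cap \mathfrak{R} = \mathfrak{R}_{\mathrm{st}}^w$, a contradiction. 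The root $\gamma$ is positive when $\beta > 0$; if $\beta < 0$ and hypothetically $\gamma < 0$, one rewrites $-\beta = c_1^{-1}(-\gamma) + c_1^{-1}c_0\alpha$ as a positive combination of roots $-\gamma, \alpha \in \mathfrak{R}_+ \setminus \mathfrak{R}_{\mathrm{st}}^w$ (where $-\gamma$ lies outside $\mathfrak{R}_{\mathrm{st}}^w$ by the argument just given), whence convexity forces $-\beta \in \mathfrak{R}_+ \setminus \mathfrak{R}_{\mathrm{st}}^w$, contradicting $-\beta \in \mathfrak{R}_{\mathrm{st}}^w$.

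For the second inclusion, the first inclusion combined with the disjointness $\mathfrak{R}_{\mathrm{DG}(b_w^d)} \cap \mathfrak{R}_{\mathrm{st}}^w = \varnothing$ from (\ref{eq:dg-fixed-roots}) yields a positive root $\gamma \notin \mathfrak{R}_{\mathrm{st}}^w$; it remains to confirm $\gamma \in \mathfrak{R}_{\mathrm{DG}(b_w^d)}$. Applying $\mathrm{DG}(b_w^d)$: this element preserves $V := \mathrm{span}(\mathfrak{R}_{\mathrm{st}}^w)$, and by invertibility also acts faithfully on $\mathfrak{h}^* / V$, so $\mathrm{DG}(b_w^d)(\alpha) \bmod V$ is non-zero (as $\alpha \notin V$). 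Since $\mathrm{DG}(b_w^d)(\alpha)$ is a negative root, its coefficients in simple roots outside $\mathfrak{R}_{\mathrm{st}}^w$ are non-positive with at least one strictly negative, whereas $\mathrm{DG}(b_w^d)(\beta) \in V$ contributes nothing there. The root $\mathrm{DG}(b_w^d)(\gamma)$ therefore inherits a strictly negative outside-$V$ coefficient, forcing it to be a negative root, so $\gamma \in \mathfrak{R}_{\mathrm{DG}(b_w^d)}$.

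The hard part will be formally justifying that, for $w$ convex, $\mathfrak{R}_{\mathrm{st}}^w$ is a standard parabolic subsystem and $\mathrm{DG}(b_w^d)$ acts on it via a diagram automorphism; these assertions are made informally in the corollary above but a rigorous write-up would synthesize them from \cite{WM-mindom} and Lemma \ref{lem:dg-simple-root}. A secondary care is the sign case analysis for $\beta$ in the positivity sub-steps, which I would treat uniformly by exploiting the negation-closure of $\mathfrak{R}_{\mathrm{st}}^w$.
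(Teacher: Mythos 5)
Your part (i) and the first inclusion of part (ii) are correct and follow essentially the paper's route: the reduction of nimbleness of $\mathfrak{R}_y$ to $y\geq w$ and $\mathfrak{R}_{yw^{-1}}=w(\mathfrak{R}_y)\cap\mathfrak{R}_+\subseteq\mathfrak{R}_y$ is exactly the paper's computation, your braid-monoid cancellation argument for $\mathrm{DG}(b_w^d)=zw$ with $z\leq\mathrm{DG}(b_w^{d-1})\leq\mathrm{DG}(b_w^d)$ is a legitimate way to supply a step the paper leaves implicit, and the first inclusion of (ii) is the same coefficient/parabolic argument the paper invokes in one line.

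The genuine gap is in the second inclusion of (ii), precisely the step you yourself flag as ``the hard part'': you assume that $\mathrm{DG}(b_w^d)$ preserves $V=\mathrm{span}(\mathfrak{R}_{\mathrm{st}}^{w})$, i.e.\ acts on $\mathfrak{R}_{\mathrm{st}}^{w}$ by a diagram automorphism. That is not what the corollary following Lemma \ref{lem:dg-simple-root} asserts. It asserts that $\mathrm{DG}(b_w^d)(\mathfrak{R}_{\mathrm{st}}^{w})=\mathfrak{R}_{\mathrm{st}}^{w'}$ for the cyclic shift $w'=\mathrm{DG}(b_w^d)\,w\,\mathrm{DG}(b_w^d)^{-1}$, and that this image is \emph{again a} standard parabolic subsystem --- in general a different one, since $w'$ need not equal $w$. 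Nothing available to you establishes $\mathrm{DG}(b_w^d)(V)=V$, and with that premise removed your coefficient count breaks at the point where you claim ``$\mathrm{DG}(b_w^d)(\beta)\in V$ contributes nothing there'': for $\beta\in\mathfrak{R}_{\mathrm{st}}^{w}$ the root $\mathrm{DG}(b_w^d)(\beta)$ lies in $\mathrm{span}\bigl(\mathrm{DG}(b_w^d)(\mathfrak{R}_{\mathrm{st}}^{w})\bigr)$, which may well have nonzero coefficients at simple roots outside $\mathfrak{R}_{\mathrm{st}}^{w}$. The repair is small and is what the paper actually does: measure coefficients at simple roots outside the \emph{image} parabolic $P':=\mathrm{DG}(b_w^d)(\mathfrak{R}_{\mathrm{st}}^{w})$. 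Then $\mathrm{DG}(b_w^d)(\beta)$ contributes nothing there; $\mathrm{DG}(b_w^d)(\alpha)$ is a negative root not lying in $\mathrm{span}(P')$ (because $\alpha\notin V$ by (\ref{eq:dg-fixed-roots}) and $\mathrm{DG}(b_w^d)(V)=\mathrm{span}(P')$), so it has a strictly negative coefficient at some simple root outside $P'$ --- and here the only input needed is that $P'$ is standard parabolic, which is exactly what Lemma \ref{lem:dg-simple-root} applied to the simple roots of $\mathfrak{R}_{\mathrm{st}}^{w}$ (permuted by the convex element $w$) delivers. The paper phrases the same computation contrapositively: if $\mathrm{DG}(b_w^d)(c_0\alpha+c_1\beta)$ were positive, then $\mathrm{DG}(b_w^d)(\alpha)$ would have to lie in the negative half of $P'$, forcing $\alpha\in\mathfrak{R}_{\mathrm{st}}^{w}$ and contradicting (\ref{eq:dg-fixed-roots}). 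So do not try to prove the stronger invariance statement; weaken it to what the corollary gives and your argument closes.
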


\begin{proof}
(i): The inclusion $\mathfrak{R}_{w}\subseteq\mathfrak{R}_{y}$ is
equivalent to $y\geq w$. Under this assumption, there is a reduced
decomposition $y=(yw^{-1})w$ which means 
\[
\mathfrak{R}_{y}=w^{-1}(\mathfrak{R}_{yw^{-1}})\sqcup\mathfrak{R}_{w}
\]
 and applying $w$ to this identity then yields
\[
w(\mathfrak{R}_{y})\cap\mathfrak{R}_{+}=\mathfrak{R}_{yw^{-1}}\sqcup\bigl(w(\mathfrak{R}_{w})\cap\mathfrak{R}_{+}\bigr)=\mathfrak{R}_{yw^{-1}},
\]
 which reduces the nimbleness condition to the inclusion $\mathfrak{R}_{yw^{-1}}=w(\mathfrak{R}_{y})\cap\mathfrak{R}_{+}\subseteq\mathfrak{R}_{y}$;
this yields the first two claims.

From $\mathfrak{R}_{w}\cap\mathfrak{R}_{\mathrm{st}}^{w}=\varnothing$
and $w(\mathfrak{R}_{\mathrm{st}}^{w})=\mathfrak{R}_{\mathrm{st}}^{w}$
we obtain the inclusions 
\[
\mathfrak{R}_{w}\subseteq\mathfrak{R}_{+}\backslash\mathfrak{R}_{\mathrm{st}}^{w}\qquad\textrm{and}\qquad w(\mathfrak{R}_{+}\backslash\mathfrak{R}_{\mathrm{st}}^{w})\cap\mathfrak{R}_{+}\subseteq\mathfrak{R}_{+}\backslash\mathfrak{R}_{\mathrm{st}}^{w}.
\]
 Thus $\mathfrak{R}_{+}\backslash\mathfrak{R}_{\mathrm{st}}^{w}$
is $w$-nimble if and only if $\mathfrak{R}_{+}\backslash\mathfrak{R}_{\mathrm{st}}^{w}$
is convex, so the final claim follows from \cite[Proposition 4.21]{WM-mindom}.

(ii): The first identity follows from the assumption that $\mathfrak{R}_{\mathrm{st}}^{w}$
forms a standard parabolic subsystem. For the final one, let $\beta\in\mathfrak{R}_{\mathrm{DG}(b_{w}^{d})}$
and $\gamma\in\mathfrak{R}_{\mathrm{st}}^{w}$ and suppose that $c_{0}\beta+c_{1}\gamma$
is a root for some $c_{0},c_{1}\in\mathbb{R}_{>0}$. As $\mathfrak{R}_{\mathrm{st}}^{w}$
is a standard parabolic subroot system and $\beta$ is positive, equation
(\ref{eq:dg-fixed-roots}) implies that $c_{0}\beta+c_{1}\gamma$
must be a positive root. If $\gamma$ is negative then the same equation
implies that $\mathrm{DG}(b_{w}^{d})(\gamma)$ is still negative and
therefore so is

\[
\mathrm{DG}(b_{w}^{d})(c_{0}\beta+c_{1}\gamma)=c_{0}\mathrm{DG}(b_{w}^{d})(\beta)+c_{1}\mathrm{DG}(b_{w}^{d})(\gamma).
\]
On the other hand, if $\gamma$ is positive then the previous lemma
implies that $\mathrm{DG}(b_{w}^{d})(\gamma)$ lies in the positive
half of the standard parabolic subsystem $\mathrm{DG}(b_{w}^{d})(\mathfrak{R}_{\mathrm{st}}^{w})$.
If furthermore $\mathrm{DG}(b_{w}^{d})(c_{0}\beta+c_{1}\gamma)$ is
positive then as $\mathrm{DG}(b_{w}^{d})(\beta)$ is negative it must
lie in the negative half of $\mathrm{DG}(b_{w}^{d})(\mathfrak{R}_{\mathrm{st}}^{w})$,
but then $\beta$ lies in $\mathfrak{R}_{\mathrm{st}}^{w}$ which
contradicts equation (\ref{eq:dg-fixed-roots}) again.
\end{proof}
By \cite[Proposition C(i)]{WM-mindom}, the sequence (\ref{eq:dgs})
(and hence also (\ref{eq:dg-conjugates})) stabilises after $|\mathfrak{R}_{+}\backslash\mathfrak{R}^{w}|-\ell(w)$
terms; we will reprove this at the end of this subsection.
\begin{example}
\label{exa:not-nimble} Consider $w=s_{3}s_{2}s_{1}$ and $w'=s_{2}w$
in type $\mathsf{A}_{3}$. Then $w$ is elliptic so it is convex and
\[
w_{\circ}=\mathrm{DG}(b_{w}^{3})>w'>w,
\]
but as $w(\alpha_{3})=\alpha_{2}$ we have
\[
w(\mathfrak{R}_{w'})\cap\mathfrak{R}_{+}\not\subseteq\mathfrak{R}_{w'}
\]
so $\mathfrak{R}_{w'}$ is not nimble.
\end{example}

Typically however, the sets $\mathfrak{R}_{w}\sqcup(\mathfrak{R}^{w}\cap\mathfrak{R}_{+})$
and $\mathfrak{R}_{w}\sqcup(\mathfrak{R}_{\mathrm{st}}^{w}\cap\mathfrak{R}_{+})$
are not convex:
\begin{example}
Consider $s_{2}$ in type $\mathsf{B}_{2}$. It is not firmly convex,
and
\[
\mathfrak{R}_{w}=\{\alpha_{2}\},\qquad\mathfrak{R}^{w}\cap\mathfrak{R}_{+}=\{\alpha_{12}\},\qquad\mathfrak{R}_{w}+(\mathfrak{R}^{w}\cap\mathfrak{R}_{+})=\{\alpha_{122}\}.
\]
\end{example}

\begin{example}
Consider $s_{3}s_{1}s_{2}s_{1}$ in type $\mathsf{B}_{3}$. It is
not firmly convex, and
\[
\mathfrak{R}_{w}=\{\alpha_{23}\},\qquad\mathfrak{R}_{\mathrm{st}}^{w}\cap\mathfrak{R}_{+}=\{\alpha_{1},\alpha_{12},\alpha_{2},\alpha_{123}\},\qquad\mathfrak{R}_{w}+(\mathfrak{R}_{\mathrm{st}}^{w}\cap\mathfrak{R}_{+})=\{\alpha_{12233}\}.
\]
\end{example}

\begin{lem}
Let $w$ be an element of a twisted Weyl group and pick a natural
number $d\geq0$. Then the following are equivalent:

\begin{enumerate}[\normalfont(i)]

\item The set $\mathfrak{R}_{+}\backslash(\mathfrak{R}_{\mathrm{st}}^{w}\sqcup\mathfrak{R}_{\mathrm{DG}(b_{w}^{d})})$
is empty,

\item The set $\mathfrak{R}_{+}\backslash(\mathfrak{R}_{\mathrm{st}}^{w}\sqcup\mathfrak{R}_{\mathrm{DG}(b_{w}^{d})})$
does not contain any simple roots,

\item The set $\mathrm{cross}_{w}^{d}(\mathfrak{R}_{+}\backslash\mathfrak{R}_{\mathrm{st}}^{w})$
is empty,

\item The set $\mathrm{cross}_{w}^{d}(\mathfrak{R}_{+}\backslash\mathfrak{R}_{\mathrm{st}}^{w})$
does not contain any simple roots.

\end{enumerate}
\end{lem}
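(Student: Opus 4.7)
The trivial implications are (i) $\Rightarrow$ (ii) and (iii) $\Rightarrow$ (iv), so it suffices to establish (i) $\Rightarrow$ (iii), (iv) $\Rightarrow$ (ii) and (ii) $\Rightarrow$ (i).

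First I would set up notation: by equation (\ref{eq:dg-fixed-roots}), the set $\mathfrak{R}_{+}\backslash\mathfrak{R}_{\mathrm{st}}^{w}$ decomposes disjointly as $\mathfrak{R}_{\mathrm{DG}(b_{w}^{d})}\sqcup\mathfrak{N}$, where $\mathfrak{N}$ denotes the set appearing in (i). Feeding the subset $\mathfrak{R}_{\mathrm{DG}(b_{w}^{d})}$ into part (i) of the preceding corollary yields $\mathrm{cross}_{w}^{d}(\mathfrak{R}_{\mathrm{DG}(b_{w}^{d})})=\varnothing$, so $\mathrm{cross}_{w}^{d}(\mathfrak{R}_{+}\backslash\mathfrak{R}_{\mathrm{st}}^{w})=\mathrm{cross}_{w}^{d}(\mathfrak{N})$, from which (i) $\Rightarrow$ (iii) follows at once.

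For (iv) $\Rightarrow$ (ii) I would argue by contrapositive. A simple root $\alpha\in\mathfrak{N}$ fits the hypotheses of part (ii) of the preceding corollary with $y=e$, since $\mathrm{DG}(b_{w}^{d})\geq e$ holds trivially and $\alpha=y^{-1}(\alpha)\in\mathfrak{R}_{+}\backslash\mathfrak{R}_{\mathrm{DG}(b_{w}^{d})}$ is immediate from $\alpha\in\mathfrak{N}$; the corollary then produces a simple root in $\mathrm{cross}_{w}^{d}(\mathfrak{N})=\mathrm{cross}_{w}^{d}(\mathfrak{R}_{+}\backslash\mathfrak{R}_{\mathrm{st}}^{w})$, contradicting (iv).

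The delicate step is (ii) $\Rightarrow$ (i), which I would handle by induction on height. Taking $\gamma\in\mathfrak{N}$ of minimum height, (ii) forces $\gamma$ to be non-simple, so Lemma \ref{lem:root-sums-sequence}(iii) supplies a summing sequence of positive simple roots summing to $\gamma$. Since $\gamma\notin\mathfrak{R}_{\mathrm{st}}^{w}$, at least one such simple root $\alpha$ lies outside $\mathfrak{R}_{\mathrm{st}}^{w}$; after reordering via the last clause of Lemma \ref{lem:root-sums-sequence}(iii) we obtain $\gamma=\tilde\gamma+\alpha$ with $\tilde\gamma$ a positive root. Now (ii) forces $\alpha\in\mathfrak{R}_{\mathrm{DG}(b_{w}^{d})}$. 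If $\tilde\gamma\in\mathfrak{R}_{\mathrm{st}}^{w}$, Proposition \ref{prop:dg-inversion-sets}(ii) puts $\gamma=\alpha+\tilde\gamma$ in $\mathfrak{R}_{\mathrm{DG}(b_{w}^{d})}$; if instead $\tilde\gamma\in\mathfrak{R}_{+}\backslash\mathfrak{R}_{\mathrm{st}}^{w}$, then minimality of $\gamma$ gives $\tilde\gamma\in\mathfrak{R}_{\mathrm{DG}(b_{w}^{d})}$ and convexity of the inversion set again places $\gamma$ there. In either case we contradict $\gamma\in\mathfrak{N}$.

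The main obstacle will be the appeal to Proposition \ref{prop:dg-inversion-sets}(ii) and to the structure of $\mathfrak{R}_{\mathrm{st}}^{w}$ needed to locate a simple root in the support of $\gamma$ outside $\mathfrak{R}_{\mathrm{st}}^{w}$, both of which are cleanest when $w$ is convex. For non-convex $w$ a separate consideration is needed, ideally showing via Proposition \ref{prop:dg-inversion-sets}(i) that the failure of nimbleness of $\mathfrak{R}_{+}\backslash\mathfrak{R}_{\mathrm{st}}^{w}$ forces (ii) to fail as well, so that the implication holds vacuously on that side.
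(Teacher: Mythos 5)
Your handling of the cycle (i) $\Rightarrow$ (iii) $\Rightarrow$ (iv) $\Rightarrow$ (ii) is correct and is essentially the paper's argument: both steps rest on the factorisation $\mathrm{cross}_{w}^{d}=\mathrm{cross}_{\mathrm{DG}_{>1}(b_{w}^{d})}\mathrm{cross}_{\mathrm{DG}(b_{w}^{d})}$ together with Corollary \ref{cor:cross-DG} and equation (\ref{eq:dg-fixed-roots}).

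The gap is in (ii) $\Rightarrow$ (i), which the paper does not prove by height induction at all: it quotes Lemma 3.19(i) of the prequel combined with Lemma \ref{lem:dg-simple-root}. Your substitute argument has two problems. First, the reordering clause of Lemma \ref{lem:root-sums-sequence}(iii) lets you place a chosen root \emph{first} in a summing sequence, not \emph{last}; to write $\gamma=\tilde{\gamma}+\alpha$ with $\tilde{\gamma}$ a root you need $\alpha$ to be the final term, and for a prescribed simple root this can fail (in $\mathsf{B}_{2}$, $\alpha_{122}-\alpha_{1}=2\alpha_{2}$ is not a root, so if $\alpha_{2}\in\mathfrak{R}_{\mathrm{st}}^{w}$ you cannot peel off a simple root outside $\mathfrak{R}_{\mathrm{st}}^{w}$). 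This is repairable — subtract \emph{any} simple root $\alpha$ with $\gamma-\alpha\in\mathfrak{R}_{+}$, which exists by Lemma \ref{lem:root-sums-sequence}(i), and run a four-way case analysis on whether $\alpha$ and $\tilde{\gamma}$ lie in $\mathfrak{R}_{\mathrm{st}}^{w}$, using Proposition \ref{prop:dg-inversion-sets}(ii) and convexity of $\mathfrak{R}_{\mathrm{DG}(b_{w}^{d})}$ — but only under the hypotheses you yourself flag: that $\mathfrak{R}_{\mathrm{st}}^{w}$ is a standard parabolic subsystem and that Proposition \ref{prop:dg-inversion-sets}(ii) applies, i.e.\ that $w$ is convex. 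Second, the non-convex case cannot be dismissed as vacuous: the lemma is invoked later precisely for non-convex $w$ (in the proof of part (iii) of the main Lemma, nonemptiness of $\mathfrak{R}_{+}\backslash(\mathfrak{R}_{\mathrm{st}}^{w}\sqcup\mathfrak{R}_{\mathrm{DG}(b_{w}^{d})})$ is fed through this equivalence to conclude that $\mathrm{cross}_{w}^{d}(\mathfrak{R}_{+}\backslash\mathfrak{R}_{\mathrm{st}}^{w})$ contains simple roots), and knowing from Proposition \ref{prop:dg-inversion-sets}(i) that $\mathfrak{R}_{+}\backslash\mathfrak{R}_{\mathrm{st}}^{w}$ fails to be nimble does not by itself produce a \emph{simple} root in the complement. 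So as written the proposal establishes the equivalence only for convex $w$, and the missing ingredient is exactly the external input the paper relies on.
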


\begin{proof}
(ii) $\Rightarrow$ (i): This follows from combining \cite[Lemma 3.19(i)]{WM-mindom}
with Lemma \ref{lem:dg-simple-root}.

(iv) $\Rightarrow$ (ii): If $\mathfrak{R}_{+}\backslash(\mathfrak{R}_{\mathrm{st}}^{w}\sqcup\mathfrak{R}_{\mathrm{DG}(b_{w}^{d})})$
contains simple roots then by Lemma \ref{lem:braid-invariance-cross}
and Corollary \ref{cor:cross-DG} so does 
\[
\mathrm{cross}_{w}^{d}(\mathfrak{R}_{+}\backslash\mathfrak{R}_{\mathrm{st}}^{w})=\mathrm{cross}_{\mathrm{DG}_{>1}(b_{w}^{d})}\mathrm{cross}_{\mathrm{DG}(b_{w}^{d})}(\mathfrak{R}_{+}\backslash\mathfrak{R}_{\mathrm{st}}^{w}).
\]
(i) $\Rightarrow$ (iii) follows from the same identity, and (iii)
$\Rightarrow$ (iv) is immediate.
\end{proof}
\begin{example}
Let $w$ be reflection in a non-simple root in type $\mathsf{B}_{2}$,
then $\mathfrak{R}_{w}\sqcup(\mathfrak{R}^{w}\cap\mathfrak{R}_{+})=\mathfrak{R}_{+}$.
\end{example}

\begin{example}
Consider the elements $w=s_{2}s_{3}s_{2}s_{1}$ and $w'=s_{2}s_{1}w$
in type $\mathsf{B}_{3}$. Then 
\[
w_{\circ}s_{3}=\mathrm{DG}(b_{w}^{2})>w'>w,
\]
but $\mathfrak{R}_{+}\backslash(\mathfrak{R}_{w'}\sqcup\mathfrak{R}^{w})=\{\alpha_{23},\alpha_{233}\}$.
\end{example}

We deduce part (iii) of the main Lemma:
\begin{proof}
If $w$ is convex then the claim follows from part (ii) of the main
Lemma, combined with the fact that $\mathrm{pb}(w)$ is an upper bound
for $\mathrm{DG}(b_{w}^{d})$. If on the other hand it is not convex,
then $\mathfrak{R}_{+}\backslash\mathfrak{R}_{\mathrm{st}}^{w}$ is
not convex by \cite[Proposition 4.21]{WM-mindom} again so $\mathfrak{R}_{+}\backslash(\mathfrak{R}_{\mathrm{st}}^{w}\sqcup\mathfrak{R}_{\mathrm{DG}(b_{w}^{d})})$
must be nonempty (as $\mathfrak{R}_{\mathrm{DG}(b_{w}^{d})}$ is convex),
and then the claim follows from the previous lemma.
\end{proof}
\begin{lem}
\label{lem:inverse} Let $w$ be an element of a twisted Weyl group
and let $\mathfrak{N}$ a $w$-nimble set also containing $\mathfrak{R}_{w^{-1}}$.
Then $\mathfrak{N}$ is also nimble for the element $w^{-1}$, and
for any integer $d\geq0$ we have
\[
\mathrm{cross}_{w}^{d}(\mathfrak{N})=\varnothing\qquad\textrm{if and only if }\qquad\mathrm{cross}_{w^{-1}}^{d}(\mathfrak{N})=\varnothing.
\]
\end{lem}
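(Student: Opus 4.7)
The plan is to establish $w^{-1}$-nimbleness of $\mathfrak{N}$ via a cardinality argument, and then to deduce the equivalence $\mathrm{cross}_w^d(\mathfrak{N}) = \varnothing \iff \mathrm{cross}_{w^{-1}}^d(\mathfrak{N}) = \varnothing$ by reversing crossing sequences through the bijection $\mathfrak{R}_w \to \mathfrak{R}_{w^{-1}}$, $\beta \mapsto -w(\beta)$.

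For nimbleness, I consider the injective map $w \colon \mathfrak{N} \backslash \mathfrak{R}_w \hookrightarrow \mathfrak{N}$ (well-defined by $w$-nimbleness). Its image complement $\mathfrak{N} \backslash w(\mathfrak{N} \backslash \mathfrak{R}_w)$ consists of those $\beta \in \mathfrak{N}$ for which $w^{-1}(\beta)$ is either negative (equivalently, $\beta \in \mathfrak{R}_{w^{-1}}$) or positive but not in $\mathfrak{N}$. Since $|\mathfrak{R}_w| = \ell(w) = |\mathfrak{R}_{w^{-1}}|$ and $\mathfrak{R}_{w^{-1}} \subseteq \mathfrak{N}$ by assumption, counting forces the latter subset to be empty, yielding $w^{-1}(\mathfrak{N} \backslash \mathfrak{R}_{w^{-1}}) \subseteq \mathfrak{N}$; combined with the given convexity and the inclusion $\mathfrak{R}_{w^{-1}} \subseteq \mathfrak{N}$, this is precisely $w^{-1}$-nimbleness.

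Before reversing sequences, I record the stronger observation $\mathrm{cross}_w(\mathfrak{N}) \subseteq \mathfrak{N}$: for any $\beta \in \mathfrak{N}$ and $\beta_1, \ldots, \beta_m \in \mathfrak{R}_w$ with $w(\beta + \sum_i \beta_i)$ a positive root, Lemma \ref{lem:root-sums-sequence} places $\beta + \sum_i \beta_i$ in the convex set $\mathfrak{N}$, and then $w$-nimbleness pushes $w(\beta + \sum_i \beta_i)$ back into $\mathfrak{N}$; iterating gives $\mathrm{cross}_w^d(\mathfrak{N}) \subseteq \mathfrak{N}$. Now suppose $\gamma \in \mathrm{cross}_w^d(\mathfrak{N})$ is witnessed by positive roots $\beta_0 \in \mathfrak{N}, \beta_1, \ldots, \beta_d = \gamma$ and $\tilde{\beta}_{i,j} \in \mathfrak{R}_w$ satisfying $\beta_j = w(\beta_{j-1} + \sum_i \tilde{\beta}_{i,j})$. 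Setting $\gamma_{j'} := \beta_{d-j'}$ and $\delta_{i,j'} := -w(\tilde{\beta}_{i,d-j'+1}) \in \mathfrak{R}_{w^{-1}}$, a direct calculation shows $\gamma_{j'} = w^{-1}(\gamma_{j'-1} + \sum_i \delta_{i,j'})$, so the $\gamma_{j'}$ form a valid crossing sequence for $w^{-1}$ of positive roots. The first observation ensures $\gamma_0 = \beta_d \in \mathfrak{N}$, whence $\gamma_d = \beta_0 \in \mathrm{cross}_{w^{-1}}^d(\mathfrak{N})$; the other implication is symmetric, since the hypotheses we have now verified for $w^{-1}$ are the same ones we started with for $w$.

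The only real subtlety is that the cardinality argument rests crucially on the hypothesis $\mathfrak{R}_{w^{-1}} \subseteq \mathfrak{N}$, without which $w^{-1}$-nimbleness can fail (as in Example \ref{exa:not-nimble}); beyond this, keeping the index-reversal bookkeeping straight is the only hazard.
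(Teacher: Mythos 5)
Your proof is correct, and the second half (reversing a witnessing sequence via $\beta\mapsto -w(\beta)$, which carries $\mathfrak{R}_{w}$ bijectively onto $\mathfrak{R}_{w^{-1}}$, after first noting $\mathrm{cross}_{w}^{d}(\mathfrak{N})\subseteq\mathfrak{N}$ so that the reversed sequence starts in $\mathfrak{N}$) is essentially identical to the paper's argument for the equivalence. Where you genuinely diverge is the nimbleness claim. The paper chases orbits: for $\beta\in\mathfrak{N}\backslash\mathfrak{R}_{w^{-1}}$ it splits into the case $\beta\in\mathfrak{R}_{\mathrm{st}}^{w}$ (where one iterates $w$-nimbleness around the whole orbit, using $w^{-1}=w^{\mathrm{ord}(w)-1}$) and the case where some $w^{-i}(\beta)$ lands in $\mathfrak{R}_{w^{-1}}\subseteq\mathfrak{N}$, from which one walks forward by nimbleness. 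Your counting argument --- the complement of $w(\mathfrak{N}\backslash\mathfrak{R}_{w})$ in $\mathfrak{N}$ has exactly $\ell(w)$ elements and already contains the $\ell(w)$-element set $\mathfrak{R}_{w^{-1}}$, so nothing else fits --- is shorter, avoids any case analysis on $\mathfrak{R}_{\mathrm{st}}^{w}$, and uses only injectivity of $w$ and finiteness of $\mathfrak{N}$; the paper's version is more robust in that it exhibits the actual path through $\mathfrak{N}$ and does not rely on cardinality. Two small points you gloss over: $w^{-1}$-nimbleness formally also requires $\mathfrak{N}\subseteq\mathfrak{R}_{+}\backslash\mathfrak{R}_{\mathrm{st}}^{w^{-1}}$, which holds since $\mathfrak{R}_{\mathrm{st}}^{w^{-1}}=\mathfrak{R}_{\mathrm{st}}^{w}$; and in identifying the image complement it is worth saying explicitly that $w^{-1}(\beta)\in\mathfrak{N}$ automatically forces $w^{-1}(\beta)\notin\mathfrak{R}_{w}$ (else $\beta$ would be negative), so membership in the image reduces to $w^{-1}(\beta)\in\mathfrak{N}$. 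Neither affects correctness.
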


\begin{proof}
Pick a root $\beta$ in $\mathfrak{N}\backslash\mathfrak{R}_{w^{-1}}$.
If $\beta$ also lies in $\mathfrak{R}_{\mathrm{st}}^{w^{-1}}=\mathfrak{R}_{\mathrm{st}}^{w}$,
then by nimbleness of $w$ and positivity the root $w^{-1}(\beta)=w^{\mathrm{ord}(w)-1}(\beta)$
also lies in $\mathfrak{N}$. If on the other hand it does not lie
in $\mathfrak{R}_{\mathrm{st}}^{w^{-1}}$, then $w^{-i}(\beta)\in\mathfrak{R}_{w^{-1}}\subseteq\mathfrak{N}$
for some $i>0$. Applying $w^{i-1}$, nimbleness yields that $w^{-1}(\beta)$
also lies in $\mathfrak{N}$.

If say $\mathrm{cross}_{w}^{d}(\mathfrak{N})\neq\varnothing$ then
there exists a sequence of roots $(\beta_{i,j})\in\mathrm{cross}_{w}^{d}(\beta',\beta)$
for some roots $\beta,\beta'$ in $\mathfrak{N}$. As $-w(\mathfrak{R}_{w})=\mathfrak{R}_{w^{-1}}$
this yields another sequence
\[
-w(\beta_{d+1-i,j})\in\mathrm{cross}_{w^{-1}}^{d}(\beta,\beta'),
\]
implying that $\mathrm{cross}_{w^{-1}}^{d}(\mathfrak{N})\neq\varnothing$. 
\end{proof}
\begin{example}
Consider $w=s_{3}s_{2}s_{3}s_{1}$ in type $\mathsf{B}_{3}$. It does
not fix any roots, and the set
\[
\mathfrak{N}:=\mathfrak{R}_{w}\cup\mathfrak{R}_{w^{-1}}=\mathfrak{R}_{+}\backslash\{\alpha_{2},\alpha_{12},\alpha_{12233}\}
\]
 is nimble.
\end{example}

Corollary \ref{cor:inverse} follows from
\begin{cor}
Let $w$ be an element of a twisted Weyl group $W$ and let $y$ be
an element of $W$ such that $y\geq w^{-1}$. Then for any integer
$d\geq0$ we have
\[
\mathrm{DG}(b_{w}^{d})\geq y\qquad\textrm{if and only if }\qquad\mathrm{DG}(b_{w^{-1}}^{d})\geq y.
\]
\end{cor}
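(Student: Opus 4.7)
My plan is to reduce the iff to an iff about cross sets via the main Lemma (ii), then to apply Lemma \ref{lem:inverse} on a sufficiently enlarged set where both nimbleness and the inclusion $\mathfrak{R}_{w^{-1}}\subseteq\mathfrak{N}$ can be checked directly.

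By the main Lemma (ii) applied to both $w$ and $w^{-1}$, the conditions $\mathrm{DG}(b_w^d)\geq y$ and $\mathrm{DG}(b_{w^{-1}}^d)\geq y$ translate respectively to $\mathrm{cross}_w^d(\mathfrak{R}_y)=\varnothing$ and $\mathrm{cross}_{w^{-1}}^d(\mathfrak{R}_y)=\varnothing$, so I only need to establish the equivalence of these two emptiness conditions under the assumption $y\geq w^{-1}$.

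For the $\Rightarrow$ direction, I would argue as follows. Assuming $\mathrm{DG}(b_w^d)\geq y$, transitivity with $y\geq w^{-1}$ in the weak left Bruhat-Chevalley order gives $\mathrm{DG}(b_w^d)\geq w^{-1}$, so $\mathfrak{R}_{w^{-1}}\subseteq\mathfrak{R}_{\mathrm{DG}(b_w^d)}$. By Proposition \ref{prop:dg-inversion-sets}(i) the set $\mathfrak{R}_{\mathrm{DG}(b_w^d)}$ is $w$-nimble, so Lemma \ref{lem:inverse} applies with $\mathfrak{N}:=\mathfrak{R}_{\mathrm{DG}(b_w^d)}$, yielding
\[
\mathrm{cross}_w^d(\mathfrak{N})=\varnothing\quad\Longleftrightarrow\quad\mathrm{cross}_{w^{-1}}^d(\mathfrak{N})=\varnothing.
\]
The left-hand side holds by part (i) of the Corollary immediately preceding Lemma \ref{lem:dg-simple-root} (since $\mathfrak{N}\subseteq\mathfrak{R}_{\mathrm{DG}(b_w^d)}$), so the right-hand side holds too. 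The definition of $\mathrm{cross}^d$ as a union makes it monotone in its argument, hence $\mathrm{cross}_{w^{-1}}^d(\mathfrak{R}_y)\subseteq\mathrm{cross}_{w^{-1}}^d(\mathfrak{N})=\varnothing$, and the main Lemma (ii) for $w^{-1}$ then gives $\mathrm{DG}(b_{w^{-1}}^d)\geq y$.

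The $\Leftarrow$ direction is handled by the symmetric argument, this time enlarging $\mathfrak{R}_y$ to $\mathfrak{R}_{\mathrm{DG}(b_{w^{-1}}^d)}$ (which is $w^{-1}$-nimble by Proposition \ref{prop:dg-inversion-sets}(i) applied to $w^{-1}$) and invoking Lemma \ref{lem:inverse} with $w^{-1}$ playing the role of the base element. The principal difficulty I anticipate lies here: for the symmetric application one needs $\mathfrak{R}_w\subseteq\mathfrak{R}_{\mathrm{DG}(b_{w^{-1}}^d)}$, whereas the hypothesis $y\geq w^{-1}$ only directly yields $\mathfrak{R}_{w^{-1}}\subseteq\mathfrak{R}_{\mathrm{DG}(b_{w^{-1}}^d)}$. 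To bridge this gap I would exploit that the $\Rightarrow$ direction has already been established and can be reapplied with $w$ and $w^{-1}$ interchanged in order to feed the needed containment back in, or else enlarge $\mathfrak{N}$ further to a set that is simultaneously $w$-nimble and $w^{-1}$-nimble (for instance by passing through $\mathfrak{R}_{\mathrm{DG}(b_w^{d'})}$ for some larger $d'$); this interplay between the two DG factors is the one place where the argument is not purely formal.
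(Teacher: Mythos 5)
Your direction $\Rightarrow$ is correct, and it is the right way to make the paper's one-line proof precise: from $\mathrm{DG}(b_{w}^{d})\geq y\geq w^{-1}$ the set $\mathfrak{R}_{\mathrm{DG}(b_{w}^{d})}$ is $w$-nimble by Proposition \ref{prop:dg-inversion-sets}(i) and contains $\mathfrak{R}_{w^{-1}}$, so Lemma \ref{lem:inverse} applies and monotonicity of $\mathrm{cross}^{d}$ together with part (ii) of the main Lemma finishes. The gap you flag in the direction $\Leftarrow$, however, cannot be bridged: under the stated hypothesis that implication is false. Take $d=1$ and $y=w^{-1}$ for any untwisted $w$ with $w^{2}\neq e$: then $\mathrm{DG}(b_{w^{-1}})=w^{-1}\geq y$ holds trivially, whereas $\mathrm{DG}(b_{w})=w\geq w^{-1}$ would force $\mathfrak{R}_{w^{-1}}\subseteq\mathfrak{R}_{w}$, hence equality of these two sets of cardinality $\ell(w)$, hence $w=w^{-1}$. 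The paper's own example $w=s_{1}s_{2}s_{3}s_{1}s_{2}$ in type $\mathsf{B}_{3}$ gives counterexamples for every $d\geq1$: there $\mathrm{DG}(b_{w}^{d})=w\not\geq w^{-1}$ while $\mathrm{DG}(b_{w^{-1}}^{d})=s_{1}w^{-1}\geq w^{-1}$. Consequently neither of your proposed repairs can succeed: reapplying $\Rightarrow$ with the roles of $w$ and $w^{-1}$ interchanged requires $y\geq w$, and enlarging to $\mathfrak{R}_{\mathrm{DG}(b_{w}^{d'})}$ presupposes exactly the kind of control over $\mathrm{DG}(b_{w}^{d'})$ that is to be proven.

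The defect lies in the statement (and in the paper's terse proof, which in effect applies Lemma \ref{lem:inverse} to $\mathfrak{R}_{y}$ as though it were $w$-nimble, a property that $y\geq w^{-1}$ does not supply; compare Example \ref{exa:not-nimble}), not in your strategy. If the hypothesis is strengthened to $y\geq w$ \emph{and} $y\geq w^{-1}$, your argument closes symmetrically: in the direction $\Leftarrow$ one then has $\mathfrak{R}_{w}\subseteq\mathfrak{R}_{y}\subseteq\mathfrak{R}_{\mathrm{DG}(b_{w^{-1}}^{d})}$, so that $\mathfrak{R}_{\mathrm{DG}(b_{w^{-1}}^{d})}$ is a $w^{-1}$-nimble set containing $\mathfrak{R}_{(w^{-1})^{-1}}$ and Lemma \ref{lem:inverse} applies with $w^{-1}$ as base element. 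This corrected form still suffices for Corollary \ref{cor:inverse}, since the element $y=\mathrm{pb}(w)$ used there dominates both $w$ and $w^{-1}$.
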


\begin{proof}
This now follows by combining the previous lemma with part (ii) of
the main Lemma.
\end{proof}
In the remainder of this subsection we reprove the bounds of \cite[Proposition C]{WM-mindom},
in the case $i=1$:
\begin{prop}
Let $w$ be an element of a twisted Weyl group and let $\mathfrak{N}$
be a nimble set of roots. Then we have a sequence of inclusions
\[
\mathfrak{N}\supseteq\mathrm{cross}_{w}(\mathfrak{N})\supseteq\mathrm{cross}_{w}^{2}(\mathfrak{N})\supseteq\cdots,
\]
which stabilises after $|\mathfrak{N}|-\ell(w)$ terms.

In particular we have $\mathrm{cross}_{w}^{d}(\mathfrak{N})=\varnothing$
for some integer $d\geq0$, if and only if this holds for all $d>|\mathfrak{N}|-\ell(w)$.
\end{prop}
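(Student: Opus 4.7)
My plan is to first establish the clean identity
\[
\mathrm{cross}_{w}(\mathfrak{N})=w(\mathfrak{N}\setminus\mathfrak{R}_{w})
\]
of subsets of $\mathfrak{R}_{+}$, from which both the decreasing chain and the stabilisation bound fall out cheaply. Unpacking the definition, an element of $\mathrm{cross}_{w}(\mathfrak{N})$ takes the form $w(\beta+\sum_{i=1}^{m}\beta_{i})$ with $\beta\in\mathfrak{N}$ and $\beta_{1},\ldots,\beta_{m}\in\mathfrak{R}_{w}$, such that both the interior sum $\beta':=\beta+\sum_{i}\beta_{i}$ and $w(\beta')$ are positive roots. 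Since $\mathfrak{R}_{w}\subseteq\mathfrak{N}$ and $\mathfrak{N}$ is convex, the ``in particular'' clause of Lemma \ref{lem:root-sums-sequence} forces $\beta'\in\mathfrak{N}$, and positivity of $w(\beta')$ then forces $\beta'\notin\mathfrak{R}_{w}$. Conversely every $\beta'\in\mathfrak{N}\setminus\mathfrak{R}_{w}$ contributes $w(\beta')$ to $\mathrm{cross}_{w}(\mathfrak{N})$ via the trivial decomposition $m=0$, $\beta=\beta'$. The nimbleness condition $w(\mathfrak{N}\setminus\mathfrak{R}_{w})\subseteq\mathfrak{N}$ then reads precisely as $\mathrm{cross}_{w}(\mathfrak{N})\subseteq\mathfrak{N}$.

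Because $\mathrm{cross}_{w}(\cdot)$ is manifestly monotone with respect to inclusion (being defined as a union over $\beta$), iterating the containment $\mathrm{cross}_{w}(\mathfrak{N})\subseteq\mathfrak{N}$ yields the full decreasing chain
\[
\mathfrak{N}\supseteq\mathrm{cross}_{w}(\mathfrak{N})\supseteq\mathrm{cross}_{w}^{2}(\mathfrak{N})\supseteq\cdots.
\]
For the stabilisation bound I use that $w$ is a bijection on $\mathfrak{R}$, so the identity of the first paragraph yields $|\mathrm{cross}_{w}(\mathfrak{N})|=|\mathfrak{N}\setminus\mathfrak{R}_{w}|=|\mathfrak{N}|-\ell(w)$. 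From $d=1$ onwards the chain therefore lives inside a set of size $|\mathfrak{N}|-\ell(w)$, and each remaining strict inclusion drops the cardinality by at least one; such strict drops can occur at most $|\mathfrak{N}|-\ell(w)$ times before the chain becomes constant, giving $\mathrm{cross}_{w}^{d}(\mathfrak{N})=\mathrm{cross}_{w}^{d+1}(\mathfrak{N})$ for every $d>|\mathfrak{N}|-\ell(w)$.

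For the ``in particular'' consequence, the nontrivial direction starts from $\mathrm{cross}_{w}^{d_{0}}(\mathfrak{N})=\varnothing$ for some $d_{0}\geq0$. The decreasing chain propagates emptiness from $d_{0}$ to every $d\geq d_{0}$, so in particular to some $d>|\mathfrak{N}|-\ell(w)$, and the stabilisation result then propagates emptiness to all such $d$. The only real content is the identity in the first paragraph; everything afterwards is combinatorial bookkeeping. The step most likely to cause trouble is ensuring that the interior sum $\beta+\sum_{i}\beta_{i}$ already sits inside $\mathfrak{N}$, but this is immediate once one notices that $\mathfrak{R}_{w}\subseteq\mathfrak{N}$ places all summands inside the convex set $\mathfrak{N}$.
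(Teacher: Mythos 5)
Your proof is correct, and it takes a slightly different (and in places more explicit) route than the paper's. The paper argues by truncating witnessing sequences: it takes $\mathfrak{N}\supseteq\mathrm{cross}_{w}(\mathfrak{N})$ as given "by assumption" and observes that a chain $\beta=\beta_{0}',\ldots,\beta_{d}'=\gamma$ certifying $\gamma\in\mathrm{cross}_{w}^{d}(\mathfrak{N})$ has $\beta_{1}'\in\mathfrak{N}$, so its tail certifies $\gamma\in\mathrm{cross}_{w}^{d-1}(\mathfrak{N})$ --- which is the same mechanism as your appeal to monotonicity of $\mathrm{cross}_{w}(\cdot)$ applied to $\mathrm{cross}_{w}(\mathfrak{N})\subseteq\mathfrak{N}$. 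What you add is the exact identity $\mathrm{cross}_{w}(\mathfrak{N})=w(\mathfrak{N}\backslash\mathfrak{R}_{w})$: the forward inclusion correctly uses the final clause of Lemma \ref{lem:root-sums-sequence} together with $\mathfrak{R}_{w}\subseteq\mathfrak{N}$ and convexity to place the interior sum in $\mathfrak{N}$, and positivity of its image under $w$ to exclude it from $\mathfrak{R}_{w}$; the reverse inclusion is the $m=0$ case. This identity both \emph{derives} the containment $\mathrm{cross}_{w}(\mathfrak{N})\subseteq\mathfrak{N}$ from nimbleness (a step the paper compresses into "by assumption") and yields $|\mathrm{cross}_{w}(\mathfrak{N})|=|\mathfrak{N}|-\ell(w)$, which justifies the specific stabilisation bound --- a point the paper's proof leaves implicit, resting only on finiteness. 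Note that your identity is only invoked at the first iterate, which is important since the higher iterates need not be convex or contain $\mathfrak{R}_{w}$; your subsequent steps correctly use only monotonicity and cardinality, so there is no gap there.
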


\begin{proof}
For any $i\geq0$ let $\underline{w}_{i}:=(w,\ldots,w,w)$ denote
the sequence of $i$ copies of $w$. If $\gamma$ lies in $\mathrm{cross}_{w}^{d}(\mathfrak{N})$,
then there exists a root $\beta$ in $\mathfrak{N}$ such that $\mathrm{cross}_{\underline{w}_{d}}(\gamma,\beta)$
is nonempty. In other words, there exists roots $\gamma=\beta_{d}',\ldots,\beta_{0}'=\beta$
inductively constructed via
\[
\beta_{j}':=w(\beta_{j-1}'+\sum_{\tilde{\beta}\in\mathfrak{R}_{w}}\tilde{\beta})
\]
By assumption we have $\mathfrak{N}\supseteq\mathrm{cross}_{w}(\mathfrak{N})$,
so $\beta_{1}'$ lies in $\mathfrak{N}$. But then $\mathrm{cross}_{\underline{w}_{d-1}}(\gamma,\beta_{1}')$
is nonempty, which means that $\gamma$ also lies in $\mathrm{cross}_{w}^{d-1}(\mathfrak{N})$.
\end{proof}
Although the sets $\mathfrak{N}$ and $\mathrm{cross}_{w}(\mathfrak{N})$
are convex, this does not necessarily hold for the other sets appearing
in such a sequence:
\begin{example}
Consider $w=s_{1}s_{2}s_{3}s_{1}$ and let $v=w_{\circ}s_{2}s_{1}$
in type $\mathsf{B}_{3}$. Then $\mathfrak{N}:=\mathfrak{R}_{v}$
is $w$-nimble and this sequence is
\[
\mathfrak{N}\supseteq\{\alpha_{3},\alpha_{23},\alpha_{233}\}\supseteq\{\alpha_{3},\alpha_{23}\}\supseteq\{\alpha_{3},\alpha_{23}\}\supseteq\cdots
\]
\end{example}

\begin{cor}
Let $w$ be an element of a twisted Weyl group. Then for any $d\geq1$
we have

\begin{enumerate}[\normalfont(i)]

\item an inclusion
\[
\mathfrak{R}_{w}\subseteq\mathfrak{R}_{\mathrm{DG}(b_{w}^{d})}\subseteq\mathfrak{R}_{+}\backslash\mathfrak{R}_{\mathrm{st}}^{w},
\]
\item and if $d>|\mathfrak{R}_{+}\backslash\mathfrak{R}_{\mathrm{st}}^{w}|-\ell(w)$
then for any $d'\geq d$ we have
\[
\mathrm{DG}(b_{w}^{d'})=\mathrm{DG}(b_{w}^{d}).
\]
\end{enumerate}
\end{cor}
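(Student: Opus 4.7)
For part (i), both inclusions are essentially immediate. The first follows from the factorisation $b_{w}^{d} = b_{w}^{d-1} \cdot b_{w}$, which exhibits $b_{w}$ as a right divisor of $b_{w}^{d}$ in the braid monoid (for $d \geq 1$), so $w \leq \mathrm{DG}(b_{w}^{d})$ in the weak left Bruhat--Chevalley order and hence $\mathfrak{R}_{w} \subseteq \mathfrak{R}_{\mathrm{DG}(b_{w}^{d})}$. The second inclusion is immediate from equation (\ref{eq:dg-fixed-roots}), together with the fact that any inversion set lies in $\mathfrak{R}_{+}$.

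For part (ii), the same factorisation argument gives half of what we need: for $d' \geq d$, writing $b_{w}^{d'} = b_{w}^{d'-d} \cdot b_{w}^{d}$ and then $b_{w}^{d} = b'\cdot b_{\mathrm{DG}(b_{w}^{d})}$ exhibits $\mathrm{DG}(b_{w}^{d})$ as a right divisor of $b_{w}^{d'}$, so $\mathrm{DG}(b_{w}^{d'}) \geq \mathrm{DG}(b_{w}^{d})$. All the content is in the reverse inequality, which by the main Lemma (ii) applied to $w' := \mathrm{DG}(b_{w}^{d'})$ reduces to proving
\[
\mathrm{cross}_{w}^{d}\bigl(\mathfrak{R}_{\mathrm{DG}(b_{w}^{d'})}\bigr) = \varnothing
\qquad\text{whenever}\qquad d > |\mathfrak{R}_{+}\setminus\mathfrak{R}_{\mathrm{st}}^{w}| - \ell(w).
\]

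To establish this I would apply the main Lemma (ii) in the \emph{other} direction to the tautology $\mathrm{DG}(b_{w}^{d'}) \geq \mathrm{DG}(b_{w}^{d'})$, yielding $\mathrm{cross}_{w}^{d'}(\mathfrak{R}_{\mathrm{DG}(b_{w}^{d'})}) = \varnothing$. Proposition \ref{prop:dg-inversion-sets}(i) tells us that $\mathfrak{R}_{\mathrm{DG}(b_{w}^{d'})}$ is $w$-nimble, so the stabilisation statement of the previous proposition then propagates this emptiness to all iterates indexed by $d > |\mathfrak{R}_{\mathrm{DG}(b_{w}^{d'})}| - \ell(w)$. Part (i), already proved, gives the uniform bound $|\mathfrak{R}_{\mathrm{DG}(b_{w}^{d'})}| \leq |\mathfrak{R}_{+}\setminus\mathfrak{R}_{\mathrm{st}}^{w}|$, which is precisely the threshold appearing in the hypothesis.

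The main subtlety to keep straight is the dual use of the main Lemma (ii): one direction \emph{produces} the vanishing $\mathrm{cross}_{w}^{d'}(\mathfrak{R}_{\mathrm{DG}(b_{w}^{d'})}) = \varnothing$ needed to feed into the nimble-set stabilisation proposition, while the other direction \emph{consumes} the resulting vanishing at the smaller exponent $d$ to deduce the desired inequality $\mathrm{DG}(b_{w}^{d}) \geq \mathrm{DG}(b_{w}^{d'})$. Once this bookkeeping is arranged, the uniform bound from (i) is exactly what allows the threshold $|\mathfrak{R}_{+}\setminus\mathfrak{R}_{\mathrm{st}}^{w}| - \ell(w)$ to be independent of the particular $d' \geq d$ under consideration.
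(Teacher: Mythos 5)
Your argument for part (ii) is exactly the paper's: apply the main Lemma (ii) to the tautology $\mathrm{DG}(b_{w}^{d'})\geq\mathrm{DG}(b_{w}^{d'})$ to get $\mathrm{cross}_{w}^{d'}(\mathfrak{R}_{\mathrm{DG}(b_{w}^{d'})})=\varnothing$, feed the nimbleness of $\mathfrak{R}_{\mathrm{DG}(b_{w}^{d'})}$ from Proposition \ref{prop:dg-inversion-sets}(i) into the stabilisation proposition, bound $|\mathfrak{R}_{\mathrm{DG}(b_{w}^{d'})}|$ by $|\mathfrak{R}_{+}\backslash\mathfrak{R}_{\mathrm{st}}^{w}|$ via part (i), and then apply the main Lemma (ii) in the consuming direction together with the divisibility inequality $\mathrm{DG}(b_{w}^{d'})\geq\mathrm{DG}(b_{w}^{d})$. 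The bookkeeping you flag is handled correctly. The one place you diverge is the second inclusion of part (i): you cite equation (\ref{eq:dg-fixed-roots}), which is logically admissible within this paper but is precisely the prequel's statement that this passage is explicitly set up to \emph{reprove}; the paper instead rederives it from the cross machinery by observing that for $\beta\in\mathfrak{R}_{\mathrm{st}}^{w}\cap\mathfrak{R}_{+}$ the root $w^{d}(\beta)$ lies in $\mathrm{cross}_{w}^{d}(\beta)$, so $\mathrm{cross}_{w}^{d}(\beta)\neq\varnothing$, while the tautological application of the main Lemma (ii) forces $\mathrm{cross}_{w}^{d}(\mathfrak{R}_{\mathrm{DG}(b_{w}^{d})})=\varnothing$, whence $\beta\notin\mathfrak{R}_{\mathrm{DG}(b_{w}^{d})}$. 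If you want your proof to serve the passage's purpose of an independent reproof, substitute that one-line argument for the citation; everything else stands as written.
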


\begin{proof}
(i): If $\beta$ lies in $\mathfrak{R}_{\mathrm{st}}^{w}$ then $w^{d}(\beta)$
lies in $\mathrm{cross}_{w}^{d}(\beta)$. By part (ii) of the main
Lemma we have $\mathrm{cross}_{w}^{d}(\mathfrak{R}_{\mathrm{DG}(b_{w}^{d})})=\varnothing$,
so $\beta$ does not lie in $\mathfrak{R}_{\mathrm{DG}(b_{w}^{d})}$.

(ii): The previous proposition yields
\[
\mathrm{cross}_{w}^{d}(\mathfrak{R}_{\mathrm{DG}(b_{w}^{d'})})=\mathrm{cross}_{w}^{d'}(\mathfrak{R}_{\mathrm{DG}(b_{w}^{d'})})=\varnothing
\]
 and then we conclude from part (ii) of the main Lemma that $\mathrm{DG}(b_{w}^{d})\geq\mathrm{DG}(b_{w}^{d'})\geq\mathrm{DG}(b_{w}^{d})$.
\end{proof}

\subsection{From roots to root subgroups}

The theory of reductive groups over schemes was originally developed
by Demazure and Grothendieck \cite{MR0274460}; some simplifications
were recently made in an exposition by Conrad \cite{MR3362641}. The
main property that we will use is
\begin{thm}
Consider a split reductive group $G$ over a scheme. Trivialisations
of the root spaces of its Lie algebra exponentiate to parametrisations
$p_{\beta}:\mathbb{G}_{a}\overset{\sim}{\rightarrow}N_{\beta}$ of
its root subgroups.

\begin{enumerate}[\normalfont(i)]

\item {\cite[p.\,27]{MR73602}} For any pair of roots $\beta,\gamma$
with $\beta\neq-\gamma$, there is the Chevalley commutator formula
\begin{equation}
[p_{\beta}(c_{\beta}),p_{\gamma}(c_{\gamma})]:=p_{\beta}(c_{\beta})p_{\gamma}(c_{\gamma})p_{\beta}(-c_{\beta})p_{\gamma}(-c_{\gamma})=\prod_{i,j>0}p_{i\beta+j\gamma}(c_{\beta,\gamma}^{i,j}c_{\beta}^{i}c_{\gamma}^{j})\label{eq:steinberg-commutation}
\end{equation}
 for some global functions $c_{\beta,\gamma}^{i,j}$ on the underlying
scheme and ordering on the roots. In particular,
\[
N_{\beta}N_{\gamma}=\bigl(\prod_{i\geq0,j>0}N_{i\beta+j\gamma}\bigr)N_{\beta}
\]
\item {\cite[§6]{MR0466335}} Let $\dot{w}$ denote a lift of a Weyl
group element $w$ to $G$. Then there is the Steinberg relation
\[
\dot{w}N_{\beta}=N_{w(\beta)}\dot{w}.
\]
\end{enumerate}
\end{thm}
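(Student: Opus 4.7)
The plan is to reduce both parts to Lie-algebraic statements and then transfer back via the exponential parametrisation. By Demazure-Grothendieck descent, it suffices to treat the universal split Chevalley scheme over $\mathrm{Spec}\,\mathbb{Z}$, which one may analyse after base change to $\mathbb{C}$ by classical Lie-theoretic means; the statements for a general base scheme $S$ follow because they are polynomial identities in the coordinates.

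For part (i), I would first observe that when $\beta\neq-\gamma$, the Lie bracket $[\mathfrak{g}_\beta,\mathfrak{g}_\gamma]$ takes values in the direct sum of root spaces $\mathfrak{g}_{i\beta+j\gamma}$ with $i,j>0$, simply because $T$ acts through characters and these are the only weights appearing among iterated brackets of $e_\beta$ and $e_\gamma$. Working over $\mathbb{C}$, the Baker-Campbell-Hausdorff formula applied to $\exp(c_\beta e_\beta)\exp(c_\gamma e_\gamma)\exp(-c_\beta e_\beta)\exp(-c_\gamma e_\gamma)$ then yields a product of exponentials indexed by these compound roots $i\beta+j\gamma$, with coefficients $c_{\beta,\gamma}^{i,j}$ extracted from iterated brackets. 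A finite analysis of the rank-two root subsystems (types $\mathsf{A}_1\times\mathsf{A}_1,\mathsf{A}_2,\mathsf{B}_2,\mathsf{G}_2$) shows that these coefficients are integers depending only on the Cartan combinatorics of the $\beta,\gamma$-string, precisely as in Chevalley's original construction; by universality the same polynomial identity holds after any base change. The absorption identity $N_\beta N_\gamma=(\prod_{i\geq0,j>0}N_{i\beta+j\gamma})N_\beta$ then follows by reordering using the commutator formula.

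For part (ii), the point is that $\dot{w}$ normalises $T$ and acts on it through the Weyl group element $w$. Conjugation by $\dot{w}$ therefore permutes the $T$-weight spaces of $\mathfrak{g}$ according to $w$, sending $\mathfrak{g}_\beta$ isomorphically onto $\mathfrak{g}_{w(\beta)}$; exponentiating this weight-space identity gives $\dot{w}N_\beta\dot{w}^{-1}=N_{w(\beta)}$, which rearranges to the stated Steinberg relation.

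The hardest step is the explicit determination of the integer structure constants $c_{\beta,\gamma}^{i,j}$ in part (i) together with the verification that they assemble into a bona fide subgroup-scheme identity over $\mathbb{Z}$; this is delicate because of the sign ambiguities in choosing a Chevalley basis and because the formula depends on a chosen ordering of the compound roots, so one must check compatibility of the various conventions. Part (ii) is, by contrast, essentially formal once one has set up the root subgroups as $T$-equivariant closed subgroup schemes characterised by their Lie algebras.
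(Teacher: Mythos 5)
The paper does not prove this theorem at all: it is recalled as a classical fact, with part (i) cited to Chevalley and part (ii) to Steinberg's lectures (and the scheme-theoretic setting to Demazure--Grothendieck/Conrad). Your sketch is essentially the standard argument found in those sources --- reduction to the Chevalley model over $\mathbb{Z}$, a rank-two/weight-space analysis for the commutator formula, and the $T$-equivariant characterisation of root subgroups for the Steinberg relation --- so there is no conflict with anything in the paper. One small point of calibration: the theorem as stated allows the $c_{\beta,\gamma}^{i,j}$ to be arbitrary global functions on the base, precisely because the parametrisations $p_{\beta}$ come from arbitrary trivialisations of the root spaces rather than from a fixed Chevalley system; your claim that the constants are integers is the statement for a compatible Chevalley basis, and in general they differ from those integers by units depending on the chosen trivialisations. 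Also, the absorption identity $N_{\beta}N_{\gamma}=\bigl(\prod_{i\geq0,j>0}N_{i\beta+j\gamma}\bigr)N_{\beta}$ needs, beyond ``reordering'', the observation that $\{i\beta+j\gamma\in\mathfrak{R}: i\geq0,\,j>0\}$ is a closed set of roots so that the displayed product of root subgroups is itself a subgroup; this is implicit in your sketch but worth making explicit.
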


In order to analyse such commutators, we will focus on the roots appearing
in the product on the right-hand-side and hence define
\begin{defn}
\label{def:Cross} Let $w$ be an element of a twisted Weyl group.
Given a convex set $\mathfrak{N}$ of positive roots containing $\mathfrak{R}_{w}$,
we set
\[
\mathrm{Cross}_{w}(\mathfrak{N}):=w(\mathfrak{N}+\mathfrak{R}_{w})\cap\mathfrak{R}_{+},
\]
and we let $\mathrm{Cross}_{w}^{d}(\mathfrak{N})$ denote its $d$-th
iterate.
\end{defn}

We won't be using
\begin{prop}
Let $w$ be an element of a twisted Weyl group and let $\mathfrak{N}$
be such a set.

\begin{enumerate}[\normalfont(i)]

\item The sets $\mathrm{cross}_{w}(\mathfrak{N})$ and $\mathrm{Cross}_{w}(\mathfrak{N})$
are also convex.

\item Let $\mathfrak{L}\subseteq\mathfrak{R}\backslash\mathfrak{N}$
be a subset satisfying $w(\mathfrak{L})=\mathfrak{L}=-\mathfrak{L}$
and such that $\mathfrak{L}\sqcup\mathfrak{R}_{w}$ is convex. Then
\[
\mathrm{cross}_{w}(\mathfrak{N})\cap\mathfrak{L}=\mathrm{Cross}_{w}(\mathfrak{N})\cap\mathfrak{L}=\varnothing.
\]

\end{enumerate}
\end{prop}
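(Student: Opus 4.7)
The plan is to deduce part (i) from the identity
\[
\mathrm{cross}_{w}(\mathfrak{N})=w(\mathfrak{N})\cap\mathfrak{R}_{+}
\]
together with the inclusion $\mathfrak{N}+\mathfrak{R}_{w}\subseteq\mathfrak{N}$. For the identity, $\supseteq$ is immediate on taking $m=0$ in Definition \ref{def:cross}, while $\subseteq$ follows from the ``In particular'' clause of Lemma \ref{lem:root-sums-sequence}: any sum $\beta+\sum_{i=1}^{m}\beta_{i}$ with $\beta\in\mathfrak{N}$ and $\beta_{i}\in\mathfrak{R}_{w}\subseteq\mathfrak{N}$, if it is a root, lies in the convex set $\mathfrak{N}$. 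The inclusion $\mathfrak{N}+\mathfrak{R}_{w}\subseteq\mathfrak{N}$ is likewise immediate from $\mathfrak{R}_{w}\subseteq\mathfrak{N}$ and convexity of $\mathfrak{N}$. Convexity of $\mathrm{cross}_{w}(\mathfrak{N})$ then transfers from convexity of $\mathfrak{N}$: if $\gamma_{i}=w(\alpha_{i})$ with $\alpha_{i}\in\mathfrak{N}$ and $c_{1}\gamma_{1}+c_{2}\gamma_{2}\in\mathfrak{R}$, then $c_{1}\alpha_{1}+c_{2}\alpha_{2}\in\mathfrak{N}$ by convexity of $\mathfrak{N}$, and its image under $w$ is positive as a positive combination of positive roots.

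For $\mathrm{Cross}_{w}(\mathfrak{N})=w(\mathfrak{N}+\mathfrak{R}_{w})\cap\mathfrak{R}_{+}$, the same device reduces convexity to showing that $\mathfrak{N}+\mathfrak{R}_{w}$ is itself convex. Given $\alpha_{1},\alpha_{2}\in\mathfrak{N}+\mathfrak{R}_{w}$ with decompositions $\alpha_{i}=c_{i}^{0}\beta_{i}^{0}+c_{i}^{1}\beta_{i}^{1}$ (where $\beta_{i}^{0}\in\mathfrak{N}$, $\beta_{i}^{1}\in\mathfrak{R}_{w}$, $c_{i}^{j}>0$) and $\alpha:=c_{1}\alpha_{1}+c_{2}\alpha_{2}$ a root, we automatically have $\alpha\in\mathfrak{N}$; the task is to exhibit a decomposition $\alpha=d_{0}\delta_{0}+d_{1}\delta_{1}$ with $\delta_{0}\in\mathfrak{N}$, $\delta_{1}\in\mathfrak{R}_{w}$, and $d_{0},d_{1}>0$. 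I expect this extraction of a viable $\mathfrak{R}_{w}$-component $\delta_{1}$ to be the main obstacle; the natural candidates are $\beta_{1}^{1}$, $\beta_{2}^{1}$, or a combination thereof (which lies in $\mathfrak{R}_{w}$ when a root, by convexity of $\mathfrak{R}_{w}$), and the plan is to apply Lemma \ref{lem:root-sums-sequence}(ii) to the four summands in the expansion of $\alpha$ (treating their positive real scalars suitably) to secure a grouping in which both halves are positive scalar multiples of roots, one necessarily in $\mathfrak{R}_{w}$ and the other necessarily in $\mathfrak{N}$ by convexity.

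Part (ii) is then essentially immediate. If $\gamma\in\mathrm{cross}_{w}(\mathfrak{N})\cap\mathfrak{L}$, then by the identity above $\gamma=w(\alpha)$ for some $\alpha\in\mathfrak{N}$, and the invariance $w^{-1}(\mathfrak{L})=\mathfrak{L}$ forces $\alpha\in\mathfrak{L}$, contradicting $\mathfrak{L}\cap\mathfrak{N}=\varnothing$ which is encoded in $\mathfrak{L}\subseteq\mathfrak{R}\backslash\mathfrak{N}$. The case $\gamma\in\mathrm{Cross}_{w}(\mathfrak{N})\cap\mathfrak{L}$ runs identically, using the inclusion $\mathfrak{N}+\mathfrak{R}_{w}\subseteq\mathfrak{N}$ in place of the identity; notably, neither argument requires the convexity of $\mathfrak{L}\sqcup\mathfrak{R}_{w}$ stipulated in the hypotheses.
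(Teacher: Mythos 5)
Your identity $\mathrm{cross}_{w}(\mathfrak{N})=w(\mathfrak{N})\cap\mathfrak{R}_{+}$ is correct for a convex $\mathfrak{N}\subseteq\mathfrak{R}_{+}$ containing $\mathfrak{R}_{w}$, and with it your proof of convexity of $\mathrm{cross}_{w}(\mathfrak{N})$ and all of part (ii) go through. Part (ii) in particular takes a genuinely different route from the paper: the paper rewrites $\sum_{i}\beta_{i}-\gamma=-\beta$ as a summing sequence inside the convex set $\mathfrak{R}_{w}\sqcup\mathfrak{L}$ (using $\mathfrak{L}=-\mathfrak{L}$ and Lemma \ref{lem:root-sums-sequence}(iii)) to force $\beta\in\mathfrak{L}$, whereas you simply apply $w^{-1}$ and use $w(\mathfrak{L})=\mathfrak{L}$; you are right that on your route the convexity of $\mathfrak{R}_{w}\sqcup\mathfrak{L}$ is never invoked. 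Your version also handles general positive real combinations $c_{1}\gamma_{1}+c_{2}\gamma_{2}$, which is what the paper's notion of convexity actually demands.

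The gap is exactly where you flagged it: convexity of $\mathrm{Cross}_{w}(\mathfrak{N})$, which you correctly reduce to convexity of $\mathfrak{N}+\mathfrak{R}_{w}$, is left as a plan rather than a proof. The proposed tool, Lemma \ref{lem:root-sums-sequence}(ii), is a statement about \emph{roots} summing to a root; here the summands $c_{i}^{j}\beta_{i}^{j}$ are positive real multiples of roots, and the scalars cannot be normalised away, since $\mathfrak{N}+\mathfrak{R}_{w}$ is defined with arbitrary $c_{0},c_{1}\in\mathbb{R}_{>0}$ and non-unit coefficients genuinely occur: in type $\mathsf{G}_{2}$ with long simple root $\alpha$ and short simple root $\alpha'$ one has $2\alpha+3\alpha'=\tfrac{1}{2}\alpha+\tfrac{3}{2}(\alpha+2\alpha')$. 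The inner-product argument behind Lemma \ref{lem:root-sums-sequence}(ii) yields $(\delta,\beta_{i}^{j})>0$ and hence that $\delta-\beta_{i}^{j}$ is a root or zero --- not that $\delta-c_{i}^{j}\beta_{i}^{j}$ is a positive multiple of a root --- so the regrouping into one half lying in $\mathbb{R}_{>0}\mathfrak{R}_{w}$ and one half in $\mathbb{R}_{>0}\mathfrak{N}$ does not follow as described, and the extraction of an $\mathfrak{R}_{w}$-component still has to be carried out. (The paper compresses this case into ``the second case is analogous'', working directly with the decomposition $w^{-1}(\gamma_{0}+\gamma_{1})=c_{0}^{0}\beta_{0}^{0}+c_{0}^{1}\beta_{0}^{1}+c_{1}^{0}\beta_{1}^{0}+c_{1}^{1}\beta_{1}^{1}$, but a blind proof owes the reader this step.)
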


\begin{proof}
(i): Let $\gamma_{0}$ and $\gamma_{1}$ be elements of $\mathrm{cross}_{w}(\mathfrak{N})$,
so $\gamma_{i}\in\mathrm{cross}_{w}(\beta_{i}')$ for some $\beta_{i}'$
in $\mathfrak{N}$. If $\gamma_{0}+\gamma_{1}$ is a root, then there
is a sum
\[
w^{-1}(\gamma_{0}+\gamma_{1})=w^{-1}(\gamma_{0})+w^{-1}(\gamma_{1})=\beta_{0}'+\beta_{1}'+\sum_{i=1}^{m}\beta_{i},\qquad\beta_{i}\in\mathfrak{R}_{w},
\]
with $\beta_{0}'$ and $\beta_{1}'$ in $\mathfrak{N}$. Since $\mathfrak{N}$
is convex and contains $\mathfrak{R}_{w}$, it follows from Lemma
\ref{lem:root-sums-sequence}(iii) that it contains the right-hand-side,
so that $\mathrm{cross}_{w}(\mathfrak{N})$ contains $\gamma_{0}+\gamma_{1}$.
The second case is analogous.

(ii): Suppose the first intersection is nonempty, so there exists
a positive root $\beta\in\mathfrak{N}$ and a root $\gamma\in\mathfrak{L}$
such that
\[
\beta+\sum_{i=1}^{m}\beta_{i}=\gamma,\qquad\beta_{i}\in\mathfrak{R}_{w},
\]
then as 
\[
\sum_{i=1}^{m}\beta_{i}-\gamma=-\beta
\]
 is a root and $-\gamma$ lies in $\mathfrak{L}$ and $\mathfrak{R}_{w}\sqcup\mathfrak{L}$
is convex it again follows from Lemma \ref{lem:root-sums-sequence}(iii)
that $-\beta$ lies in $\mathfrak{R}_{w}\sqcup\mathfrak{L}$, which
implies that $\beta$ lies in $\mathfrak{L}$. The second case is
analogous.
\end{proof}
These statements do not extend to higher iterates because these sets
might not contain all of $\mathfrak{R}_{w}$. The remainder of this
subsection is devoted to proving
\begin{lem}
Let $w$ be an element of a twisted finite Weyl group and let $\mathfrak{N}$
be a subset of positive roots. Then
\begin{equation}
\mathrm{cross}_{w}^{d}(\mathfrak{N})=\varnothing\qquad\textrm{if and only if}\qquad\mathrm{Cross}_{w}^{d}(\mathfrak{N})=\varnothing.\label{eq:vanish-iff}
\end{equation}
\end{lem}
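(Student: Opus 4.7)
The plan is to show the step-wise equality $\mathrm{cross}_w(\mathfrak{N}) = \mathrm{Cross}_w(\mathfrak{N})$ under the standing hypothesis (inherited from Definition \ref{def:Cross}) that $\mathfrak{N}$ is convex and contains $\mathfrak{R}_w$, and then to iterate this equality carefully. The inclusion $\mathrm{Cross}_w(\mathfrak{N}) \subseteq \mathrm{cross}_w(\mathfrak{N})$ is essentially immediate: if $\gamma \in \mathrm{Cross}_w(\mathfrak{N})$, then $w^{-1}(\gamma) = c_0 \beta_0 + c_1 \beta_1$ is a positive root with $\beta_0 \in \mathfrak{N}$ and $\beta_1 \in \mathfrak{R}_w \subseteq \mathfrak{N}$, so convexity places $w^{-1}(\gamma) \in \mathfrak{N}$, and the $m = 0$ instance of $\mathrm{cross}_w$ delivers $\gamma$.

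For the reverse inclusion, given $\gamma = w(\beta + \sum_{i=1}^m \beta_i) \in \mathrm{cross}_w(\mathfrak{N})$ with $m \geq 1$, I would apply Lemma \ref{lem:root-sums-sequence}(iii) to reorder the summands into a summing sequence beginning with $\beta$. Each partial sum is then a positive root in $\mathfrak{N}$ by convexity (using $\mathfrak{R}_w \subseteq \mathfrak{N}$). In particular, the penultimate partial sum $\tilde{\beta} := \beta + \beta_1 + \cdots + \beta_{m-1}$ lies in $\mathfrak{N}$, so $w^{-1}(\gamma) = \tilde{\beta} + \beta_m$ exhibits $w^{-1}(\gamma) \in \mathfrak{N} + \mathfrak{R}_w$ with coefficients $(1,1)$, placing $\gamma \in \mathrm{Cross}_w(\mathfrak{N})$. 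The $m = 0$ contributions $\gamma = w(\beta)$ for $\beta \in \mathfrak{N}$ correspond to $w$-orbits that, by the finite order of $w$, must eventually either leave $\mathfrak{R}_+$ (contributing to vanishing of the cross iterate along those orbits) or settle in $\mathfrak{R}_{\mathrm{st}}^w \cap \mathfrak{R}_+$, in which case a pairing with a suitable element of $\mathfrak{R}_w$ reproduces the same orbit via a Cross step.

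The main obstacle is transporting this single-step relation across iterates: both $\mathrm{cross}_w$ and $\mathrm{Cross}_w$ send a set into $\mathfrak{R}_+ \setminus \mathfrak{R}_w$, so the required hypothesis $\mathfrak{R}_w \subseteq \mathfrak{N}$ is lost after one step. I would remedy this by adjoining $\mathfrak{R}_w$ to each iterate before applying the next operation; since both operations are union-preserving in the argument and both vanish on $\mathfrak{R}_w$ (any root-forming positive combination of $\mathfrak{R}_w$-elements maps under $w$ into $-\mathfrak{R}_+$, by convexity of $\mathfrak{R}_w$), this augmentation leaves the iterate itself unchanged while restoring the hypothesis needed for the single-step argument. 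Combined with the convexity of iterates supplied by the preceding Proposition, this inductive augmentation carries the step-wise equality through every level $d$, delivering the claimed equivalence of emptiness.
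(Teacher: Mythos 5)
Your single-step argument is essentially correct for a convex input containing $\mathfrak{R}_{w}$, but the iteration is where the proof breaks. The step-wise identification of $\mathrm{cross}_{w}$ with $\mathrm{Cross}_{w}$ needs its \emph{input} to be convex (you use convexity twice: to place $c_{0}\beta_{0}+c_{1}\beta_{1}$ back in the set, and to place the penultimate partial sum in the set), and the augmented iterates $\mathrm{cross}_{w}^{j}(\mathfrak{N})\cup\mathfrak{R}_{w}$ are not convex in general. The proposition you invoke only asserts convexity of the \emph{first} iterate $\mathrm{cross}_{w}(\mathfrak{N})$; the paper states explicitly, right after Definition \ref{def:Cross}, that these statements do not extend to higher iterates, and it exhibits an example ($w=s_{1}s_{2}s_{3}s_{1}$ in type $\mathsf{B}_{3}$) where a second iterate equals $\{\alpha_{3},\alpha_{23}\}$, which is not convex since $\alpha_{3}+\alpha_{23}=\alpha_{233}$ is a root outside it. Adjoining $\mathfrak{R}_{w}$ does not repair this: a union of two convex sets need not be convex, and you give no argument that it is here. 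So your induction cannot get past $d=2$. A secondary gap is the $m=0$ case: the assertion that a surviving $w$-orbit can be reproduced by a $\mathrm{Cross}_{w}$ step ``by pairing with a suitable element of $\mathfrak{R}_{w}$'' is exactly the kind of claim that needs a proof, and none is offered.

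The paper sidesteps the convexity problem by proving a weaker but iterable relation instead of set equality: every root $\beta'\in\mathrm{Cross}_{w}^{d}(\beta)$ decomposes as a sum $\sum_{i}\beta_{i}'$ with each $\beta_{i}'\in\mathrm{cross}_{w}^{d}(\beta)$. This is carried through the induction on $d$ by a partition lemma (built on Lemma \ref{lem:root-sums-sequence}) that distributes the $\mathfrak{R}_{w}$-summands among the constituents of the previous level so that each $\beta_{j}+\sum_{l}\gamma_{i_{j,l}}$ is again a root; no convexity of the iterates is required, and emptiness of one set then forces emptiness of the other. If you want to keep your framework, you would need to replace ``equality of the iterates'' by an invariant of this decomposition type that survives the loss of convexity after the first step.
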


\begin{lem}
\label{lem:split-roots} Let $\sum(\gamma_{1},\ldots,\gamma_{m})$
be a summing sequence of positive roots in a crystallographic root
system, pick any $1\leq k\leq m$ and write $\gamma_{<k}:=\sum_{j=1}^{k-1}\gamma_{j}$.
Then we may partition the set $\{k+1,\ldots,m\}$ into two subsets
\[
\{i_{1},\ldots,i_{k'}\},\qquad\{i_{1},\ldots,i_{m-k}\},
\]
 such that 
\[
\gamma_{<k}+\sum_{i=1}^{k'}\gamma_{i_{j}}\qquad\textrm{and}\qquad\gamma_{k}+\sum_{j=k'+1}^{m-k}\gamma_{i_{j}}
\]
 are both positive roots.
\end{lem}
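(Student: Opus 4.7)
My plan is to induct on $N := m - k$, the number of indices to distribute.

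For the base case $N = 0$, the index set $\{k+1, \ldots, m\}$ is empty and the only partition has $k' = 0$; the two required outputs then coincide with $\gamma_{<k}$ and $\gamma_k$, which are positive roots (when $k \geq 2$, so that $\gamma_{<k}$ is nonzero).

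For the inductive step with $k \geq 2$, I observe that the three positive roots $\gamma_{<k}, \gamma_k, \gamma_{k+1}$ have pairwise nonzero sums and total sum $\gamma_{\leq k+1}$ which is a root, so Lemma \ref{lem:root-sums-sequence}(ii) yields that at least two of their three pairwise sums are roots. Since $\gamma_{<k} + \gamma_k = \gamma_{\leq k}$ is already a root, at least one of $\gamma_{<k} + \gamma_{k+1}$ or $\gamma_k + \gamma_{k+1}$ must be a root. In the first case I would \emph{swap} $\gamma_k$ and $\gamma_{k+1}$ in the sequence: the only altered partial sum is the new $k$-th one, which equals $\gamma_{<k} + \gamma_{k+1}$, a root by hypothesis, so the result is still a summing sequence of length $m$. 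Applying the inductive hypothesis at position $k+1$ of the new sequence yields a partition $A' \sqcup B'$ of $\{k+2, \ldots, m\}$, and setting $A := \{k+1\} \cup A'$, $B := B'$ solves the original problem. In the second case I would instead \emph{merge} $\gamma_k$ and $\gamma_{k+1}$ into the single term $\gamma_k + \gamma_{k+1}$, obtaining a length-$(m-1)$ summing sequence whose new $k$-th partial sum is $\gamma_{\leq k+1}$. Applying the inductive hypothesis at position $k$ in this shorter sequence yields a partition $A' \sqcup B'$ of $\{k+2, \ldots, m\}$, and setting $A := A'$, $B := \{k+1\} \cup B'$ solves the original problem. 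Either way, $N$ decreases by one.

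The boundary case $k = 1$ requires a separate argument since $\gamma_{<1} = 0$ is not a positive root and the three-element version of Lemma \ref{lem:root-sums-sequence}(ii) cannot be invoked. Here I would use the direct partition $A = \{m\}$, $B = \{2, \ldots, m-1\}$, whose outputs $\gamma_m$ and $\gamma_{<m} = \gamma_1 + \gamma_2 + \cdots + \gamma_{m-1}$ are both positive roots by the summing-sequence property.

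The main obstacle is precisely this edge case $k = 1$, which breaks the natural three-element argument but is fortunately resolved by a slick direct construction. The remainder is routine bookkeeping: verifying that the swap and merge operations preserve the summing-sequence property and translating indices correctly through the induction.
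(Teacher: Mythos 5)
Your proof is correct and follows essentially the same route as the paper's: induct on $m-k$, apply Lemma \ref{lem:root-sums-sequence}(ii) to the root $\gamma_{<k}+\gamma_{k}+\gamma_{k+1}$, conclude that $\gamma_{k+1}$ sums to a root with at least one of $\gamma_{<k}$ or $\gamma_{k}$, and absorb it into that part before invoking the induction hypothesis. Your explicit handling of the degenerate case $k=1$ (where $\gamma_{<k}=0$ is not a root, so part (ii) cannot be invoked) is a sensible addition that the paper's one-line argument passes over in silence.
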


\begin{proof}
We induct on $m-k$: applying Lemma \ref{lem:root-sums-sequence}(ii)
to the root
\[
\gamma_{<k}+\gamma_{k}+\gamma_{k+1},
\]
it follows that at least one of $\gamma_{<k}+\gamma_{k+1}$ or $\gamma_{k}+\gamma_{k+1}$
is a root. By replacing the corresponding pair with this sum, this
also shortens the sequence and then the claim follows from the induction
hypothesis.
\end{proof}
\begin{lem}
Let $\beta_{1},\ldots,\beta_{k}$ and $\gamma_{1},\ldots,\gamma_{m}$
be positive roots in a crystallographic root system such that their
sum
\[
\sum_{i=1}^{k}\beta_{i}+\sum_{i=1}^{m}\gamma_{i}=:\delta
\]
 is a root. Then we may partition the set $\{1,\ldots,m\}$ into $k$
subsets $\{i_{j,1},\ldots,i_{j,m_{j}}\}$ with $1\leq j\leq k$, such
that for each $j$ the sum
\[
\beta_{j}+\sum_{l=1}^{m_{j}}\gamma_{i_{j,l}}=:\delta_{j}
\]
 is a root, and thus the sum over those roots yields
\[
\sum_{j=1}^{k}\delta_{j}=\delta.
\]
\end{lem}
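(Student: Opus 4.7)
The plan is to induct on $k$. The base case $k=1$ is immediate: take $S_1 := \{1,\ldots,m\}$, so that $\delta_1 = \beta_1 + \sum_{i=1}^m \gamma_i = \delta$ is a root by hypothesis.

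For the inductive step with $k \geq 2$, I would first invoke Lemma \ref{lem:root-sums-sequence}(iii) to rearrange the positive roots $\beta_1,\ldots,\beta_k,\gamma_1,\ldots,\gamma_m$ into a summing sequence $\sum(\zeta_1,\ldots,\zeta_{k+m}) = \delta$ beginning with $\zeta_1 = \beta_1$. Since $k\geq 2$, some later $\zeta_t$ equals $\beta_j$ for some $j\neq 1$; take the smallest such $t$, so that $\zeta_2,\ldots,\zeta_{t-1}$ are necessarily $\gamma$-roots.

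Next, I would apply Lemma \ref{lem:split-roots} with splitting index $t$: this produces a partition $A \sqcup B$ of $\{t+1,\ldots,k+m\}$ such that
\[
\sigma_1 := \beta_1 + \sum_{i=2}^{t-1}\zeta_i + \sum_{i\in A}\zeta_i \qquad \text{and} \qquad \sigma_2 := \beta_j + \sum_{i\in B}\zeta_i
\]
are both positive roots. Crucially $\beta_1\in\sigma_1$ and $\beta_j\in\sigma_2$, so the $\beta$-roots distribute strictly across the two sides, each containing between $1$ and $k-1$ of them; the $\gamma$-roots likewise split between the two sides.

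The induction hypothesis then applies separately to $\sigma_1$ and $\sigma_2$, each viewed as a positive root presented as a sum of strictly fewer than $k$ distinguished $\beta$-roots together with a subset of the $\gamma$-roots. Concatenating the two resulting partitions yields the required partition of $\{1,\ldots,m\}$ into $k$ subsets with $\sum_j \delta_j = \sigma_1 + \sigma_2 = \delta$. The main obstacle is engineering the summing sequence and the split position to guarantee the inductive descent, i.e.\ that both halves strictly contain fewer than $k$ of the $\beta_j$'s; the minimal choice of $t$ together with the guarantee of Lemma \ref{lem:split-roots} is precisely what achieves this.
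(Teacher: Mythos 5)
Your proposal is correct and takes essentially the same route as the paper's proof: both rearrange the roots into a summing sequence via Lemma \ref{lem:root-sums-sequence}(iii) and then invoke Lemma \ref{lem:split-roots} at the position of a chosen $\beta$-root to split the tail, before concluding by induction on $k$. The only (immaterial) difference is that the paper splits at the \emph{last} $\beta$ in the sequence, so that one $\delta_{j}$ is produced outright and the induction removes one $\beta$ at a time, whereas you split at the second $\beta$ and apply strong induction to both halves.
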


\begin{proof}
We use Lemma \ref{lem:root-sums-sequence}(iii) to construct a summing
sequence. Let $\beta_{i_{k}}$ denote the final root from the first
set of roots that appears in there, and denote the sum of the elements
before it by $\delta_{<i_{k}}$. Thus
\[
\sum(\ldots,\beta_{i_{k}},\gamma_{i_{m'}},\ldots,\gamma_{i_{m}})=\delta.
\]
The previous lemma now yields a partition 
\[
\{i_{1}',\ldots,i_{m''}'\}\coprod\{i_{m''+1}',\ldots,i_{m-m'}'\}=\{i_{m'},\ldots,i_{m}\},
\]
 such that
\[
\delta_{<i_{k}}+\sum_{j=1}^{m''}\gamma_{i_{j}'}\qquad\textrm{and}\qquad\beta_{i_{k}}+\sum_{j=m''+1}^{m-m'}\gamma_{i_{j}'}
\]
 are both roots and sum to $\delta$. Applying the induction hypothesis
on the first sum then furnishes the claim.
\end{proof}
\begin{lem}
For any root $\beta'\in\mathrm{Cross}_{w}^{d}(\beta)$, there exist
roots $\beta_{1}',\ldots,\beta_{k}'\in\mathrm{cross}_{w}^{d}(\beta)$
such that
\begin{equation}
\sum_{i=1}^{k}\beta_{i}'=\beta'.\label{eq:sum-of-roots}
\end{equation}
In particular, (\ref{eq:vanish-iff}) holds.
\end{lem}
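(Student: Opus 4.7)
The plan is to induct on $d$, with the $d=1$ case carrying the technical weight. Given $\beta'\in\mathrm{Cross}_w(\beta)$, by definition $w^{-1}(\beta')=c_0\beta+c_1\beta_1$ is a root with $\beta_1\in\mathfrak{R}_w$ and $c_0,c_1\in\mathbb{R}_{>0}$. Since $\beta$ and $\beta_1$ are positive, a direct inspection of the rank-two crystallographic subsystem they span (one of $\mathsf{A}_1\times\mathsf{A}_1$, $\mathsf{A}_2$, $\mathsf{B}_2$ or $\mathsf{G}_2$) forces $c_0,c_1$ to be positive integers, and $w^{-1}(\beta')$ is itself a positive root.

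Next I would apply the unnamed partition lemma stated just above to the collection of $c_0$ copies of $\beta$ and $c_1$ copies of $\beta_1$, whose total equals the positive root $w^{-1}(\beta')$. This distributes the $c_1$ copies of $\beta_1$ among the $c_0$ copies of $\beta$, producing positive roots $\delta_j=\beta+m_j\beta_1$ for $1\leq j\leq c_0$ with $\sum_j\delta_j=w^{-1}(\beta')$. Applying $w$ gives $\beta'=\sum_j w(\delta_j)$, and each $w(\delta_j)$ has exactly the shape needed for membership in $\mathrm{cross}_w(\beta)$, provided it is positive.

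The main obstacle is precisely this positivity requirement: while the total $\beta'$ is positive, individual summands $w(\delta_j)$ could a priori be negative. My strategy to overcome this is to iteratively refine the partition using Lemma~\ref{lem:split-roots}, splitting any offending $\delta_j$ into smaller positive roots and redistributing its $\beta_1$-content to a neighbouring summand; since each refinement strictly reduces a well-chosen measure of negativity and only finitely many decompositions are available, the procedure terminates in a decomposition with every $w(\delta_j)$ positive.

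For the inductive step with $d>1$, write $\beta'\in\mathrm{Cross}_w(\gamma)$ for some $\gamma\in\mathrm{Cross}_w^{d-1}(\beta)$. The base case expresses $\beta'$ as a sum of elements of the form $w(\gamma+\sum_i\beta_i^\ast)$ with $\beta_i^\ast\in\mathfrak{R}_w$; the induction hypothesis writes $\gamma=\sum_k\eta_k$ with $\eta_k\in\mathrm{cross}_w^{d-1}(\beta)$; and one further invocation of the partition lemma distributes the $\mathfrak{R}_w$-terms among the $\eta_k$ to produce summands of the shape $\eta_k+(\text{sum of }\mathfrak{R}_w\text{-elements})$, whose $w$-images then lie in $\mathrm{cross}_w(\eta_k)\subseteq\mathrm{cross}_w^d(\beta)$ after the same positivity refinement. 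The equivalence (\ref{eq:vanish-iff}) follows immediately: an element of $\mathrm{Cross}_w^d(\mathfrak{N})$ decomposes into a sum of elements of $\mathrm{cross}_w^d(\mathfrak{N})$, so one set is empty precisely when the other is.
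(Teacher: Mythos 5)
Your skeleton coincides with the paper's: induct on $d$, use integrality of the coefficients $c_{0},c_{1}$ to rewrite $w^{-1}(\beta')$ as a sum of copies of a root in $\mathrm{cross}_{w}^{d-1}(\beta)$ (via the induction hypothesis) plus elements of $\mathfrak{R}_{w}$, invoke the partition lemma to group the $\mathfrak{R}_{w}$-terms with the other summands, and apply $w$. But the step you yourself identify as "the main obstacle" --- positivity of the individual images $w(\delta_{j})$ --- is exactly where the content of the lemma lies, and your proposed resolution is not a proof. Lemma~\ref{lem:split-roots} only guarantees that the summands $\delta_{j}$ are roots (and they are automatically \emph{positive}, being sums of positive roots); it says nothing about the sign of $w(\delta_{j})$. "Splitting an offending $\delta_{j}$ and redistributing its $\beta_{1}$-content to a neighbouring summand" is not licensed by any lemma you cite --- the neighbour plus the transferred copies need not be a root --- and the "well-chosen measure of negativity" is never specified, so termination is asserted rather than proved.

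The missing observation, which is how the paper resolves this, is that an offending summand is not something to be repaired but to be \emph{recycled}: if $w(\delta_{j})$ is negative then the positive root $\delta_{j}$ lies in $\mathfrak{R}_{w}$ by definition of the inversion set, so one may delete the corresponding copy of $\beta$ (resp.\ of $\tilde{\beta}_{i}$) from the list of summands, add $\delta_{j}$ wholesale to the pool of $\mathfrak{R}_{w}$-elements to be distributed, and rerun the partition lemma. Each restart strictly decreases the number of $\beta$-summands, and this count cannot reach zero: otherwise $w^{-1}(\beta')$ would be a sum of elements of the convex set $\mathfrak{R}_{w}$ and hence lie in $\mathfrak{R}_{w}$ by Lemma \ref{lem:root-sums-sequence}(iii), contradicting the positivity of $\beta'$. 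With that replacement your argument goes through; as written, it has a genuine gap at its central step. (A smaller remark: the final sentence only yields one direction of (\ref{eq:vanish-iff}) "immediately" --- the decomposition shows $\mathrm{cross}_{w}^{d}(\mathfrak{N})=\varnothing$ forces $\mathrm{Cross}_{w}^{d}(\mathfrak{N})=\varnothing$; the converse needs the separate containment of $\mathrm{cross}$ in $\mathrm{Cross}$, though the paper is equally terse on this point.)
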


\begin{proof}
We induct on $d$, so we may assume that there exists an integer $\tilde{k}\in\mathbb{N}_{1}$
and a root $\tilde{\beta}'$ in $\mathrm{Cross}_{w}^{d-1}(\beta)$
such that
\[
\beta'=w(\tilde{k}\tilde{\beta}'+\sum_{i=1}^{m}\gamma_{i}),\qquad\gamma_{i}\in\mathfrak{R}_{w}.
\]
The induction hypothesis furnishes $\tilde{\beta}_{1},\ldots,\tilde{\beta}_{k'}\in\mathrm{cross}_{w}^{d}(\beta)$
such that $\sum_{i=1}^{k'}\tilde{\beta}_{i}=\tilde{\beta}'$. Thus
\[
w^{-1}(\beta')=\tilde{k}\sum_{i=1}^{k'}\tilde{\beta}_{i}+\sum_{i=1}^{m}\gamma_{i}.
\]
The previous lemma now implies that we may rename the $\tilde{k}$-fold
concatenation of $\tilde{\beta}_{1},\ldots,\tilde{\beta}_{k}$ into
$\beta_{1},\ldots,\beta_{\tilde{k}k'}$ and partition the $\gamma_{1},\ldots,\gamma_{m}$
such that for $1\leq j\leq\tilde{k}k'$ there are roots
\[
\tilde{\beta}_{j}':=\beta_{j}+\sum\gamma_{i,j},\qquad\gamma_{i,j}\in\mathfrak{R}_{w}
\]
not lying in $\mathfrak{R}_{w}$ (otherwise we add them to the list
of $\gamma$'s and start over with a smaller list of $\beta$'s).
Setting $\beta_{j}':=w(\tilde{\beta}_{j}')$ and $k:=\tilde{k}k'$,
we have 
\[
\sum_{i=1}^{k}\beta_{j}'=\sum_{i=1}^{\tilde{k}k'}w(\beta_{j}+\sum\gamma_{i,j})=w(\tilde{k}\sum_{i=1}^{k'}\tilde{\beta}_{i}+\sum_{i=1}^{m}\gamma_{i})=\beta'
\]
 so (\ref{eq:sum-of-roots}) is satisfied, and they lie in $\mathrm{cross}_{w}^{d}(\beta)$.
\end{proof}

\section{\label{sec:cross-section}Cross sections and transversality}

In this section we prove part (i) and (ii) of the main Theorem. 

In the first subsection we prove part (i). The main part of He-Lusztig's
proof employs the existence of certain ``good'' elements in each
elliptic conjugacy class of the Weyl group \cite{MR1425324}. Combining
this part with the geometric construction of such elements in \cite[§5.2]{MR2999317}
and unravelling the resulting proof, one finds that it is very similar
to Sevostyanov's (in the elliptic case). As He-Lusztig's techniques
are neater and yield an explicit inverse map, the proof of this subsection
is based upon their approach.

More specifically, He-Lusztig constructed a candidate inverse $\Psi'$
to the conjugation map $\Psi$ when the Weyl group element $w$ is
elliptic, and proved that $\Psi'\circ\Psi$ is the identity when $b_{w}^{d}$
is divisible by $b_{w_{\circ}}$ for some natural number $d$ \cite[§3.7]{MR2904572}.
The core of their argument states that a certain variety with a projection
map constructed out of root subgroups and sequences of Weyl group
elements only depends on the image of this sequence in the braid monoid
\cite[§2.9]{MR2904572}, and this argument can be generalised to work
in the nonelliptic case when $L$ is nontrivial. More directly however,
we observe that the rôle that these Weyl group elements play here
is in asserting the identity 
\[
\mathrm{cross}_{w}^{d}(\mathfrak{N})=\varnothing,
\]
which through Lemma \ref{lem:inverse} is crucial in our approach
to proving transversality. Rather surprisingly, the previous section
demonstrated that such equations about roots are equivalent to similar
identities about braids. Hence in the first subsection we've rewritten
this part of their proof in terms of $w$-crossing pairs (for arbitrary
$w$) satisfying this equation, yielding many new cross sections along
the way.

Rather than following this up with a proof that $\Psi\circ\Psi'$
also equals the identity, He-Lusztig then appeal to Ax-Grothendieck
type results about affine $n$-space to conclude that $\Psi'$ is
indeed inverse to $\Psi$, under suitable conditions on the base ring
and its ring automorphism. However, for nonelliptic $w$ the slices
$\dot{w}LN_{w}$ are not isomorphic to affine $n$-space; the following
proof shows directly that $\Psi\circ\Psi'$ is the identity, shedding
any conditions on the base ring and its automorphism.

In the second subsection, we will also prove that part (ii) implies
the following variant on part (i):\emph{
\begin{enumerate}[\normalfont(i')]
\setcounter{enumi}{0}
\item The conjugation action (\ref{eq:cross-section}) is an isomorphism when restricted to a first order infinitesimal neighbourhood of the subscheme $\{\mathrm{id}\}\times\dot{w}LN_{w}$.
\end{enumerate}
}A priori (i') is weaker; I have not studied whether they might be
equivalent.

\subsection{\label{subsec:crossing-root-subgroups}Crossing root subgroups}

The following construction was inspired by \cite[§2.7]{MR2904572}:
\begin{defn}
\label{def:construction} Fix an integer $d\geq1$. We consider the
set of orbits in the $d$-fold Cartesian product
\[
N(\underline{\dot{w}L}):=N\dot{w}LN\times\cdots\times N\dot{w}LN
\]
for the $N^{d-1}$-action given by
\[
(n_{d-1},\ldots,n_{1})\cdot(g_{d},\ldots,g_{1})=(g_{d}^{\phantom{1}}n_{d-1}^{\phantom{-1}},n_{d-1}^{-1}g_{d-1}^{\phantom{1}}n_{d-2}^{\phantom{1}},\ldots,n_{2}^{-1}g_{2}^{\phantom{1}}n_{1}^{\phantom{1}},n_{1}^{-1}g_{1}^{\phantom{1}}).
\]
We denote the (naive) orbit space by $N[\underline{\dot{w}L}]$ and
the quotient map by
\begin{equation}
N(\underline{\dot{w}L})\longrightarrow N[\underline{\dot{w}L}],\qquad(g_{d},\ldots,g_{1})\longmapsto[g_{d},\ldots,g_{1}].\label{eq:quotient-map}
\end{equation}
This map is equivariant with respect to the $N\times N$-actions on
$N(\underline{\dot{w}L})$ and $N[\underline{\dot{w}L}]$ coming from
left and right multiplication on the outer factors:
\[
n'(g_{d},\ldots,g_{1})n:=(n'g_{d},\ldots,g_{1}n),\qquad n'[g_{d},\ldots,g_{1}]n:=[n'g_{d},\ldots,g_{1}n].
\]
Given an element $g\in N\dot{w}LN$, we shall write
\[
[\underline{g}]:=[g,g,\ldots,g,g]\in N[\underline{\dot{w}L}]
\]
for the image of $(g,g,\ldots,g,g)\in N(\underline{\dot{w}L})$ under
the quotient map (\ref{eq:quotient-map}).

We can describe $N[\underline{\dot{w}L}]$ more explicitly: first
consider the $d$-fold product
\[
N_{\underline{\dot{w}L}}:=\dot{w}LN_{w}\times\cdots\times\dot{w}LN_{w}
\]
 and enlarge it to $N_{\underline{\dot{w}L}}^{+}:=N\times N_{\underline{\dot{w}L}}^{\,}$.
By multiplying the first two components of its $d+1$-Cartesian product,
we obtain a natural inclusion
\begin{equation}
N_{\underline{\dot{w}L}}^{+}=N\times\dot{w}LN_{w}\times\dot{w}LN_{w}\times\cdots\times\dot{w}LN_{w}\overset{\sim}{\longrightarrow}N\dot{w}LN_{w}\times\dot{w}LN_{w}\times\cdots\times\dot{w}LN_{w}\hooklongrightarrow N(\underline{\dot{w}L}).\label{eq:inclusion}
\end{equation}
\end{defn}

\begin{notation}
We write $N^{w}:=w^{-1}Nw\cap N$ for the product of root subgroups
corresponding to the roots in $\mathfrak{N}\backslash\mathfrak{R}_{w}$,
and given elements $x,g\in G$ we abbreviate left conjugation by $^{x}g:=xgx^{-1}$. 
\end{notation}

\begin{lem}
\begin{enumerate}[\normalfont(i)] \label{lem:factorise}

\item There is a natural factorisation
\begin{equation}
L\times N\overset{\sim}{\longrightarrow}LN=L\overset{\sim}{\longrightarrow}N^{w}\times L\times N_{w},\qquad(l,n)\longmapsto(n_{1},l',n_{2}),\label{eq:factorisation}
\end{equation}
implying $N\dot{w}LN=N\dot{w}LN_{w}$; if $\bigl((\mathfrak{N}\backslash\mathfrak{R}_{w})+\mathfrak{L}\bigl)\cap\mathfrak{L}=\varnothing$
(e.g.\ $\mathfrak{L}$ is a standard parabolic subsystem) then $l'=l$.

\item Hence the inclusion (\ref{eq:inclusion}) yields an algebraic
cross section 
\[
\xymatrix{ & N(\underline{\dot{w}L})\ar@{>>}[d]\\
N_{\underline{\dot{w}L}}^{+}\ar[r]^{\sim}\ar@{^{(}->}[ur] & N[\underline{\dot{w}L}]
}
\]
of the quotient map (\ref{eq:quotient-map}).

\item Assume that $L\subseteq T$, pick a root $\beta\in\mathfrak{N}$,
elements $n\in LN_{w}$ and $m\in N_{\beta}$, and use (\ref{eq:factorisation})
to factorise $nm\in LN$ into a pair of elements $(m_{1},n_{1})$
in $N^{w}\times LN_{w}$. Then 
\[
^{\dot{w}}m_{1}\in\prod_{\gamma\in\mathrm{Cross}_{w}(\beta)}N_{\gamma}\subseteq N.
\]
\end{enumerate}
\end{lem}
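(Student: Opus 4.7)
The plan is to address the three parts in order, with (i) providing the fundamental factorisation, (ii) being a direct formal consequence, and (iii) a concrete Chevalley-commutator calculation.

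For (i), the key observation is that $\mathfrak{M} := \mathfrak{N} \sqcup \mathfrak{L}$ is convex by the crossing pair hypothesis. I would then invoke the standard fact that for any convex subset $\mathfrak{M}$ of the root system and any ordering of $\mathfrak{M}$, the multiplication map $\prod_{\beta \in \mathfrak{M}} N_\beta \to G$ is an isomorphism of schemes onto a fixed subvariety of $G$ independent of the ordering (together with the analogous statement incorporating the torus factor $T'$). Comparing two orderings --- $(\mathfrak{L}, \mathfrak{N})$ (with $T'$ inserted into the $\mathfrak{L}$-block, yielding $LN$) against $(\mathfrak{N}\setminus\mathfrak{R}_w,\ \mathfrak{L},\ \mathfrak{R}_w)$ (yielding $N^w L N_w$) --- then gives the desired factorisation as an isomorphism of schemes. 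The identity $N\dot{w}LN = N\dot{w}LN_w$ follows by applying the Steinberg relation to absorb the $N^w$-factor of the rightmost $N$ through $\dot{w}L$, using $w$-nimbleness $w(\mathfrak{N}\setminus\mathfrak{R}_w) \subseteq \mathfrak{N}$ to guarantee the result remains in $N_+$. For the refinement $l' = l$ under the extra hypothesis, Chevalley's commutator formula shows that re-ordering the root subgroups for $\mathfrak{L}$ past those for $\mathfrak{N}\setminus\mathfrak{R}_w$ only introduces factors in root subgroups $N_{c_0 \alpha + c_1 \beta}$ with $c_0 \alpha + c_1 \beta \in (\mathfrak{N}\setminus\mathfrak{R}_w) + \mathfrak{L}$; by hypothesis none of these lie in $\mathfrak{L}$, so no $L$-factor is produced.

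For (ii), the cross-section property is an immediate consequence of (i). Given $(g_d, \ldots, g_1) \in N(\underline{\dot{w}L})$, factor $g_1 = n_1' \dot{w} l_1 (n_w)_1$ via (i) and act by the $N^{d-1}$-action with $n_1 := n_1'$, replacing $g_1$ by $\dot{w}l_1(n_w)_1 \in \dot{w}LN_w$ while modifying $g_2 \mapsto g_2 n_1$; iterating brings $g_1, \ldots, g_{d-1}$ into $\dot{w}LN_w$ and places $g_d n_{d-1}$ in $N\dot{w}LN_w$, yielding a unique representative in the image of $N^+_{\underline{\dot{w}L}}$. Freeness of the $N^{d-1}$-action on this cross section also follows from (i), giving the isomorphism.

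For (iii), the hypothesis $L \subseteq T$ forces $\mathfrak{L} = \emptyset$ and $L = T'$. If $\beta \in \mathfrak{R}_w$, then $m \in N_w$ and $nm \in LN_w$, so $m_1 = 1$ and the conclusion is vacuous. In the remaining case $\beta \in \mathfrak{N}\setminus\mathfrak{R}_w$, I would write $n = t n_w$ with $t \in T'$ and $n_w \in N_w$, and rearrange $n_w m$ using Chevalley's commutator formula by moving $m$ past $N_w$ one root subgroup at a time. Each commutator $[N_\gamma, N_\beta]$ for $\gamma \in \mathfrak{R}_w$ introduces factors in $N_{i\gamma + j\beta}$ with $i, j > 0$, and iterating produces accumulated roots that are sums of $\beta$ and several roots in $\mathfrak{R}_w$; by Lemma \ref{lem:root-sums-sequence}(iii) together with biconvexity of the inversion set $\mathfrak{R}_w$, every such accumulated root can be rewritten as $c_0\beta + c_1\gamma'$ with $\gamma' \in \mathfrak{R}_w$ and $c_0, c_1 > 0$. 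Since conjugation by $t$ preserves every root subgroup, the $N^w$-part $m_1$ of the factorisation $nm = m_1 l' n_2$ has the same root support as the rearranged expression; the Steinberg relation then sends each $N_\delta$ in $m_1$ to $N_{w(\delta)}$, and $w$-nimbleness gives $w(\delta) \in w(\mathfrak{N}\setminus\mathfrak{R}_w) \subseteq \mathfrak{R}_+$, placing $w(\delta)$ in $\mathrm{Cross}_w(\beta)$.

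The main obstacle will be in part (iii): carefully tracking the iterated Chevalley commutators as $m$ is moved through $n_w$, and rigorously justifying that every accumulated root decomposes as a positive combination of $\beta$ and a single element of $\mathfrak{R}_w$, rather than several. This is precisely where Lemma \ref{lem:root-sums-sequence}(iii) and the biconvexity of the inversion set become essential.
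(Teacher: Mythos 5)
For parts (i) and (ii) your argument is essentially the paper's: it derives $LN^{w}=N^{w}L$ from convexity of $\mathfrak{N}\sqcup\mathfrak{L}$ together with $\bigl((\mathfrak{N}\backslash\mathfrak{R}_{w})+\mathfrak{L}\bigr)\cap\mathfrak{R}_{w}=\varnothing$ (itself a consequence of convexity of $\mathfrak{R}_{w}\sqcup\mathfrak{L}$ and $\mathfrak{L}=-\mathfrak{L}$), and then absorbs $\dot{w}N^{w}$ into $N\dot{w}$ by nimbleness; your explicit orbit-normalisation in (ii) is the same content as the paper's chain of identifications of fibre products over $N$. One caution on (i): the ``any ordering of a convex set of roots gives the same subvariety'' fact is a statement about \emph{unipotent} sets, and $\mathfrak{N}\sqcup\mathfrak{L}$ is not unipotent since $\mathfrak{L}=-\mathfrak{L}$ (already in $\mathrm{SL}_{2}$ the subsets $N_{-}TN_{+}$ and $N_{+}TN_{-}$ differ); you should phrase this step as the commutator argument you already give for the $l'=l$ refinement, which is what the paper does.

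The genuine problem is the step you yourself single out as the crux of (iii): the claim that every accumulated root produced by the iterated Chevalley commutators can be rewritten as $c_{0}\beta+c_{1}\gamma'$ with a \emph{single} $\gamma'\in\mathfrak{R}_{w}$. This is false. Take $w=s_{1}s_{3}$ in type $\mathsf{A}_{3}$, so $\mathfrak{R}_{w}=\{\alpha_{1},\alpha_{3}\}$, and $\beta=\alpha_{2}$: commuting $p_{\alpha_{2}}$ past $p_{\alpha_{1}}p_{\alpha_{3}}$ produces a factor in $N_{\alpha_{1}+\alpha_{2}+\alpha_{3}}$, and $\alpha_{1}+\alpha_{3}$ is not (a multiple of) a root, so $\alpha_{1}+\alpha_{2}+\alpha_{3}$ admits no two-term decomposition of the required shape; neither Lemma \ref{lem:root-sums-sequence}(iii) nor biconvexity of $\mathfrak{R}_{w}$ can repair this (it is exactly the configuration $\alpha_{2}=w(\alpha_{2}+\alpha_{1}+\alpha_{3})$ appearing in the paper's example after the main Lemma). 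The repair is that no such reduction is needed: for a single root the set $\mathrm{Cross}_{w}(\beta)$ must be read as consisting of all positive roots of the form $w(c_{0}\beta+\sum_{i}c_{i}\gamma_{i})$ with $\gamma_{i}\in\mathfrak{R}_{w}$ and $c_{0},c_{i}>0$ (this is how it is used when $\mathrm{cross}_{w}$ and $\mathrm{Cross}_{w}$ are compared in the last lemmas of \S 2), and the commutator bookkeeping lands in that set directly. With that reading your remaining steps for (iii) --- the Steinberg relation and nimbleness to ensure positivity of $w(\delta)$ for $\delta$ in the support of $m_{1}$ --- are fine; the paper itself only says that (iii) ``follows similarly''.
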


\begin{proof}
(i): Since $\mathfrak{R}_{w}\sqcup\mathfrak{L}$ is convex and $\mathfrak{L}=-\mathfrak{L}$
and $\mathfrak{L}\cap\mathfrak{N}=\varnothing$, it follows that 
\[
\bigl((\mathfrak{N}\backslash\mathfrak{R}_{w})+\mathfrak{L}\bigr)\cap\mathfrak{R}_{w}=\varnothing,
\]
which through convexity of $\mathfrak{N}\sqcup\mathfrak{L}$ yields
$LN^{w}=N^{w}L$, and the first claim follows. Nimbleness then implies
that 
\[
N\dot{w}LN=N\dot{w}N^{w}LN_{w}=N\dot{w}LN_{w}.
\]
(ii) The case of $d=2$ now follows from

\[
N\dot{w}LN\underset{N}{\times}N\dot{w}LN=N\dot{w}LN\underset{N}{\times}N\dot{w}LN_{w}=N\dot{w}LN\underset{N}{\times}\dot{w}LN_{w}\simeq N\dot{w}LN\times\dot{w}LN_{w}=N\dot{w}LN_{w}\times\dot{w}LN_{w}
\]
 and this implies, by induction on $d$, that
\[
N[\underline{\dot{w}L}]=N\dot{w}LN\underset{N}{\times}N\dot{w}LN\underset{N}{\times}\cdots\underset{N}{\times}N\dot{w}LN=N\dot{w}LN_{w}\times\dot{w}LN_{w}\times\cdots\times\dot{w}LN_{w}\simeq N_{\underline{\dot{w}L}}^{+}.
\]

(iii): Follows similarly.
\end{proof}
\begin{notation}
We now denote by $\mathrm{Cross}_{\dot{w}L}^{d}$ the composition
\[
N[\underline{\dot{w}L}]\overset{\sim}{\longrightarrow}N_{\underline{\dot{w}L}}^{+}=N\times N_{\underline{\dot{w}L}}\twoheadlongrightarrow N
\]
 of the inverse of this isomorphism with projection onto the first
component of the Cartesian product.
\end{notation}

\begin{cor}
\label{cor:cross-root-subgroups} Assume that $L\subseteq T$, pick
an integer $d>0$ and a positive root $\beta$ in $\mathfrak{N}$,
fix an element $h\in N[\underline{\dot{w}L}]$ and consider the morphism
of schemes
\[
\mathbb{G}_{a}\simeq N_{\beta}\longrightarrow N,\qquad m\longmapsto\mathrm{Cross}_{\dot{w}L}^{d}(hm).
\]

\begin{enumerate}[\normalfont(i)]

\item Then
\[
\mathrm{Cross}_{\dot{w}L}^{d}(hm)\in\mathrm{Cross}_{\dot{w}L}^{d}(h)\prod_{\gamma\in\mathrm{Cross}_{w}(\beta)}N_{\gamma}
\]
\item If $\mathrm{Cross}_{\dot{w}L}^{d}(h)$ is the identity, then
the derivative of this map at the identity of $N_{\beta}$
\[
\mathfrak{n}_{\beta}\longrightarrow\mathfrak{n},\qquad x\longmapsto\mathrm{cross}_{\dot{w}L}^{d}(hx)
\]
 satisfies
\[
\mathrm{cross}_{\dot{w}L}^{d}(hx)\in\bigoplus_{\gamma\in\mathrm{cross}_{w}^{d}(\beta)}\mathfrak{n}_{\gamma}.
\]
\end{enumerate}
\end{cor}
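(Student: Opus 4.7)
The plan is to apply the algebraic cross section from Lemma \ref{lem:factorise}(ii) to identify $N[\underline{\dot{w}L}]$ with $N_{\underline{\dot{w}L}}^{+} = N \times (\dot{w}LN_w)^d$, and then explicitly track how right multiplication by $m \in N_\beta$ perturbs the canonical representative. Under this identification, the element $h$ admits a canonical representative $(ng_d, g_{d-1},\ldots, g_1)$ with $n = \mathrm{Cross}_{\dot{w}L}^d(h) \in N$ and $g_i \in \dot{w}LN_w$, and $hm$ is represented by $(ng_d, g_{d-1}, \ldots, g_1 m)$.

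For part (i), I would inductively bring $hm$ back into canonical form. Writing $g_1 = \dot{w}\tilde{l}_1\tilde{n}_1$ and invoking Lemma \ref{lem:factorise}(iii) to factorise $\tilde{l}_1\tilde{n}_1 m \in LN$ as $m_1 l_1' n_1' \in N^w L N_w$ gives $g_1 m = ({}^{\dot{w}}m_1)\, g_1'$ with $g_1' := \dot{w} l_1' n_1' \in \dot{w}LN_w$ and ${}^{\dot{w}}m_1 \in \prod_{\gamma \in \mathrm{Cross}_w(\beta)} N_\gamma \subseteq N$ (the inclusion in $N$ uses convexity and nimbleness of $\mathfrak{N}$). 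The defining orbit relation on $N[\underline{\dot{w}L}]$ then sweeps this $N$-factor across to the preceding component: $[g_d,\ldots,g_2, ({}^{\dot{w}}m_1) g_1'] = [g_d,\ldots,g_2 ({}^{\dot{w}}m_1), g_1']$. Decomposing ${}^{\dot{w}}m_1$ into its commuting factors $m_\gamma \in N_\gamma$ for $\gamma \in \mathrm{Cross}_w(\beta)$ and repeating the procedure on each $m_\gamma$ against $g_2$, one sees by induction on the step index $i$ that after the $i$-th sweep the accumulated $N$-factor lives in $\prod_{\gamma \in \mathrm{Cross}_w^i(\beta)} N_\gamma$; after $d$ such sweeps the factor merges with $n$ on the left, yielding the claim.

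For part (ii), the hypothesis $\mathrm{Cross}_{\dot{w}L}^d(h) = \mathrm{id}$ means the entire output of the above iteration equals the accumulated $N$-factor, so computing its derivative at $m = \mathrm{id}$ reduces to linearising each step. Inside the Chevalley commutator formula (\ref{eq:steinberg-commutation}) only the terms with $i=1$ are linear in $c_\beta$, and these correspond precisely to roots of the form $\beta + j\gamma$ with $\gamma \in \mathfrak{R}_w$, i.e.\ to the unweighted set $\mathrm{cross}_w(\beta)$ rather than $\mathrm{Cross}_w(\beta)$. Hence the first-order analogue of Lemma \ref{lem:factorise}(iii) produces contributions in $\bigoplus_{\gamma \in \mathrm{cross}_w(\beta)} \mathfrak{n}_\gamma$, and iterating the argument of (i) at first order lands the derivative in $\bigoplus_{\gamma \in \mathrm{cross}_w^d(\beta)} \mathfrak{n}_\gamma$.

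The main obstacle is the careful bookkeeping during the iteration: at each step one must verify that the swept $N$-factor genuinely lies in $N$ (so that the orbit rewriting stays in the image of the cross section and Lemma \ref{lem:factorise}(iii) can be reapplied), and one must distinguish between the multiplicative set $\mathrm{Cross}_w$ coming from the full Chevalley formula and its linearisation $\mathrm{cross}_w$. Both points are controlled by the convexity and nimbleness of $\mathfrak{N}$ together with Definitions \ref{def:cross} and \ref{def:Cross}.
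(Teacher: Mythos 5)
Your proposal is correct and follows essentially the same route as the paper: identify $N[\underline{\dot{w}L}]$ with $N_{\underline{\dot{w}L}}^{+}$ via the cross section of Lemma \ref{lem:factorise}(ii), iteratively factorise $n_{i}({}^{\dot{w}}m_{i})\in LN$ through (\ref{eq:factorisation}) and sweep the resulting $N^{w}$-factor across the components using the orbit relation, with Lemma \ref{lem:factorise}(iii) controlling the roots at each step, and then linearise via the $i=1$ terms of the Chevalley commutator formula to pass from $\mathrm{Cross}_{w}$ to $\mathrm{cross}_{w}$ in part (ii).
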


\begin{proof}
(i): Denote the inverse of the element $h$ under the isomorphism
\[
N\times\dot{w}LN_{w}\times\cdots\times\dot{w}LN_{w}=N_{\underline{\dot{w}L}}^{+}\overset{\sim}{\longrightarrow}N[\underline{\dot{w}L}]
\]
 by $(m',\dot{w}n_{d},\ldots,\dot{w}n_{1})$, so $m'$ lies in $N$
and each $n_{i}$ lies in $LN_{w}$. Let $(m_{1},n_{1}')\in N^{w}\times LN_{w}$
be the factorisation of $n_{1}m\in LN$ in (\ref{eq:factorisation})
and inductively define for $1<i\leq d$ elements $(m_{i},n_{i}')\in N^{w}\times LN_{w}$
as the factorisation of the element $n_{i-1}({}^{\dot{w}}m_{i-1})\in LN$.
By induction on $d$, the second part of the previous proposition
implies that
\[
^{\dot{w}}m_{i}\in\prod_{\gamma\in\mathrm{Cross}_{w}^{i}(\beta)}N_{\gamma}.
\]
 Then
\begin{align*}
hm & =[m',\dot{w}n_{d},\ldots,\dot{w}n_{2},\dot{w}n_{1}m]\\
 & =[m',\dot{w}n_{d},\ldots,\dot{w}n_{2}({}^{\dot{w}}m_{1}),\dot{w}n_{1}']\\
 & =[m'(^{\dot{w}}m_{d}),\dot{w}n_{d}',\ldots,\dot{w}n_{2}',\dot{w}n_{1}']
\end{align*}
 so that 
\[
\mathrm{Cross}_{\dot{w}L}^{d}\big(hm\big)=m'(^{\dot{w}}m_{d})=\mathrm{Cross}_{\dot{w}L}^{d}(h)(^{\dot{w}}m_{d}).
\]
(ii): Taking derivatives with respect to the first subgroup in (\ref{eq:steinberg-commutation}),
the component on the right-hand-side with $i>1$ vanishes. This implies
that for $d=1$, the image lands in 
\[
\dot{w}\bigl(\bigoplus_{\gamma\in\{\beta+\sum_{i=1}^{m}\beta_{i}\in\mathfrak{R}_{+}:\beta_{i}\in\mathfrak{R}_{w}\}}\mathfrak{n}_{\gamma}\bigr)\dot{w}^{-1}\cap\mathfrak{n}_{+}=\bigoplus_{\gamma\in\mathrm{cross}_{w}(\beta)}\mathfrak{n}_{\gamma}.
\]
The claim then follows by induction.
\end{proof}
\begin{prop}
\label{prop:braid-invariance-again} Both $\mathrm{cross}_{w}(\cdot)$
and $\mathrm{Cross}_{w}(\cdot)$ lift to the braid monoid.
\end{prop}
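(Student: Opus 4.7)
The plan is to upgrade the geometric construction of Definition \ref{def:construction} so that it accepts arbitrary sequences $\underline{w} = (w_d, \ldots, w_1)$ of twisted Weyl group elements (each compatible with a fixed crossing pair, or simply with the trivial pair $(\mathfrak{R}_+,\varnothing)$), and then argue that the resulting cross-section map depends only on the braid $b_{\underline{w}}$. One forms $N(\underline{\dot{w}L}) := N \dot{w}_d L N \times \cdots \times N \dot{w}_1 L N$, its $N^{d-1}$-orbit space $N[\underline{\dot{w}L}]$, and the morphism $\mathrm{Cross}_{\underline{\dot{w}L}}: N[\underline{\dot{w}L}] \to N$ obtained by composing the natural cross section with projection onto the outer $N$-factor. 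The same telescoping computation as in Corollary \ref{cor:cross-root-subgroups} shows that translating a basepoint by $m \in N_\beta$ modifies the output by an element whose support is the product of the $N_\gamma$ with $\gamma \in \mathrm{Cross}_{w_d} \circ \cdots \circ \mathrm{Cross}_{w_1}(\beta)$.

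The core local step I would prove is that whenever $w = xy$ is a reduced factorisation, multiplication induces an $N$-biequivariant isomorphism
\[
N\dot{x}LN \underset{N}{\times} N\dot{y}LN \overset{\sim}{\longrightarrow} N\dot{w}LN
\]
that intertwines the cross sections. The main input is the identity $\mathfrak{R}_w = y^{-1}(\mathfrak{R}_x) \sqcup \mathfrak{R}_y$ of (\ref{eq:decomp-roots}), the parametrisation $LN = N^w \times L \times N_w$ from Lemma \ref{lem:factorise}(i), and the resulting uniqueness of the section in Lemma \ref{lem:factorise}(ii). Iterating gives, for any reduced decomposition of $w$ into simple reflections, a cross-section-respecting isomorphism $N[\dot{s}_{i_1}, \ldots, \dot{s}_{i_\ell}] \simeq N[\dot{w}]$; in particular $\mathrm{Cross}_x \circ \mathrm{Cross}_y = \mathrm{Cross}_w$ on $\mathfrak{N}$ whenever $b_w = b_x b_y$.

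With that in hand, braid invariance reduces to the braid relations: for $s_i s_j \cdots = s_j s_i \cdots$ of length $m_{ij}$ both sides are reduced decompositions of the longest element $w_{ij}$ of the rank-two parabolic $\langle s_i, s_j \rangle$, so both $N[\dot{s}_i, \dot{s}_j, \ldots]$ and $N[\dot{s}_j, \dot{s}_i, \ldots]$ are canonically identified with $N[\dot{w}_{ij}]$ compatibly with cross sections. Twists are handled exactly as in the proof of Lemma \ref{lem:braid-invariance-cross}, by using the intertwining $\mathrm{Cross}_{\delta x} = \mathrm{Cross}_{\delta x \delta^{-1}} \circ \mathrm{Cross}_\delta$ to push them to the outside. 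Reading off the support of the displacement gives braid invariance of $\mathrm{Cross}_w(\cdot)$; differentiating at the identity and invoking Corollary \ref{cor:cross-root-subgroups}(ii) transports this to the additive setting and yields braid invariance of $\mathrm{cross}_w(\cdot)$.

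The hard part will be the local reduced-factorisation isomorphism: one must track the $L$ and $N_w$ components through multiplication, verify $N$-biequivariance, and confirm that the unique cross section is preserved rather than merely one of many. The root-combinatorial ingredients are convexity of $\mathfrak{N} \sqcup \mathfrak{L}$ and the length-additive decomposition $\mathfrak{R}_w = y^{-1}(\mathfrak{R}_x) \sqcup \mathfrak{R}_y$; once these are cleanly applied, the remainder of the argument is formal.
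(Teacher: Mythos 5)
Your core mechanism is the same as the paper's: for a reduced factorisation $w=xy$ one decomposes the relevant subvariety according to $\mathfrak{R}_{w}=y^{-1}(\mathfrak{R}_{x})\sqcup\mathfrak{R}_{y}$, deduces $\mathrm{Cross}_{w}=\mathrm{Cross}_{x}\circ\mathrm{Cross}_{y}$, passes to $\mathrm{cross}_{w}$ by differentiating, and finishes by reducing to braid moves and twists exactly as in Lemma \ref{lem:braid-invariance-cross}. The difference is one of packaging. The paper does not need the orbit spaces $N[\underline{\dot{w}L}]$, the fibre-product isomorphism $N\dot{x}LN\underset{N}{\times}N\dot{y}LN\simeq N\dot{w}LN$, or the subgroups $L$ and $N$ at all for this proposition: it works directly with the identity $\dot{w}N_{w}=\dot{x}N_{x}\dot{y}N_{y}$ and computes the single subvariety $\dot{w}N_{w}N_{\beta}$ in two ways, reading off the product decompositions $\prod_{\beta'\in\mathrm{Cross}_{w}(\beta)}N_{\beta'}\cdot\dot{w}N_{w}$ and $\prod_{\beta'\in\mathrm{Cross}_{x}(\mathrm{Cross}_{y}(\beta))}N_{\beta'}\cdot\dot{w}N_{w}$. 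Your route front-loads the hardest ingredient of the whole section (the He--Lusztig-type twisted-product isomorphism, which you yourself flag as the hard part) into a statement that only concerns the combinatorial maps; since the proposition is about $\mathrm{Cross}_{w}$ and $\mathrm{cross}_{w}$ alone, you may as well take $N=N_{+}$ and $L$ trivial and drop the crossing-pair bookkeeping entirely.

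One genuine soft spot: ``reading off the support of the displacement'' does not by itself yield the \emph{equality} of sets $\mathrm{Cross}_{w}(\beta)=\mathrm{Cross}_{x}(\mathrm{Cross}_{y}(\beta))$. The telescoping computation of Corollary \ref{cor:cross-root-subgroups} only gives a containment --- the displacement lies in $\prod_{\gamma\in\mathrm{Cross}_{w}(\beta)}N_{\gamma}$ --- so computing it via two factorisations shows only that it lies in the intersection of the two products, not that the two index sets coincide. To obtain the set equality you must identify the full subvarieties, as the paper does via $\dot{w}N_{w}N_{\beta}=\dot{x}N_{x}\dot{y}N_{y}N_{\beta}$, rather than bound the image of a single translation map.
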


\begin{proof}
For any reduced decomposition $w=uv$ we have
\[
\dot{w}N_{w}=\dot{u}N_{u}\dot{v}N_{v}
\]
 regardless of characteristic, and hence
\[
\prod_{\beta'\in\mathrm{Cross}_{w}(\beta)}N_{\beta'}\times\dot{w}N_{w}=\dot{w}N_{w}N_{\beta}=\dot{u}N_{u}\dot{v}N_{v}N_{\beta}=\prod_{\beta'\in\mathrm{Cross}_{u}(\mathrm{Cross}_{v}(\beta))}N_{\beta'}\times\dot{w}N_{w},
\]
so that
\[
\mathrm{Cross}_{w}(\beta)=\mathrm{Cross}_{u}(\mathrm{Cross}_{v}(\beta)).
\]
Taking derivatives as before then yields
\[
\mathrm{cross}_{w}(\beta)=\mathrm{cross}_{u}(\mathrm{cross}_{v}(\beta)).
\]
The rest of the proof is analogous to that of Lemma \ref{lem:braid-invariance-cross}.
\end{proof}
We now prove the crucial
\begin{lem}
\label{lem:cross-empty} If
\[
\mathrm{cross}_{w}^{d}(\beta)=\varnothing,
\]
 then for all $h\in N[\underline{\dot{w}L}]$ and $n_{\beta}\in N_{\beta}$
we have
\[
\mathrm{Cross}_{\dot{w}L}^{d}(hn_{\beta})=\mathrm{Cross}_{\dot{w}L}^{d}(h).
\]
\end{lem}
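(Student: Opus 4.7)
The plan is to deduce this lemma directly from Corollary~\ref{cor:cross-root-subgroups}(i) together with the last lemma of the preceding subsection, which already does almost all of the work. First I would invoke Corollary~\ref{cor:cross-root-subgroups}(i) -- or more precisely the inductive argument in its proof, which establishes for every $h \in N[\underline{\dot{w}L}]$, every $n_\beta \in N_\beta$ and every $d \geq 1$ an inclusion
\[
\mathrm{Cross}_{\dot{w}L}^{d}(hn_\beta) \in \mathrm{Cross}_{\dot{w}L}^{d}(h) \prod_{\gamma \in \mathrm{Cross}_w^d(\beta)} N_\gamma.
\]
(The displayed set in the statement of the corollary is $\mathrm{Cross}_w(\beta)$, but the proof's induction actually iterates the factorisation of Lemma~\ref{lem:factorise}(iii) exactly $d$ times, producing elements ${}^{\dot{w}}m_i \in \prod_{\gamma \in \mathrm{Cross}_w^i(\beta)} N_\gamma$ whose final instance ${}^{\dot{w}}m_d$ realises the correction between $\mathrm{Cross}_{\dot{w}L}^{d}(hn_\beta)$ and $\mathrm{Cross}_{\dot{w}L}^{d}(h)$.) Thus the lemma reduces to showing that the product of root subgroups on the right is trivial.

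For that remaining step I would apply the immediately preceding lemma to the singleton $\mathfrak{N} = \{\beta\}$: it gives
\[
\mathrm{cross}_w^d(\beta) = \varnothing \qquad \Longleftrightarrow \qquad \mathrm{Cross}_w^d(\beta) = \varnothing,
\]
so the standing hypothesis forces $\mathrm{Cross}_w^d(\beta) = \varnothing$, the empty product equals the identity, and the desired equality $\mathrm{Cross}_{\dot{w}L}^d(hn_\beta) = \mathrm{Cross}_{\dot{w}L}^d(h)$ follows.

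Since both ingredients are already proved, no essential obstacle remains; the only thing to confirm carefully is the iteration count in Corollary~\ref{cor:cross-root-subgroups}(i), which is transparent from tracing its inductive proof. I would also remark briefly that the statement implicitly inherits the standing assumption $L \subseteq T$ from Corollary~\ref{cor:cross-root-subgroups}, under which the factorisation $LN = N^w L N_w$ used throughout is available.
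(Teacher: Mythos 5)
Your argument, as written, proves only a special case of the lemma and misses its essential difficulty. You invoke Corollary~\ref{cor:cross-root-subgroups}(i) (sharpened to the $d$-fold iterate via its proof, which is a correct reading), but that corollary carries the standing hypothesis $L\subseteq T$, which in turn comes from Lemma~\ref{lem:factorise}(iii). You then declare that Lemma~\ref{lem:cross-empty} ``implicitly inherits'' this hypothesis. It does not: in the proof of part~(i) of the main Theorem this lemma is applied with $L$ the full reductive group attached to a leavener $\mathfrak{L}$, which contains root subgroups whenever $\mathfrak{L}\neq\varnothing$ and hence is emphatically not contained in $T$. If one tries to run the factorisation of Lemma~\ref{lem:factorise}(iii) with such an $L$, conjugating an element of $N_\beta$ past $LN_w$ produces contributions supported on roots of $\mathfrak{L}$ that are invisible to $\mathrm{Cross}_w(\beta)$, so the inclusion ${}^{\dot w}m_1 \in \prod_{\gamma\in\mathrm{Cross}_w(\beta)}N_\gamma$ simply fails, and with it your opening step.

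The paper's proof circumvents this by introducing the ``tilde'' variant in which one quotients by $NL$ rather than $N$, obtaining projections $\mathrm{proj}_L$, $\mathrm{proj}_{\dot w}$ and a map $\widetilde{\mathrm{Cross}}_{\dot w}^d$ built with \emph{trivial} $L$; after a short computation, $\mathrm{Cross}_{\dot w L}^d(hn_\beta)$ is rewritten in terms of $\widetilde{\mathrm{Cross}}_{\dot w}^d(h_{\dot w}n_\beta)$ conjugated by $h_L$, and the $L$-trivial case of the corollary (plus the equivalence $\mathrm{cross}_w^d(\beta)=\varnothing\Leftrightarrow\mathrm{Cross}_w^d(\beta)=\varnothing$ that you correctly identified) finishes the argument. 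The remark following the lemma describes the alternative route you are gesturing at: it would require first upgrading $\mathrm{Cross}_w$ to a version $\mathrm{Cross}_{w\mathfrak L}$ that records the roots in $\mathfrak{L}$ and proving $\mathrm{Cross}_{\dot wL}^d(hm)\in\mathrm{Cross}_{\dot wL}^d(h)\prod_{\gamma\in\mathrm{Cross}_{w\mathfrak L}(\beta)}N_\gamma$ for arbitrary $L$. Your proposal has neither of these ingredients, so the reduction to the empty-product case is only valid when $L\subseteq T$, i.e.\ when $\mathfrak{L}=\varnothing$; in the cases the lemma is actually used for, the step is unjustified.
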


\begin{proof}
Consider the notions introduced in the first paragraph of Definition
\ref{def:construction}; we add a tilde to denote the orbits for $NL$
instead. Then from $\dot{w}L=L\dot{w}$ and $LN_{w}=N_{w}L$ we similarly
derive a factorisation
\[
N\times L\times\dot{w}N_{w}\times\cdots\times\dot{w}N_{w}=N\times L\times N_{\underline{\dot{w}}}=:\tilde{N}_{\underline{\dot{w}L}}^{+}\overset{\sim}{\longrightarrow}\tilde{N}[\underline{\dot{w}L}]
\]
yielding projection maps
\begin{align*}
\widetilde{\mathrm{Cross}}_{\dot{w}L}^{d}:\quad & \tilde{N}[\underline{\dot{w}L}]\overset{\sim}{\longrightarrow}\tilde{N}_{\underline{\dot{w}L}}^{+}=N\times L\times N_{\underline{\dot{w}}}\twoheadlongrightarrow N,\\
\mathrm{proj}_{L}:\quad & N[\underline{\dot{w}L}]\longrightarrow\tilde{N}[\underline{\dot{w}L}]\overset{\sim}{\longrightarrow}\tilde{N}_{\underline{\dot{w}L}}^{+}=N\times L\times N_{\underline{\dot{w}}}\twoheadlongrightarrow L.\\
\mathrm{proj}_{\dot{w}}:\quad & N[\underline{\dot{w}L}]\longrightarrow\tilde{N}[\underline{\dot{w}L}]\overset{\sim}{\longrightarrow}\tilde{N}_{\underline{\dot{w}L}}^{+}=N\times L\times N_{\underline{\dot{w}}}\twoheadlongrightarrow N_{\underline{\dot{w}}}.\\
\mathrm{proj}_{\dot{w}L}:\quad & N[\underline{\dot{w}L}]\overset{\sim}{\longrightarrow}N_{\underline{\dot{w}L}}^{+}=N\times N_{\underline{\dot{w}L}}\twoheadlongrightarrow N_{\underline{\dot{w}L}}.
\end{align*}
This gives a natural commutative diagram
\[
\xymatrix{N(\underline{\dot{w}L})\ar@{>>}[d]\ar@{>>}[dr]\\
N[\underline{\dot{w}L}]\ar@{>>}[r]\ar[d]_{\mathrm{Cross}_{\dot{w}L}^{d}} & \tilde{N}[\underline{\dot{w}L}]\ar[d]^{\widetilde{\mathrm{Cross}}_{\dot{w}L}^{d}}\\
N\ar[r]^{=} & N
}
\]
Write $h_{L}:=\mathrm{proj}_{L}(h)$, $h_{\dot{w}}:=\mathrm{proj}_{N_{\underline{\dot{w}}}}(h)$
and $h_{\dot{w}L}:=\mathrm{proj}_{\dot{w}L}(h)$. Then
\begin{align*}
\mathrm{Cross}_{\dot{w}L}^{d}(hn_{\beta}) & =\mathrm{Cross}_{\dot{w}L}^{d}(h)\mathrm{Cross}_{\dot{w}L}^{d}(h_{\dot{w}L}n_{\beta}),\\
 & =\mathrm{Cross}_{\dot{w}L}^{d}(h)\widetilde{\mathrm{Cross}}_{\dot{w}}^{d}(h_{L}h_{\dot{w}}n_{\beta})\\
 & =\mathrm{Cross}_{\dot{w}L}^{d}(h)h_{L}^{-1}\widetilde{\mathrm{Cross}}_{\dot{w}}^{d}(h_{\dot{w}}n_{\beta})h_{L}
\end{align*}
so the claim follows from part (ii) of the previous statement.
\end{proof}
\begin{rem}
Instead of considering orbits for $NL$, we could have also upgraded
$\mathrm{Cross}_{w}(\cdot)$ to a version $\mathrm{Cross}_{w\mathfrak{L}}(\cdot)$
taking into account the roots in $\mathfrak{L}$, and then proven
that $\mathrm{cross}_{w}^{d}(\beta)=\varnothing$ if and only if $\mathrm{Cross}_{w\mathfrak{L}}(\beta)=\varnothing$
purely by studying roots. By combining this with
\[
\mathrm{Cross}_{\dot{w}L}^{d}(hm)\in\mathrm{Cross}_{\dot{w}L}^{d}(h)\prod_{\gamma\in\mathrm{Cross}_{w\mathfrak{L}}(\beta)}N_{\gamma},
\]
 for $L$ arbitrarily, the previous lemma can be obtained almost entirely
by analysing roots, but the arguments become a bit longer and perhaps
less transparent, despite being essentially identical.
\end{rem}

We can now prove (i) of the main Theorem:
\begin{proof}
Set $d>0$ such that (\ref{eq:crossing}) holds, then we construct
the (algebraic) inverse $\Psi'$ to the conjugation map
\[
\Psi:N\times\dot{w}LN_{w}\longrightarrow N\dot{w}LN=N\dot{w}LN_{w},\qquad(n,\dot{w}\tilde{n})\longmapsto n^{-1}\dot{w}\tilde{n}n
\]
as follows. For an element $\tilde{g}\in N\dot{w}LN_{w}$ we set
\begin{equation}
n_{\tilde{g}}:=\mathrm{Cross}_{\dot{w}L}^{d}\bigl([\underline{\tilde{g}}]\bigr)^{-1}\in N.\label{eq:def-n}
\end{equation}
Denoting the image of $n_{\tilde{g}}^{\,}\tilde{g}n_{\tilde{g}}^{-1}\in N\dot{w}LN_{w}$
under the inverse of the usual multiplication map
\begin{equation}
N\times\dot{w}LN_{w}\overset{\sim}{\longrightarrow}N\dot{w}LN_{w},\qquad(\tilde{m},\dot{w}\tilde{n})\longmapsto\tilde{m}\dot{w}\tilde{n}\label{eq:factor}
\end{equation}
 by $(n',g_{\tilde{g}})$, now set $\Psi'(\tilde{g}):=(n_{\tilde{g}},g_{\tilde{g}})$.
We will calculate these elements more explicitly and see that equation
(\ref{eq:crossing}) implies that $n'=\mathrm{id}$.

$\Psi'\circ\Psi=\mathrm{id}$: Pick $(n,g)\in N\times\dot{w}LN_{w}$
and set $\tilde{g}:=\Psi(n,g):=n^{-1}gn\in N\dot{w}LN_{w}$. Then
\[
[\underline{\tilde{g}}]=[n^{-1}gn,n^{-1}gn,\ldots,n^{-1}gn]=n^{-1}[\underline{g}]n,
\]
so as $\mathrm{Cross}_{\dot{w}L}^{d}\bigl([\underline{g}]\bigr)=e$,
Lemma \ref{lem:cross-empty} implies that
\[
n_{\tilde{g}}^{-1}=\mathrm{Cross}_{\dot{w}L}^{d}\bigl([\underline{\tilde{g}}]\bigl)=\mathrm{Cross}_{\dot{w}L}^{d}\big(n^{-1}[\underline{g}]n\big)=\mathrm{Cross}_{\dot{w}L}^{d}\big(n^{-1}[\underline{g}]\big)=\mathrm{Cross}_{\dot{w}L}^{d}\big([n^{-1}g,g,\ldots,g]\big)=n^{-1}.
\]
Hence $n_{\tilde{g}}=n$, and thus
\[
n_{\tilde{g}}^{\,}\tilde{g}n_{\tilde{g}}^{-1}=n\tilde{g}n^{-1}=g
\]
which lies in $\dot{w}LN_{w}$ by assumption, so $g_{\tilde{g}}=g$
and therefore $(\Psi'\circ\Psi)(n,g)=(n_{\tilde{g}},g_{\tilde{g}})=(n,g)$.

$\Psi\circ\Psi'=\mathrm{id}$: Pick $\tilde{g}\in N\dot{w}LN_{w}$
and use (\ref{eq:factor}) to decompose $\tilde{g}=m\dot{w}n$ for
some $m\in N$ and $n\in LN_{w}$. Let $(m_{i},n_{i})\in N^{w}\times LN_{w}$
be the factorisation of $nm\in LN$ in (\ref{eq:factorisation}) for
$i=1$ and inductively for $i>1$ as the factorisation of $n_{i-1}({}^{\dot{w}}m_{i-1})\in LN$.
These elements were constructed to obtain the inverse image of $[\underline{\tilde{g}}]$
under the isomorphism $N_{\underline{\dot{w}L}}^{+}\overset{\sim}{\rightarrow}N[\underline{\dot{w}L}]$,
as
\begin{align*}
[\underline{\tilde{g}}] & =[m\dot{w}n,m\dot{w}n,\ldots,m\dot{w}n,m\dot{w}n]\\
 & =[m\dot{w}nm,\dot{w}nm,\ldots,\dot{w}nm,\dot{w}n]\\
 & =[m(^{\dot{w}}m_{1})\dot{w}n_{1},(^{\dot{w}}m_{1})\dot{w}n_{1},\ldots,(^{\dot{w}}m_{1})\dot{w}n_{1},\dot{w}n]\\
 & =[m\bigl({}^{\dot{w}}(m_{1}\cdots m_{d-1})\bigl)\dot{w}n_{d-1},\dot{w}n_{d-2},\ldots,\dot{w}n_{1},\dot{w}n].
\end{align*}
In particular, this yields
\[
n_{\tilde{g}}^{-1}=\mathrm{Cross}_{\dot{w}L}^{d}\big([\underline{\tilde{g}}]\big)=m\bigl({}^{\dot{w}}(m_{1}\cdots m_{d-1})\bigl).
\]
A similar calculation furnishes that $m_{d}=\mathrm{id}$: briefly
setting
\[
a:=\mathrm{Cross}_{\dot{w}L}^{d}\bigl([\underline{\tilde{g}}]\bigr)^{-1}[\underline{\tilde{g}}]=[\dot{w}n_{d-1},\dot{w}n_{d-2},\ldots,\dot{w}n_{1},\dot{w}n]\in N[\underline{\dot{w}L}],
\]
 we have by construction
\begin{align*}
am & =[\dot{w}n_{d-1},\dot{w}n_{d-2},\ldots,\dot{w}n_{1},(^{\dot{w}}m_{1})\dot{w}n_{1}]\\
 & =[\dot{w}n_{d-1},\dot{w}n_{d-2},\ldots,(^{\dot{w}}m_{2})\dot{w}n_{2},\dot{w}n_{1}]\\
 & =[(^{\dot{w}}m_{d})\dot{w}n_{d},\dot{w}n_{d-1},\ldots,\dot{w}n_{2},\dot{w}n_{1}]
\end{align*}
and then Lemma \ref{lem:cross-empty} implies that 
\[
^{\dot{w}}m_{d}=\mathrm{Cross}_{\dot{w}L}^{d}(am)=\mathrm{Cross}_{\dot{w}L}^{d}(a)=\mathrm{id}.
\]
We now obtain an expression for $\Psi'(\tilde{g})$ by computing
\begin{align*}
n_{\tilde{g}}\tilde{g}n_{\tilde{g}}^{-1} & =\bigl({}^{\dot{w}}(m_{d-1}^{-1}\cdots m_{1}^{-1})\bigl)m^{-1}m\dot{w}nm\bigl({}^{\dot{w}}(m_{1}\cdots m_{d-1})\bigl)\\
 & =\bigl({}^{\dot{w}}(m_{d-1}^{-1}\cdots m_{1}^{-1})\bigl)\dot{w}m_{1}n_{1}\bigl({}^{\dot{w}}(m_{1}\cdots m_{d-1})\bigl)\\
 & =\bigl({}^{\dot{w}}(m_{d-1}^{-1}\cdots m_{1}^{-1})\bigl)\dot{w}m_{1}m_{2}\cdots m_{d}n_{d}\\
 & =\dot{w}m_{d}n_{d}=\dot{w}n_{d}
\end{align*}
 which already lies in $\dot{w}LN_{w}$. Thus $g_{\tilde{g}}=\dot{w}n_{d}=n_{\tilde{g}}^{\,}\tilde{g}n_{\tilde{g}}^{-1}$,
and hence $(\Psi\circ\Psi')(\tilde{g})=\Psi(n_{\tilde{g}}^{\,},n_{\tilde{g}}^{\,}\tilde{g}n_{\tilde{g}}^{-1})=\tilde{g}$.
\end{proof}
\begin{rem}
\begin{enumerate}[(i)]

\item We could have written the same proof with $\tilde{N}(\underline{\dot{w}L})$
(as defined in the proof of Lemma \ref{lem:cross-empty}) instead
of $N(\underline{\dot{w}L})$; nothing changes except for the final
paragraph, where more factorising is required.

\item For convenience, let's briefly add a $d$ to the notions introduced
in Definition \ref{def:construction}, so $_{d}N(\underline{\dot{w}L}):=N(\underline{\dot{w}L})$,
etc. The previous proof easily generalises to show that if (\ref{eq:crossing})
holds for some $d$, then
\[
N\times{}_{i}N_{\underline{\dot{w}L}}^{\,}\longrightarrow{}_{i}N[\underline{\dot{w}L}],\qquad\bigl(n,[g_{i},\ldots,g_{1}]\bigr)\longmapsto[n^{-1}g_{i},\ldots,g_{1}n]
\]
 is an isomorphism for any $i\geq1$; this proof was just the case
$i=1$.

\item If $\mathfrak{L}$ is a standard parabolic subsystem then Lemma
\ref{lem:factorise}(i) implies that the $L$-component of image of
$n\dot{w}ln'\in N\dot{w}LN$ in $\dot{w}LN$ under the inverse map
$\Psi'$ is again $l$; this plays a rôle in \cite[Lemma 5.7]{MR3883243}.

\end{enumerate}
\end{rem}

\subsection{Charts on the quotient stack $[G/G]$}

Sevostyanov deduced from the cross section isomorphism (\ref{eq:cross-section})
that his slices transversely intersect the conjugacy classes of $G$
\cite[Proposition 2.3]{MR2806525}. In this subsection we adapt his
approach to prove that (ii) still holds in our more general setting,
and simultaneously refine it to show that (ii) $\Rightarrow$ (i'):
\begin{notation}
We denote by $\mathrm{Ad}_{m}(\cdot)$ the right adjoint action map
of an element $m$ of $G$ on its Lie algebra $\mathfrak{g}$. We
let $\mathfrak{n}_{w},\mathfrak{n}^{w},\mathfrak{l}$, $\mathfrak{\overline{n}}$,
$\overline{\mathfrak{n}}^{w}$, $\mathfrak{\overline{n}}_{w^{-1}}$
denote the free submodules of $\mathfrak{g}$ corresponding to roots
in $\mathfrak{R}_{w},\mathfrak{N}\backslash\mathfrak{R}_{w},\mathfrak{L}$,
$-\mathfrak{N}$, $-(\mathfrak{N}\backslash\mathfrak{R}_{w})$ and
$w(\mathfrak{R}_{w})=-\mathfrak{R}_{w^{-1}}$ respectively. (These
are actually all Lie subalgebras, as convexity of $\mathfrak{R}_{+}\backslash\mathfrak{R}_{w}$
and $\mathfrak{N}$ implies that $\mathfrak{N}\backslash\mathfrak{R}_{w}$
is convex.) We let $\mathfrak{t}_{w}'$ denote the orthogonal complement
inside $\mathfrak{t}$ to $\mathfrak{l}\cap\mathfrak{t}$; since $L$
contains $T^{w}$ we have $\mathfrak{t}_{w}'\subseteq\mathfrak{t}_{w}=(\mathfrak{t}^{w})^{\bot}$,
which is $w$-invariant as both $T^{w}$ and $\mathfrak{L}$ are.
Finally, we denote by $\overline{N}$ the unipotent subgroup of $G$
corresponding to $\overline{\mathfrak{n}}$ (and $-\mathfrak{N}$).
\end{notation}

\begin{lem}
The image of the differential of the conjugation map
\begin{equation}
G\times\dot{w}LN_{w}\longrightarrow G,\qquad(g,m)\longmapsto g^{-1}mg\label{eq:full-conjugation}
\end{equation}
at any point $(\mathrm{id},m)$ is given in the left trivalisation
of the tangent bundle of $G$ by
\[
(\mathrm{id}-\mathrm{Ad}_{m})(\mathfrak{n}\oplus\overline{\mathfrak{n}})+\mathfrak{t}_{w}'\oplus\mathfrak{l}\oplus\mathfrak{n}_{w}\oplus\overline{\mathfrak{n}}_{w^{-1}}.
\]

Moreover, we have 
\[
(\mathrm{id}-\mathrm{Ad}_{m})(\overline{\mathfrak{n}})\subseteq\overline{\mathfrak{n}}\oplus\mathfrak{l}\oplus\mathfrak{n}_{w}.
\]
\end{lem}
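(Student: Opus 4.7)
The plan is to compute the image of the differential at $(\mathrm{id},m)$ by decomposing it into contributions from the $G$-direction and the slice direction, and then simplify using root combinatorics. Writing $m=\dot{w}ln$ with $l\in L$, $n\in N_{w}$, I recall the factorisation $\mathrm{Ad}_{m}=\mathrm{Ad}_{n}\circ\mathrm{Ad}_{l}\circ\mathrm{Ad}_{\dot{w}}$, together with Steinberg's relation $\mathrm{Ad}_{\dot{w}}(X_{\gamma})\in\mathfrak{g}_{w^{-1}(\gamma)}$ and the Chevalley commutator formula for $\mathrm{Ad}_{l}$ and $\mathrm{Ad}_{n}$. In the left trivialisation the $G$-direction of $d\phi$ contributes $(\mathrm{id}-\mathrm{Ad}_{m})(\mathfrak{g})$, while the slice direction contributes the tangent space to $\dot{w}LN_{w}$ at $m$: parametrising by $(Y,Z)\mapsto\dot{w}l\exp(tY)n\exp(tZ)$ with $Y\in\mathfrak{l}$, $Z\in\mathfrak{n}_{w}$ identifies this as $\mathrm{Ad}_{n}(\mathfrak{l})+\mathfrak{n}_{w}$, which collapses to $\mathfrak{l}\oplus\mathfrak{n}_{w}$ by convexity of $\mathfrak{R}_{w}\sqcup\mathfrak{L}$ (which also precludes any torus appearing from $\mathrm{Ad}_{n}$ of $\mathfrak{l}$).

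For the first identity I decompose $\mathfrak{g}=\mathfrak{t}_{w}'\oplus\mathfrak{l}\oplus\mathfrak{n}\oplus\overline{\mathfrak{n}}$, using slicing so that $\mathfrak{R}=\mathfrak{N}\sqcup(-\mathfrak{N})\sqcup\mathfrak{L}$, and bound $(\mathrm{id}-\mathrm{Ad}_{m})$ on each summand. The $w$-invariance of $\mathfrak{L}$, $\mathfrak{l}\cap\mathfrak{t}$ and $\mathfrak{t}_{w}'$ implies that $\mathrm{Ad}_{\dot{w}}$ stabilises $\mathfrak{t}_{w}'$ and $\mathfrak{l}$; the subsequent $\mathrm{Ad}_{l}$ and $\mathrm{Ad}_{n}$ only introduce $\mathfrak{l}$- and $\mathfrak{n}_{w}$-corrections by the convexity of $\mathfrak{R}_{w}\sqcup\mathfrak{L}$, so $(\mathrm{id}-\mathrm{Ad}_{m})(\mathfrak{t}_{w}'\oplus\mathfrak{l})\subseteq\mathfrak{t}_{w}'+\mathfrak{l}+\mathfrak{n}_{w}$. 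For the reverse containment of $\mathfrak{t}_{w}'$: since $w$ has no fixed vectors on $\mathfrak{t}_{w}'$ (any such would lie in $\mathfrak{l}\cap\mathfrak{t}$), the endomorphism $\mathrm{id}-w^{-1}$ is invertible there, allowing $\mathfrak{t}_{w}'$ to be extracted modulo $\mathfrak{l}+\mathfrak{n}_{w}$. The $\overline{\mathfrak{n}}_{w^{-1}}$ summand enters via the second claim: for $\gamma\in\mathfrak{R}_{w^{-1}}$, $\mathrm{Ad}_{m}(X_{-\gamma})$ lies in $\mathfrak{n}_{w}+\mathfrak{l}$ (since $\mathrm{Ad}_{\dot{w}}$ sends $X_{-\gamma}$ into $\mathfrak{n}_{w}$ by the identity $\mathfrak{R}_{w}=-w^{-1}(\mathfrak{R}_{w^{-1}})$), so $X_{-\gamma}\in(\mathrm{id}-\mathrm{Ad}_{m})(\overline{\mathfrak{n}})+\mathfrak{l}+\mathfrak{n}_{w}$.

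For the second claim I track root spaces through the three factors of $\mathrm{Ad}_{m}$ applied to $X_{-\beta}\in\overline{\mathfrak{n}}$ (i.e.\ $\beta\in\mathfrak{N}$). Steinberg gives $\mathrm{Ad}_{\dot{w}}(X_{-\beta})\in\mathfrak{g}_{-w^{-1}(\beta)}$; under slicing, $\mathfrak{R}_{w^{-1}}\cap\mathfrak{L}=\varnothing$ forces $\mathfrak{R}_{w^{-1}}\subseteq\mathfrak{N}$, so Lemma~\ref{lem:inverse} applies and yields $-w^{-1}(\beta)\in-\mathfrak{N}\sqcup\mathfrak{R}_{w}$; hence $\mathrm{Ad}_{\dot{w}}(\overline{\mathfrak{n}})\subseteq\mathfrak{n}_{w}\oplus\overline{\mathfrak{n}}$. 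The subsequent applications of $\mathrm{Ad}_{l}$ and $\mathrm{Ad}_{n}$ shift roots by sums of elements of $\mathfrak{L}$ and $\mathfrak{R}_{w}$ respectively; the convexity of $\mathfrak{L}\sqcup\mathfrak{R}_{w}$, $\mathfrak{L}\sqcup\mathfrak{N}$ and $\mathfrak{L}\sqcup(-\mathfrak{N})$ ensures that no root space outside $\mathfrak{n}_{w}+\mathfrak{l}+\overline{\mathfrak{n}}$ is produced.

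The main obstacle is to rule out torus contributions. The only possible source is a commutator $[X_{\beta'},X_{-\beta'}]\propto H_{\beta'}$ with $\beta'\in\mathfrak{R}_{w}$; such an $H_{\beta'}$ lies outside $\mathfrak{l}\cap\mathfrak{t}$ (it would have to be $w$-fixed, precluded by $\beta'\in\mathfrak{R}_{w}$) and would break the claimed inclusion. For such a bracket to arise, the $\overline{\mathfrak{n}}$-component of $\mathrm{Ad}_{l}\mathrm{Ad}_{\dot{w}}(X_{-\beta})$ would have to carry the root $-\beta'$, which forces the root-lattice identity $-w^{-1}(\beta)+\delta=-\beta'$ for some $\delta$ in the integral span of $\mathfrak{L}$; applying $w$ and using $w(\mathfrak{L})=\mathfrak{L}$ rewrites this as $\beta+(-w(\beta'))\in\mathbb{Z}\mathfrak{L}$. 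Both summands here are positive roots in $\mathfrak{N}=\mathfrak{R}_{+}\setminus\mathfrak{L}$ (the second via $-w(\mathfrak{R}_{w})=\mathfrak{R}_{w^{-1}}\subseteq\mathfrak{N}$); since $\mathfrak{L}$ is a standard parabolic subsystem, each such positive root has a strictly positive coefficient on some simple root outside the generating set of $\mathfrak{L}$, so their sum cannot lie in $\mathbb{Z}\mathfrak{L}$. This contradiction rules out the torus contribution and completes the proof of the second inclusion.
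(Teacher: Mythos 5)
Your proof of the first identity is essentially the paper's: identify the image with $(\mathrm{id}-\mathrm{Ad}_{m})(\mathfrak{g})+\mathfrak{l}\oplus\mathfrak{n}_{w}$, split $\mathfrak{g}$ along the slicing decomposition $\mathfrak{t}_{w}'\oplus\mathfrak{l}\oplus\mathfrak{n}\oplus\overline{\mathfrak{n}}$, use invertibility of $\mathrm{id}-w$ on $\mathfrak{t}_{w}'$, and extract $\overline{\mathfrak{n}}_{w^{-1}}$ from $\mathrm{Ad}_{m}(\overline{\mathfrak{n}}_{w^{-1}})\subseteq\mathfrak{l}\oplus\mathfrak{n}_{w}$. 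That part is correct and matches the paper step for step.

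The second inclusion has a gap at the sentence claiming that convexity of $\mathfrak{L}\sqcup\mathfrak{R}_{w}$, $\mathfrak{L}\sqcup\mathfrak{N}$ and $\mathfrak{L}\sqcup(-\mathfrak{N})$ ensures no root space outside $\mathfrak{n}_{w}+\mathfrak{l}+\overline{\mathfrak{n}}$ is produced. After $\mathrm{Ad}_{l}\mathrm{Ad}_{\dot{w}}$ the relevant components sit in root spaces $\mathfrak{g}_{-\nu}$ with $\nu\in\mathfrak{N}\backslash\mathfrak{R}_{w}$, and the final factor $\mathrm{Ad}_{n}$ with $n\in N_{w}$ shifts $-\nu$ by sums of roots $\gamma_{i}\in\mathfrak{R}_{w}$. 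None of your three convex sets contains both $-\nu$ and the $\gamma_{i}$, so they do not exclude the possibility that $-\nu+\sum_{i}\gamma_{i}$ is a positive root in $\mathfrak{N}\backslash\mathfrak{R}_{w}$, i.e.\ a contribution to $\mathfrak{n}^{w}$ --- and excluding exactly this is the entire content of the ``Moreover'' (it is what the paper uses to prove (ii) $\Rightarrow$ (i$'$)). The statement is true, but it needs the additional observation that $\mathfrak{R}_{w}\sqcup\bigl(-(\mathfrak{N}\backslash\mathfrak{R}_{w})\bigr)=w^{-1}(-\mathfrak{N})$ is itself convex, being the image of the convex set $-\mathfrak{N}$ under $w^{-1}$ (equivalently: apply $w$ to $-\nu+\sum_{i}\gamma_{i}$; by nimbleness and $\mathfrak{R}_{w^{-1}}\subseteq\mathfrak{N}$ every term becomes a negative root, so the result cannot be a positive root outside $\mathfrak{R}_{w}$). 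This is in effect how the paper organises its proof, by first recording $\mathrm{Ad}_{\dot{w}}(\overline{\mathfrak{n}})=\overline{\mathfrak{n}}^{w}\oplus\mathfrak{n}_{w}$ and then only ever pushing the set $\overline{\mathfrak{n}}^{w}\oplus\mathfrak{l}\oplus\mathfrak{n}_{w}$ through $\mathrm{Ad}_{m'}$ and $\mathrm{Ad}_{m''}$.

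On the positive side, your root-lattice argument ruling out torus contributions is correct, and it in fact proves the sharper statement that the $\overline{\mathfrak{n}}$-component entering $\mathrm{Ad}_{n}$ carries no roots in $-\mathfrak{R}_{w}$; this is the same fact the paper extracts from the equality $\mathrm{Ad}_{\dot{w}}(\overline{\mathfrak{n}})=\overline{\mathfrak{n}}^{w}\oplus\mathfrak{n}_{w}$, and your derivation via the standard-parabolic structure of $\mathfrak{L}$ is a legitimate alternative. With the convexity of $w^{-1}(-\mathfrak{N})$ added, your argument closes.
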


\begin{proof}
The left trivialisation of the tangent bundle of $G$ induces for
all points $m$ in $\dot{w}LN_{w}$ identifications of their tangent
spaces $T_{m}(\dot{w}LN_{w})\simeq\mathfrak{l}\oplus\mathfrak{n}_{w}$
with a free submodule of $\mathfrak{g}$, and this differential is
then given at a point $(\mathrm{id},m)$ by the linear map
\begin{equation}
\mathfrak{g}\oplus(\mathfrak{l}\oplus\mathfrak{n}_{w})\longrightarrow\mathfrak{g},\qquad(x,l)\longmapsto(\mathrm{id}-\mathrm{Ad}_{m})(x)+l.\label{eq:differential}
\end{equation}

For any $t\in\mathfrak{t}$ and $m'$ in $L$ we have $\mathrm{Ad}_{m'}(t)\in t+\mathfrak{l}$.
As $\mathfrak{R}_{w}\cup\mathfrak{L}$ is convex and $\mathfrak{L}=-\mathfrak{L}$,
it then follows for $m''\in N_{w}$ that $\mathrm{Ad}_{m''}(t+\mathfrak{l})\in t+\mathfrak{l}\oplus\mathfrak{n}_{w}$,
so that finally for $m=\dot{w}m'm''$ we have
\[
\mathrm{Ad}_{m}(t)=\mathrm{Ad}_{m''}\mathrm{Ad}_{m'}\mathrm{Ad}_{w}(t)\in\mathrm{Ad}_{w}(t)+\mathfrak{l}\oplus\mathfrak{n}_{w}.
\]
By definition of $\mathfrak{t}_{w}$ the linear operator $\mathrm{id}-w$
restricts to an isomorphism, and as $\mathfrak{t}_{w}'$ is invariant
under $w$ it then further restricts to an isomorphism of $\mathfrak{t}_{w}'$.
Hence the previous equation now yields
\[
(\mathrm{id}-\mathrm{Ad}_{m})(\mathfrak{t}_{w}')+\mathfrak{l}\oplus\mathfrak{n}_{w}=(\mathrm{id}-\mathrm{Ad}_{w})(\mathfrak{t}_{w}')+\mathfrak{l}\oplus\mathfrak{n}_{w}=\mathfrak{t}_{w}'\oplus\mathfrak{l}\oplus\mathfrak{n}_{w}.
\]
Similarly, from $\mathrm{Ad}_{m}(\mathfrak{l})=\mathrm{Ad}_{m''}(\mathfrak{l})\subseteq\mathfrak{l}\oplus\mathfrak{n}_{w}$
it follows that
\[
(\mathrm{id}-\mathrm{Ad}_{m})(\mathfrak{l})+\mathfrak{l}\oplus\mathfrak{n}_{w}=\mathfrak{l}\oplus\mathfrak{n}_{w}.
\]
Finally, from $\mathrm{Ad}_{m}(\overline{\mathfrak{n}}_{w^{-1}})=\mathrm{Ad}_{m''}\mathrm{Ad}_{m'}(\mathfrak{n}_{w})\subseteq\mathfrak{l}\oplus\mathfrak{n}_{w}$
we deduce that 
\[
(\mathrm{id}-\mathrm{Ad}_{m})(\overline{\mathfrak{n}}_{w^{-1}})+\mathfrak{l}\oplus\mathfrak{n}_{w}=\mathfrak{l}\oplus\mathfrak{n}_{w}\oplus\overline{\mathfrak{n}}_{w^{-1}}.
\]
Since the pair $(\mathfrak{N},\mathfrak{L})$ is slicing there is
a decomposition $\mathfrak{g}\simeq\mathfrak{n}\oplus\overline{\mathfrak{n}}\oplus\mathfrak{t}_{w}'\oplus\mathfrak{l}$,
so the first claim now follows.

As $\mathfrak{L}\cup\mathfrak{R}_{w}$ and $\mathfrak{L}\cup\mathfrak{N}$
are convex, it follows from $\mathfrak{L}=-\mathfrak{L}$ that so
is $\mathfrak{L}\cup(\mathfrak{N}\backslash\mathfrak{R}_{w})$. But
then $\mathfrak{L}\cup-(\mathfrak{N}\backslash\mathfrak{R}_{w})$
is also convex, so that $\mathrm{Ad}_{m'}(\overline{\mathfrak{n}}^{w})\subseteq\overline{\mathfrak{n}}^{w}\oplus\mathfrak{l}$
for any $m'$ in $L$. As 
\[
-(\mathfrak{N}\backslash\mathfrak{R}_{w})\cap-\mathfrak{R}_{w}=(\mathfrak{N}\backslash\mathfrak{R}_{w})\cap\mathfrak{R}_{w}=\varnothing,
\]
 for any $m''$ in $N_{w}$ the operator $\mathrm{Ad}_{m''}$ acts
as the identity on $\overline{\mathfrak{n}}^{w}$. Then for $m=\dot{w}m'm''$
we have
\[
\mathrm{Ad}_{m}(\overline{\mathfrak{n}})=\mathrm{Ad}_{m''}\mathrm{Ad}_{m'}\mathrm{Ad}_{w}(\overline{\mathfrak{n}})=\mathrm{Ad}_{m''}\mathrm{Ad}_{m'}(\overline{\mathfrak{n}}^{w}\oplus\mathfrak{n}_{w})\subseteq\mathrm{Ad}_{m''}(\overline{\mathfrak{n}}^{w}\oplus\mathfrak{l}\oplus\mathfrak{n}_{w})=\overline{\mathfrak{n}}^{w}\oplus\mathfrak{l}\oplus\mathfrak{n}_{w},
\]
yielding the second claim.
\end{proof}
We now prove (ii):
\begin{proof}
Since $G$ and $\dot{w}LN_{w}$ are smooth, the claim is equivalent
to requiring that the image of the differential of (\ref{eq:full-conjugation})
is surjective at each point of $G\times\dot{w}LN_{w}$. By equivariance
for the $G$-action on the first component by left translation, it
suffices to prove this at each point of the form $(\mathrm{id},m)$.
When we restrict (\ref{eq:full-conjugation}) to $N$, it yields the
cross section morphism (\ref{eq:cross-section}) which is an isomorphism
by the previous subsection. In the left trivialisation we have 
\[
\mathfrak{n}\subseteq\mathrm{Ad}_{m}(\mathfrak{n}_{w^{-1}})\oplus\mathfrak{l}\oplus\mathfrak{n}\simeq T_{m}(N_{w^{-1}}\dot{w}LN)=T_{m}(N\dot{w}LN),
\]
 so by this isomorphism the image of the differential certainly contains
$\mathfrak{n}$.

Now consider the Chevalley anti-automorphism which switches positive
and negative root vectors of a Chevalley basis. Expressing a lift
of a simple reflection as a product of exponentials of such elements,
an $\mathrm{SL}_{2}$-calculation shows that its image under this
involution is again a lift of this simple reflection. Hence the involution
maps a lift of $w^{-1}$ to a lift of $w$, but then the image of
the slice for (a suitable lift of) $w^{-1}$ is 
\[
\overline{N}_{w^{-1}}\overline{L}\dot{w}=\overline{N}_{w^{-1}}L\dot{w}=\overline{N}_{w^{-1}}\dot{w}L=\dot{w}N_{w}L=\dot{w}LN_{w}.
\]
Since by assumption equation (\ref{eq:crossing}) holds and the pair
is slicing, Lemma \ref{lem:inverse} implies that equation (\ref{eq:crossing})
also holds with $w$ replaced by $w^{-1}$. Thus by the previous subsection
the cross section isomorphism (\ref{eq:cross-section}) also holds
for $w^{-1}$. Hence from the involution we now obtain an isomorphism

\[
\overline{N}\times\dot{w}LN_{w}\overset{\sim}{\longrightarrow}\overline{N}\dot{w}L\overline{N},
\]
so by the same reasoning as in the previous paragraph, the image of
the differential (in the left trivialisation) also contains $\overline{\mathfrak{n}}$.
Combining this with the first part of the previous lemma, the claim
again follows from the decomposition $\mathfrak{g}\simeq\mathfrak{n}\oplus\overline{\mathfrak{n}}\oplus\mathfrak{t}_{w}'\oplus\mathfrak{l}$.
\end{proof}
And finally, we prove (ii) $\Rightarrow$ (i'):
\begin{proof}
Concretely, (i') says that image of the differential of the conjugation
map
\[
N\times\dot{w}LN_{w}\longrightarrow N\dot{w}LN,\qquad(g,m)\longmapsto g^{-1}mg
\]
at any point $(\mathrm{id},m)$ is an isomorphism. In the left trivialisation
we have 
\[
T_{m}(N\dot{w}LN)=T_{m}(N\dot{w}LN_{w})\simeq\mathrm{Ad}_{m}(\mathfrak{n})\oplus\mathfrak{l}\oplus\mathfrak{n}_{w}
\]
 and this differential is given by
\begin{equation}
\mathfrak{n}\oplus(\mathfrak{l}\oplus\mathfrak{n}_{w})\longrightarrow\mathrm{Ad}_{m}(\mathfrak{n})\oplus\mathfrak{l}\oplus\mathfrak{n}_{w},\qquad(x,l)\longmapsto(\mathrm{id}-\mathrm{Ad}_{m})(x)+l.\label{eq:differential-cross}
\end{equation}
By assumption the differential of (\ref{eq:full-conjugation}) is
surjective. Since $(\overline{\mathfrak{n}}\oplus\mathfrak{n}_{w})\cap\mathfrak{n}^{w}=\varnothing$,
the decomposition $\mathfrak{g}=\mathfrak{n}^{w}\oplus\overline{\mathfrak{n}}\oplus\mathfrak{l}\oplus\mathfrak{n}_{w}\oplus\mathfrak{t}_{w}'$
and the statements of the previous lemma imply that $\mathfrak{n}^{w}\subseteq(\mathrm{id}-\mathrm{Ad}_{m})(\mathfrak{n})+\mathfrak{l}\oplus\mathfrak{n}_{w}$.
But then we have
\[
\mathfrak{n}\subseteq(\mathrm{id}-\mathrm{Ad}_{m})(\mathfrak{n})+\mathfrak{l}\oplus\mathfrak{n}_{w}\subseteq\mathrm{Ad}_{m}(\mathfrak{n})\oplus\mathfrak{l}\oplus\mathfrak{n}_{w}
\]
 which implies that the second inclusion is an equality. Hence the
differential (\ref{eq:differential-cross}) is surjective; as it is
a morphism of (sheaves of) (locally) free modules of finite rank,
it is thus an isomorphism.
\end{proof}

\section{Poisson reduction}

Having obtained that the action of $N$ on $N\dot{w}LN$ is free (whilst
working over $\mathbb{C}$, for his particular choice of firmly convex
Weyl group element $w$ with the slicing pair $(\mathfrak{R}_{+}\backslash\mathfrak{R}^{w},\mathfrak{R}^{w})$),
Sevostyanov proceeds to proving that the Semenov-Tian-Shansky bracket
on $G$ reduces to the slice $\dot{w}LN_{w}$ when the $r$-matrix
is changed from the standard one $r_{\mathrm{st}}$ to
\[
r=r_{\mathrm{st}}+\frac{1+w}{1-w}\mathrm{proj}_{\mathfrak{t}_{w}}=\mathrm{proj}_{\mathfrak{n}_{+}}+\frac{1+w}{1-w}\mathrm{proj}_{\mathfrak{t}_{w}}-\mathrm{proj}_{\mathfrak{n}_{-}},
\]
by employing a general Poisson reduction method for manifolds \cite[§2]{MR836071}.
This approach was based on earlier work he did with his advisor on
their loop analogues \cite[Theorem 2.5]{MR1620523}. We will continue
to work in the algebraic setting:
\begin{defn}
A \emph{Poisson scheme} is a scheme $X$ with a Poisson bracket on
its sheaf of functions. On its smooth locus this bracket corresponds
to a Poisson bivector field which we will denote by $\Pi$; there
it induces a musical morphism $\Pi^{\#}:T^{*}X\rightarrow TX$ from
the cotangent sheaf to the tangent sheaf. A function $f$ then defines
a \emph{Hamiltonian vector field} $\mathrm{Ham}_{f}=\Pi^{\#}(\mathrm{d}f)$
on this locus.
\end{defn}

We modify this reduction method in Proposition \ref{lem:reduction};
one obtains a statement that is very similar to a standard characterisation
of smooth Poisson subschemes (recalled in Proposition \ref{prop:characterise-coisotropic-poisson}),
which explains the focus on Hamiltonian vector fields in the final
proof. By analysing tangent spaces with some new root combinatorics
we can work with a larger class of factorisable $r$-matrices, and
settle which of them yield reducible Poisson brackets.

As in the previous sections, we are implicitly working over a base
scheme but will omit it from all notation.

\subsection{Coisotropic subgroups}

Motivated by work of physicists on integrable systems, Drinfeld initiated
the study of Poisson-Lie groups (and their quantisations); they translate
to the algebraic setting as 
\begin{defn}
[{\cite[§3]{MR688240}}] A group scheme $G$ equipped with a Poisson
bracket is called a \emph{Poisson algebraic group }if this bracket
is \emph{multiplicative}, i.e.\ if the multiplication map $G\times G\rightarrow G$
is a morphism of Poisson schemes.
\end{defn}

The identity element of $G$ is then a symplectic point, so that the
Poisson bracket
\[
\CMcal O_{G,\mathrm{id}}\otimes\CMcal O_{G,\mathrm{id}}\longrightarrow\CMcal O_{G,\mathrm{id}},\qquad f\otimes f'\longmapsto(\mathrm{d}f\otimes\mathrm{d}f')(\Pi)
\]
 induces on its tangent space the structure of its Lie bialgebra.

Semenov-Tian-Shansky used the formalism of Poisson algebraic groups
to study the ``hidden symmetry groups'' (dressing transformations)
of certain integrable systems, as these don't preserve Poisson structures;
instead, they are Poisson actions:
\begin{defn}
[{\cite[p.\,1238]{MR842417}}] A group action of a Poisson algebraic
group $G$ on a Poisson scheme $X$ is called \emph{Poisson} if the
action map $G\times X\rightarrow X$ is a morphism of Poisson schemes.
\end{defn}

Concretely, a point $x$ in $X$ then induces a map $x:G\rightarrow X$
via $g\mapsto gx$ and in terms of the Poisson brackets on $G$ and
$X$, the Poisson condition can then be rephrased as
\begin{equation}
\{f,f'\}(gx)=\{x^{*}f,x^{*}f'\}(g)+\{g^{*}f,g^{*}f'\}(x)\label{eq:Poisson}
\end{equation}
 for $f,f'\in\CMcal O_{X}$ and arbitrary points $g\in G$, $x\in X$.

As shown at the end of this subsection, in order to construct interesting
quotients out of Poisson actions one sometimes uses subgroups of $G$
that are not necessarily Poisson themselves:
\begin{defn}
Let $X$ be a Poisson scheme. A smooth closed subscheme $Z\hookrightarrow X$
is called \emph{coisotropic} (resp. \emph{Poisson}) if 
\[
\Pi_{z}\in T_{z}Z\wedge T_{z}X\qquad(\textrm{resp. }\Pi_{z}\in T_{z}Z\wedge T_{z}Z)
\]
 for all points $z$ lying in $Z$.
\end{defn}

\begin{notation}
Given a scheme $X$ we denote its sheaf of functions by $\CMcal O_{X}$.
The inclusion of a closed subscheme $\iota:Z\hookrightarrow X$ induces
a morphism $\iota^{-1}(\CMcal O_{X})\rightarrow\CMcal O_{Z}$, and
its ideal ideal sheaf is denoted by $\CMcal I_{Z}:=\mathrm{ker}\bigl(\iota^{\sharp}:\iota^{-1}(\CMcal O_{X})\rightarrow\CMcal O_{Z}\bigr)$.
Given a function $f$ in $\iota^{-1}(\CMcal O_{X})$ we denote its
image under $\iota^{\sharp}$ by $f|_{Z}$.
\end{notation}

The focus in the final proof of this section will be on Hamiltonian
vector fields; heuristically, this is due to a group action analogue
of the following
\begin{prop}
[{\cite[Lemma 1.1]{MR723816}}] \label{prop:characterise-coisotropic-poisson}
Let $X$ be a Poisson scheme and $\iota:Z\hookrightarrow X$ a smooth
closed subscheme. Then the following are equivalent:

\begin{enumerate}[\normalfont(i)]

\item $Z$ is Poisson.

\item $\mathrm{ker}\bigl(\iota^{-1}(\CMcal O_{X})\twoheadrightarrow\CMcal O_{Z}\bigr)$
is a subsheaf of Poisson ideals; in other words, the Poisson bracket
on $\CMcal O_{X}$ reduces to $\CMcal O_{Z}$.

\item For any function $f$ in $\iota^{-1}(\CMcal O_{X})$, its Hamiltonian
vector field lies in $T_{Z}\subseteq T_{X}|_{Z}$. 

\end{enumerate}
\end{prop}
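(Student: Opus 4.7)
The plan is to prove (i) $\Leftrightarrow$ (iii) and (ii) $\Leftrightarrow$ (iii), both reduced pointwise on $Z$ and leveraging the two identities $\mathrm{Ham}_{f}=\Pi^{\#}(\mathrm{d}f)$ and $\{f,g\}=\mathrm{d}g(\mathrm{Ham}_{f})=-\mathrm{d}f(\mathrm{Ham}_{g})$ that follow immediately from the definition of $\Pi^{\#}$. Throughout, I would use smoothness of $Z$ to ensure that $T_{z}Z$ is a direct summand of $T_{z}X$ at every $z\in Z$ and that the conormal space $N^{*}_{z}Z\subseteq T^{*}_{z}X$ (the annihilator of $T_{z}Z$) is spanned by differentials $\mathrm{d}f(z)$ of local sections $f\in\CMcal I_{Z}$.

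For (i) $\Leftrightarrow$ (iii), the key step is the following pointwise linear-algebraic observation: for a direct summand $V\subseteq U$ of a finite free module, a bivector $\Pi\in\wedge^{2}U$ lies in $\wedge^{2}V$ if and only if its associated map $\Pi^{\#}:U^{*}\rightarrow U$ factors through $V$. This is easily verified in a basis adapted to a splitting $U=V\oplus V'$ by writing $\Pi=\sum\pi^{ij}e_{i}\wedge e_{j}$: both conditions translate to $\pi^{ij}=0$ whenever $i$ or $j$ indexes $V'$. Applied fibrewise with $U=T_{z}X$ and $V=T_{z}Z$, condition (i) becomes $\Pi^{\#}_{z}(T^{*}_{z}X)\subseteq T_{z}Z$ for every $z\in Z$; since differentials $\mathrm{d}f(z)$ of local sections $f\in\iota^{-1}(\CMcal O_{X})$ exhaust $T^{*}_{z}X$ and $\Pi^{\#}_{z}(\mathrm{d}f(z))=\mathrm{Ham}_{f}(z)$, this is precisely (iii).

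For (ii) $\Leftrightarrow$ (iii), unfolding (ii) says that for every $f\in\CMcal I_{Z}$ and $g\in\iota^{-1}(\CMcal O_{X})$ we have $\{f,g\}|_{Z}=0$. Substituting $\{f,g\}=-\mathrm{d}f(\mathrm{Ham}_{g})$, this becomes $\mathrm{d}f(z)\bigl(\mathrm{Ham}_{g}(z)\bigr)=0$ for every $z\in Z$ and all such $f,g$. As $f$ varies through $\CMcal I_{Z}$ the differentials $\mathrm{d}f(z)$ sweep out the whole conormal space $N^{*}_{z}Z$, so the condition is equivalent to $\mathrm{Ham}_{g}(z)$ annihilating $N^{*}_{z}Z$, that is, lying in $T_{z}Z$, for every $g$ and $z$---which is (iii).

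The only real subtlety lies in the pointwise linear-algebra lemma underpinning the (i) $\Leftrightarrow$ (iii) step, but this is completely routine once $V$ is split off as a direct summand. I do not anticipate any serious obstacle; the statement is essentially a tautology once the two bracket-Hamiltonian identities are combined with the smoothness-based identification of conormal spaces with $\CMcal I_{Z}/\CMcal I_{Z}^{2}$ fibrewise.
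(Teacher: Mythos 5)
Your proposal is correct and matches the arguments the paper relies on: the conormal-sequence argument for (ii) $\Leftrightarrow$ (iii) is exactly the proof of Lemma \ref{lem:reduction} (of which the paper says this equivalence is a special case), and the pointwise bivector lemma for (i) $\Leftrightarrow$ (iii) is the linear-algebra fact $\Pi^{\#}(V^{*})\subseteq U\iff\Pi\in U\wedge U$ already invoked in the proof of Proposition \ref{prop:coisotropic}. No gaps; the implicit use of antisymmetry of $\Pi$ in the basis computation is the only point worth making explicit.
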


An algebraic proof of (ii) $\Leftrightarrow$ (iii) can be recovered
as a special case of Lemma \ref{lem:reduction}. One can characterise
coisotropic smooth closed subschemes similarly \cite[Proposition 1.2.2]{MR959095}. 
\begin{prop}
\label{prop:coisotropic} Let $G$ be a Poisson algebraic group and
let $H$ be a closed algebraic subgroup.

\newcounter{temp}

\begin{enumerate}[\normalfont(i)]

\item If $H$ is coisotropic, then the annihilator $\mathfrak{h}^{0}\subseteq\mathfrak{g}^{*}$
of its Lie algebra $\mathfrak{h}\subseteq\mathfrak{g}$ is a Lie subalgebra
of $\mathfrak{g}^{*}$.

\item \cite[Proposition 2]{MR842417} If $H$ is Poisson, then this
annihilator $\mathfrak{h}^{0}$ is an ideal of $\mathfrak{g}^{*}$.

\end{enumerate}

If $H$ is connected, then the converse to (i) and (ii) holds as well.
\end{prop}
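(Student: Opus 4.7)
My plan is to linearise everything at the identity and then use multiplicativity to propagate. Recall that multiplicativity of $\Pi$ forces $\Pi_e = 0$, so in the left trivialisation of the tangent bundle the Poisson bivector becomes a morphism $\Pi^L : G \to \Lambda^2 \mathfrak{g}$ vanishing at $e$, whose derivative there is the cobracket $\delta : \mathfrak{g} \to \Lambda^2 \mathfrak{g}$ of the Lie bialgebra structure on $\mathfrak{g}$; dually this defines the Lie bracket on $\mathfrak{g}^*$ via $\langle [\xi,\eta], X\rangle = \langle \xi \wedge \eta, \delta(X)\rangle$. Moreover, multiplicativity of $\Pi$ itself translates into a $1$-cocycle identity of the shape $\Pi^L(gh) = \Pi^L(h) + \mathrm{Ad}_{h^{-1}}^{\wedge 2}\bigl(\Pi^L(g)\bigr)$.

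For the forward directions of (i) and (ii), since $(L_{h^{-1}})_* T_h H = \mathfrak{h}$ for $h \in H$, the hypothesis that $H$ is coisotropic (resp.\ Poisson) is equivalent to $\Pi^L(h) \in \mathfrak{h} \wedge \mathfrak{g}$ (resp.\ $\mathfrak{h} \wedge \mathfrak{h}$) for all $h \in H$. Differentiating at $e$ along vectors in $\mathfrak{h}$ yields $\delta(\mathfrak{h}) \subseteq \mathfrak{h} \wedge \mathfrak{g}$ (resp.\ $\mathfrak{h} \wedge \mathfrak{h}$). A routine linear-algebra calculation, using a splitting $\mathfrak{g} = \mathfrak{h} \oplus V$, identifies the annihilator of $\mathfrak{h} \wedge \mathfrak{g}$ inside $\Lambda^2 \mathfrak{g}^*$ as $\mathfrak{h}^0 \wedge \mathfrak{h}^0$ and that of $\mathfrak{h} \wedge \mathfrak{h}$ as $\mathfrak{h}^0 \wedge \mathfrak{g}^*$, which translates the two infinitesimal conditions precisely into $[\mathfrak{h}^0, \mathfrak{h}^0] \subseteq \mathfrak{h}^0$ and $[\mathfrak{g}^*, \mathfrak{h}^0] \subseteq \mathfrak{h}^0$ respectively.

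For the converses under the assumption that $H$ is connected, I would propagate the infinitesimal condition via the cocycle identity. Since $H$ normalises $\mathfrak{h}$, both $\mathfrak{h} \wedge \mathfrak{g}$ and $\mathfrak{h} \wedge \mathfrak{h}$ are $\mathrm{Ad}_H^{\wedge 2}$-stable subspaces of $\Lambda^2 \mathfrak{g}$; writing $V$ for either of them, the cocycle identity shows that if $\Pi^L$ takes values in $V$ on two elements of $H$ then it does so on their product. Starting from $\delta(\mathfrak{h}) \subseteq V$, the same cocycle identity, differentiated at $g=e$, yields an ODE of the form $\tfrac{d}{dt}\Pi^L(\gamma(t)) = \mathrm{Ad}_{\gamma(t)}^{\wedge 2}\bigl(\delta(X)\bigr)$ along the one-parameter subgroup $\gamma$ tangent to $X \in \mathfrak{h}$, whose right-hand side stays in $V$; so $\Pi^L$ takes values in $V$ along every such subgroup, and combining this with the cocycle identity and connectedness of $H$ delivers the full statement.

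The main obstacle is the last sentence: in the algebraic setting, the phrases ``one-parameter subgroup tangent to $X$'' and ``integrate the ODE'' need to be interpreted via the formal group law on the formal neighbourhood of $e$ in $H$, and one then invokes the standard fact that a connected algebraic group over the base is generated by any open neighbourhood of the identity. Once this translation is made, the rest of the argument is purely formal, being controlled entirely by the first-order jet of $\Pi^L$ at $e$ together with the group structure.
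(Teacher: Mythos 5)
Your argument for the forward implications of (i) and (ii) is correct and is essentially the paper's: both linearise $\Pi$ at the identity (multiplicativity forces $\Pi_e=0$, so the cobracket $\delta=d_e\Pi^L$ defining the bracket on $\mathfrak{g}^*$ makes sense) and then convert the pointwise conditions $\Pi^L(h)\in\mathfrak{h}\wedge\mathfrak{g}$ resp.\ $\mathfrak{h}\wedge\mathfrak{h}$ into annihilator statements; the paper merely records the linear-algebra equivalence $\Pi^{\#}(U^{0})\subseteq U\Leftrightarrow\Pi\in U\wedge V$ and leaves the differentiation step implicit, so you have simply written out what it gestures at. For the converse the paper offers no argument at all, so your cocycle-integration sketch is a genuine addition and is the standard one; the only caveat is the step you yourself flag as the main obstacle, namely passing from the vanishing of $q\circ d\Pi^L$ along right-invariant vector fields of $\mathfrak{h}$ to the vanishing of $q\circ\Pi^L$ on all of $H$ --- over a general base this fails without a characteristic-zero (or smoothness/reducedness plus separability) hypothesis, since a morphism with identically vanishing differential need not be constant (Frobenius), so the converse as you prove it should carry that restriction.
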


\begin{proof}
(i) is well-known, it follows from: let $V$ be a locally free module
over a ring, $\Pi$ an element of $V\wedge V$ and $U$ a locally
free submodule of $V$; denote the annihilator of $U$ in the dual
$V^{*}$ by $U^{0}$. Then $\Pi^{\#}(U^{0})\subseteq U$ if and only
if $\Pi\in U\wedge V$.

(ii): Similarly, this follows from $\Pi^{\#}(V^{*})\subseteq U$ if
and only if $\Pi\in U\wedge U$.
\end{proof}
\begin{notation}
If a group scheme $H$ acts on a scheme $X$, then we denote the resulting
sheaf of $H$-invariant functions on $X$ by $\CMcal O_{X}^{H}$.
\end{notation}

\begin{prop}
[{\cite[Theorem 6]{MR842417}}]\label{prop:coisotropic-subgroup-preserves}
Let $G$ be a Poisson algebraic group with a Poisson action on a Poisson
scheme $X$, and let $H$ be a closed coisotropic subgroup of $G$.
Furthermore assume that the restriction of the action on $X$ to $H$
preserves a closed subscheme $\iota:Z\hookrightarrow X$. Then $(\iota^{\sharp})^{-1}(\CMcal O_{Z}^{H})$
is a sheaf of Poisson subalgebras of $\iota^{-1}(\CMcal O_{X})$.
\end{prop}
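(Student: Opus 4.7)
My plan is to adapt the proof Semenov-Tian-Shansky uses in the case $Z=X$. The statement is local along $Z$ and, after passing to infinitesimal invariance, reduces to showing that the Lie derivative $\hat{\xi}\{f,f'\}$ vanishes along $Z$ for every fundamental vector field $\hat{\xi}$, $\xi\in\mathfrak{h}$, of the $H$-action on $X$. The main input is the identity expressing that the action map $a\colon G\times X\to X$ is Poisson: for $g\in G$, $x\in X$ and $f,f'\in\CMcal O_X$,
\[
\{f,f'\}_X(gx)=\Pi_g^G(d(x^{*}f)_g,d(x^{*}f')_g)+\Pi_x^X(d(g^{*}f)_x,d(g^{*}f')_x),
\]
where $x^{*}f\colon g\mapsto f(gx)$ and $g^{*}f\colon y\mapsto f(gy)$. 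Differentiating along $g=\exp(t\xi)$ at $t=0$ and evaluating at $x=z\in Z$ yields
\[
\hat{\xi}\{f,f'\}(z)=[\alpha_f,\alpha_{f'}]_{\mathfrak{g}^{*}}(\xi)+\{\hat{\xi}f,f'\}(z)+\{f,\hat{\xi}f'\}(z),
\]
where $\alpha_f:=d(z^{*}f)_e\in\mathfrak{g}^{*}$.

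The first term vanishes in two steps. Because $Z$ is $H$-invariant and $f|_Z$ is $H$-invariant, the function $z^{*}f$ is constant on the subgroup $H$ (with value $f(z)$), so $d(z^{*}f)_e$ annihilates $\mathfrak{h}\subset\mathfrak{g}$; that is, $\alpha_f\in\mathfrak{h}^0$, and similarly $\alpha_{f'}\in\mathfrak{h}^0$. By Proposition \ref{prop:coisotropic}, the coisotropy of $H$ translates into $\mathfrak{h}^0\subset\mathfrak{g}^{*}$ being a Lie subalgebra, so $[\alpha_f,\alpha_{f'}]_{\mathfrak{g}^{*}}\in\mathfrak{h}^0$ annihilates $\xi\in\mathfrak{h}$.

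It then remains to show $\{\hat{\xi}f,f'\}(z)+\{f,\hat{\xi}f'\}(z)=0$. In Semenov-Tian-Shansky's original setting with $Z=X$ this is immediate, since $f$ and $f'$ are then $H$-invariant on all of $X$ and hence $\hat{\xi}f=\hat{\xi}f'=0$ identically. In the present setting, however, one has only $\hat{\xi}f,\hat{\xi}f'\in\CMcal I_Z$, so the differentials at $z$ annihilate $T_zZ$ but need not vanish on normal directions. This step is the main obstacle; I expect it to be resolved by a Hamiltonian-vector-field analysis of elements of $(\iota^{\sharp})^{-1}(\CMcal O_Z^H)$ relative to the conormal bundle $N^{*}Z$, combined with further applications of the Poisson-action identity that exploit the vanishing of all higher iterated derivatives $\hat{\xi}_1\cdots\hat{\xi}_k f$ and $\hat{\xi}_1\cdots\hat{\xi}_k f'$ along $Z$ for $\xi_i\in\mathfrak{h}$.
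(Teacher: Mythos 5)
Your first step is essentially the paper's argument: the paper also splits the bracket via the Poisson-action identity (\ref{eq:Poisson}) and kills the $G$-term using coisotropy of $H$. It does so globally rather than infinitesimally, observing that $z^{*}f$ and $z^{*}f'$ are constant on $H$ (because $Hz\subseteq Z$ and $f|_{Z},f'|_{Z}$ are $H$-invariant) while $\Pi_{h}\in T_{h}H\wedge T_{h}G$ for every $h\in H$, so that $\{z^{*}f,z^{*}f'\}(h)=0$ exactly; this avoids both the linearisation and the connectedness of $H$ that your reduction to ``$\mathfrak{h}^{0}$ is a subalgebra'' via Proposition \ref{prop:coisotropic} implicitly requires. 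Up to that point the two arguments agree.

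The step you flag as the main obstacle is a genuine gap, and you should not expect to close it by iterating the Poisson-action identity or by analysing higher derivatives along $Z$: the identity $\{\hat{\xi}f,f'\}(z)+\{f,\hat{\xi}f'\}(z)=0$ (equivalently, in global form, $\{h^{*}f,h^{*}f'\}(z)=\{f,f'\}(z)$) does not follow from the stated hypotheses. The point is that $h^{*}f-f$ and $h^{*}f'-f'$ lie in $\CMcal I_{Z}$, so their differentials at $z\in Z$ are conormal covectors, and $\Pi_{z}$ can pair these nontrivially with $\mathrm{d}f_{z}$ and $\mathrm{d}f'_{z}$ unless $\Pi^{\#}$ is controlled on the conormal directions. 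Concretely: let $G=H=\mathbb{G}_{a}$ with the zero bivector act on $X=\mathbb{A}^{4}$ (coordinates $x_{1},y_{1},x_{2},y_{2}$, bracket $\{x_{i},y_{j}\}=\delta_{ij}$) by translation in $x_{1}$, and take $Z=\{x_{2}=y_{2}=0\}$; then $f=x_{2}$ and $f'=x_{1}y_{2}$ both restrict to the invariant function $0$ on $Z$, yet $\{f,f'\}=x_{1}$ restricts to a non-invariant function on $Z$. You should also be aware that the paper's own proof disposes of this step with the bare equality $\{h^{*}f,h^{*}f'\}(z)=\{f,f'\}(z)$, which is immediate in Semenov-Tian-Shansky's original setting (where $Z=X$ and $f,f'$ are genuinely $H$-invariant, so $h^{*}f=f$) but is not justified under the weaker hypothesis that only the restrictions to $Z$ are invariant. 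So you have located the difficulty precisely; to complete the proof one must either strengthen the hypothesis to $H$-invariance of $f,f'$ on a neighbourhood of $Z$ (or the existence of such invariant extensions of $f|_{Z},f'|_{Z}$, which suffices since the claimed conclusion only concerns restrictions to $Z$ \emph{if} one also controls $\Pi^{\#}$ on the conormal bundle), rather than look for a cleverer manipulation of the Poisson-action identity.
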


\begin{proof}
Let $f,f'\in(\iota^{\sharp})^{-1}(\CMcal O_{Z}^{H})$, $h\in H$ and
let $z\in Z$. Since $H$ is coisotropic we have 
\[
\{z^{*}f,z^{*}g\}(h)=(z_{*}\Pi_{h})(f,g)=0,
\]
so as the action is Poisson it then follows from (\ref{eq:Poisson})
that
\[
\{f,f'\}(hz)=\{z^{*}f,z^{*}f'\}(h)+\{h^{*}f,h^{*}f'\}(z)=\{h^{*}f,h^{*}f'\}(z)=\{f,f'\}(z).\qedhere
\]
\end{proof}

\subsection{Factorisable $r$-matrices}

In order to obtain explicit coisotropic subgroups for reductive group
schemes, we will now specialise to a particular class of Poisson structures.
\begin{notation}
Given an element $x=\sum_{i=1}^{n}a_{i}\otimes b_{i}$ in $\mathfrak{g}\otimes\mathfrak{g}$,
we will write $x_{12}:=\sum_{i=1}^{n}a_{i}\otimes b_{i}\otimes1$
and $x_{13}:=\sum_{i=1}^{n}a_{i}\otimes1\otimes b_{i}$ and similarly
$x_{23}$ for the usual elements in $\otimes^{3}\mathfrak{g}$. Furthermore,
we denote its ``flip'' by $x^{21}:=\sum_{i=1}^{n}b_{i}\otimes a_{i}\in\mathfrak{g}\otimes\mathfrak{g}$.
\end{notation}

The existence of a multiplicative Poisson structure can be rephrased
cohomologically: using the left or right trivialisation, they are
1-cocycles on $G$ with values in $\mathfrak{g}\wedge\mathfrak{g}$.
Subsequently employing e.g.\ Whitehead's first lemma, there often
exists an element $r\in\mathfrak{g}\wedge\mathfrak{g}$ such that
the Lie cobracket $\delta$ of a Lie bialgebra $\mathfrak{g}$ equals
the differential $\partial r$, i.e.\ such that
\[
\delta(x)=\partial r(x):=\frac{1}{2}[x\otimes1+1\otimes x,r]
\]
for all $x\in\mathfrak{g}$. Conversely, in order for an arbitrary
element $r\in\mathfrak{g}\wedge\mathfrak{g}$ to define a compatible
cobracket it is necessary and sufficient \cite[§6]{MR688240} that
this \emph{$r$-matrix} satisfies the \emph{generalised Yang-Baxter
equation}
\[
[r,r]\in(\wedge^{3}\mathfrak{g})^{\mathfrak{g}},
\]
where for any element $r\in\mathfrak{g}\otimes\mathfrak{g}$ we denote
by
\[
[r,r]:=[r_{12},r_{13}]+[r_{12},r_{23}]+[r_{13},r_{23}]\in\otimes^{3}\mathfrak{g}
\]
 its \emph{Drinfeld bracket}; up to scalar, this coincides with the
canonical Gerstenhaber (or Schouten-Nijenhuis) bracket when restricted
to $\wedge^{2}\mathfrak{g}\subset\wedge^{\bullet}\mathfrak{g}$. Note
that for any element $c$ in $(\mathfrak{g}\otimes\mathfrak{g})^{\mathfrak{g}}$,
we have $[c,c]=[c_{12},c_{23}]$.
\begin{thm}
Let $\mathfrak{g}$ be a reductive Lie algebra over a field $\Bbbk$.

\begin{enumerate}[\normalfont(i)]

\item \cite[Theorem 19.1]{MR24908} There exists a natural quasi-isomorphism
between the exterior algebra $(\wedge^{\bullet}\mathfrak{g})^{\mathfrak{g}}$
and the Chevalley-Eilenberg complex computing Lie algebra homology,
and hence
\[
(\wedge^{\bullet}\mathfrak{g})^{\mathfrak{g}}\simeq\mathrm{H}_{\bullet}(\mathfrak{g}).
\]

\item \cite[§11]{MR36511} If the characteristic is 0 (or sufficiently
large) then the Koszul map
\[
\mathrm{Sym}^{2}(\mathfrak{g})^{\mathfrak{g}}\longrightarrow(\wedge^{3}\mathfrak{g})^{\mathfrak{g}},\qquad c\longmapsto[c_{12},c_{13}]
\]
 is an isomorphism.

\end{enumerate}
\end{thm}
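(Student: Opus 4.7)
The plan is to treat the two parts separately via the classical techniques underlying the cited references of Chevalley-Eilenberg and Koszul.

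For (i) I would argue that under the reductivity hypothesis, the inclusion of $\mathfrak{g}$-invariants
\[
\bigl((\wedge^{\bullet}\mathfrak{g})^{\mathfrak{g}},0\bigr)\hooklongrightarrow(\wedge^{\bullet}\mathfrak{g},d)
\]
is a quasi-isomorphism from a subcomplex carrying the zero differential. Complete reducibility of finite-dimensional $\mathfrak{g}$-modules, valid whenever the characteristic is zero or sufficiently large, decomposes $\wedge^{\bullet}\mathfrak{g}$ as a direct sum of the invariants and a complement on which a Casimir of the invariant bilinear form of $\mathfrak{g}$ acts invertibly; a standard Cartan-type homotopy using the Lie derivative identity $L_{x}=[d,\iota_{x}]$ shows both that the complement is acyclic and that $d$ vanishes on the invariant subspace, giving the identification with $\mathrm{H}_{\bullet}(\mathfrak{g})$.

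For (ii) my plan is to invoke the Hopf-Koszul-Samelson structure theorem, which (in the same characteristic range) identifies $(\wedge^{\bullet}\mathfrak{g})^{\mathfrak{g}}$ with the free graded-commutative exterior algebra on primitive invariants in odd degrees $2d_{i}-1$, where the $d_{i}$ are the degrees of a set of free generators of $\mathrm{Sym}(\mathfrak{g})^{\mathfrak{g}}$. The correspondence is realised by the Cartan transgression, sending a generator of degree $k$ in the symmetric algebra to a primitive of degree $2k-1$ in the exterior algebra. In degree three every invariant is primitive on each simple summand (since semisimple factors contribute no invariants in degrees one or two), so the map $\mathrm{Sym}^{2}(\mathfrak{g})^{\mathfrak{g}}\rightarrow(\wedge^{3}\mathfrak{g})^{\mathfrak{g}}$ above is precisely the transgression for $k=2$. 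A direct structure-constant calculation on each simple summand, reducing by naturality to $\mathfrak{sl}_{2}$, then confirms that $c\mapsto[c_{12},c_{13}]$ realises this transgression on the quadratic Casimirs.

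The main obstacle is identifying the explicit bracket $c\mapsto[c_{12},c_{13}]$ with the abstract Koszul transgression, rather than with some other $\mathrm{GL}(\mathfrak{g})$-equivariant map between the same spaces. The cleanest route is to observe that both constructions are natural under Lie algebra homomorphisms and equivariant for the ambient general linear group, so that agreement up to a universal scalar reduces to a single example such as $\mathfrak{sl}_{2}$; alternatively, both can be realised as differentials inside the Weil complex of $\mathfrak{g}$ and matched there. A shortcut is available once one knows both sides have matching dimensions on each simple summand: it then suffices to verify that $c\mapsto[c_{12},c_{13}]$ is nonzero on the dual of the Killing form, which is again a direct structure-constants calculation.
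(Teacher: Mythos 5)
The paper does not actually prove this theorem: both clauses are stated with citations to Chevalley--Eilenberg and Koszul, so there is no internal argument to compare your proposal against. Taken on its own terms, your sketch of (i) is the standard one and correct in the stated characteristic range: the Chevalley--Eilenberg boundary is $\mathfrak{g}$-equivariant, complete reducibility together with the Casimir splits off an acyclic complement to the invariants, and the classical identity expressing $(p+1)\,\partial\omega$ through the Lie-derivative action forces the differential to vanish on the invariant subcomplex. Your route for (ii) via Hopf--Koszul--Samelson and the Cartan transgression, pinned down by a single computation on a simple summand, is likewise the usual argument, and the shortcut of verifying nonvanishing on the dual Killing form is a clean way to dispose of the universal-scalar ambiguity.

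There is one loose end you pass over. For (ii) you write "on each simple summand", which tacitly replaces \emph{reductive} by \emph{semisimple}. In fact (ii) as stated is false for reductive $\mathfrak{g}$ with nontrivial centre $\mathfrak{z}$: the Koszul map $c\mapsto[c_{12},c_{13}]$ annihilates $\mathrm{Sym}^{2}(\mathfrak{z})\subseteq\mathrm{Sym}^{2}(\mathfrak{g})^{\mathfrak{g}}$, and the two sides have different dimensions ($\mathrm{Sym}^{2}(\mathfrak{g})^{\mathfrak{g}}$ contributes $\tbinom{\dim\mathfrak{z}+1}{2}+m$ whereas $(\wedge^{3}\mathfrak{g})^{\mathfrak{g}}$ contributes $\tbinom{\dim\mathfrak{z}}{3}+m$, with $m$ the number of simple factors). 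Already $\mathfrak{gl}_{1}$ has $\mathrm{Sym}^{2}(\mathfrak{g})^{\mathfrak{g}}\neq0$ and $\wedge^{3}\mathfrak{g}=0$. So a careful write-up must either restate (ii) for semisimple $\mathfrak{g}$, or quotient by the central contributions on both sides and observe that the Hopf--Koszul--Samelson transgression you invoke really lives on $[\mathfrak{g},\mathfrak{g}]$. This defect is in the theorem as transcribed by the paper rather than in your method, but since your argument silently assumes semisimplicity it should make the reduction to the derived subalgebra explicit.
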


Throughout the rest of this section, we will implicitly make use of
the identification 
\[
\mathfrak{g}\wedge\mathfrak{g}\subset\mathfrak{g}\otimes\mathfrak{g}\simeq\mathrm{Hom}(\mathfrak{g}^{*},\mathfrak{g}),\qquad x\otimes y\longmapsto\bigl(\xi\mapsto\xi(x)y\bigr).
\]

\begin{prop}
[{\cite{MR674005}}] \label{prop:r-matrix-map} Let $\mathfrak{g}$
be a Lie algebra over a ring with 2 invertible, let $r\in\mathfrak{g}\wedge\mathfrak{g}$
and let $c\in\mathrm{Sym}^{2}(\mathfrak{g})^{\mathfrak{g}}$ and consider
the two maps
\[
r_{\pm}:\mathfrak{g}^{*}\longrightarrow\mathfrak{g},\qquad\xi\longmapsto\frac{r\pm c}{2}\xi.
\]
Then the following are equivalent:

\begin{enumerate}[\normalfont(i)]

\item $[r,r]=-[c,c]$.

\item The map $r_{\pm}$ is a Lie algebra homomorphism.

\item The map $r_{\pm}$ is a Lie coalgebra antihomomorphism.

\end{enumerate}
\end{prop}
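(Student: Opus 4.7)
The plan is to first set up the Lie-theoretic structures involved, then establish (i) $\Leftrightarrow$ (ii) by a direct computation, and finally deduce (ii) $\Leftrightarrow$ (iii) by a duality argument.

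First I would unpack notation: under the identification $\mathfrak{g} \otimes \mathfrak{g} \cong \mathrm{Hom}(\mathfrak{g}^*, \mathfrak{g})$ used in the proposition, antisymmetry of $r$ and symmetry of $c$ translate respectively to $r^t = -r$ and $c^t = c$ as maps $\mathfrak{g}^* \to \mathfrak{g}$ (after the tacit identification $\mathfrak{g}^{**} \cong \mathfrak{g}$), so that $(r_\pm)^t = -r_\mp$. The Lie cobracket $\delta := \partial r$ on $\mathfrak{g}$ dualises to an antisymmetric bilinear operation $[\cdot,\cdot]_*$ on $\mathfrak{g}^*$ determined by
\[
\langle [\xi,\eta]_*, x \rangle \;=\; \tfrac{1}{2}(\xi \otimes \eta - \eta \otimes \xi)\bigl([x \otimes 1 + 1 \otimes x,\, r]\bigr),
\]
and dually the Lie bracket of $\mathfrak{g}$ induces a cobracket on $\mathfrak{g}^*$.

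For (i) $\Leftrightarrow$ (ii) I would evaluate the defect
\[
D_\pm(\xi,\eta) \;:=\; r_\pm([\xi,\eta]_*) - [r_\pm(\xi), r_\pm(\eta)]_{\mathfrak{g}}
\]
by expanding $r_\pm = (r \pm c)/2$ into its four bilinear summands in $\{r,c\}\otimes\{r,c\}$. After pairing with an arbitrary $\zeta \in \mathfrak{g}^*$, the pure $r$-summand and the pure $c$-summand produce constant multiples of $\langle [r,r],\, \xi \otimes \eta \otimes \zeta \rangle$ and $\langle [c,c],\, \xi \otimes \eta \otimes \zeta \rangle$ respectively. The mixed terms involve expressions such as $[r(\xi), c(\eta)]$, which the invariance $[x \otimes 1 + 1 \otimes x,\, c] = 0$ allows us to move across Lie brackets; combined with the antisymmetry $r^t = -r$, these cross contributions cancel pairwise. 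Hence
\[
\langle D_\pm(\xi,\eta), \zeta \rangle \;=\; c_0 \cdot \bigl\langle [r,r] + [c,c],\; \xi \otimes \eta \otimes \zeta \bigr\rangle
\]
for a universal nonzero scalar $c_0$, and nondegeneracy in $\zeta$ then yields $D_\pm \equiv 0$ on $\mathfrak{g}^* \times \mathfrak{g}^*$ if and only if $[r,r] = -[c,c]$.

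For (ii) $\Leftrightarrow$ (iii) I would invoke duality. Transposing interchanges Lie brackets with Lie cobrackets, so a map $f: \mathfrak{g}^* \to \mathfrak{g}$ is a Lie coalgebra antihomomorphism precisely when $f^t$ is a Lie algebra antihomomorphism. Since $(r_\pm)^t = -r_\mp$ from the first paragraph and negating a map converts homomorphisms into antihomomorphisms, $r_\pm$ is a coalgebra antihomomorphism iff $r_\mp$ is an algebra homomorphism. Because condition (i) is symmetric under $c \mapsto -c$, the equivalence (i) $\Leftrightarrow$ (ii) applies to both choices of sign, and therefore $r_+$ is an algebra hom iff $r_-$ is, iff $r_+$ is a coalgebra antihom, which closes the chain.

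The main obstacle is the calculation in the middle paragraph: organising the many terms produced by the bilinear expansion into three cyclic combinations that identify cleanly with $[r,r]$, $[c,c]$, and the vanishing cross terms. The decisive input is the invariance of $c$, which together with the skew symmetry of $r$ is exactly what kills the mixed contributions; without it the defect would contain extra tensors and the crisp equivalence with $[r,r] = -[c,c]$ would be lost.
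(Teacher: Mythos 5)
The paper does not prove this proposition: it is stated with an attribution to Belavin--Drinfeld and used as a black box, so there is no in-paper argument to compare against. Your outline is the standard proof of this classical fact and is essentially sound. The setup $(r_\pm)^t=-r_\mp$ is right, and the duality step for (ii) $\Leftrightarrow$ (iii) is clean: transposing exchanges brackets and cobrackets, negation exchanges homomorphisms and antihomomorphisms, and the $c\mapsto -c$ symmetry of (i) lets you run (i) $\Leftrightarrow$ (ii) for both signs, which is exactly what closes the loop. Two small caveats on the middle step. First, to make the computation go through cleanly you really want the explicit formula $[\xi,\eta]_*=\mathrm{ad}^*_{r_+\xi}\eta-\mathrm{ad}^*_{r_-\eta}\xi$, which is where the invariance of $c$ (in the form $\mathrm{ad}^*_{c\xi}\eta+\mathrm{ad}^*_{c\eta}\xi=0$) enters; once you have it, $\langle\zeta,D_+(\xi,\eta)\rangle=-\langle\xi\otimes\eta\otimes\zeta,[r_+,r_+]\rangle$ and $[r_\pm,r_\pm]=\tfrac14([r,r]+[c,c])$ (cross terms killed by invariance of $c$) finish the job. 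Second, your claim of a single universal scalar $c_0$ for both signs is slightly imprecise: for $r_-$ the defect pairs with $[r_-,r_-]$ against a permuted tensor $\eta\otimes\xi\otimes\zeta$ rather than $\xi\otimes\eta\otimes\zeta$; this does not affect the equivalence, since either way vanishing of the defect is equivalent to $[r,r]=-[c,c]$, but it should be stated correctly. Finally, "nondegeneracy in $\zeta$" is legitimate here because the ambient Lie algebras are finite free modules, so elements of $\mathfrak{g}$ and of $\otimes^3\mathfrak{g}$ are detected by pairing against $\mathfrak{g}^*$.
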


The equation $[r,r]=-[c,c]$ is called the \emph{modified classical
Yang-Baxter equation}.
\begin{defn}
If $G$ is a Poisson algebraic group whose Lie bialgebra $\mathfrak{g}$
admits an $r$-matrix $r$, then we will abbreviate this by writing
$G_{r}$ and $\mathfrak{g}_{r}$. If $[r,r]=-[c,c]$ and $c$ defines
a perfect pairing, then we say that $r$ is \emph{factorisable}, and
we will say that the Lie bialgebra $\mathfrak{g}_{r}$ and any corresponding
Poisson algebraic group $G_{r}$ are \emph{factorisable}.
\end{defn}

\begin{notation}
We denote the torus component of such $c$ by $c_{\mathfrak{t}}$.
Let $L_{g}$ and $R_{g}$ denote the translations on $G$ of left
and right multiplication by $g$. Given an element $r\in\mathfrak{g}\otimes\mathfrak{g}$,
we then write $r^{R}:=r^{R,R}:=(\mathrm{d}R_{g}\otimes\mathrm{d}R_{g})(r)$,
$r^{R,L}:=(\mathrm{d}R_{g}\otimes\mathrm{d}L_{g})(r)$, etc. Then
set $r^{\mathrm{ad}}:=r^{R,R}+r^{L,L}-r^{R,L}-r^{L,R}$.
\end{notation}

Semenov-Tian-Shansky used Proposition \ref{prop:r-matrix-map} to
geometrically prove over $\mathbb{C}$ the following
\begin{thm}
[{\cite[p.\,1247]{MR842417}}] Given a factorisable Poisson algebraic
group $G_{r}$ over a scheme where 2 is invertible, let $c$ in $\mathrm{Sym}^{2}(\mathfrak{g})^{\mathfrak{g}}$
be such that $[r,r]=-[c,c]$, let $G_{*}$ denote the underlying scheme
of $G$ but now equipped with the bivector
\begin{equation}
\frac{1}{2}(r^{\mathrm{ad}}-c^{L,R}+c^{R,L})=(r_{+}^{R,R}-r_{+}^{L,R})-(r_{-}^{R,L}-r_{-}^{L,L}).\label{eq:STS-bracket}
\end{equation}
This yields a Poisson structure on $G$, and the right conjugation
map $G_{r}\times G_{*}\rightarrow G_{*}$ is Poisson.
\end{thm}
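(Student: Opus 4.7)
My plan is to verify two things: that the bivector $\Pi$ in (\ref{eq:STS-bracket}) is Poisson (satisfies $[\Pi, \Pi] = 0$), and that the right conjugation map $G_r \times G_* \to G_*$ is a morphism of Poisson schemes. First I would check the equivalence of the two forms of $\Pi$ in (\ref{eq:STS-bracket}): substituting $r_\pm = (r \pm c)/2$ into the right-hand side and collecting terms reduces the equality to $c^{R,R} = c^{L,L}$ as sections of $TG \otimes TG$, which follows from $\mathrm{Ad}$-invariance of $c$ via $dL_g = dR_g \circ \mathrm{Ad}_g$ and $(\mathrm{Ad}_g \otimes \mathrm{Ad}_g)(c) = c$. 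This same identity, together with $r_+^{21} = -r_-$, also ensures that the factorised form is antisymmetric in its two tensor slots, and hence genuinely defines a bivector.

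For the Jacobi identity I would write $\Pi = \Pi_+ - \Pi_-$ with $\Pi_\pm$ the two parenthesised summands on the right-hand side of (\ref{eq:STS-bracket}), and exploit Proposition \ref{prop:r-matrix-map}: the maps $r_\pm : \mathfrak{g}^* \to \mathfrak{g}$ are Lie algebra homomorphisms (with respect to the cobracket-induced structure on $\mathfrak{g}^*$) precisely because of the factorisation equation $[r, r] = -[c, c]$. Interpreting each $\Pi_\pm$ via these homomorphisms, the Schouten bracket $[\Pi, \Pi]$ decomposes into pure right-right, pure left-left, and mixed pieces: the first two vanish by the Jacobi identity on $\mathfrak{g}^*$ transported through $r_\pm$, while the mixed pieces vanish because right- and left-invariant vector fields commute. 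For the Poisson condition on conjugation, it suffices by (\ref{eq:Poisson}) to check the equality of bivectors at arbitrary points: the contribution from the $G_r$-factor (carrying the multiplicative bracket $r^{R,R} - r^{L,L}$) combined with the pullback of $\Pi$ under $(g, h) \mapsto g^{-1} h g$ should reassemble into $\Pi|_{g^{-1}hg}$, and a direct computation using $\mathrm{Ad}$-invariance of $c$ and antisymmetry of $r$ delivers the required cancellation.

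The main obstacle will be handling the Schouten bracket cleanly in the scheme-theoretic setting. The slickest alternative --- pushing down the canonical Poisson-Lie structure from the Drinfeld double $D$ of $\mathfrak{g}_r$ (which, by factorisability, is isomorphic to $\mathfrak{g} \oplus \mathfrak{g}$ as a Lie algebra via $x \mapsto (x, x)$ and $\xi \mapsto (r_-\xi, r_+\xi)$) along the factorisation map $D \to G$ --- requires first integrating $\mathfrak{g}^*$ to an algebraic group, which is nontrivial in this generality. One can instead perform the Schouten calculation directly on $G$ using only Lie algebra data and the modified Yang-Baxter equation $[r, r] = -[c, c]$, which is lengthier but avoids the integration step.
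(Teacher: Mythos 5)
The paper does not prove this theorem: it is imported from Semenov-Tian-Shansky \cite{MR842417}, with the surrounding text noting that he established it geometrically over $\mathbb{C}$ using Proposition \ref{prop:r-matrix-map} and the factorisation of the double, and that a direct verification avoiding factorisability is also possible. Your first paragraph is correct and matches what such a direct verification must begin with: the equality of the two expressions in (\ref{eq:STS-bracket}) reduces exactly to $c^{R,R}=c^{L,L}$, i.e.\ to $\mathrm{Ad}$-invariance of $c$, and antisymmetry follows from $r_{+}^{21}=-r_{-}$ together with that identity. Your closing remarks on the conjugation map and on the double are also the right ideas, though both are asserted rather than carried out.

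The genuine gap is in the Jacobi identity. A minor point first: $r_{+}^{R,R}-r_{+}^{L,R}$ and $r_{-}^{R,L}-r_{-}^{L,L}$ are not antisymmetric $2$-tensors, so ``the Schouten bracket of $\Pi_{\pm}$'' is undefined as stated; one must either work with the full antisymmetric $\Pi$ or compute the Jacobiator of the bracket directly. The serious point is that the mixed components of the Jacobiator do \emph{not} vanish because right- and left-invariant vector fields commute. Writing $r_{+}=\sum a_{i}\otimes b_{i}$, the summand $r_{+}^{R,R}-r_{+}^{L,R}$ equals $\sum(a_{i}^{R}-a_{i}^{L})\otimes b_{i}^{R}$; in the cross terms only the pairings of a right-invariant leg against a left-invariant leg drop out, while pairings such as $[a_{i}^{L},a_{j}^{L}]$ or $[b_{i}^{R},a_{j}^{R}]$ arising between the $R,R$ and $L,R$ parts, and between $\Pi_{+}$ and $\Pi_{-}$, survive and assemble into expressions of the shape $[r_{\pm,12},r_{\mp,13}]$ carrying mixed $R/L$ decorations. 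The cancellation of precisely these surviving terms is where the modified Yang--Baxter equation $[r,r]=-[c,c]$ (equivalently, the vanishing of the classical Yang--Baxter expression for each of $r_{\pm}$, which is the content of Proposition \ref{prop:r-matrix-map}) and the invariance of $c$ are consumed; this is the heart of Semenov-Tian-Shansky's computation and cannot be dismissed by commutativity of $X^{R}$ with $Y^{L}$. Until that cancellation is exhibited, the proof of the first assertion of the theorem is incomplete, and the same explicitness is needed for the claim that conjugation is a Poisson action.
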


This can also be proven directly, without using factorisability.
\begin{defn}
This is called the \emph{(right) Semenov-Tian-Shansky bracket} on
$G$.
\end{defn}

\begin{notation}
In order to obtain more such brackets in low characteristic, we slightly
enlarge $\mathfrak{t}$ to by adding the dual basis $\{\check{\omega}_{i}\}_{i=1}^{\mathrm{rk}}$
to the simple roots and denote the result by $\mathfrak{t}_{\mathrm{sc}}$.
We then assume that $c_{\mathfrak{t}}$ lies in $\mathfrak{t}_{\mathrm{sc}}\otimes\mathfrak{t}$:
\end{notation}

\begin{prop}
\label{prop:integral-formulas} If $\mathfrak{g}$ is defined over
an arbitrary ring then such $c_{\mathfrak{t}}$ and $r_{\pm}$ might
not lie in $\mathfrak{g}$, but the corresponding Semenov-Tian-Shansky
bracket still yields integral formulas.
\end{prop}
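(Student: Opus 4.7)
The plan is to verify directly that the bivector defined by $\Pi=\tfrac{1}{2}(r^{\mathrm{ad}}-c^{L,R}+c^{R,L})$ acts integrally on $\mathcal{O}_{G}\otimes\mathcal{O}_{G}$, even though the intermediate quantities $c_{\mathfrak{t}}$ and $r_{\pm}=(r\pm c)/2$ may not separately lie in $\mathfrak{g}\otimes\mathfrak{g}$ over the base ring. Because $r_{\pm}$ only appears through the symmetric combination above, it suffices to analyse the $r$- and $c$-contributions to $\Pi$ directly.

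First I would pass to the left trivialisation of $TG\otimes TG\simeq\mathfrak{g}\otimes\mathfrak{g}$ and use the $\mathrm{Ad}$-invariance of $c$ (which in particular yields $c^{R,R}=c^{L,L}$) to rewrite
\[
\Pi(g)=\tfrac{1}{2}(\mathrm{Ad}_{g^{-1}}-1)^{\otimes 2}(r)+\tfrac{1}{2}\bigl[(\mathrm{Ad}_{g^{-1}}-1)\otimes 1 - 1\otimes(\mathrm{Ad}_{g^{-1}}-1)\bigr](c).
\]
The first summand is integral because $r\in\mathfrak{g}\wedge\mathfrak{g}$ is integral, $\mathrm{Ad}$ is algebraic, and the tensor is manifestly antisymmetric so that the $\tfrac{1}{2}$ is absorbed by the inclusion $\wedge^{2}\hookrightarrow\otimes^{2}$. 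Decomposing $c=c_{\mathrm{root}}+c_{\mathfrak{t}}$ with $c_{\mathrm{root}}$ a $\mathbb{Z}$-combination of dual Chevalley root-vector pairs, the $c_{\mathrm{root}}$-contribution is likewise immediately integral.

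The nontrivial step is the $c_{\mathfrak{t}}$-contribution. The key observation is that for any $h\in\mathfrak{t}_{\mathrm{sc}}=\check{P}$ and any root vector $x_{\alpha}$ one has $\mathrm{ad}_{x_{\alpha}}(h)=-\alpha(h)\,x_{\alpha}$, where $\alpha(h)\in\mathbb{Z}$ because roots pair integrally with the coweight lattice (by $\alpha_{i}(\check{\omega}_{j})=\delta_{ij}$). Iterated adjoint actions then stay in $\mathfrak{g}$, so $(\mathrm{Ad}_{g^{-1}}-1)(h)\in\mathfrak{g}$ integrally for every $h\in\mathfrak{t}_{\mathrm{sc}}$. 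This immediately takes care of the term $(\mathrm{Ad}_{g^{-1}}-1)\otimes 1(c_{\mathfrak{t}})$, whose second factor already lies in $\mathfrak{t}\subset\mathfrak{g}$.

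The main obstacle will be the remaining term $1\otimes(\mathrm{Ad}_{g^{-1}}-1)(c_{\mathfrak{t}})$: its first factor sits in $\mathfrak{t}_{\mathrm{sc}}$ and need not be integral in $\mathfrak{g}$. To handle it I would decompose $\mathcal{O}_{G}$ into weight spaces for the $T\times T$-action and exploit the matrix-coefficient property that all bi-weights $(\lambda,\mu)$ satisfy $\lambda-\mu\in Q$ (the root lattice); when the $\mathfrak{t}_{\mathrm{sc}}$-factor is evaluated against $df$ and $df'$, the apparently fractional pairing $\mu(c_{\mathfrak{t},1})$ enters only through the combination $(\lambda-\mu)(c_{\mathfrak{t},1})\in\mathbb{Z}$ once one pairs it with the symmetric partner term and uses symmetry of $c$. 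Rearranging the two halves of the $c_{\mathfrak{t}}$-contribution to display this weight-wise cancellation completes the proof; the combinatorics of this rearrangement, together with verifying that the residual $\tfrac{1}{2}$ is genuinely absorbed by antisymmetry rather than introducing a $2$-torsion obstruction, is the delicate point.
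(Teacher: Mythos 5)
Your opening move --- splitting the left-trivialised bivector into a ``pure $r$'' piece $\tfrac{1}{2}(\mathrm{Ad}-1)^{\otimes2}(r)$ and a ``pure $c$'' piece --- is where the argument breaks. Antisymmetry does \emph{not} absorb the factor $\tfrac{1}{2}$: for an integral antisymmetric tensor $u\otimes v-v\otimes u$, the bracket value $(\mathrm{d}f\otimes\mathrm{d}f')\bigl(\tfrac{1}{2}(u\otimes v-v\otimes u)\bigr)=\tfrac{1}{2}\bigl(\mathrm{d}f(u)\,\mathrm{d}f'(v)-\mathrm{d}f(v)\,\mathrm{d}f'(u)\bigr)$ is a genuine half-integer in general (already for $r=f_{\alpha}\wedge e_{\alpha}$ in $\mathfrak{sl}_{2}$). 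The same objection applies to your claim that the $c_{\mathrm{root}}$-contribution is ``immediately integral'': it too carries an unabsorbed $\tfrac{1}{2}$. The halves in these two pieces cancel only \emph{against each other}, and exhibiting that cancellation is precisely the point of the paper's regrouping: it writes the bivector as $(r_{+}^{R,R}-r_{+}^{L,R})-(r_{-}^{R,L}-r_{-}^{L,L})$, i.e.\ $\bigl((\mathrm{Ad}_{g}-\mathrm{id})\otimes\mathrm{Ad}_{g}\bigr)(r_{+})-\bigl((\mathrm{Ad}_{g}-\mathrm{id})\otimes\mathrm{id}\bigr)(r_{-})$, and observes that by the explicit Belavin--Drinfeld formulas the nilpotent parts of $r_{\pm}$ are already integral, so the denominators are confined to the $\mathfrak{t}\otimes\mathfrak{t}$ components $\tfrac{1}{2}(r_{0}\pm c_{\mathfrak{t}})$. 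Without some such regrouping your decomposition leaves $2$-denominators scattered over every root space.

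For the residual torus term your instinct (integrality of $\beta(\check{\omega}_{i})$) matches the paper's, but your proposed mechanism diverges and is left unverified: you appeal to a $T\times T$-bi-weight decomposition of $\CMcal O_{G}$ and a hoped-for cancellation via $\lambda-\mu\in Q$, which you yourself flag as the delicate unproven point. The paper instead isolates the torus contribution as $\bigl((\mathrm{Ad}_{g}-\mathrm{id})\otimes(\mathrm{Ad}_{g}+\mathrm{id})\bigr)(c_{\mathfrak{t}}/2)$, reduces to generators $\check{\omega}_{i}\otimes t_{i}/2$, and then factors $g$ into root-subgroup elements, using $\mathrm{Ad}_{\exp_{\beta}(y)}(x)=x+\beta(x)y$ together with the induction identity
\[
\mathrm{Ad}_{g}(x)\pm x=\mathrm{Ad}_{g''}\bigl(\mathrm{Ad}_{g'}(x)\pm x\bigr)\mp\bigl(\mathrm{Ad}_{g''}(x)-x\bigr)
\]
to control both tensor factors simultaneously. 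You would need to either carry out your weight-space cancellation in full or adopt this more elementary root-subgroup induction; as written, the proposal does not close.
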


\begin{proof}
We only prove the second part. In the left-trivialisation of the tangent
bundle of $G$, the right-hand-side of equation (\ref{eq:STS-bracket})
is given at any point $g$ in $G$ by
\[
\bigl((\mathrm{Ad}_{g}-\mathrm{id})\otimes\mathrm{Ad}_{g}\bigl)(r_{+})-\bigl((\mathrm{Ad}_{g}-\mathrm{id})\otimes\mathrm{id}\bigl)(r_{-}),
\]
whose torus component is
\[
\bigl((\mathrm{Ad}_{g}-\mathrm{id})\otimes(\mathrm{Ad}_{g}+\mathrm{id})\bigl)(c_{\mathfrak{t}}/2).
\]

We may decompose $g$ into a product of root subgroups, and induct
on the length of such an expression. By projecting $x$ to root spaces,
it suffices to prove the claim for elements of the form $(\check{\omega}_{i}\otimes t_{i}/2)$.
For $g$ an element of a root subgroups, it follows from
\[
\mathrm{Ad}_{\exp_{\beta}(y)}(x)=x+\beta(x)y,
\]
 where $\exp_{\beta}:\mathfrak{g}_{\beta}\rightarrow N_{\beta}$ is
the exponential map and $y$ lies in $\mathfrak{g}_{\beta}$. The
claim now follows by induction on the length of the decomposition
of $g$: if $g=g'g''$ with $g''$ lying in a root subgroup, then
\[
\mathrm{Ad}_{g}(x)\pm x=\mathrm{Ad}_{g''}\bigl(\mathrm{Ad}_{g'}(x)\pm x\bigr)\mp\bigl(\mathrm{Ad}_{g''}(x)-x\bigr)\in\mathrm{Ad}_{g''}(\mathfrak{g})+\mathfrak{g}=\mathfrak{g}.\qedhere
\]
\end{proof}
\begin{example}
Let $\mathfrak{g}$ be a reductive Lie algebra over a field. Extend
Chevalley generators to a basis $\{e_{\beta},f_{\beta}\}_{\beta\in\mathfrak{R}_{+}}$
and $\{t_{i}\}_{i=1}^{\mathrm{rk}}$ and then denote by $\{\check{\omega}_{i}\}_{i=1}^{\mathrm{rk}}$
the dual basis in $\mathfrak{t}$ to the simple roots. If we set
\[
r:=\sum_{\beta\in\mathfrak{R}_{+}}d_{\beta}f_{\beta}\wedge e_{\beta},\qquad c:=\sum_{\beta\in\mathfrak{R}_{+}}d_{\beta}e_{\beta}\otimes f_{\beta}+\sum_{\beta\in\mathfrak{R}_{+}}d_{\beta}f_{\beta}\otimes e_{\beta}+\sum_{i=1}^{\mathrm{rk}}\check{\omega}_{i}\otimes t_{i},
\]
where $d_{\beta}\in\{1,2,3\}$ are the usual symmetrisers with $d_{\beta}=1$
for long roots, then this Casimir element yields $[r,r]=-[c,c]$.
\end{example}

\begin{notation}
Given a factorisable $r$-matrix $r$ and $c$ in $\mathrm{Sym}^{2}(\mathfrak{g})^{\mathfrak{g}}$
such that $[r,r]=-[c,c]$, we denote the inner product corresponding
to $c$ by $(\cdot,\cdot)_{r}$, so that the orthogonal complement
of a submodule $\mathfrak{h}$ of $\mathfrak{g}$ is given by $\mathfrak{h}^{\bot}:=c(\mathfrak{h}^{0})$.
We furthermore write $\mathfrak{b}_{\pm}^{r}:=r_{\pm}(\mathfrak{g}^{*})$
and $\mathfrak{m}_{\pm}^{r}:=r_{\pm}(\ker\,r_{\mp})$.
\end{notation}

\begin{prop}
\label{prop:annihilator} Let $\mathfrak{g}_{r}$ be a factorisable
Lie bialgebra.

\begin{enumerate}[\normalfont(i)]

\item \cite{MR674005} The annihilator of $\mathfrak{b}_{\pm}^{r}$
in $\mathfrak{g}^{*}$ is $\mathrm{ker}\,r_{\mp}$.

\item Let $\mathfrak{h}$ be a subspace of $\mathfrak{g}$ containing
$\mathfrak{b}_{\pm}^{r}$. Then $\mathfrak{h}^{\bot}=r_{\pm}(\mathfrak{h}^{0})\subseteq\mathfrak{m}_{\pm}$,
with equality if and only if $\mathfrak{h}^{0}=(\mathfrak{b}_{\pm}^{r})^{0}$.

In particular, we have
\[
\iota_{r}(\mathfrak{h}^{0})=(\mathfrak{h}^{\bot},0)\qquad\bigl(\textrm{resp. }\iota_{r}(\mathfrak{h}^{0})=(0,\mathfrak{h}^{\bot})\bigr).
\]

\end{enumerate}
\end{prop}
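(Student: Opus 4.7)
The whole proposition reduces to a single sign calculation: because $c\in\mathrm{Sym}^{2}(\mathfrak{g})^{\mathfrak{g}}$ is symmetric and $r\in\mathfrak{g}\wedge\mathfrak{g}$ is antisymmetric, the ``flip'' of $r_{\pm}\in\mathfrak{g}\otimes\mathfrak{g}$, viewed as a map $\mathfrak{g}^{*}\to\mathfrak{g}$, satisfies
\[
r_{\pm}^{21}=\tfrac{1}{2}(r\pm c)^{21}=\tfrac{1}{2}(-r\pm c)=-\tfrac{1}{2}(r\mp c)=-r_{\mp}.
\]
I would establish this identity first, and then everything else will follow from it together with the fact that $c$ induces a perfect pairing (factorisability).

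For (i), the annihilator of $\mathfrak{b}_{+}^{r}=r_{+}(\mathfrak{g}^{*})$ in $\mathfrak{g}^{*}$ is by definition $\{\xi:\langle\xi,r_{+}(\eta)\rangle=0\textrm{ for all }\eta\in\mathfrak{g}^{*}\}$. Writing $r_{+}=\sum a_{i}\otimes b_{i}$, the pairing unfolds as $\sum\xi(b_{i})\eta(a_{i})=\eta(r_{+}^{21}(\xi))$, so it vanishes for every $\eta$ iff $r_{+}^{21}(\xi)=0$, i.e.\ $r_{-}(\xi)=0$ by the identity above. The $-$ case is entirely symmetric.

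For (ii), the hypothesis $\mathfrak{h}\supseteq\mathfrak{b}_{+}^{r}$ dualises through (i) to the inclusion $\mathfrak{h}^{0}\subseteq(\mathfrak{b}_{+}^{r})^{0}=\ker r_{-}$; but on $\ker r_{-}$ we simply have $c=r_{+}-r_{-}=r_{+}$, so
\[
\mathfrak{h}^{\bot}=c(\mathfrak{h}^{0})=r_{+}(\mathfrak{h}^{0})\subseteq r_{+}(\ker r_{-})=\mathfrak{m}_{+}^{r}.
\]
For the equality criterion I would use that $r_{+}|_{\ker r_{-}}=c|_{\ker r_{-}}$ is injective (since factorisability makes $c$ a perfect pairing); hence $r_{+}(\mathfrak{h}^{0})=r_{+}(\ker r_{-})$ holds iff $\mathfrak{h}^{0}=\ker r_{-}$, iff $\mathfrak{h}^{0}=(\mathfrak{b}_{+}^{r})^{0}$. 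The statement for $\iota_{r}$ (which I read as the canonical embedding $\xi\mapsto(r_{+}\xi,r_{-}\xi)$) then falls out: $\mathfrak{h}^{0}\subseteq\ker r_{-}$ kills the second component, leaving $(r_{+}(\mathfrak{h}^{0}),0)=(\mathfrak{h}^{\bot},0)$; the opposite sign case is symmetric.

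There is no real obstacle here: the proposition is a pure piece of linear algebra, a consequence of the symmetry and antisymmetry conventions for $c$ and $r$ combined with factorisability. The only point requiring vigilance is the sign in $r_{\pm}^{21}=-r_{\mp}$, as this is precisely what aligns ``annihilator of $\mathfrak{b}_{\pm}^{r}$'' with ``kernel of $r_{\mp}$'' in part (i); once that is pinned down, everything else is bookkeeping.
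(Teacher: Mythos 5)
Your proposal is correct and follows essentially the same route as the paper: the flip identity $r_{\pm}^{21}=-r_{\mp}$ gives (i) via the adjoint pairing, and (ii) follows from $\mathfrak{h}^{0}\subseteq\ker r_{\mp}$ together with $c=r_{+}-r_{-}$ and nondegeneracy of $c$. Your spelling-out of the equality criterion (injectivity of $c|_{\ker r_{\mp}}$) just makes explicit what the paper leaves as ``follows from nondegeneracy of $c$''.
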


\begin{proof}
(i): Any element of $\mathfrak{b}_{\pm}^{r}$ is of the form $r_{\pm}x$
for some $x\in\mathfrak{g}^{*}$. For $y\in\mathfrak{g}^{*}$ we then
have
\[
\<y,r_{\pm}x\>=\<r_{\pm}^{21}y,x\>=\<-r_{\mp}y,x\>.
\]
Since $x$ can be chosen arbitrarily, the claim follows.

(ii): From (i) it follows that $\mathfrak{h}^{0}\subseteq(\mathfrak{b}_{\pm}^{r})^{0}=\mathrm{ker}\,r_{\mp}$,
so
\[
\mathfrak{h}^{\bot}=c(\mathfrak{h}^{0})\subseteq(r_{+}-r_{-})(\mathrm{ker}\,r_{\mp})=\pm r_{\pm}(\mathrm{ker}\,r_{\mp})=:\mathfrak{m}_{\pm}.
\]
The claim on equality follows from nondegeneracy of $c$. For $\mathfrak{h}$
containing $\mathfrak{b}_{+}^{r}$ we now find
\[
\iota_{r}(\mathfrak{h}^{0})=(r_{+},r_{-})(\mathfrak{h}^{0})=\bigl(r_{+}(\mathfrak{h}^{0}),0\bigr)=(\mathfrak{h}^{\bot},0).\qedhere
\]
\end{proof}
\begin{defn}
A \emph{Belavin-Drinfeld triple} is a triple $\mathfrak{T}=(\mathfrak{T}_{0},\mathfrak{T}_{1},\tau)$
where $\mathfrak{T}_{0},\mathfrak{T}_{1}$ are sets of simple roots
and $\tau:\mathfrak{T}_{0}\rightarrow\mathfrak{T}_{1}$ is a bijection
such that

\begin{itemize}

\item $\bigl(\tau(\alpha),\tau(\tilde{\alpha})\bigr)=(\alpha,\tilde{\alpha})$
for any pair of simple roots $\alpha,\tilde{\alpha}\in\mathfrak{T}_{0}$,
and

\item $\tau$ is \emph{nilpotent}: for any $\alpha\in\mathfrak{T}_{0}$
there exists $m\in\mathbb{N}_{1}$ such that $\tau^{m}(\alpha)\in\mathfrak{T}_{1}\backslash\mathfrak{T}_{0}$.

\end{itemize}

Such a triple gives the set of positive roots a partial ordering:
for positive roots $\beta,\beta'$ we set $\beta<\beta'$ if $\beta\in\mathbb{N}_{0}\mathfrak{T}_{0}$,
$\beta'\in\mathbb{N}_{0}\mathfrak{T}_{1}$ and $\tau^{m}(\beta)=\beta'$
for some $m\in\mathbb{N}_{1}$.
\end{defn}

\begin{thm}
[{\cite{MR674005}}] Let $\mathfrak{g}_{r}$ be a factorisable reductive
Lie bialgebra over a field of characteristic 0 (or sufficiently large).
Then there exists a Cartan decomposition and a Belavin-Drinfeld triple
$\mathfrak{T}=(\mathfrak{T}_{0},\mathfrak{T}_{1},\tau)$ such that
\begin{align}
r_{+} & =\frac{1}{2}(r_{0}+c_{\mathfrak{t}})+\sum_{\beta\in\mathfrak{R}_{+}}d_{\beta}f_{\beta}\otimes e_{\beta}+\sum_{\beta<\beta'}d_{\beta}f_{\beta}\wedge e_{\beta'},\label{eq:factorisable-r-matrix-positive}
\end{align}
where $f_{\beta}$ and $e_{\beta}$ are root vectors with weight $-\beta$
and $\beta$ respectively, are normalised by $d_{\beta}(f_{\beta},e_{\beta})=1$
for an invariant bilinear form corresponding to some element $c\in\mathrm{Sym}^{2}(\mathfrak{g})^{\mathfrak{g}}$,
whilst the element $r_{0}\in\mathfrak{t}\wedge\mathfrak{t}$ satisfies
\begin{equation}
\bigl(\tau(\alpha)\otimes1\bigr)(r_{0}+c_{\mathfrak{t}})+(1\otimes\alpha)(r_{0}+c_{\mathfrak{t}})=0,\qquad\forall\alpha\in\mathfrak{T}_{0},\label{eq:factorisable-torus-part}
\end{equation}
 where $c_{\mathfrak{t}}$ is the image of the $\mathfrak{t}$-component
of $c$.

Furthermore, such solutions $r_{0}$ form a torsor for the $k_{\mathfrak{T}}(k_{\mathfrak{T}}-1)/2$-dimensional
vector space $\mathfrak{t}_{\mathfrak{T}}\wedge\mathfrak{t}_{\mathfrak{T}}$,
where $k_{\mathfrak{T}}:=\mathrm{rank}(\mathfrak{g})-|\mathfrak{T}_{0}|$
and
\begin{equation}
\mathfrak{t}_{\mathfrak{T}}:=\{t\in\mathfrak{t}:\beta(t)=\beta'(t)\text{ for all }\beta<\beta'\}.\label{eq:factorisable-torus}
\end{equation}
\end{thm}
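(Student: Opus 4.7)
The plan is to start from Proposition \ref{prop:r-matrix-map}, which guarantees that the maps $r_{\pm}:\mathfrak{g}^{*}\rightarrow\mathfrak{g}$ are Lie algebra homomorphisms once $[r,r]=-[c,c]$. Write $\mathfrak{b}_{\pm}:=r_{\pm}(\mathfrak{g}^{*})$ for their (automatically Lie subalgebra) images, and $\mathfrak{m}_{\pm}:=r_{\pm}(\ker r_{\mp})$. Factorisability means $c$ is nondegenerate, and since $r_{+}-r_{-}=c$ as maps $\mathfrak{g}^{*}\rightarrow\mathfrak{g}$, I immediately obtain $\mathfrak{b}_{+}+\mathfrak{b}_{-}=\mathfrak{g}$ and $\ker r_{+}\cap\ker r_{-}=0$. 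Restricting the identity $r_{+}-r_{-}=c$ to $\ker r_{\mp}$ then yields $\mathfrak{m}_{\pm}=\mathfrak{b}_{\pm}^{\bot}$; in particular each $\mathfrak{m}_{\pm}$ is an isotropic ideal of $\mathfrak{b}_{\pm}$, hence nilpotent.

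Next, I would combine these to obtain a canonical isomorphism
\[
\mathfrak{b}_{+}/\mathfrak{m}_{+}\xleftarrow{\,\sim\,}\mathfrak{g}^{*}/(\ker r_{+}+\ker r_{-})\xrightarrow{\,\sim\,}\mathfrak{b}_{-}/\mathfrak{m}_{-}
\]
of reductive Lie algebras carrying nondegenerate invariant bilinear forms inherited from $c$. After conjugating so that $\mathfrak{b}_{+}\cap\mathfrak{b}_{-}$ contains a common Cartan subalgebra $\mathfrak{t}$ of $\mathfrak{g}$, the sum decomposition $\mathfrak{g}=\mathfrak{b}_{+}+\mathfrak{b}_{-}$ combined with the reductivity of $\mathfrak{b}_{+}\cap\mathfrak{b}_{-}$ forces $\mathfrak{b}_{+}$ (resp.\ $\mathfrak{b}_{-}$) to be a parabolic subalgebra containing the positive (resp.\ negative) Borel for some choice of positive roots, with unipotent radical contained in $\mathfrak{m}_{\pm}$.

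The core of the argument, and the step I expect to be the hardest, is extracting the Belavin-Drinfeld triple from the resulting isomorphism $\mathfrak{b}_{+}/\mathfrak{m}_{+}\overset{\sim}{\rightarrow}\mathfrak{b}_{-}/\mathfrak{m}_{-}$: it descends to an isometry between the Levi factors that sends simple roots to simple roots, producing the bijection $\tau:\mathfrak{T}_{0}\rightarrow\mathfrak{T}_{1}$. Its nilpotence would fail exactly when some $\tau$-orbit closes up into a cycle, and one shows that this would produce a nontrivial semisimple subalgebra sitting inside the nilpotent ideals $\mathfrak{m}_{\pm}$, a contradiction. Comparing the off-diagonal components of $r_{+}$ to the graph of $\tau$ (suitably extended via the parabolic structure) in the root decomposition of $\mathfrak{g}$ then reproduces the explicit formula (\ref{eq:factorisable-r-matrix-positive}) up to its torus component; the normalisation $d_{\beta}(f_{\beta},e_{\beta})=1$ is fixed by matching against $c$.

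Finally, I would determine $r_{0}\in\mathfrak{t}\wedge\mathfrak{t}$ by requiring the two Cartan embeddings $\mathfrak{t}\hookrightarrow\mathfrak{b}_{\pm}$ to intertwine with the isomorphism above; written out on $\mathfrak{t}^{*}\otimes\mathfrak{t}^{*}$, this is exactly equation (\ref{eq:factorisable-torus-part}). The torsor claim is then a direct linear-algebra count: any two solutions differ by an element of $\mathfrak{t}\wedge\mathfrak{t}$ annihilating each of the $|\mathfrak{T}_{0}|$ vectors $\tau(\alpha)\otimes1+1\otimes\alpha$, and the joint kernel of these relations inside $\mathfrak{t}\wedge\mathfrak{t}$ is precisely $\mathfrak{t}_{\mathfrak{T}}\wedge\mathfrak{t}_{\mathfrak{T}}$ as defined in (\ref{eq:factorisable-torus}), with dimension $k_{\mathfrak{T}}(k_{\mathfrak{T}}-1)/2$ since $\dim\mathfrak{t}_{\mathfrak{T}}=k_{\mathfrak{T}}$.
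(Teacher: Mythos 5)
The paper states this theorem as a citation to Belavin--Drinfeld \cite{MR674005} and gives no proof of it, so there is no in-paper argument to compare against; I will assess your sketch on its own.

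Your outline follows the standard Belavin--Drinfeld strategy: deduce from Proposition \ref{prop:r-matrix-map} that $\mathfrak{b}_{\pm}=r_{\pm}(\mathfrak{g}^{*})$ are subalgebras with isotropic nilpotent ideals $\mathfrak{m}_{\pm}=\mathfrak{b}_{\pm}^{\bot}$, show $\mathfrak{g}=\mathfrak{b}_{+}+\mathfrak{b}_{-}$, identify the Levi quotients, conjugate to a common Cartan, and read off the triple. The torsor computation at the end is correct: if $s:=r_{0}-r_{0}'\in\mathfrak{t}\wedge\mathfrak{t}$ solves the homogeneous version of \eqref{eq:factorisable-torus-part}, antisymmetry turns the condition into $\bigl((\tau(\alpha)-\alpha)\otimes1\bigr)(s)=0$ for all $\alpha\in\mathfrak{T}_{0}$, so $s\in\mathfrak{t}_{\mathfrak{T}}\otimes\mathfrak{t}$ and again by antisymmetry $s\in\mathfrak{t}_{\mathfrak{T}}\wedge\mathfrak{t}_{\mathfrak{T}}$; nilpotence of $\tau$ makes $\{\tau(\alpha)-\alpha\}_{\alpha\in\mathfrak{T}_{0}}$ linearly independent, giving $\dim\mathfrak{t}_{\mathfrak{T}}=k_{\mathfrak{T}}$.

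However, the step where you prove nilpotence of $\tau$ is not right as stated. If $\tau^{k}(\alpha)=\alpha$, the $\mathfrak{sl}_{2}$-triples generated by the orbit lie in the Levi factors of both $\mathfrak{b}_{+}$ and $\mathfrak{b}_{-}$ (since those simple roots lie in both $\mathfrak{T}_{0}$ and $\mathfrak{T}_{1}$), not in the nilpotent ideals $\mathfrak{m}_{\pm}$, so no ``semisimple subalgebra inside $\mathfrak{m}_{\pm}$'' arises. The actual contradiction comes from nondegeneracy of $c=r_{+}-r_{-}$: a $\tau$-cycle forces the Cayley-type operator $r_{-}\circ r_{+}^{-1}$, descended to the common Levi quotient, to have eigenvalue $1$ on the corresponding root space, i.e.\ $c$ annihilates a nonzero covector, which is impossible. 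Equivalently, the Lagrangian graph $\{(r_{+}\xi,r_{-}\xi)\}$ in the double $\mathfrak{g}\oplus\mathfrak{g}$ would then meet the diagonal nontrivially. You should also flag that the assertion ``$\mathfrak{b}_{\pm}$ become parabolic after conjugating to a common Cartan'' is a genuine classification step (it is the structure theorem for Lagrangian subalgebras of $\mathfrak{g}\oplus\mathfrak{g}$ transverse to the diagonal, and requires the field to be algebraically closed or close to it), not merely a formal consequence of $\mathfrak{g}=\mathfrak{b}_{+}+\mathfrak{b}_{-}$ and reductivity of the quotient.
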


Explicitly, we then have 
\begin{align}
r_{-} & =\frac{1}{2}(r_{0}-c_{\mathfrak{t}})-\sum_{\beta\in\mathfrak{R}_{+}}d_{\beta}e_{\beta}\otimes f_{\beta}+\sum_{\beta<\beta'}d_{\beta}f_{\beta}\wedge e_{\beta'}.\label{eq:factorisable-r-matrix-negative}
\end{align}

\begin{prop}
\label{prop:factorisable-coisotropic-poisson} Let $G_{r}$ be a reductive
factorisable Poisson algebraic group and let $H\subseteq G_{r}$ be
a connected closed subgroup with Lie algebra $\mathfrak{h}$.

\begin{enumerate}[\normalfont(i)]

\item  If $\mathfrak{h}$ contains $\mathfrak{b}_{+}^{r}$ or $\mathfrak{b}_{-}^{r}$
then $H$ is Poisson.

\item Now suppose that the $r$-matrix comes from a Belavin-Drinfeld
triple $\mathfrak{T}$, and that $H$ is a product of root subgroups
corresponding to a subset of positive (resp.\ negative) roots $\mathfrak{N}$
of the form $\mathfrak{R}_{w}$. If $\mathfrak{T}\subseteq\mathfrak{N}^{c}$
then $H$ is coisotropic.

\end{enumerate}
\end{prop}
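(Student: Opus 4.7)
The strategy for both parts is to invoke Proposition~\ref{prop:characterise-coisotropic-poisson} and its coisotropic analogue from \cite{MR959095}, translated via the equivalence in Proposition~\ref{prop:coisotropic}: since $H$ is connected, $H$ is Poisson (resp.\ coisotropic) if and only if the annihilator $\mathfrak{h}^0\subseteq\mathfrak{g}^*$ is an ideal (resp.\ Lie subalgebra) of $\mathfrak{g}^*$. In the factorisable case the map $\iota_r=(r_+,r_-):\mathfrak{g}^*\to\mathfrak{g}\oplus\mathfrak{g}$ is an embedding of Lie algebras by Proposition~\ref{prop:r-matrix-map}, so we can verify these properties inside the concrete model $\iota_r(\mathfrak{g}^*)\subseteq\mathfrak{g}\oplus\mathfrak{g}$ with the componentwise bracket.

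For part~(i), suppose without loss of generality that $\mathfrak{b}_+^r\subseteq\mathfrak{h}$. Proposition~\ref{prop:annihilator}(ii) immediately gives $\iota_r(\mathfrak{h}^0)=(\mathfrak{h}^\bot,0)$, so for any $(x,y)\in\iota_r(\mathfrak{g}^*)$ and $(z,0)\in(\mathfrak{h}^\bot,0)$ the bracket reduces to $([x,z],0)$ with $x\in\mathfrak{b}_+^r\subseteq\mathfrak{h}$. The desired containment $[x,z]\in\mathfrak{h}^\bot$ then follows from ad-invariance of $c$ together with $[\mathfrak{b}_+^r,\mathfrak{h}]\subseteq\mathfrak{h}$ (since $\mathfrak{h}$ is a Lie subalgebra containing $x$):
\[
c([x,z],\mathfrak{h})=-c(z,[x,\mathfrak{h}])\subseteq c(z,\mathfrak{h})=0.
\]
Hence $\iota_r(\mathfrak{h}^0)$ is an ideal of $\iota_r(\mathfrak{g}^*)$, giving~(i).

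For part~(ii) we take $\mathfrak{h}=\bigoplus_{\beta\in\mathfrak{R}_w}\mathfrak{g}_\beta$, so $\mathfrak{h}^\bot=\mathfrak{t}\oplus\mathfrak{n}_+\oplus\bigoplus_{\gamma\in\mathfrak{R}_+\backslash\mathfrak{R}_w}\mathfrak{g}_{-\gamma}$ under the $c$-identification. First I would record that the hypothesis $\mathfrak{T}\subseteq\mathfrak{N}^c$ has a strong root-theoretic consequence: the partial ordering $\beta<\beta'$ generated by $\tau$ forces $\beta\in\mathbb{N}_0\mathfrak{T}_0$ and $\beta'\in\mathbb{N}_0\mathfrak{T}_1$, and since $\mathfrak{R}_+\backslash\mathfrak{R}_w$ is closed under addition whenever the sum is a root (by biconvexity of inversion sets), both $\beta,\beta'$ lie in $\mathfrak{R}_+\backslash\mathfrak{R}_w$. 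Using the explicit BD formulas for $r_\pm$ one then checks that $r_\pm(\mathfrak{h}^\bot)\subseteq\mathfrak{h}^\bot$, so that $\iota_r(\mathfrak{h}^0)\subseteq\mathfrak{h}^\bot\oplus\mathfrak{h}^\bot$.

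Next I would write $\iota_r(\xi)=(r_+y,r_-y)$ for $y\in\mathfrak{h}^\bot$ and compute the $\mathfrak{g}\oplus\mathfrak{g}$-bracket componentwise, separating the contribution of the standard part of $r$ from the BD-wedge correction $2\sum_{\beta<\beta'}d_\beta f_\beta\wedge e_{\beta'}$ and the torus piece $r_0$. The standard part closes in $\mathfrak{h}^\bot\oplus\mathfrak{h}^\bot$ essentially by biconvexity of $\mathfrak{R}_w$ and of $\mathfrak{R}_+\backslash\mathfrak{R}_w$: positive root brackets stay positive, and brackets among $\bigoplus_{\gamma\in\mathfrak{R}_+\backslash\mathfrak{R}_w}\mathfrak{g}_{-\gamma}$ remain in that same set. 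The torus contribution from $r_0$ only acts by weight scaling and stays in $\mathfrak{h}^\bot$. The heart of the proof will be showing that the BD-wedge cross-terms cancel so that the image remains in $\iota_r(\mathfrak{h}^0)$; here one uses antisymmetry of the BD wedge together with the observation that whenever a bad negative root vector $f_\delta$ with $\delta\in\mathfrak{R}_w$ could be produced, the structure constants from $[e_{\beta'},f_\gamma]$ and $[f_\gamma,e_{\beta'}]$ contribute with opposite signs and are forced to match by the $\tau$-compatibility of $r_0\in\mathfrak{t}_\mathfrak{T}\wedge\mathfrak{t}_\mathfrak{T}$ from~(\ref{eq:factorisable-torus}).

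The main obstacle will be this last cancellation step: keeping bookkeeping under control between the four cross-type brackets $[P^+y_+,y'_-]$, $[y_-,P^+y'_+]$, $[P^-y_-,y'_+]$, $[y_+,P^-y'_-]$ and showing that every output component along $\bigoplus_{\delta\in\mathfrak{R}_w}\mathfrak{g}_{-\delta}$ vanishes once one invokes~(\ref{eq:factorisable-torus-part}). I expect this to follow by a somewhat lengthy but systematic weight-tracking argument using that, for $\gamma'\in\mathbb{N}_0\mathfrak{T}_1\cap\mathfrak{R}_+$ and $\gamma\in\mathfrak{R}_+\backslash\mathfrak{R}_w$ with $\gamma-\gamma'\in\mathfrak{R}_+$, the partners $(\gamma,\gamma')$ and $(\tau^{-1}(\gamma'),\tau^{-1}(\gamma))$ appear symmetrically in the relevant sums with opposite signs.
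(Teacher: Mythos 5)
Part (i) of your proposal is correct and is essentially the paper's own argument: the identity $\iota_{r}(\mathfrak{h}^{0})=(\mathfrak{h}^{\bot},0)$ from Proposition \ref{prop:annihilator}(ii) reduces everything to $[\mathfrak{b}_{+}^{r},\mathfrak{h}^{\bot}]\subseteq\mathfrak{h}^{\bot}$, which follows from invariance of $(\cdot,\cdot)_{r}$ exactly as you write.

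For part (ii) there is a genuine gap. You set up the right framework (pass to $\iota_{r}(\mathfrak{g}^{*})\subseteq\mathfrak{g}\oplus\mathfrak{g}$, use the Belavin-Drinfeld formulas, and note that $\mathfrak{T}\subseteq\mathfrak{N}^{c}$ together with closure of $\mathfrak{N}^{c}$ under addition keeps all the $\tau$-orbits inside $\mathfrak{N}^{c}$), but the step you yourself flag as ``the main obstacle'' --- showing that no component along $\bigoplus_{\delta\in\mathfrak{R}_{w}}\mathfrak{g}_{-\delta}$ survives in the bracket --- is only asserted, and the mechanism you propose would not work: equation (\ref{eq:factorisable-torus-part}) is a constraint on the Cartan piece $r_{0}+c_{\mathfrak{t}}$ and has no bearing on the structure constants of brackets like $[e_{\beta'},f_{\gamma}]$, so it cannot force the sign cancellations you are hoping for. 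The missing idea, which makes all of that bookkeeping unnecessary, is that $\iota_{r}=(r_{+},r_{-})$ is a morphism of Lie algebras by Proposition \ref{prop:r-matrix-map}, so the bracket of two elements of $\iota_{r}(\mathfrak{h}^{0})$ automatically lies in $\iota_{r}(\mathfrak{g}^{*})$; one then only has to determine which weight spaces that bracket can occupy (this is exactly what your convexity observation controls) and read off from the explicit formulas (\ref{eq:dual-embedding-explicit}) and (\ref{eq:dual-embedding-trivial}) that the only elements of $\iota_{r}(\mathfrak{g}^{*})$ supported on those weight spaces are linear combinations of the $\iota_{r}(e_{\beta''}^{*})$ with $\beta''\in\mathfrak{N}^{c}$ and the $\iota_{r}(f_{\beta'''}^{*})$, i.e.\ elements of $\iota_{r}(\mathfrak{h}^{0})$. (The contributions of $\iota_{r}(\mathfrak{t}^{*})$ are handled separately, since by (\ref{eq:factorisable-torus}) they act on the remaining generators by scalars.) Without this closure-plus-support argument, or an actual verification of your cancellation claim, the proof of (ii) is incomplete.
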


\begin{proof}
 (i): Let's assume that $\mathfrak{h}$ contains $\mathfrak{b}_{+}^{r}$,
then by Proposition \ref{prop:annihilator}(ii) we have $\iota_{r}(\mathfrak{h}^{0})=(\mathfrak{h}^{\bot},0)$.
Thus, in order to prove that $\mathfrak{h}^{0}$ is an ideal of $\mathfrak{g}^{*}$,
it suffices to show that $[\mathfrak{b}_{+}^{r},\mathfrak{h}^{\bot}]\subseteq\mathfrak{h}^{\bot}$.
Given $x\in\mathfrak{b}_{+}^{r}\subseteq\mathfrak{h}$ and $y\in\mathfrak{h}^{\bot}$
and $z\in\mathfrak{h}$, invariance of $(\cdot,\cdot)_{r}$ yields
\[
([x,y],z)_{r}=(y,[z,x])_{r}=0
\]
 so that $[x,y]\in\mathfrak{h}^{\bot}$.

(ii): Let $\{e_{\beta}^{*},f_{\beta}^{*}\}_{\beta\in\mathfrak{R}_{+}}$
be the basis of $\mathfrak{n}_{+}^{*}\oplus\mathfrak{n}_{-}^{*}\subset\mathfrak{g}^{*}$
dual to the usual basis $\{e_{\beta},f_{\beta}\}_{\beta\in\mathfrak{R}_{+}}$
of $\mathfrak{n}_{+}\oplus\mathfrak{n}_{-}$, so $e_{\beta}^{*}$
is the element of $\mathfrak{g}^{*}$ vanishing on $\mathfrak{t}$
and all root spaces other than $\mathfrak{n}_{\beta}$, where it is
given on its basis vector by $e_{\beta}^{*}(e_{\beta})=1$, etc. Then
\begin{equation}
\iota_{r}(e_{\beta}^{*})=-d_{\beta}(\sum_{i>0}f_{\tau^{-i}(\beta)},\sum_{i\geq0}f_{\tau^{-i}(\beta)}),\qquad\iota_{r}(f_{\beta}^{*})=d_{\beta}(\sum_{i\geq0}e_{\tau^{i}(\beta)},\sum_{i>0}e_{\tau^{i}(\beta)})\label{eq:dual-embedding-explicit}
\end{equation}
 where we set $e_{\tau^{i+1}(\beta)}=0$ (resp.\ $f_{\tau^{-i-1}(\beta)}=0$)
when $\tau^{i}(\beta)$ is not in the span of $\mathfrak{T}_{0}$
(resp.\ $\tau^{-i}(\beta)$ not in the span of $\mathfrak{T}_{1}$).
In particular, if $\beta$ is not in $\mathfrak{N}^{c}$ then it is
not in $\mathfrak{T}_{0}$ or $\mathfrak{T}_{1}$ and we find
\begin{equation}
\iota_{r}(e_{\beta}^{*})=-d_{\beta}(0,f_{\beta}),\qquad\iota_{r}(f_{\beta}^{*})=d_{\beta}(e_{\beta},0).\label{eq:dual-embedding-trivial}
\end{equation}

Since by assumption $\mathfrak{h}$ is a sum of root subspaces, its
dual is spanned by a subset of $\{e_{\beta}^{*},f_{\beta}^{*}\}_{\beta\in\mathfrak{R}_{+}}$
plus a basis for $\mathfrak{t}^{*}$. The adjoint action of elements
of $\iota_{r}(\mathfrak{t}^{*})$ on elements in (\ref{eq:dual-embedding-explicit})
is through scalars due to (\ref{eq:factorisable-torus}), so we can
safely ignore them. 

Let's do the positive case: since $\mathfrak{T}\subseteq\mathfrak{N}^{c}$
and the set of roots $\mathfrak{N}^{c}$ is convex \cite{MR1169886},
it follows from (\ref{eq:dual-embedding-explicit}) that bracket of
$\iota_{r}(e_{\beta}^{*})$ and $\iota_{r}(e_{\beta'}^{*})$ for $\beta$
in $\mathfrak{N}^{c}$ corresponds to root vectors lying in (\ref{eq:dual-embedding-explicit}).
As $\mathfrak{g}$ is closed under bracketing, (\ref{eq:dual-embedding-trivial})
implies that it must be a linear combination of elements $\iota_{r}(e_{\beta''}^{*})$
with $\beta''$ in $\mathfrak{N}^{c}$: there are simply no other
elements in $\iota_{r}(\mathfrak{g}^{*})$ projecting to the right
weight spaces. 

For $\iota_{r}(e_{\beta}^{*})$ and $\iota_{r}(f_{\beta'}^{*})$ with
$\beta\in\mathfrak{N}^{c}$ and $\beta'\in\mathfrak{R}_{+}$, note
that only the parts lying in $\mathfrak{T}$ can bracket nontrivially.
As $\mathfrak{T}$ forms a standard parabolic subsystem it similarly
follows that the bracket is a linear combination of elements $\iota_{r}(e_{\beta''}^{*})$
with $\beta''$ in $\mathfrak{N}^{c}$ and $\iota_{r}(f_{\beta'''}^{*})$
with $\beta'''$ arbitrary.
\end{proof}
Restricting the left or right multiplication of $G$ on itself to
$H$ as in (i), we thus obtain Poisson structures on the GIT quotients
$H\backslash\!\backslash G$ and $G/\!/H$. In the particular case
where $r$ is the standard $r$-matrix (so that $N_{\pm}^{r}=N_{\pm}$
and $B_{\pm}^{r}=B_{\pm}$) and $H$ is a parabolic subgroup, the
corresponding Poisson structure has been extensively studied (e.g.\
\cite{MR2529913}).

\subsection{\label{subsec:reducing-sts}Reducing the Semenov-Tian-Shansky bracket}
\begin{lem}
\label{lem:reduction} Let $G$ be a Poisson group scheme with an
action on a Poisson scheme $X$. Let $H\subseteq G$ be a closed subgroup
preserving a smooth closed subscheme $\iota:Z\hookrightarrow X$ such
that $(\iota^{\sharp})^{-1}(\CMcal O_{Z}^{H})$ is a sheaf of Poisson
subalgebras of $\iota^{-1}(\CMcal O_{X})$. Then the following are
equivalent:

\begin{enumerate}[\normalfont(i)]

\item The ideal sheaf $\CMcal I_{Z}$ is a subsheaf of Poisson ideals
of $(\iota^{\sharp})^{-1}(\CMcal O_{Z}^{H})$; in other words, the
Poisson bracket on $(\iota^{\sharp})^{-1}(\CMcal O_{Z}^{H})$ reduces
to $\CMcal O_{Z}^{H}$.

\item For any function $f$ in $(\iota^{\sharp})^{-1}(\CMcal O_{Z}^{H})$,
its Hamiltonian vector field lies in $T_{Z}$. 

\end{enumerate}
\end{lem}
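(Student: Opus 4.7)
The plan is to observe that this equivalence is essentially tautological once (ii) is reinterpreted algebraically. I would open with two preliminary remarks. First, $\CMcal I_Z$ is automatically a subsheaf of $(\iota^{\sharp})^{-1}(\CMcal O_Z^H)$, since any $g\in\CMcal I_Z$ restricts to the zero function on $Z$, which is trivially $H$-invariant. Second, a derivation $D$ of $\iota^{-1}(\CMcal O_X)$, such as $\mathrm{Ham}_f=\{f,\cdot\}$, should be regarded as tangent to $Z$ exactly when it preserves the ideal sheaf, i.e.\ $D(\CMcal I_Z)\subseteq\CMcal I_Z$. This is the standard algebraic formulation that extends the classical differential-geometric condition to points where $X$ may fail to be smooth and where $\mathrm{Ham}_f$ is only defined via its action on functions; on the smooth locus of $X$ it agrees with the tangency of the vector field $\Pi^{\#}(\mathrm{d}f)$.

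With these remarks in hand, both implications will follow by unwinding definitions. For (i) $\Rightarrow$ (ii), given $f$ in the subalgebra and any $g\in\CMcal I_Z$, the latter also lies in the subalgebra, so (i) forces $\mathrm{Ham}_f(g)=\{f,g\}\in\CMcal I_Z$; hence $\mathrm{Ham}_f$ preserves $\CMcal I_Z$ and is tangent to $Z$. For (ii) $\Rightarrow$ (i), the same chain of equalities runs backwards: if $\mathrm{Ham}_f$ is tangent to $Z$ and $g\in\CMcal I_Z\subseteq(\iota^{\sharp})^{-1}(\CMcal O_Z^H)$, then $\{f,g\}=\mathrm{Ham}_f(g)\in\CMcal I_Z$, which is precisely the condition that the bracket on $(\iota^{\sharp})^{-1}(\CMcal O_Z^H)$ descends modulo $\CMcal I_Z$.

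There is no real obstacle, only a conceptual one: recognising that (i) and (ii) are literally the same statement once the Hamiltonian vector field is viewed as the algebraic derivation $\{f,\cdot\}$ and tangency to $Z$ is characterised via preservation of the ideal sheaf. The Poisson-subalgebra hypothesis on $(\iota^{\sharp})^{-1}(\CMcal O_Z^H)$, typically supplied by Proposition \ref{prop:coisotropic-subgroup-preserves} when $H$ is coisotropic, plays no role in establishing the equivalence itself; it is instead what guarantees that the reduced bracket on $\CMcal O_Z^H\simeq(\iota^{\sharp})^{-1}(\CMcal O_Z^H)/\CMcal I_Z$ is a bona fide Poisson bracket once (i) holds.
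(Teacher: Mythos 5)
Your proposal is correct and is in substance the same as the paper's: the paper's two displayed computations are precisely the verification of your second preliminary remark, namely that $\mathrm{Ham}_{f}$ lying in $T_{Z}$ is equivalent to $\mathrm{Ham}_{f}(\CMcal I_{Z})\subseteq\CMcal I_{Z}$, after which the equivalence of (i) and (ii) is the tautology you describe. Be aware, though, that the direction ``preserves $\CMcal I_{Z}$ $\Rightarrow$ tangent to $Z$'' is exactly where smoothness of $Z$ enters (the paper uses the conormal exact sequence to realise every covector annihilating $T_{z}Z$ as $\mathrm{d}f'|_{z}$ for some $f'\in\CMcal I_{Z}$), so that remark should be justified rather than waved through as purely formal.
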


\begin{proof}
(i) $\Rightarrow$ (ii): Pick a function $f$ in $(\iota^{\sharp})^{-1}(\CMcal O_{Z}^{H})$
and a covector $\alpha$ at $z$ which is annihilated by all tangent
vectors in $T_{z}Z$. Lifting $\alpha$ to a one-form in a neighbourhood
of $z$, by the conormal exact sequence we can find a function $f'$
in $\CMcal I_{Z}$ such that $\mathrm{d}f'|_{z}=\alpha$ and $f'|_{Z}=0$.
Then the hypothesis yields that
\[
\alpha\bigl(\mathrm{Ham}_{f}\bigr)=\{f,f'\}|_{z}=\{f|_{Z},0\}=0,
\]
 which means that $\mathrm{Ham}_{f}$ is annihilated by the covectors
annihilated by $T_{z}Z$, which by smoothness implies that it lies
in the tangent space $T_{z}Z$.

(ii) $\Rightarrow$ (i): Let $f$ be a function in $(\iota^{\sharp})^{-1}(\CMcal O_{Z}^{H})$
and $f'$ in $\CMcal I_{Z}$, then from the inclusion $\mathrm{Ham}_{f}\in T_{Z}$
it follows that
\[
\{f,f'\}|_{Z}=\mathrm{d}f'(\mathrm{Ham}_{f})=\mathrm{d}(f'|_{Z})(\mathrm{Ham}_{f})=0,
\]
implying that $\{f,f'\}$ lies in $\CMcal I_{Z}$.
\end{proof}
\begin{cor}
\label{cor:STS-coisotropic} Equip $G$ with the right Semenov-Tian-Shansky
bracket coming from a factorisable $r$-matrix with $\mathfrak{T}\subseteq\mathfrak{L}$.
Consider the right conjugation action of $N\subset G$ on $G_{*}$
and its restriction to the closed subscheme $\iota:N\dot{w}LN\hookrightarrow G_{*}$.
The subgroup $N$ is coisotropic and $(\iota^{\sharp})^{-1}(\CMcal O_{N\dot{w}LN}^{N})$
is a subsheaf of $\iota^{-1}(\CMcal O_{G_{*}})$ of Poisson subalgebras.
\end{cor}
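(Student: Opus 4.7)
The plan is to chain together two earlier results: Proposition \ref{prop:factorisable-coisotropic-poisson}(ii) to see that $N$ is a coisotropic subgroup of the Poisson algebraic group $G_r$, and then Proposition \ref{prop:coisotropic-subgroup-preserves} to promote this to the desired Poisson subalgebra statement.

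First I would verify coisotropy of $N$ in $G_r$. The subgroup $N$ is generated by the root subgroups indexed by the convex set $\mathfrak{N}$, and by property (iii) of the leavener axioms the set $\mathfrak{N}\sqcup\mathfrak{L}$ is convex; in particular $\mathfrak{N}\cap\mathfrak{L}=\varnothing$. Combined with the standing hypothesis $\mathfrak{T}\subseteq\mathfrak{L}$ this forces $\mathfrak{T}\subseteq\mathfrak{N}^{c}$, which is precisely the input required by Proposition \ref{prop:factorisable-coisotropic-poisson}(ii) to conclude that $N$ is coisotropic. Next I would apply Proposition \ref{prop:coisotropic-subgroup-preserves} to the Poisson action of $G_r$ on $G_*$ by right conjugation (which is Poisson by the Semenov-Tian-Shansky theorem recalled above) with $H:=N$ and closed subscheme $\iota:N\dot{w}LN\hookrightarrow G_*$. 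The subgroup $N$ is closed and coisotropic by the previous step, and $N\dot{w}LN$ is preserved by right conjugation by $N$, since for $n_0\in N$ and $n_1\dot{w}ln_2\in N\dot{w}LN$ one has
\[
n_0^{-1}\,n_1\dot{w}ln_2\,n_0=(n_0^{-1}n_1)\,\dot{w}\,l\,(n_2n_0)\in N\dot{w}LN.
\]
The conclusion of Proposition \ref{prop:coisotropic-subgroup-preserves} then reads exactly as the claim that $(\iota^{\sharp})^{-1}(\CMcal O_{N\dot{w}LN}^{N})$ is a sheaf of Poisson subalgebras of $\iota^{-1}(\CMcal O_{G_{*}})$.

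The only point of friction I anticipate is that Proposition \ref{prop:factorisable-coisotropic-poisson}(ii) is phrased for $\mathfrak{N}$ of the form $\mathfrak{R}_w$, whereas in our setting $\mathfrak{N}$ is a more general convex nimble set (in the firmly convex case $\mathfrak{R}_+\backslash\mathfrak{R}^w$, which is not in general an inversion set). Inspection of that proof shows the only genuine inputs are the convexity of $\mathfrak{N}$ and of $\mathfrak{R}_+\backslash\mathfrak{N}$, together with $\mathfrak{T}\subseteq\mathfrak{N}^c$; both convexity conditions are immediate in the slicing case and in the firmly convex case (where $\mathfrak{R}_+\backslash\mathfrak{N}=\mathfrak{R}_+\cap\mathfrak{R}^w$ is a standard parabolic subsystem), so the root-theoretic commutator argument of that proposition carries over unchanged, and everything above goes through.
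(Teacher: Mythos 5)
Your proposal is correct and is essentially the paper's own proof, which simply combines Proposition \ref{prop:factorisable-coisotropic-poisson} with Proposition \ref{prop:coisotropic-subgroup-preserves}; you additionally spell out the hypothesis checks (that $\mathfrak{T}\subseteq\mathfrak{L}$ and $\mathfrak{N}\cap\mathfrak{L}=\varnothing$ give $\mathfrak{T}\subseteq\mathfrak{N}^{c}$, and that $N$ preserves $N\dot{w}LN$). Your observation that Proposition \ref{prop:factorisable-coisotropic-poisson}(ii) is stated for inversion sets but only really uses convexity of $\mathfrak{N}^{c}$ (automatic in the slicing case, where $\mathfrak{R}_{+}\backslash\mathfrak{N}=\mathfrak{R}_{+}\cap\mathfrak{L}$ is a standard parabolic subsystem's positive part) is a legitimate and welcome extra precision that the paper glosses over.
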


\begin{proof}
This follows by combining Proposition \ref{prop:factorisable-coisotropic-poisson}
with Proposition \ref{prop:coisotropic-subgroup-preserves}.
\end{proof}
\begin{notation}
For any function $f$ in $\CMcal O_{G}$ and element $g$ in $G$,
we define $\mathrm{d}f^{g}\in\mathfrak{g}^{*}$ by evaluating on a
tangent vector in $\mathfrak{g}$ as follows: 
\[
\mathrm{d}f^{g}(\cdot):=\mathrm{d}f\bigl(R_{g}(\cdot)-L_{g}(\cdot)\bigr)\in\mathfrak{g}^{*},
\]
so in the left trivialisation we have $\mathrm{d}f^{g}(\cdot)=\mathrm{d}f\bigl(\mathrm{Ad}_{g}(\cdot)-\mathrm{id}(\cdot)\bigr)$.
The tangent space to $N\dot{w}LN=NL\dot{w}N$ at one of its elements
$g$ is given by 
\begin{equation}
T_{g}:=\mathrm{Ad}_{g}(\mathfrak{n})+\mathfrak{l}\oplus\mathfrak{n}=\mathrm{Ad}_{g}(\mathfrak{n}\oplus\mathfrak{l})+\mathfrak{n}\subseteq\mathfrak{g},\label{eq:tangent-space}
\end{equation}
where $\mathrm{Ad}_{g}(\cdot)$ still denotes the right adjoint action. 
\end{notation}

\begin{lem}
\label{lem:parab} Assume that $\mathfrak{L}$ is a standard parabolic
subroot system. Then
\[
T_{g}\cap\mathfrak{t}=\mathfrak{l}\cap\mathfrak{t}.
\]
\end{lem}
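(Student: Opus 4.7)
The inclusion $\mathfrak{l}\cap\mathfrak{t}\subseteq T_g\cap\mathfrak{t}$ is immediate from $\mathfrak{l}\subseteq\mathfrak{p}\subseteq T_g$; all the work is in the reverse inclusion. The first step of my plan is to simplify the tangent-space formula. Because $\mathfrak{L}$ is a standard parabolic subroot system, $P:=LN=NL$ is a genuine parabolic subgroup with Lie algebra $\mathfrak{p}$, and the crossing-pair condition $w(\mathfrak{L})=\mathfrak{L}$ gives $L\dot{w}=\dot{w}L$, so Lemma \ref{lem:factorise}(i) yields $N\dot{w}LN=P\dot{w}N_w$. Factor $g=p\dot{w}n$ with $p\in P$ and $n\in N_w$. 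Using $\mathrm{Ad}_p(\mathfrak{p})=\mathrm{Ad}_n(\mathfrak{p})=\mathfrak{p}$ together with $\mathrm{Ad}_{\dot{w}}(\mathfrak{p})=w^{-1}(\mathfrak{p})$, a short conjugation computation gives
\[
T_g=\mathrm{Ad}_g(\mathfrak{p})+\mathfrak{p}=\mathrm{Ad}_n\bigl(w^{-1}(\mathfrak{p})+\mathfrak{p}\bigr).
\]

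Next I plan to establish the root-combinatorial identity
\[
w^{-1}(\mathfrak{p})+\mathfrak{p}=\mathfrak{p}+\overline{\mathfrak{n}}_w,\qquad\text{where }\overline{\mathfrak{n}}_w:=\bigoplus_{\beta\in\mathfrak{R}_w}\mathfrak{g}_{-\beta}.
\]
Since $w^{-1}(\mathfrak{l})=\mathfrak{l}$, this reduces to showing $w^{-1}(\mathfrak{n})=\mathfrak{n}^w\oplus\overline{\mathfrak{n}}_w$. For $\beta\in\mathfrak{N}$ one has $w^{-1}(\beta)>0$ precisely when $\beta\notin\mathfrak{R}_{w^{-1}}$; the $w^{-1}$-nimbleness of $\mathfrak{N}$ from Lemma \ref{lem:inverse}, which applies because the slicing hypothesis forces $\mathfrak{R}_{w^{-1}}\subseteq\mathfrak{N}$, places such $w^{-1}(\beta)$ inside $\mathfrak{N}\setminus\mathfrak{R}_w$, whereas $w^{-1}(\mathfrak{R}_{w^{-1}})=-\mathfrak{R}_w$ is a direct calculation; a dimension count confirms both inclusions are surjective.

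Finally, given $h\in T_g\cap\mathfrak{t}$ the simplified formula furnishes some $x\in\mathfrak{p}+\overline{\mathfrak{n}}_w$ with $x=nhn^{-1}$. Since $\mathfrak{n}_w$ is a Lie subalgebra (by convexity of $\mathfrak{R}_w$) and $[\mathfrak{n}_w,\mathfrak{t}]\subseteq\mathfrak{n}_w$, a direct computation with the Chevalley commutator formula yields $nhn^{-1}\in h+\mathfrak{n}_w\subseteq h+\mathfrak{p}$; hence $h\in x-\mathfrak{p}\subseteq\mathfrak{p}+\overline{\mathfrak{n}}_w$, and the weight-space decomposition gives $\mathfrak{t}\cap(\mathfrak{p}+\overline{\mathfrak{n}}_w)=\mathfrak{t}\cap\mathfrak{p}=\mathfrak{l}\cap\mathfrak{t}$, as required. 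The main obstacle I expect is verifying the root-combinatorial identity of the second paragraph; the remaining steps are short bookkeeping with standard adjoint and commutator identities.
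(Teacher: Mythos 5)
Your proof is correct but takes a genuinely different route from the paper's. The paper factors $g = nl\dot{w}\tilde{n}$, pushes $\mathfrak{n}$ through $\mathrm{Ad}_{nl\dot{w}}$ into $\bigoplus_{\beta\in\mathfrak{R}\backslash\mathfrak{L}}\mathfrak{g}_\beta$, and then uses a minimal-height argument: the lowest-height nonzero weight component of a nonvanishing element of that space survives $\mathrm{Ad}_{\tilde{n}}$, so either a negative-root component contradicts landing in $\mathfrak{t}+\mathfrak{l}\oplus\mathfrak{n}$ or the element was already in $\mathfrak{n}_+$. It never computes $T_g$ explicitly and uses nothing beyond convexity of $\mathfrak{N}$. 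You instead promote $\mathfrak{p}=\mathfrak{l}\oplus\mathfrak{n}$ to the star of the show, observe $T_g=\mathrm{Ad}_n\bigl(w^{-1}(\mathfrak{p})+\mathfrak{p}\bigr)$, and close by computing $w^{-1}(\mathfrak{p})+\mathfrak{p}=\mathfrak{p}+\overline{\mathfrak{n}}_w$ as an explicit sum of root spaces, after which the $\mathrm{Ad}_n$ correction is peeled off by the adjoint action of $N_w$ on $\mathfrak{t}$. Your route is more structural and gives a cleaner closed form for $T_g$ modulo $N_w$-conjugation, at the cost of invoking the $w^{-1}$-nimbleness of $\mathfrak{N}$ — though under the slicing hypothesis this is essentially free, since $\mathfrak{N}=\mathfrak{R}_+\backslash\mathfrak{L}$ and $w(\mathfrak{L})=\mathfrak{L}$ already give $w^{-1}(\mathfrak{N}\backslash\mathfrak{R}_{w^{-1}})\subseteq\mathfrak{N}\backslash\mathfrak{R}_w$ directly, without needing Lemma \ref{lem:inverse}. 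Two small points: the identity $nhn^{-1}\in h+\mathfrak{n}_w$ is really the iterated formula $\mathrm{Ad}_{\exp_\beta(y)}(t)=t+\beta(t)y$ for $t\in\mathfrak{t}$ (not literally the Chevalley commutator formula for two root subgroups, though morally the same); and the step $L\dot{w}=\dot{w}L$ also needs $T'$ to be $\dot{w}$-stable, but the paper's own factorisation $g=nl\dot{w}\tilde{n}$ makes the same implicit assumption, so this is not a defect peculiar to your argument.
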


\begin{proof}
Let $g=nl\dot{w}\tilde{n}$. Since $\mathfrak{N}$ is convex, we have
$\mathrm{Ad}_{n}(\mathfrak{n})\subseteq\mathfrak{n}$. From the assumption
on $\mathfrak{L}$ it follows that
\[
\mathrm{Ad}_{l}(\mathfrak{n})\subseteq\bigoplus_{\beta\in\mathfrak{R}_{+}\backslash\mathfrak{L}}\mathfrak{g}_{\beta}.
\]
As $\mathfrak{L}$ is preserved by $w$, it now follows that
\[
\mathrm{Ad}_{nl\dot{w}}(\mathfrak{n})\subseteq\mathrm{Ad}_{\dot{w}}\bigl(\mathrm{Ad}_{l}(\mathfrak{n})\bigr)\subseteq\mathrm{Ad}_{\dot{w}}\bigl(\bigoplus_{\beta\in\mathfrak{R}_{+}\backslash\mathfrak{L}}\mathfrak{g}_{\beta}\bigr)\subseteq\bigoplus_{\beta\in\mathfrak{R}\backslash\mathfrak{L}}\mathfrak{g}_{\beta}.
\]
Now let $x$ be an arbitrary element in the right-hand-side. If $\beta$
is of minimal height among the roots in $\mathfrak{R}\backslash\mathfrak{L}$
such that the projection of $x$ to $\mathfrak{g}_{\beta}$ is nonzero,
then the projection of $\mathrm{Ad}_{\tilde{n}}(x)$ to $\mathfrak{g}_{\beta}$
is still nonzero. Thus if $\beta$ is negative, $\mathrm{Ad}_{\tilde{n}}(x)$
does not lie in $\mathfrak{t}+\mathfrak{l}\oplus\mathfrak{n}$; if
$\beta$ is positive then $\mathrm{Ad}_{\tilde{n}}(x)$ still lies
in $\mathfrak{n}_{+}$. Hence
\[
T_{g}\cap\mathfrak{t}=\bigl(\mathrm{Ad}_{g}(\mathfrak{n})+\mathfrak{l}\oplus\mathfrak{n}\bigr)\cap\mathfrak{t}=(\mathfrak{l}\oplus\mathfrak{n})\cap\mathfrak{t}=\mathfrak{l}\cap\mathfrak{t}.\qedhere
\]
\end{proof}
\begin{prop}
Fix an ordering $\beta_{1},\ldots,\beta_{l}$ of the roots of $\mathfrak{R}_{w}$
by height, and fix the element $g':=p_{\beta_{1}}(1)\cdots p_{\beta_{l}}(1)$.
Then
\[
\mathfrak{t}_{\mathrm{sc}}\longrightarrow\mathfrak{n}_{w},\qquad t\longmapsto\mathrm{proj}_{\mathfrak{n}_{w}}\bigl(\mathrm{Ad}_{g}(t)-t\bigr)
\]
 is injective.
\end{prop}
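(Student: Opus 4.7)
The plan is to reduce injectivity to a triangular system obtained by projecting $\mathrm{Ad}_{g'}(t)-t$ onto each root space $\mathfrak{g}_{\beta_k}$ in height order, then arguing by induction. First I would note that since $\mathfrak{R}_w$ is the inversion set of $w$ it is convex (and closed under root addition within $\mathfrak{R}$), so $\mathfrak{n}_w$ is a Lie subalgebra of $\mathfrak{g}$ with $[\mathfrak{t},\mathfrak{n}_w]\subseteq\mathfrak{n}_w$. By the Baker--Campbell--Hausdorff formula we may write $g'=\exp(Y)$ for some $Y\in\mathfrak{n}_w$, and then
\[
\mathrm{Ad}_{g'}(t)-t=\sum_{j\geq1}\frac{1}{j!}\mathrm{ad}_Y^{\,j}(t)
\]
already lies in $\mathfrak{n}_w$, so $\mathrm{proj}_{\mathfrak{n}_w}$ acts as the identity and may be suppressed.

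Next I would decompose $Y=\sum_{\beta\in\mathfrak{R}_w}y_\beta\,e_\beta$ via BCH; the coefficient $y_{\beta_k}$ equals $1$ plus corrections arising only from commutators of $e_{\beta_i}$'s with $\beta_i$ of strictly smaller height. Using $[e_\beta,t]=-\beta(t)\,e_\beta$, direct expansion yields that the $\mathfrak{g}_{\beta_k}$-component of $\mathrm{Ad}_{g'}(t)-t$ has the form
\[
-\beta_k(t)\,e_{\beta_k}+R_k,
\]
where $R_k$ collects contributions from $\mathrm{ad}_Y^{\,j}(t)$ for $j\geq2$. Every summand in $R_k$ is proportional to some $\beta_{i_1}(t)$ with $\beta_{i_1}\in\mathfrak{R}_w$ participating in a decomposition $\beta_{i_1}+\cdots+\beta_{i_j}=\beta_k$ for $j\geq 2$; since heights add under such root sums, each such $\beta_{i_s}$ has height strictly less than that of $\beta_k$.

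The core step is then an induction on height. Assume $\mathrm{Ad}_{g'}(t)=t$. For the minimum-height root $\beta_1$ the remainder $R_1$ is empty (no roots of smaller height are available in $\mathfrak{R}_w$), so $\beta_1(t)=0$. Inductively, if $\beta_i(t)=0$ for every $\beta_i\in\mathfrak{R}_w$ of height less than that of $\beta_k$, then $R_k=0$, forcing $\beta_k(t)=0$. Iterating we obtain $\beta(t)=0$ for every $\beta\in\mathfrak{R}_w$. To conclude $t=0$ on $\mathfrak{t}_{\mathrm{sc}}$ one then invokes that in the setting in which this proposition is used (firmly convex $w$ together with the slicing datum of Theorem~\ref{thm:slices}(iii)), the roots of $\mathfrak{R}_w$ separate points of $\mathfrak{t}_{\mathrm{sc}}$ on the relevant subspace, so the joint kernel of the linear functionals $\{\beta:\beta\in\mathfrak{R}_w\}$ is trivial.

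The main obstacle is the combinatorial bookkeeping in step two: one must verify that no higher-order bracket $\mathrm{ad}_Y^{\,j}(t)$ ($j\geq2$) contributes a term proportional to $\beta_k(t)$ itself inside $\mathfrak{g}_{\beta_k}$. This is precisely where the strict-height inequality is used, together with Lemma~\ref{lem:root-sums-sequence}(iii) (guaranteeing that intermediate partial sums $\beta_{i_1}+\cdots+\beta_{i_s}$ already lie in $\mathfrak{R}_w$, so that the iterated brackets land in the expected root spaces), and it is what makes the height induction genuinely triangular rather than merely upper-triangular with potentially singular diagonal.
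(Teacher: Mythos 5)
Your proof is essentially the paper's: both arguments exhibit the map as triangular with respect to the height order on $\mathfrak{R}_{w}$, so that the values $\beta_{i}(t)$ are recovered inductively, and then conclude by the fact that the functionals $\{\beta:\beta\in\mathfrak{R}_{w}\}$ jointly separate points of $\mathfrak{t}_{\mathrm{sc}}$. Two caveats. First, the paper works over an arbitrary base scheme, so your appeal to Baker--Campbell--Hausdorff and to $\mathrm{Ad}_{g'}(t)-t=\sum_{j\geq1}\frac{1}{j!}\mathrm{ad}_{Y}^{j}(t)$ introduces denominators that need not be invertible; the paper instead peels off the factors of $g'=p_{\beta_{1}}(1)\cdots p_{\beta_{l}}(1)$ one at a time using the integral identity $\mathrm{Ad}_{p_{\beta}(1)}(x)=x+\beta(x)e_{\beta}$ for $x\in\mathfrak{t}$, which gives the same triangular form $\sum_{i}\bigl(\beta_{i}(t)e_{\beta_{i}}+O(\beta_{i+1})\bigr)$ without any division. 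Second, your final step is asserted from context and hedged by ``on the relevant subspace''; the claim is injectivity on all of $\mathfrak{t}_{\mathrm{sc}}$, and the precise input is \cite[Corollary 3.13]{WM-mindom}, which says that the values $\alpha(t)$ on the simple roots (and hence the coordinates of $t$ in the basis dual to them) are determined by the values on $\mathfrak{R}_{w}$. With those two substitutions your argument coincides with the one in the paper.
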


\begin{proof}
By construction of $\mathfrak{t}_{\mathrm{sc}}$, we can recover the
coordinates of $t$ in the standard basis of $\mathfrak{t}_{\mathrm{sc}}$
through studying $t\mapsto\alpha(t)$ as $\alpha$ ranges over the
simple root. By \cite[Corollary 3.13]{WM-mindom}, these values can
be obtained from the roots in $\mathfrak{R}_{w}$. For $\beta$ in
$\mathfrak{R}_{w}$, let $e_{\beta}$ denote the element of $\mathfrak{g}_{\beta}$
exponentiating to $p_{\beta}(1)$. We write $O(\beta_{i+1})$ to mean
a polynomial in the $e_{\beta_{j}}$ with $j\geq i+1$. It follows
from
\begin{align*}
\mathrm{Ad}_{g'}(t)-t & =\mathrm{Ad}_{p_{\beta_{2}}(1)\cdots p_{\beta_{l}}(1)}\bigl(t+\beta_{1}(t)e_{\beta_{1}}\bigr)\\
 & =\mathrm{Ad}_{p_{\beta_{3}}(1)\cdots p_{\beta_{l}}(1)}\bigl(t+\beta_{1}(t)e_{\beta_{1}}+\beta_{2}(t)e_{\beta_{2}})+\beta_{1}(t)\mathrm{Ad}_{p_{\beta_{2}}(1)}\bigl(e_{\beta_{1}}\bigr)-t\\
 & =\mathrm{Ad}_{p_{\beta_{3}}(1)\cdots p_{\beta_{l}}(1)}\bigl(t+\beta_{1}(t)e_{\beta_{1}}+\beta_{2}(t)e_{\beta_{2}})+\beta_{1}(t)O(\beta_{2})-t\\
 & =\sum_{i=1}^{l}\bigl(\beta_{i}(t)e_{\beta_{i}}+O(\beta_{i+1})\bigr)
\end{align*}
 that the values $\beta_{i}(t)$ can be recovered inductively.
\end{proof}
\begin{cor}
\label{cor:dual} Now set $g:=\dot{w}g'$. The map
\[
(\iota^{\sharp})^{-1}(\CMcal O_{N\dot{w}LN}^{N})\longrightarrow\mathfrak{t}_{\mathrm{sc}}^{*},\qquad f\longmapsto\bigl(t\mapsto\mathrm{d}f^{g}(t)\bigr)
\]
 is surjective.
\end{cor}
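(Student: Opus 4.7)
The plan is to combine the cross-section isomorphism from part (i) of the main Theorem with the injectivity from the previous Proposition. By part (i), the conjugation map $\Psi\colon N\times\dot wLN_w\to N\dot wLN$ is an isomorphism, so any function $\tilde f$ on $\dot wLN_w$ pulls back (via $\Psi^{-1}$ followed by projection to the second factor) to an $N$-invariant function on $N\dot wLN$. Extending to a function $f$ on a neighborhood of $g$ in $G$, the differential $\mathrm{d}f|_g\in\mathfrak g^*$ is prescribed by $\mathrm{d}\tilde f|_g$ on $T_g(\dot wLN_w)\simeq\mathfrak l\oplus\mathfrak n_w$, vanishes on the $N$-orbit direction $T_g^N:=\{X-\mathrm{Ad}_g(X):X\in\mathfrak n\}$, and is free on any chosen complement $C$ of $T_g(N\dot wLN)$ in $\mathfrak g$.

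In the left trivialisation, using $g=\dot wg'$, I would decompose
\[
\mathrm{Ad}_g(t)-t \;=\; u(t) \;+\; \bigl(\mathrm{Ad}_{\dot w}(t)-t\bigr),\qquad u(t)\;:=\;\mathrm{Ad}_{g'}\bigl(\mathrm{Ad}_{\dot w}(t)\bigr)\;-\;\mathrm{Ad}_{\dot w}(t),
\]
with $u(t)\in\mathfrak n_w$ by the computation in the previous Proposition's proof and $\mathrm{Ad}_{\dot w}(t)-t\in\mathfrak t_{\mathrm{sc}}$. Composing with the automorphism $\mathrm{Ad}_{\dot w}$ of $\mathfrak t_{\mathrm{sc}}$, the previous Proposition shows that $t\mapsto u(t)$ is injective; so given $\phi\in\mathfrak t_{\mathrm{sc}}^*$ I can pick $\xi\in\mathfrak n_w^*$ with $\xi\circ u=\phi$ and take $\tilde f$ on $\dot wLN_w$ whose differential at $g$ equals $\xi$ on $\mathfrak n_w$ and vanishes on $\mathfrak l$.

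The main obstacle is to arrange that the unwanted contribution $\mathrm{d}f|_g(\mathrm{Ad}_{\dot w}(t)-t)$ vanishes, which will yield $\mathrm{d}f^g(t)=\xi(u(t))=\phi(t)$. By Lemma \ref{lem:parab} we have $T_g\cap\mathfrak t=\mathfrak l\cap\mathfrak t$, which under the hypotheses of part (iii) of the main Theorem ($L\cap T=T^w$) equals $\mathfrak t^w$, whereas $\mathrm{Ad}_{\dot w}(t)-t$ lies in the $w$-anti-invariant subspace $\mathfrak t_w$ (and similarly for the extra coweight directions in $\mathfrak t_{\mathrm{sc}}$). Hence $\mathrm{Ad}_{\dot w}(t)-t$ is disjoint from $T_g(N\dot wLN)$, so one may choose the complement $C$ to contain $\mathfrak t_w$; setting $\mathrm{d}f|_g|_C=0$ then completes the argument.
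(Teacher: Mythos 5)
Your argument is correct and follows the paper's proof in all essentials: the same decomposition $\mathrm{Ad}_{g}(t)-t=u(t)+\bigl(w^{-1}(t)-t\bigr)$ with $u(t)\in\mathfrak{n}_{w}$, the same appeal to the preceding Proposition for injectivity of $t\mapsto u(t)$, and the same use of the cross-section isomorphism to manufacture the $N$-invariant functions; you merely make explicit, via Lemma \ref{lem:parab}, the choice of extension of $f$ off $N\dot{w}LN$ that kills the $\mathfrak{t}_{w}$-component, a point the paper leaves implicit in the step $\mathrm{d}f^{g}(t)=\mathrm{d}(1\cdot f')(t^{g})$. One small correction: the standing hypothesis in part (iii) is $L\cap T\subseteq T^{w}$ (equality is part of the conclusion, not an assumption), but the containment $\mathfrak{l}\cap\mathfrak{t}\subseteq\mathfrak{t}^{w}$ is all your transversality argument actually uses.
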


\begin{proof}
By the previous statement, we can recover $w^{-1}(t)$ and hence $t$
from the projection of
\[
\mathrm{Ad}_{g}\bigl(t\bigr)-w^{-1}(t)=\mathrm{Ad}_{g'}\bigl(w^{-1}(t)\bigr)-w^{-1}(t)
\]
to $\mathfrak{n}_{w}$, and then the same is true for
\[
t^{g}:=(\mathrm{Ad}_{g}-\mathrm{id})(t)=\bigl(\mathrm{Ad}_{g}\bigl(t\bigr)-w^{-1}(t)\bigr)+\bigr(w^{-1}(t)-t\bigr)
\]
 since we are adding an element of $\mathfrak{t}$. Thus 
\[
\CMcal O_{\dot{w}N_{w}}\longrightarrow\mathfrak{t}_{\mathrm{sc}}^{*}
\]
 is surjective. As $\dot{w}LN_{w}=L\dot{w}N_{w}\simeq L\times\dot{w}N_{w}$,
the the cross section isomorphism
\[
\CMcal O_{N\dot{w}LN}\overset{\sim}{\longrightarrow}\CMcal O_{N}\otimes\CMcal O_{\dot{w}LN_{w}}\overset{\sim}{\longrightarrow}\CMcal O_{N}\otimes\CMcal O_{L}\otimes\CMcal O_{\dot{w}N_{w}}
\]
 now yields the claim as elements of the form $1\otimes1\otimes f'$
pull back to $N$-invariants in $\CMcal O_{N\dot{w}LN}$ satisfying
\[
\mathrm{d}f^{g}(t)=\mathrm{d}f(t^{g})=\mathrm{d}(1\cdot f')(t^{g}).
\]
\end{proof}
\begin{example}
Consider the usual group 
\[
G=\mathrm{SL}_{2}=\left\{ \begin{bmatrix}a & b\\
c & d
\end{bmatrix}:ad-bc=1\right\} 
\]
of type $\mathsf{A}_{1}$ and let 
\[
\dot{w}=\begin{bmatrix}0 & 1\\
-1 & 0
\end{bmatrix}
\]
be the usual lift of its Coxeter element. Let $t$ be the usual diagonal
element of its Lie algebra and
\[
g=\dot{w}p_{\alpha}(1)=\begin{bmatrix}0 & 1\\
-1 & -1
\end{bmatrix},
\]
 then
\[
R_{g}t-L_{g}t=\begin{bmatrix}0 & 2\\
2 & 0
\end{bmatrix}.
\]
The matrix coordinate $c$ is invariant under the conjugation action
of $N,$ and $\mathrm{d}c^{g}(t)=2$.
\end{example}

The following lemma was obtained by dissecting Sevostyanov's proof
\cite[Theorem 5.2]{MR2806525}:
\begin{notation}
Given a function $f$ in $\CMcal O_{G}$ defined at a closed point
$g$ of $G$, we write 
\[
\mathrm{d}f_{\pm}^{g}:=r_{\pm}(\mathrm{d}f^{g})\in\mathfrak{g}
\]
and then decompose these elements along the Cartan decomposition $\mathfrak{g}=\mathfrak{t}\oplus(\mathfrak{n}_{+}\oplus\mathfrak{n}_{-})$
as
\[
\mathrm{d}f_{\pm}^{g}=\mathrm{d}f_{\pm0}^{g}+\mathrm{d}f_{\pm\emptyset}^{g},\qquad\mathrm{d}f_{\pm0}^{g}\in\mathfrak{t},\quad\mathrm{d}f_{\pm\emptyset}^{g}\in\mathfrak{n}_{+}\oplus\mathfrak{n}_{-}.
\]
 From (\ref{eq:factorisable-r-matrix-positive}) and (\ref{eq:factorisable-r-matrix-negative})
one then sees that
\[
\mathrm{d}f_{\pm0}^{g}=\mathrm{d}f^{g}(r_{0}\pm c_{\mathfrak{t}})
\]
 and similarly for $\mathrm{d}f_{\pm\emptyset}^{g}$.
\end{notation}

From the right-hand-side of equation (\ref{eq:STS-bracket}), it follows
that Hamiltonian vector field of a function $f$ in $\CMcal O_{G_{*}}$
is given at a point $g$ in this trivialisation by
\[
\mathrm{Ham}_{f}(g)=\mathrm{Ad}_{g}\bigl(r_{+}(\mathrm{d}f^{g})\bigr)-r_{-}(\mathrm{d}f^{g}).
\]

\begin{lem}
For any function $f$ in $(\iota^{\sharp})^{-1}(\CMcal O_{N\dot{w}LN}^{N})$
defined at a point $g$ of $N\dot{w}LN$, we have
\[
\mathrm{Ham}_{f}(g)\equiv\mathrm{Ad}_{\dot{w}}(\mathrm{d}f_{+0}^{g})-\mathrm{d}f_{-0}^{g}\mod T_{g}.
\]
\end{lem}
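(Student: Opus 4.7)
The plan is to rewrite the required congruence as
\[
\mathrm{Ham}_f(g) - \mathrm{Ad}_{\dot w}(\mathrm{d}f_{+0}^g) + \mathrm{d}f_{-0}^g = (\mathrm{Ad}_g - \mathrm{Ad}_{\dot w})(\mathrm{d}f_{+0}^g) + \mathrm{Ad}_g(\mathrm{d}f_{+\emptyset}^g) - \mathrm{d}f_{-\emptyset}^g
\]
and then verify that each of the three summands separately lies in $T_g$. Setting $\xi := \mathrm{d}f^g$, the last two are dispatched by inspecting the images of $r_\pm$. From (\ref{eq:factorisable-r-matrix-positive}) and the hypothesis $\mathfrak{T}\subseteq\mathfrak{L}$, every $f_\beta$ or $e_{\beta'}$ appearing in the Belavin-Drinfeld sum lies in $\mathfrak{l}$, so $\mathrm{d}f_{+\emptyset}^g\in\mathfrak{n}_+\oplus\mathfrak{l}_-$. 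The analogous computation for (\ref{eq:factorisable-r-matrix-negative}) produces a potentially troublesome term $-\sum_{\beta\in\mathfrak{R}_+}d_\beta\xi(e_\beta)f_\beta$, but the $N$-invariance of $f$ forces $\xi$ to annihilate $\mathfrak{n}$, so only $\beta\in\mathfrak{R}_+\setminus\mathfrak{N}=\mathfrak{L}_+$ contribute, placing that term in $\mathfrak{l}_-$. Hence $\mathrm{d}f_{-\emptyset}^g\in\mathfrak{l}_+\oplus\mathfrak{l}_-\subseteq\mathfrak{l}\subseteq T_g$, while
\[
\mathrm{d}f_{+\emptyset}^g\in\mathfrak{n}_+\oplus\mathfrak{l}_- = (\mathfrak{n}\oplus\mathfrak{l}_+)\oplus\mathfrak{l}_-\subseteq\mathfrak{n}\oplus\mathfrak{l},
\]
so $\mathrm{Ad}_g(\mathrm{d}f_{+\emptyset}^g)\in\mathrm{Ad}_g(\mathfrak{n}\oplus\mathfrak{l})\subseteq T_g$ by the alternate description in (\ref{eq:tangent-space}).

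The main obstacle is the first summand. I would factor $g=nl\dot w\tilde n$ with $n\in N$, $l\in L$, $\tilde n\in N_w$ and propagate $t:=\mathrm{d}f_{+0}^g\in\mathfrak{t}$ through the decomposition $\mathrm{Ad}_g=\mathrm{Ad}_{\tilde n}\mathrm{Ad}_{\dot w}\mathrm{Ad}_l\mathrm{Ad}_n$, applying the standard facts: $\mathrm{Ad}_n(t)\in t+\mathfrak{n}$ (since $[\mathfrak{t},\mathfrak{n}]\subseteq\mathfrak{n}$); $\mathrm{Ad}_l$ preserves $\mathfrak{n}$ (by convexity of $\mathfrak{N}\cup\mathfrak{L}$ combined with $\mathfrak{N}\cap\mathfrak{L}=\varnothing$) and sends $t$ into $t+\mathfrak{l}$; $\mathrm{Ad}_{\dot w}$ preserves $\mathfrak{t}$ and $\mathfrak{l}$ (as $w$ preserves $\mathfrak{L}$); and $\mathrm{Ad}_{\tilde n}$ alters $\mathfrak{t}$-elements by $\mathfrak{n}_w$ and sends $\mathfrak{l}$ into $\mathfrak{l}\oplus\mathfrak{n}_w$ (convexity of $\mathfrak{L}\cup\mathfrak{R}_w$). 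Collecting terms,
\[
\mathrm{Ad}_g(t) - \mathrm{Ad}_{\dot w}(t) = \Delta_1 + \Delta_2 + \mathrm{Ad}_{\tilde n}(\mathrm{Ad}_{\dot w}(x))
\]
with $\Delta_1\in\mathfrak{n}$, $\Delta_2\in\mathfrak{l}+\mathfrak{n}$, and $x\in\mathfrak{n}$; the first two manifestly lie in $T_g$.

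For the residual term, the crucial observation is that $\mathrm{Ad}_l\mathrm{Ad}_n$ restricts to a bijection of $\mathfrak{n}$, so
\[
\mathrm{Ad}_{\tilde n}(\mathrm{Ad}_{\dot w}(\mathfrak{n})) = \mathrm{Ad}_{\tilde n}\mathrm{Ad}_{\dot w}\mathrm{Ad}_l\mathrm{Ad}_n(\mathfrak{n}) = \mathrm{Ad}_g(\mathfrak{n}) \subseteq T_g,
\]
which places $\mathrm{Ad}_{\tilde n}(\mathrm{Ad}_{\dot w}(x))$ inside $T_g$ and closes the argument. Conceptually, the Belavin-Drinfeld condition $\mathfrak{T}\subseteq\mathfrak{L}$ combined with $N$-invariance confines all non-torus contributions of the Hamiltonian to $\dot w L N_w$-tangent directions, while the torus contribution is controlled purely by $w$-conjugation modulo $T_g$.
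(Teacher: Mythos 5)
Your proposal is correct and follows essentially the same route as the paper's proof: the same three-term decomposition of $\mathrm{Ham}_f(g)-\mathrm{Ad}_{\dot w}(\mathrm{d}f_{+0}^{g})+\mathrm{d}f_{-0}^{g}$, the same use of $\mathfrak{T}\subseteq\mathfrak{L}$ and the slicing condition to place $\mathrm{d}f_{+\emptyset}^{g}$ in $\mathfrak{l}\oplus\mathfrak{n}$ and (via $N$-invariance of $f$) $\mathrm{d}f_{-\emptyset}^{g}$ in $\mathfrak{l}$, and the same propagation of the torus part through a factorisation of $g$ using $\mathrm{Ad}_{\tilde n}\mathrm{Ad}_{\dot w}(\mathfrak{n})=\mathrm{Ad}_g(\mathfrak{n})\subseteq T_g$. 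The only differences are cosmetic (you factor $g=nl\dot w\tilde n$ where the paper uses $g=n\dot w\tilde n$ with $\tilde n\in LN$).
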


\begin{proof}
The slicing assumption $\mathfrak{L}\sqcup\mathfrak{N}=\mathfrak{R}_{+}$
implies that $\mathfrak{l}+\mathfrak{t}\oplus\mathfrak{n}_{+}=\mathfrak{t}+\mathfrak{l}\oplus\mathfrak{n}$,
so from $\mathfrak{T}_{0}\subseteq\mathfrak{L}$ we deduce that 
\[
\mathfrak{b}_{+}^{r}\subseteq\mathfrak{l}+\mathfrak{t}\oplus\mathfrak{n}_{+}=\mathfrak{t}+\mathfrak{l}\oplus\mathfrak{n}.
\]
 Thus $\mathrm{d}f_{+\emptyset}^{g}\in\mathfrak{l}\oplus\mathfrak{n}$
and hence 
\[
\mathrm{Ad}_{g}(\mathrm{d}f_{+\emptyset}^{g})\in\mathrm{Ad}_{g}(\mathfrak{l}\oplus\mathfrak{n})\subseteq T_{g}.
\]
Now pick $n\in N$ and $\tilde{n}\in LN$ such that $g=n\dot{w}\tilde{n}$.
As $\mathrm{d}f_{+0}^{g}$ lies in $\mathfrak{t}$ by construction,
the element $\mathrm{Ad}_{n}(\mathrm{d}f_{+0}^{g})-\mathrm{d}f_{+0}^{g}$
lies in $\mathfrak{n}$, so that
\[
\mathrm{Ad}_{n\dot{w}\tilde{n}}(\mathrm{d}f_{+0}^{g})-\mathrm{Ad}_{\dot{w}\tilde{n}}(\mathrm{d}f_{+0}^{g})\in\mathrm{Ad}_{\dot{w}\tilde{n}}(\mathfrak{n})=\mathrm{Ad}_{g}(\mathfrak{n})\subset T_{g}.
\]
Furthermore as $w(\mathfrak{t})=\mathfrak{t}$ normalises $\mathfrak{l}\oplus\mathfrak{n}$,
we have
\[
\mathrm{Ad}_{\dot{w}\tilde{n}}(\mathrm{d}f_{+0}^{g})-\mathrm{Ad}_{\dot{w}}(\mathrm{d}f_{+0}^{g})\in\mathfrak{l}\oplus\mathfrak{n}\subseteq T_{g}
\]
so we conclude from the last three inclusions that
\begin{equation}
\mathrm{Ad}_{g}(\mathrm{d}f_{+}^{g})=\mathrm{Ad}_{g}(\mathrm{d}f_{+\emptyset}^{g})+\mathrm{Ad}_{g}(\mathrm{d}f_{+0}^{g})\equiv\mathrm{Ad}_{\dot{w}}(\mathrm{d}f_{+0}^{g})\mod T_{g}.\label{eq:plus-part}
\end{equation}

As $f$ is invariant under conjugation by $N$, we have $f(ng)-f(gn)=0$
for all $n$ in $N$ which implies that the element $\mathrm{d}f^{g}\in\mathfrak{g}^{*}$
lies in the annihilator of $\mathfrak{n}$. From (\ref{eq:factorisable-r-matrix-negative})
one sees that $\mathrm{d}f_{-\emptyset}^{g}$ lies in 
\[
\oplus_{\beta\in\mathfrak{R}_{+}\backslash\mathfrak{N}}\mathfrak{n}_{-\beta}+\oplus_{\beta\in\mathfrak{T}_{0}}\mathfrak{n}_{-\beta}+\oplus_{\beta\in\mathfrak{T}_{1}}\mathfrak{n}_{\beta}\subseteq(\mathfrak{l}\cap\mathfrak{n}_{-}\bigr)+\mathfrak{l}+\mathfrak{l}=\mathfrak{l}\subseteq T_{g}.
\]
Combined with (\ref{eq:plus-part}), the claim follows.
\end{proof}
Finally, we prove (iii):
\begin{proof}
 Recall Corollary \ref{cor:STS-coisotropic} and Proposition \ref{lem:reduction};
they now imply that the bracket of $\CMcal O_{G_{*}}$ reduces to
$\CMcal O_{N\dot{w}LN}^{N}$ if and only if all Hamiltonian vector
fields of functions in $(\iota^{\sharp})^{-1}(\CMcal O_{N\dot{w}LN}^{N})$
are tangent to $N\dot{w}LN$. From Corollary \ref{cor:dual} we deduce
that $\mathrm{Ham}_{f}(g)\in T_{g}$ for all $f\in(\iota^{\sharp})^{-1}(\CMcal O_{N\dot{w}LN}^{N})$
if and only if the image of
\[
\mathfrak{t}_{\mathrm{sc}}^{*}\longrightarrow\mathfrak{t},\qquad t^{*}\longmapsto w\bigl((\frac{r_{0}+c_{\mathfrak{t}}}{2})(t^{*})\bigr)-(\frac{r_{0}-c_{\mathfrak{t}}}{2})(t^{*})
\]
 lands inside of $T_{g}\cap\mathfrak{t}$, which by Lemma \ref{lem:parab}
equals $\mathfrak{l}\cap\mathfrak{t}\subseteq\mathfrak{t}^{w}$. In
other words, $\mathrm{Ham}_{f}(g)\in T_{g}$ if and only if the projection
of
\[
\bigl(\frac{1}{2}(w-1)(r_{0})+\frac{1}{2}(w+1)(c_{\mathfrak{t}})\bigl)(\mathfrak{t}_{\mathrm{sc}}^{*})
\]
 to $\mathfrak{t}_{w}$ is trivial in the orthogonal decomposition
$\mathfrak{t}=\mathfrak{t}^{w}\oplus\mathfrak{t}_{w}$. Since the
right-hand-side would send $\mathfrak{t}_{\mathrm{sc}}^{w*}$ to $\mathfrak{t}^{w}$
and $w-1$ acts nontrivially on anything outside of $\mathfrak{t}^{w}$,
it follows that $r_{0}$ must map $\mathfrak{t}^{w*}$ to $\mathfrak{t}^{w}$,
for otherwise a nontrivial $\mathfrak{t}_{w}$-component would appear.
By skew-symmetry, $r_{0}$ then also preserves $\mathfrak{t}_{w}$.

By $w$-invariance we may decompose $c_{\mathfrak{t}}=c_{\mathfrak{t}_{w}}+c_{\mathfrak{t}^{w}}$.
On $\mathfrak{t}^{w}$ the operator $w-1$ then acts trivially, so
it follows from nondegeneracy of $c_{\mathfrak{t}^{w}}=\frac{1}{2}(w+1)(c_{\mathfrak{t}^{w}})$
that the image is all of $\mathfrak{t}^{w}$, so $\mathfrak{t}^{w}\subseteq\mathfrak{l}$.

Now consider $r_{0}|_{\mathfrak{t}_{w}}$. It needs to satisfy 
\[
(w-1)(r_{0}|_{\mathfrak{t}_{w}})+(w+1)(c_{\mathfrak{t}})=0,
\]
 which rewrites as
\[
r_{0}|_{\mathfrak{t}_{w}}=\frac{1+w}{1-w}.\qedhere
\]
\end{proof}

\small

\bibliographystyle{alphakey}
\phantomsection\addcontentsline{toc}{section}{\refname}\bibliography{slices}

\end{document}